\DeclareMathOperator{\id}{id}
\newcommand{\scal}[2]{\ensuremath{\langle #1 , #2 \rangle}} 
\newcommand{\norm}[1]{\left\lVert#1\right\rVert}
\newcommand{\normdot}{{\|\!\cdot\!\|}}
\newcommand{\Leb}{\mathscr{L}}
\newcommand{\N}{\mathbb{N}}
\newcommand{\R}{\mathbb{R}}
\newcommand{\p}{\mathtt p} 
\newcommand{\de}{\ensuremath{\, \mathrm d}} 
\newcommand\restr[2]{{
  \left.\kern-\nulldelimiterspace 
  #1 
  \right|_{#2} 
  }}
\newcommand{\cd}{\mathsf{CD}}
\newcommand{\bm}{\mathsf{BM}}
\newcommand{\BE}{\mathsf{BE}}
\newcommand{\MCP}{\mathsf{MCP}}
\newcommand{\Ric}{\mathrm{Ric}}
\newcommand{\X}{\mathsf{X}}
\newcommand{\I}{\mathcal{I}}
\newcommand{\M}{\mathcal{M}}
\newcommand{\Lip}{\mathsf {Lip}}
\newcommand{\di}{\mathsf d} 
\newcommand{\m}{\mathfrak m} 
\DeclareMathOperator{\Geo}{Geo}
\newcommand{\Prob}{\mathscr{P}}
\newcommand{\dis}{\mathcal D}
\newcommand{\sF}{sub-Finsler }
\newcommand{\sr}{sub-Riemannian }
\newcommand{\ariem}{almost-Riemannian }
\newcommand{\Ltwo}{L^2\big([0,1];(\R^k,\normdot)\big)}
\DeclareMathOperator{\ann}{Ann}
\newcommand{\sinom}{\sin_\Omega}
\newcommand{\cosom}{\cos_\Omega}
\newcommand{\sinomp}{\sin_{\Omega^\circ}}
\newcommand{\cosomp}{\cos_{\Omega^\circ}}
\newcommand{\Sbb}{\mathbb{S}}
\newcommand{\cut}{{\rm Cut}}
\newcommand{\e}{{\rm e}}
\newcommand{\hei}{\mathbb{H}}
\title{\textbf{Failure of the curvature-dimension condition in \sF manifolds}}
\date{\today}
\author{Mattia Magnabosco\footnote{Institut f\"ur Angewandte Mathematik, Universit\"at Bonn. \textit{E-mail}:  \href{mailto:magnabosco@iam.uni-bonn.de}{magnabosco@iam.uni-bonn.de}} \ and Tommaso Rossi\footnote{Institut f\"ur Angewandte Mathematik, Universit\"at Bonn. \textit{E-mail}: \href{mailto:rossi@iam.uni-bonn.de}{rossi@iam.uni-bonn.de}}}
\newtheoremstyle{remark}
        {10pt}
        {10pt}
        {}
        {}
        {\itshape}
        {.}
        {.4em}
        {}
\newtheoremstyle{proof}
        {10pt}
        {10pt}
        {}
        {}
        {\itshape}
        {.}
        {.4em}
        {}
\newtheoremstyle{definition}
        {10pt}
        {10pt}
        {}
        {}
        {\bfseries}
        {.}
        {.4em}
        {}
\newtheoremstyle{theorem}
        {10pt}
        {10pt}
        {\slshape}
        {}
        {\bfseries}
        {.}
        {.4em}
        {}
\theoremstyle{theorem}
\newtheorem{theorem}{Theorem}[section]
\newtheorem{prop}[theorem]{Proposition}
\newtheorem{corollary}[theorem]{Corollary}
\newtheorem{lemma}[theorem]{Lemma}
\newtheorem{conj}[theorem]{Conjecture}
\theoremstyle{definition}
\newtheorem{definition}[theorem]{Definition}
\theoremstyle{remark}
\newtheorem{remark}[theorem]{Remark}
\newtheorem{notation}[theorem]{Notation}
\theoremstyle{proof}
\newtheorem*{pro}{Proof}
 {\popQED\end{pro}}
\renewcommand\xleftrightarrow[2][]{%
  \ext@arrow 9999{\longleftrightarrowfill@}{#1}{#2}}
\newcommand\longleftrightarrowfill@{%
  \arrowfill@\leftarrow\relbar\rightarrow}
\begin{document}

\maketitle

\begin{abstract}
The Lott--Sturm--Villani curvature-dimension condition $\cd(K,N)$ provides a synthetic notion for a metric measure space to have curvature bounded from below by $K$ and dimension bounded from above by $N$. It has been recently proved that this condition does not hold in \sr geometry for every choice of the parameters $K$ and $N$. In this paper, we extend this result to the context \sF geometry, showing that the $\cd(K,N)$ condition is not well-suited to characterize curvature in this setting. Firstly, we show that this condition fails in (strict) \sF manifolds equipped with a smooth strongly convex norm and with a positive smooth measure. Secondly, we focus on the \sF Heisenberg group, proving that curvature-dimension bounds can not hold also when the reference norm is less regular, in particular when it is of class $C^{1,1}$. The strategy for proving these results is a non-trivial adaptation of the work of Juillet \cite{MR4201410}, and it requires the introduction of new tools and ideas of independent interest. Finally, we demonstrate the failure of the (weaker) measure contraction property $\MCP(K,N)$ in the \sF Heisenberg group, equipped with a singular strictly convex norm and with a positive smooth measure. This result contrasts with what happens in the \sr Heisenberg group, which instead satisfies $\MCP(0,5)$.\vspace{4pt}\\
\textbf{Keywords}: sub-Finsler geometry, curvature-dimension condition, Heisenberg group. \vspace{4pt} \\
\textbf{AMS Mathematics Subject Classifications 2020}: 53C23, 49J52, 58E10, 49N60.
\end{abstract}

\tableofcontents

\section{Introduction}

In the present paper, we address the validity of the Lott--Sturm--Villani curvature-dimension (in short $\cd(K,N)$) condition in the setting of \sF geometry. In particular, we prove that this condition can not hold in a large class of \sF manifolds. Thus, on the one hand, this work shows that the $\cd(K,N)$ condition is not well-suited to characterize curvature in \sF geometry. On the other hand, we discuss how our results could provide remarkable insights about the geometry of $\cd(K,N)$ spaces.

\subsection{Curvature-dimension conditions}

In their groundbreaking works, Sturm \cite{MR2237206,MR2237207} and Lott--Villani \cite{MR2480619} introduced independently a synthetic notion of curvature-dimension bounds for non-smooth spaces, using Optimal Transport. Their theory stems from the crucial observation that, in the Riemannian setting, having a uniform lower bound on the Ricci curvature and an upper bound on the dimension, can be equivalently characterized in terms of a convexity property of suitable entropy functionals in the Wasserstein space. In particular, it was already observed in \cite{MR2142879} that the Ricci bound $\Ric \geq K\cdot g$ holds if and only if the Boltzmann--Shannon entropy functional is $K$-convex in the Wasserstein space. More generally, let $(M,g)$ be a complete Riemannian manifold, equipped with a measure of the form $\m=e^{-V}{\rm vol}_g$, where ${\rm vol}_g$ is the Riemannian volume and $V\in C^2(M)$. Given $K\in\R$ and $N\in (n,+\infty]$, Sturm \cite{MR2237207} proved that the (generalized) Ricci lower bound 
\begin{equation}
\label{eq:intro_ricci_bound}
       \Ric_{N,V} := \Ric  + \nabla^2 V - 
        \frac
            {\nabla V \otimes \nabla V}
            {N-n} \geq K \cdot g,
\end{equation}

holds if and only if a $(K,N)$-convexity inequality holds for R\'enyi entropy functionals, defined with respect to the reference measure $\m$. While \eqref{eq:intro_ricci_bound} involves a differential object, the Ricci tensor, entropy convexity can be formulated relying solely upon a reference distance and a reference measure, without the need of the underlying smooth structure of the Riemannian manifold. Therefore, it can be introduced in the non-smooth setting of metric measure spaces and taken
as definition of curvature-dimension bound. This condition is called $\cd(K,N)$ and represents a synthetic lower bound on the (Ricci) curvature by $K\in\R$ and a synthetic upper bound on the dimension by $N\in (1,\infty]$, see Definition \ref{def:CD}. In this sense, according to the discussion above, the $\cd(K,N)$ condition is coherent with the Riemannian setting. Moreover, it was proved by Ohta \cite{MR2546027} that the relation between curvature and $\cd(K,N)$ condition holds also in the context of Finsler manifolds.

Remarkably, $\cd(K,N)$ spaces (i.e.\ spaces satisfying the $\cd(K,N)$ condition) enjoy several geometric properties which hold in the smooth setting.
Some of them are expected (and in a way necessary) for a reasonable curvature-dimension bound, such as the scaling \cite{MR2237207}, tensorization \cite{MR2610378} and globalization \cite{MR4309491} properties or the monotonicity with respect to the parameters \cite{MR2237207}, i.e.
\begin{equation*}
    \cd(K',N') \implies \cd(K,N) \qquad \text{if }K'\geq K \text{ and }N'\leq N.
\end{equation*}
Others are completely non-trivial and highlight some notable geometric features. Among them, we mention the Bonnet--Myers diameter bound and the Bishop--Gromov inequality, that provides an estimate on the volume growth of concentric balls. Particularly interesting in the context of this work is the Brunn--Minkowski inequality $\bm(K,N)$, which, given two sets $A$ and $B$ in the reference metric measure space $(\X,\di,\m)$, provides a lower estimate on the measure of the set of $t$-midpoints
\begin{equation}
    M_t(A,B)=\left\{ x \in \X\,:\, \di(a,x)=t \di(a,b), \, \di(x,b)=(1-t) \di(a,b)\,\text{ for some }a\in A,b\in B\right \},
\end{equation}
in terms of $\m(A)$ and $\m(B)$, for every $t\in[0,1]$, cf. \eqref{eq:bm}. The notable feature of the $\bm(K,N)$ inequality is that its formulation does not invoke optimal transport, or Wasserstein interpolation, and because of that, it is easier to handle than the $\cd(K,N)$ condition. Nonetheless, it contains a strong information about the curvature of the underlying space, to the extent that it is equivalent to the $\cd(K,N)$ condition in the Riemannian setting,  cf. \cite{seminalpaper}. In particular, in the proof of Theorem \ref{thm:intro1} and Theorem \ref{thm:intro3}, we show the failure of the $\cd(K,N)$ condition by contradicting the Brunn--Minkowski inequality $\bm(K,N)$.

Finally, another fundamental property of the $\cd(K,N)$ condition is its stability with respect to the (pointed) measured Gromov--Hausdorff convergence \cite{MR2237207,MR2480619,GigMonSav}. This notion of convergence for metric measure spaces essentially combines the Hausdorff convergence for the metric side and the weak convergence for the reference measures. 
As in a metric measure space, the tangent spaces at a point are identified with a measured Gromov--Hausdorff limit procedure of suitably rescalings of the original space, the stability of the curvature-dimension condition implies that the metric measure tangents of a $\cd(K,N)$ space is a $\cd(0,N)$ space.

In the setting of metric measure spaces, it is possible to define other curvature-dimension bounds, such as the so-called measure contraction property (in short $\MCP(K,N)$), introduced by Ohta in \cite{MR2341840}. In broad terms, the $\MCP(K,N)$ condition can be interpreted as the Brunn--Minkowski inequality where one of the two sets degenerates to a point. In particular, it is implied by (and strictly weaker than) the $\bm(K,N)$ inequality, and therefore it is also a consequence of the $\cd(K,N)$ condition. 

\subsection{The curvature-dimension condition in \sr geometry}
\label{sec:CD-SR}

While in the Riemannian setting the $\cd(K,N)$ condition is equivalent to having bounded geometry, a similar result does not hold for \sr manifolds. Sub-Riemannian geometry is a generalization of Riemannian geometry where, given a smooth manifold $M$, we define a smoothly varying scalar product only on a subset of \emph{horizontal} directions $\dis_p\subset T_pM$ (called distribution) at each point $p\in M$. Under the so-called H\"ormander condition, $M$ is horizontally-path connected, and the usual length-minimization procedure yields a well-defined distance $\di_{SR}$. In particular, differently from what happens in Riemannian geometry, the rank of the distribution $r(p):=\dim \dis_p$ may be strictly less than the dimension of the manifold and may vary with the point. This may influence the behavior of geodesics, emphasizing singularities of the distance $\di_{SR}$. For this reason, we can not expect the $\cd(K,N)$ condition to hold for \emph{truly} \sr manifolds. This statement is confirmed by a series of papers, most notably \cite{MR4201410,MR4562156,rizzi2023failure}, that contributed to the proof of the following result (see also \cite{MR4061989} for the analogous result in Carnot groups).

\begin{theorem}
\label{thm:intro0}
    Let $M$ be a complete truly \sr manifold, equipped with a positive smooth measure $\m$. Then, the metric measure space $(M,\di_{SR},\m)$ does not satisfy the $\cd(K,N)$ condition, for any $K\in\R$ and $N\in(1,\infty)$.  
\end{theorem}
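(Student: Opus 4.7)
The plan is to contradict the Brunn--Minkowski inequality $\bm(K,N)$, which is implied by $\cd(K,N)$. The argument proceeds in two stages: a blow-up reduction to the nilpotent tangent at a regular point, followed by an explicit construction of sets whose $t$-midpoint measure is too small.

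First I would exploit the stability of the $\cd(K,N)$ condition under pointed measured Gromov--Hausdorff convergence, together with its scaling behaviour. Picking any regular point $p\in M$ of the sub-Riemannian structure, the rescalings $(M,\lambda\,\di_{SR},c_\lambda\,\m,p)$ converge as $\lambda\to\infty$ to the nilpotent approximation at $p$: a Carnot group $(G,\di_G,\mathcal{L}^G)$ equipped with its Haar measure, where $c_\lambda$ is a normalisation dictated by the Hausdorff dimension of $G$. Since $M$ is truly sub-Riemannian, at such a $p$ the distribution $\dis_p$ is strictly smaller than $T_pM$, so $G$ has step at least $2$. Stability and scaling then force $\cd(0,N)$ on $G$, so the problem reduces to ruling out $\cd(0,N)$, hence $\bm(0,N)$, on every Carnot group of step $\geq 2$.

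Second, to exclude $\bm(0,N)$ on such a $G$, I would follow Juillet's original strategy in the Heisenberg case. The idea is to place a small ``horizontal'' set $A$ near the identity and a small set $B$ displaced in a purely non-horizontal (vertical) direction at fixed sub-Riemannian distance. At such configurations the differential of the sub-Riemannian endpoint map degenerates transversely to the horizontal layer, so the midpoint set $M_{1/2}(A,B)$ is contracted along the non-horizontal directions. A direct volume computation then yields $\m(M_{1/2}(A,B)) < \bigl(\tfrac12 \m(A)^{1/N} + \tfrac12 \m(B)^{1/N}\bigr)^N$ for $A,B$ sufficiently small, contradicting $\bm(0,N)$ for every finite $N$.

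The main obstacle is carrying out the second step in the full generality of an arbitrary Carnot group, rather than just in Heisenberg. For higher step or non-generic brackets, sub-Riemannian geodesics are no longer explicit and the cut locus may be wild; controlling the contraction rate of the endpoint map transverse to $\dis$ requires a careful Jacobi-field or second-variation analysis along normal geodesics whose initial covector annihilates the horizontal distribution. Implementing this uniformly across all regular points of an arbitrary truly sub-Riemannian manifold, and in particular coping with possibly abnormal minimisers and non-equiregular loci, is precisely the technical heart of the works cited alongside the theorem.
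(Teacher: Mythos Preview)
Your blow-up reduction has a genuine gap: it fails for almost-Riemannian manifolds. These are truly sub-Riemannian (the distribution has rank strictly less than $n$ on a non-empty singular set) yet satisfy $r(p)=n$ on an open dense set of regular points. At any such regular point the nilpotent approximation is Euclidean $\R^n$, which \emph{does} satisfy $\cd(0,n)$, so passing to the tangent produces no contradiction. Your assertion that ``at such a $p$ the distribution $\dis_p$ is strictly smaller than $T_pM$'' is exactly what fails here, and this is not a technicality about abnormal minimisers or the second step in a Carnot group---the reduction itself breaks down before one ever reaches a non-abelian $G$. Choosing instead a singular point does not help either, since there the tangent cone need not be a Carnot group and Mitchell's theorem does not apply.

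The paper does not give its own proof of this theorem; it records it as a result assembled from the literature and surveys three distinct strategies in Section~\ref{sec:CD-SR}. Juillet's argument \cite{MR4201410} works directly in $M$ rather than via the tangent: along a short segment of an ample geodesic the midpoint contraction is governed by the geodesic dimension $\mathcal N>n$, yielding the estimate \eqref{eq:bm_ineq} with $A$ and $B$ small balls at the two endpoints (not a horizontal--vertical pair as in your sketch). This also requires $r(p)<n$ everywhere and misses the almost-Riemannian case for the same structural reason your approach does. That case was handled separately, via the localisation $\cd\Rightarrow\cd^1$ in \cite{MR4562156} and, in full generality, via the Bakry--\'Emery route in \cite{rizzi2023failure}. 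Your tangent reduction combined with the Carnot-group result \cite{MR4061989} would recover the case $r(p)<n$ everywhere, but not the full statement.
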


In \cite{MR4201410}, Juillet proved Theorem \ref{thm:intro0} for \sr manifolds where the rank of the distribution $r(p)$ is strictly smaller than the topological dimension $n:=\dim M$, for every $p\in M$. His strategy relies on the construction of two Borel subsets for which the Brunn--Minkowski inequality $\bm(K,N)$ does not hold. Namely, for all $R,\varepsilon>0$, one can find $A,B\subset M$ such that $\mathrm{diam}(A\cup B)<R$, $\m(A)\approx\m(B)$, and such that there exists $t\in (0,1)$ for which
\begin{equation}
\label{eq:bm_ineq}
    \m(M_t(A,B))\leq \frac{1}{2^{\mathcal{N}-n}}\,\m(B)(1+\varepsilon),
\end{equation}
where $\mathcal{N}$ is the so-called \emph{geodesic dimension} of $M$, see \cite[Def.\ 5.47]{MR3852258} for a precise definition. The sets $A$ and $B$ are metric balls of small radius, centered at the endpoints of a short segment of an \emph{ample geodesic}, see \cite{MR3852258} for details. The inequality \eqref{eq:bm_ineq} allows to contradict the Brunn--Minkowski inequality $\bm(K,N)$ if and only if the geodesic dimension $\mathcal N$ is strictly greater than $n$, which is the case if $r(p)<n$, for every $p\in M$. 

While Julliet's result is quite general, it does not include \ariem geometry. Roughly speaking, an \ariem manifold is a \sr manifold where the rank of the distribution coincides with the dimension of $M$, at almost every point. In \cite{MR4562156}, we addressed this issue, proposing a new strategy for proving Theorem \ref{thm:intro0} in this setting. Our idea is to 
exploit the following one-dimensional characterization of the $\cd(K,N)$ condition: 
\begin{equation}\label{eq:camon}
    \cd(K,N)\quad\Rightarrow\quad \cd^1(K,N),
\end{equation}
proved by Cavalletti and Mondino in \cite{MR3648975}, and contradict the $\cd^1(K,N)$ condition. On a metric measure space $(\X,\di,\m)$, given a $1$-Lipschitz function $u\in \Lip(\X)$, it is possible to partition $\X$ in one-dimensional transport rays, associated with $u$, and disintegrate the measure $\m$ accordingly. Then, the $\cd^1(K,N)$ condition asks for the validity of the $\cd(K,N)$ condition along the transport rays of the disintegration associated with $u$, for any choice of $u\in\Lip(\X)$. In \cite[Thm.\ 1.2]{MR4562156}, when $M$ is either strongly regular or $\dim M=2$, we are able to explicitly build a $1$-Lipschitz function, and compute the associated disintegration, showing that the $\cd(K,N)$ condition along the rays does not hold for any $K\in\R$ and $N\in (1,\infty)$. 


Most recently, Rizzi and Stefani \cite{rizzi2023failure} proposed yet another strategy to prove Theorem \ref{thm:intro0}. Differently from the strategies presented above, they pursue the ``Eulerian'' approach to curvature-dimension bounds, based on a suitable Gamma calculus, see \cite{MR3155209} for details. This approach can be adopted for metric measure spaces that satisfy the \emph{infinitesimal Hibertian} condition (cf. \cite{AmbrosioGigliSavare11-2,Gigli12}) which forces the space to be \emph{Riemannian-like} and ensures the linearity of the heat flow. According to \cite{AmbrosioGigliSavare12}, an infinitesimally Hilbertian $\cd(K,N)$ space supports the so-called Bakry--\'Emery inequality $\BE(K,\infty)$, which, in the \sr setting reads as 
\begin{equation}
\label{eq:be_lla}
    \frac{1}{2} \Delta\left(\|\nabla f\|^{2}\right) \geq g (\nabla f, \nabla \Delta f)+K\|\nabla f\|^{2}, \qquad \forall\,f\in C^\infty_c(M),
\end{equation}
where $\nabla$ is the horizontal gradient and $\Delta$ is the sub-Laplacian. In \cite{rizzi2023failure}, the authors show that \eqref{eq:be_lla} implies the existence of enough isometries on the metric tangent to force it to be Euclidean at \emph{each point}, proving Theorem \ref{thm:intro0} (including also the case $N=\infty$). 

\subsection{Other curvature-dimension bounds in \sr geometry}\label{sec:Illpayforthatmycourtesy}

Given that the $\cd(K,N)$ condition does not hold in \sr geometry, considerable efforts have been undertaken to explore potential curvature-dimension bounds that may hold in this class. A first observation in this direction is that the weaker $\MCP(K,N)$ condition does hold in many examples of \sr manifolds. In particular, it was proved by Juillet \cite{MR2520783} that the \sr Heisenberg group satisfies the $\MCP(0,5)$ condition, where the curvature-dimension parameters can not be improved. Moreover, in \cite{MR3852258} it was observed that the optimal dimensional parameter for the measure contraction property coincides with the geodesic dimension of the \sr Heisenberg group (i.e.\ $\mathcal N=5$). This result has been subsequently extended to a large class of \sr manifolds, including ideal Carnot groups \cite{MR3110060}, corank-$1$ Carnot groups \cite{MR3502622}, generalised H-type Carnot groups \cite{MR3848070} and two-step
analytic \sr structures \cite{MR4245620}. In all these cases, the $\MCP(0,N)$ condition holds with the dimensional parameter $N$ greater than or equal to the geodesic dimension $\mathcal N$. 

Another attempt is due to Milman \cite{MR4373164}, who introduced the quasi curvature-dimension condition, inspired by the interpolation inequalities along Wasserstein geodesics in ideal sub-Riemannian manifolds, proved by Barilari and Rizzi \cite{MR3935035}. Finally, these efforts culminated in the recent work by Barilari, Mondino and Rizzi \cite{barilari2022unified}, where the authors propose a unification of Riemannian and sub-Riemannian geometries in a comprehensive theory of synthetic Ricci curvature lower bounds. In the setting of gauge metric measure spaces, they introduce the $\cd(\beta,n)$ condition, encoding in the distortion coefficient $\beta$ finer geometrical information of the underlying structure. Moreover they prove that the $\cd(\beta,n)$ condition holds for compact fat \sr manifolds, thus substantiating the definition.

\subsection{Sub-Finsler manifolds and Carnot groups}\label{sec:babygoat}

In the present paper, we focus on \sF manifolds, which widely generalize both \sr and Finsler geometry. Indeed, in this setting, given a smooth manifold $M$, we prescribe a smoothly varying \emph{norm} (which needs not be induced by a scalar product) on the distribution $\dis_p\subset T_pM$, at each point $p\in M$. As in the \sr setting, $\dis$ must satisfy the H\"ormander condition, and consequently the length-minimization procedure among admissible curves gives a well-defined distance $\di_{SF}$. Note that, on the one hand, if the $\dis_p=T_pM$ for every $p\in M$, we recover the classical Finsler geometry. On the other hand, if the norm on $\dis_p$ is induced by a scalar product for every $p\in M$, we fall back into \sr geometry.  

Replacing the scalar product with a (possibly singular) norm is not merely a technical choice, as the metric structure of a \sF manifold reflects the singularities of the reference norm. Indeed, even though \sF manifolds can still be investigated by means of classical control theory \cite{AS-GeometricControl}, deducing finer geometrical properties is more delicate compared to what happens in the \sr setting, as the Hamiltonian function has a low regularity, cf. Section \ref{sec:sub-Finsler_geometry}. In this regard, \sF manifolds provide an interesting example of smooth structures which present both the typical \sr and Finsler singular behavior. A particularly relevant class of \sF manifolds is the one of \emph{\sF Carnot groups}.

\begin{definition}[Carnot group]
    A Carnot group is a connected, simply connected Lie group $G$ with nilpotent Lie algebra $\mathfrak{g}$, admitting a stratification
    \begin{equation*}
        \mathfrak g= \mathfrak g_1 \oplus \cdots \oplus \mathfrak g_k,
    \end{equation*} 
    where $\mathfrak g_{i+1}= [\mathfrak g_1,\mathfrak g_i]$, for every $i=1,\dots, k-1$, and $[\mathfrak g_1,\mathfrak g_k]=\{0\}$.
\end{definition}

\noindent Given a Carnot group $G$, if we equip the first layer $\mathfrak g_1$ of its Lie algebra with a norm, we naturally obtain a left-invariant \sF structure on $G$. We refer to the resulting manifold as a \sF Carnot group. 

Motivated from the results presented in the previous section, cf. Theorem \ref{thm:intro0}, and especially from the ones obtained in the present work (see Section \ref{sec:coldweather}), we formulate the following conjecture.

\begin{conj}\label{conj:carnot}
    Let $G$ be a \sF Carnot group, endowed with a positive smooth measure $\m$. Then, the metric measure space $(G,\di_{SF},\m)$ does not satisfy the $\cd(K,N)$ condition for any $K\in\R$ and $N\in(1,\infty)$.
\end{conj}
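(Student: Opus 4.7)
My plan is to combine the two strategies developed in the present paper---the Brunn--Minkowski contradiction of Theorem \ref{thm:intro1} and the low-regularity refinement of Theorem \ref{thm:intro3}---with the self-similar structure of \sF Carnot groups. A first crucial reduction exploits the intrinsic dilations $\{\delta_\lambda\}_{\lambda>0}$: since the left-invariant \sF distance scales as $\di_{SF}(\delta_\lambda x,\delta_\lambda y)=\lambda\,\di_{SF}(x,y)$ and a smooth Haar-type measure $\m$ is homogeneous of degree $Q=\sum_i i\dim\mathfrak g_i$ under the dilations, the Carnot group is self-similar at every scale. Together with the scaling property of $\cd(K,N)$, this implies that if $(G,\di_{SF},\m)\in\cd(K,N)$ then $(G,\di_{SF},\m)\in\cd(K/\lambda^2,N)$ for every $\lambda>0$, and letting $\lambda\to+\infty$ it is enough to contradict $\cd(0,N)$---equivalently, the Brunn--Minkowski inequality $\bm(0,N)$---for every $N\in(1,\infty)$.

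The next step is to produce, around a chosen normal \sF geodesic $\gamma\colon[0,1]\to G$ issuing from the identity, two small balls $A=B_\varepsilon(\gamma(0))$ and $B=B_\varepsilon(\gamma(1))$ such that, for some $t\in(0,1)$ and all sufficiently small $\varepsilon$,
\begin{equation*}
    \m(M_t(A,B))\leq \frac{1}{2^{\mathcal N-n}}\,\m(B)\bigl(1+o_\varepsilon(1)\bigr),
\end{equation*}
where $\mathcal N>n=\dim G$ plays the role of a \sF geodesic dimension along $\gamma$. Such an $\mathcal N$ should always exist as long as the stratification of $\mathfrak g$ is non-trivial, since for a generic horizontal covector the geodesic homothety contracts superlinearly in the higher layers of $\mathfrak g$. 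In the spirit of \cite{MR4201410} and of Theorem \ref{thm:intro1}, the estimate should follow from an analysis of the Jacobian of the \sF endpoint map, made tractable by the homogeneity of the group in terms of the grading.

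The principal difficulty is the low regularity of the reference norm on $\mathfrak g_1$, which in the generality of Conjecture \ref{conj:carnot} is only assumed to be a norm. My first attempt would be a regularization argument: mollify $\|\cdot\|$ to a smooth strongly convex norm $\|\cdot\|_\eta$, contradict $\bm(0,N)$ for the smoothed \sF Carnot structure via Theorem \ref{thm:intro1}, and pass to the limit $\eta\to 0$ using the stability of $\cd(K,N)$ under measured Gromov--Hausdorff convergence. This works only if the failure of $\bm$ can be made quantitatively uniform in $\eta$, a delicate point when the unit ball of $\|\cdot\|$ has flat faces or non-differentiable extremal rays. For such pathological norms, I would instead follow the $\cd^1$-localization approach of \cite{MR4562156,MR3648975}: choose as $1$-Lipschitz function a horizontal linear form attaining the dual norm along a singular direction, disintegrate $\m$ into conditional measures along the resulting transport rays, and show that their densities cannot satisfy the one-dimensional $\cd(K,N)$ inequality because the rays concentrate mass in a tube whose effective dimension is dictated by the grading. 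The hardest step will be unifying these two complementary techniques---smooth regularization on one side, non-smooth localization on the other---into a single argument valid in all \sF Carnot groups simultaneously.
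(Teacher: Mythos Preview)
The statement you are attempting to prove is not a theorem of the paper: it is Conjecture~\ref{conj:carnot}, explicitly left open. The paper establishes it only in special cases---smooth strongly convex norms on arbitrary \sF manifolds (Theorem~\ref{thm:intro1}, hence Theorem~\ref{thm:introtremezzino} for Carnot groups), and strictly convex norms on the Heisenberg group that are either $C^{1,1}$ (Theorem~\ref{thm:intro3}) or non-$C^1$ (Theorem~\ref{thm:intro2}). There is therefore no ``paper's own proof'' to compare against; what you have written is a research program, and you yourself flag its incompleteness.

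The most concrete gap is in the regularization step. Stability of $\cd(K,N)$ under measured Gromov--Hausdorff convergence says that a limit of $\cd(K,N)$ spaces is $\cd(K,N)$; it does \emph{not} say that a limit of non-$\cd$ spaces is non-$\cd$. Knowing that each smoothed structure $(G,\di_{SF}^\eta,\m)$ fails $\bm(0,N)$ tells you nothing about the limit unless the witnesses $A_\eta,B_\eta,t_\eta$ and the defect in the Brunn--Minkowski inequality are controlled uniformly in~$\eta$. You acknowledge this, but it is not a technicality: the estimates of Theorem~\ref{thm:casosmooth} depend on the Jacobian of the exponential map along a geodesic without abnormal sub-segments, and both the geodesic and the Jacobian can degenerate as the smoothed norm approaches a norm with flat faces or corners. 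Indeed, the paper needs a completely different mechanism (branching, Theorem~\ref{thm:noCDnonC1}) precisely when the dual sphere acquires flat parts, which is exactly the regime where your uniformity would have to be established.

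The alternative $\cd^1$ route is likewise only a heuristic here: you assert that the conditional densities along transport rays of a horizontal linear form violate the one-dimensional $\cd(K,N)$ inequality, but give no computation. In \cite{MR4562156} this is carried out only for specific almost-Riemannian structures with explicit disintegrations; extending it to an arbitrary \sF Carnot group with an arbitrary norm is the content of the conjecture, not a lemma one can invoke. In short, your two prongs cover, at best, the same ground already covered by Theorems~\ref{thm:introtremezzino}, \ref{thm:intro3} and \ref{thm:intro2}, and the ``unifying'' step you defer to the end is the open problem itself.
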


\noindent Our interest in Carnot groups stems from the fact that they are the only metric spaces that are locally compact, geodesic, isometrically homogeneous and self-similar (i.e. admitting a dilation) \cite{MR3283670}. According to this property, sub-Finsler Carnot groups naturally arise as metric tangents of metric measure spaces.

\begin{theorem}[Le Donne \cite{MR2865538}]
    Let $(\X,\di,\m)$ be a geodesic metric measure space, equipped with a doubling measure $\m$. Assume that, for $\m$-almost every $x\in \X$, the set ${\rm Tan}(\X, x)$ of all metric tangent spaces at $x$ contains only one element. Then, for $\m$-almost every $x \in \X$, the element in ${\rm Tan}(\X, x)$ is a \sF Carnot group $G$.    
\end{theorem}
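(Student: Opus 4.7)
The plan is to reduce to the characterization of \sF Carnot groups recalled just above the statement: among metric spaces, they coincide with the locally compact, geodesic, isometrically homogeneous, self-similar spaces. Hence it suffices to verify these four properties for the (unique) element $Y_x$ of ${\rm Tan}(\X,x)$ at $\m$-a.e.\ $x\in\X$.

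Local compactness and the geodesic property of $Y_x$ are inherited from $(\X,\di)$ under pointed Gromov--Hausdorff limits: a complete doubling metric space is proper, the doubling constant is stable along such limits, and the property of being geodesic is preserved under pointed Gromov--Hausdorff convergence of geodesic sequences. Self-similarity is an immediate consequence of the uniqueness assumption. Indeed, write $Y_x=(Y,\di_Y,y_0)$ as the pointed Gromov--Hausdorff limit of $(\X,r_n\di,x)$ for some $r_n\to\infty$; then for any $\lambda>0$ the rescaled sequence $(\X,\lambda r_n\di,x)$ has subsequential pointed Gromov--Hausdorff limits lying in ${\rm Tan}(\X,x)$, and by uniqueness each of them coincides with $Y_x$. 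The resulting self-identification is precisely a $\lambda$-dilation of $Y_x$ fixing $y_0$.

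The heart of the argument, and what I expect to be the main obstacle, is the isometric homogeneity of $Y_x$. Given an arbitrary $y\in Y$, the natural approach is to select, via the definition of pointed Gromov--Hausdorff convergence, points $x_n\in\X$ such that $x_n$ represents $y$ in an $\varepsilon_n$-approximation of $(\X,r_n\di,x)$ to $Y_x$, and then to prove that the rescaled pointed spaces $(\X,r_n\di,x_n)$ converge, up to subsequence, to a pointed metric space isometric to $Y_x$ but with base point $y$ in place of $y_0$. Composing this identification with the original one produces an isometry of $Y$ sending $y_0$ to $y$, whence homogeneity. The delicate point is that the $x_n$ must be chosen at points where the tangent is also unique, and where the rescaling behaves coherently with the one based at $x$: this is achieved by a measure-theoretic density argument that crucially exploits the doubling hypothesis to transfer the $\m$-a.e.\ uniqueness from the base point $x$ to a suitable sequence $x_n$ approximating any prescribed $y\in Y$. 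Once the four properties are established, the metric characterization recalled above identifies $Y_x$ with a \sF Carnot group for $\m$-a.e.\ $x\in\X$.
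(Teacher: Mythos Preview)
The paper does not give its own proof of this statement: it is quoted as a background result from Le Donne \cite{MR2865538} and used only to motivate Conjecture~\ref{conj:carnot}. So there is nothing in the paper to compare your argument against.

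That said, your sketch captures the correct strategy of the original reference: reduce to the metric characterization of \sF Carnot groups (locally compact, geodesic, isometrically homogeneous, self-similar), verify local compactness and the geodesic property by stability under pointed Gromov--Hausdorff limits, and obtain self-similarity directly from the uniqueness of the tangent. The subtle step you correctly flag is isometric homogeneity; the actual mechanism in Le Donne's paper is an iterated-tangent argument \`a la Preiss: for $\m$-a.e.\ $x$, every tangent of the tangent is again a tangent at $x$, so (by uniqueness) every pointed tangent $(Y,\di_Y,y)$ at any $y\in Y$ coincides with $(Y,\di_Y,y_0)$, which gives the required isometry sending $y_0$ to $y$. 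Your description via density and doubling is the right intuition for why ``tangents of tangents are tangents'' holds $\m$-a.e., but the clean formulation is this Preiss-type statement rather than a direct choice of approximating points $x_n$.
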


\noindent In particular, this result applies to $\cd(K,N)$ spaces, where the validity of the doubling property is guaranteed by the Bishop--Gromov inequality. Moreover, as already mentioned, the metric measure tangents of a $\cd(K,N)$ space are $\cd(0,N)$. Therefore, the study of the $\cd(K,N)$ condition in \sF Carnot groups, and especially the validity of Conjecture \ref{conj:carnot}, has the potential to provide deep insights on the structure of tangents of $\cd(K,N)$ spaces. This could be of significant interest, particularly in connection with Bate's recent work \cite{MR4506771}, which establishes a criterion for rectifiability in metric measure spaces, based on the structure of metric tangents.

\subsection{Main results}\label{sec:coldweather}

The aim of this paper is to show the failure of the $\cd(K,N)$ condition in the \sF setting, with a particular attention to Conjecture \ref{conj:carnot}. Our results offer an advance into two different directions: on the one hand we deal with general \sF structures, where the norm is smooth, cf. Theorem \ref{thm:intro1} and Theorem \ref{thm:introtremezzino}, and, on the other hand, we deal with the \sF Heisenberg group, equipped with more general norms, cf. Theorem \ref{thm:intro2} and Theorem \ref{thm:intro3}. 

In order to extend the \sr result of Theorem \ref{thm:intro0} to the \sF setting, one can attempt to adapt the strategies discussed in Section \ref{sec:CD-SR}, however this can present major difficulties. Specifically, the argument developed in \cite{rizzi2023failure} has little hope to be generalized, because the infinitesimal Hilbertianity assumption does not hold in Finsler-like spaces, see \cite{MR2917125}. It is important to note that this is not solely a ``regularity" issue, in the sense that it also occurs when the norm generating the \sF structure is smooth, but not induced by a scalar product. Instead, the approach proposed in \cite{MR4562156} could potentially be applied to \sF manifolds as it relies on tools developed in the non-smooth setting, see \eqref{eq:camon}. However, adapting the \sr computations that led to a contradiction of the $\cd^1(K,N)$ condition seems non-trivial already when the reference norm is smooth. Finally, the strategy illustrated in \cite{MR4201410} hinges upon geometrical constructions and seems to be well-suited to generalizations to the \sF setting. In this paper, we build upon this observation and adapt the latter strategy to prove our main theorems. 

Our first result is about the failure of the $\cd(K,N)$ condition in \emph{smooth} \sF manifolds, cf. Theorem \ref{thm:casosmooth}.

\begin{theorem}\label{thm:intro1}
    Let $M$ be a complete \sF manifold with $r(p)<n:=\dim M$ for every $p\in M$, equipped with a smooth, strongly convex norm $\normdot$ and with a positive smooth measure $\m$. Then, the metric measure space $(M,\di_{SF},\m)$ does not satisfy the $\cd(K,N)$ condition, for any $K\in\R$ and $N\in (1,\infty)$.
\end{theorem}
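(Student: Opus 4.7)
The strategy is to adapt the argument of Juillet \cite{MR4201410} to the \sF setting and contradict the Brunn--Minkowski inequality $\bm(K,N)$, which is implied by $\cd(K,N)$ for any $K\in\R$ and $N\in(1,\infty)$. The smoothness and strong convexity of $\normdot$ ensure that its dual norm is smooth away from the origin, so the \sF Hamiltonian $H(\lambda)=\tfrac12\|\lambda|_{\dis}\|_*^2$ is smooth on $T^*M\setminus\ann(\dis)$, yielding a smooth normal exponential map $\exp_p:T^*_pM\setminus \ann(\dis_p)\to M$ through which normal geodesics are parametrized. The assumption $r(p)<n$ for every $p\in M$ means that $\ann(\dis_p)$ is always nontrivial, which will be decisive below.

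First, I would introduce a \sF analog of an \emph{ample} geodesic and of the \emph{geodesic dimension} $\mathcal{N}$ by studying the linearization of the Hamiltonian flow along a normal extremal. While $H$ is no longer quadratic, strong convexity of $\normdot$ guarantees that this linearization is a well-defined, non-degenerate Hamiltonian flow, so the growth vector along the extremal and the associated integer $\mathcal{N}$ can be defined exactly as in the \sr case; in particular, $\mathcal{N}>n$ whenever $r<n$. One then shows that through every point of $M$ there passes a short ample normal geodesic segment.

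Next, fixing such a segment $\gamma:[0,1]\to M$, taking $A=B_\delta(\gamma(0))$ and $B=B_\delta(\gamma(1))$ for small $\delta>0$, and choosing a suitable $t\in(0,1)$, a Jacobian expansion for the exponential map along $\gamma$ should yield
\begin{equation*}
    \m(M_t(A,B))\le \frac{1+\varepsilon}{2^{\mathcal{N}-n}}\,\m(B),
\end{equation*}
for $\delta$ small enough. Since $\mathcal{N}>n$, the factor on the right is strictly less than one (up to the arbitrary $\varepsilon$). On the other hand, for any fixed $K\in\R$ and $N\in(1,\infty)$ the Brunn--Minkowski inequality $\bm(K,N)$, applied to $A$ and $B$ in the small-$\delta$ regime where curvature and dimension corrections become negligible and $\m(A)\approx\m(B)$, forces $\m(M_t(A,B))/\m(B)\to 1$ as $\delta\to 0$. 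Choosing $\varepsilon$ small enough then yields a contradiction, proving that $(M,\di_{SF},\m)$ cannot satisfy $\cd(K,N)$.

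The main obstacle is the volume estimate in the previous paragraph. In the \sr case it is driven by the algebraic structure of \emph{Jacobi curves} attached to a quadratic Hamiltonian, whose Young diagram directly encodes $\mathcal{N}$ and the vanishing order of the Jacobian of $\exp_p$ in transverse directions. In the \sF setting one must instead work with the linearization of a merely smooth, non-quadratic Hamiltonian flow; smoothness and strong convexity of $\normdot$ are the minimal assumptions under which this linearization stays non-degenerate and under which a Jacobi-type expansion of $\exp_p$ with the correct vanishing order $\mathcal{N}-n$ can be carried out. Setting up this framework, and verifying that the contraction constant $2^{n-\mathcal{N}}$ persists in the \sF case, is where the bulk of the work will lie.
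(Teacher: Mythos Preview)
Your overall plan---contradict $\bm(K,N)$ via a Juillet-type volume contraction along a carefully chosen geodesic---matches the paper's, but the technical route you propose diverges from theirs in two substantial ways, and one of the steps you pass over is a genuine gap.

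\textbf{The gap.} You write ``one then shows that through every point of $M$ there passes a short ample normal geodesic segment.'' In the \sF setting this is not available for free, and the paper spends most of Section~4 constructing such a geodesic. Their method is new even in the \sr case: they take a smooth hypersurface $\Sigma$ without characteristic points, study the \emph{normal exponential map} from $\ann(\Sigma)$, and use local semiconcavity of $\delta=\di_{SF}(\cdot,\Sigma)$ together with Alexandrov's theorem to show that this map is a diffeomorphism on an open dense set (Theorem~\ref{thm:E_diffeo_delta_smooth}). From this they extract a strongly normal geodesic \emph{without abnormal sub-segments} (Theorem~\ref{thm:existence_good_geod}). The difficulty you do not address is that the Hamiltonian $H$ is only $C^1$ on $\ann(\dis)$, so the usual \sr regularity of the exponential map fails at the zero section; the paper works around this with a weaker regularity statement that is still sufficient.

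\textbf{A different, more elementary route.} You propose to recover the contraction exponent $\mathcal N$ by building Jacobi curves and Young diagrams for the linearized (non-quadratic) Hamiltonian flow. The paper avoids this machinery entirely. Instead it adopts the metric-measure definition of geodesic dimension (Definition~\ref{def:geodesic_dimension}) and proves $\mathcal N(p)\ge n+1$ directly from the Ball--Box theorem applied to an equivalent \sr distance (Proposition~\ref{prop:lower_bound_geod_dim}). Then, rather than an expansion in terms of a growth vector, they simply Taylor-expand $\det d_q\phi_t(p,\cdot)$ in $t$ and show that either all coefficients below order $n+1$ vanish or one obtains a contradiction with $\mathcal N(p)\ge n+1$ (Theorem~\ref{thm:finite_order_jacobian}). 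This sidesteps the question of whether the \sr Jacobi-curve formalism survives under strong convexity alone. The paper also introduces a weaker notion of ``ample'' (Definition~\ref{def:ample_geodesic}), namely that the Jacobian has \emph{finite} order, and handles the non-ample case separately by contradicting $\MCP(K,N)$ directly; your proposal does not account for this dichotomy.

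\textbf{The contradiction step.} Finally, your choice of $A=B_\delta(\gamma(0))$, $B=B_\delta(\gamma(1))$ is Juillet's original construction; the paper instead takes $B_\varrho$ a small Euclidean ball around $\bar q=\gamma(b)$ and $A_\varrho=\I_m(B_\varrho)$, where $\I_m$ is the inverse-geodesic map through the midpoint $m$. The advantage is that $F(x,y)=\M(\I_m(x),y)$ satisfies $F(x,x)\equiv m$, which gives a clean first-order Taylor expansion $d_{(\bar q,\bar q)}F(v,w)=d_{\bar q}\M(\bar p,\cdot)(w-v)$ and hence the sharp inclusion~\eqref{eq:themago}. This is what yields the factor $2^{n-m_{\bar q}}\le 1/2$ with minimal effort, and it is robust enough to be reused in the low-regularity Heisenberg case in Section~\ref{sec:CDheisenberg}.
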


\noindent This result is the \sF analogue of \cite[Cor.\ 1.2]{MR4201410}. Although the strategy of its proof follows the blueprint of \cite{MR4201410}, the adaptation to our setting is non-trivial and requires many intermediate results of independent interest. First of all, we establish the existence of geodesics \emph{without abnormal sub-segments}, cf. Theorem \ref{thm:existence_good_geod}, proposing a construction that is new even in the \sr framework and relies on the regularity properties of the distance function from the boundary of an open set. 
Note that, while these properties are well-known in the \sr context (cf. \cite[Prop. 3.1]{FPR-sing-lapl}), inferring them in the \sF setting becomes more challenging due to the low regularity of the Hamiltonian, which affects the regularity of the normal exponential map. Nonetheless, we settle a weaker regularity result that is enough for our purposes, cf. Theorem \ref{thm:E_diffeo_delta_smooth}. Second of all, we prove an analogue of the \sr theorem, indicating that the volume contraction along ample geodesics is governed by the geodesic dimension, see \cite[Thm.\ D]{MR3852258}. Indeed, in a smooth \sF manifold, we establish that the volume contraction rate along geodesics without abnormal sub-segments is bigger than $\dim M+1$, cf. Theorem \ref{thm:finite_order_jacobian}. Finally, we mention that these technical challenges lead us to a simplification of Juillet's argument (cf. Theorem \ref{thm:casosmooth}), which revealed itself to be useful also in the proof of Theorem \ref{thm:intro3}.

Observe that, since sub-Finsler Carnot groups are equiregular (and thus $r(p)<n$, for every $p\in G$) and complete, we immediately obtain the following consequence of Theorem \ref{thm:intro1}, which constitutes a significant step forward towards the proof of Conjecture \ref{conj:carnot}.

\begin{theorem}\label{thm:introtremezzino}
    Let $G$ be a \sF Carnot group, equipped with a smooth, strongly convex norm $\normdot$ and with a positive smooth measure $\m$. Then, the metric measure space $(G,\di_{SF},\m)$ does not satisfy the $\cd(K,N)$ condition, for any $K\in\R$ and $N\in (1,\infty)$.
\end{theorem}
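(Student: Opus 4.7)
The plan is to deduce Theorem \ref{thm:introtremezzino} as a direct corollary of Theorem \ref{thm:intro1}, by checking that a sub-Finsler Carnot group $G$ (assumed genuinely sub-Finsler, i.e.\ with step $k \geq 2$, so that the stratification is non-trivial) fits into its hypotheses. The smoothness and strong convexity of $\normdot$, as well as the smoothness and positivity of $\m$, are direct assumptions, so only two structural points remain: the metric completeness of $(G, \di_{SF})$, and the bound $r(p) < n := \dim G$ at every point $p \in G$.

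For completeness, I would argue as follows. Since $G$ is connected, simply connected and nilpotent, the exponential map $\exp \colon \mathfrak g \to G$ is a global diffeomorphism, so $G$ is diffeomorphic to $\R^n$. The sub-Finsler distance $\di_{SF}$ is left-invariant by construction, and the Chow--Rashevskii theorem (applicable because the stratification is bracket-generating) ensures it is finite and induces the manifold topology. Using left-invariance together with the intrinsic stratified dilations $\delta_\lambda$, one can compare $\di_{SF}$-balls with Euclidean balls, obtaining local compactness of $(G, \di_{SF})$. A standard Hopf--Rinow-type argument in the length-space setting then yields metric completeness.

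For the rank condition, the sub-Finsler distribution $\dis$ is by definition the left-invariant distribution whose value at the identity $e$ is the first layer $\mathfrak g_1$. Hence $\dis_p = (dL_p)_e(\mathfrak g_1)$ for every $p \in G$, and $r(p) = \dim \mathfrak g_1$ is constant in $p$. Since the stratification is non-trivial, $\mathfrak g_2 \neq \{0\}$, and therefore
\begin{equation*}
    r(p) = \dim \mathfrak g_1 < \dim \mathfrak g_1 + \dim \mathfrak g_2 + \cdots + \dim \mathfrak g_k = n,
\end{equation*}
as required. This shows that $G$ is in particular equiregular with $r(p) < n$ everywhere.

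With completeness and the rank condition in hand, Theorem \ref{thm:intro1} applies directly to $(G, \di_{SF}, \m)$ and rules out $\cd(K,N)$ for every $K \in \R$ and $N \in (1, \infty)$. There is essentially no technical obstacle at this stage, since all the hard work is absorbed into Theorem \ref{thm:intro1}; the single conceptual point worth highlighting is that left-invariance automatically upgrades a sub-Finsler Carnot group to an equiregular sub-Finsler manifold with strictly sub-maximal rank, which is exactly the setting in which the geodesic-dimension-based Brunn--Minkowski obstruction from Theorem \ref{thm:intro1} kicks in.
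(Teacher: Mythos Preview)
Your proposal is correct and matches the paper's own argument: the paper simply remarks that sub-Finsler Carnot groups are equiregular (hence $r(p)<n$ everywhere) and complete, and then invokes Theorem~\ref{thm:intro1}. Your added caveat that the step must be $k\geq 2$ is a sensible explicit clarification, since for $k=1$ one recovers a Finsler $\R^n$ where the rank condition fails and the conclusion is genuinely false.
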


In the proof of Theorem \ref{thm:intro1}, the smoothness of the norm plays a pivotal role in establishing the correct volume contraction rate along geodesics. When the norm is less regular, it is not clear how to achieve an analogue behavior in full generality. Nonetheless, we are able to recover such a result in the context of the \emph{\sF Heisenberg group} $\hei$, equipped with a possibly singular norm (see Section \ref{sec:Heisenberg}). Working in this setting is advantageous since, assuming strict convexity of the norm, the geodesics and the cut locus are completely described \cite{Bereszynski} and there exists an explicit expression for them in terms of convex trigonometric functions \cite{Nestogol} (see also \cite{MR3657277} for an example of the non-strictly convex case).

For the \sF Heisenberg group, we prove two different results, with the first addressing the case of $C^{1,1}$ and strictly convex reference norms and thus substantially relaxing the assumptions of Theorem \ref{thm:intro1}, cf. Theorem \ref{thm:noCDC11}.

\begin{theorem}\label{thm:intro3}
    Let $\hei$ be the \sF Heisenberg group, equipped with a strictly convex and $C^{1,1}$ norm and with a positive smooth measure $\m$. Then, the metric measure space $(\hei,\di_{SF},\m)$ does not satisfy the $\cd(K,N)$, for any $K\in\R$ and $N\in (1,\infty)$.
\end{theorem}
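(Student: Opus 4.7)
The strategy is to follow the simplified Juillet-type argument developed for Theorem \ref{thm:intro1} and contradict the Brunn--Minkowski inequality $\bm(K,N)$, which is enough thanks to the implication $\cd(K,N)\Rightarrow \bm(K,N)$. By taking sets of vanishing diameter, the distortion coefficients of $\bm(K,N)$ tend to those of $\bm(0,N)$, so it is enough to exhibit, for every $\varepsilon>0$, Borel sets $A_\varepsilon, B_\varepsilon\subset\hei$ with $\m(A_\varepsilon)\asymp\m(B_\varepsilon)$ and some $t\in(0,1)$ for which
\[
    \m\bigl(M_t(A_\varepsilon, B_\varepsilon)\bigr) \leq \frac{1}{2^{\mathcal N-3}}\,\m(B_\varepsilon)\,(1+\varepsilon),
\]
for some $\mathcal N>3$ independent of $\varepsilon$; this would contradict $\bm(K,N)$ for every $K\in\R$ and $N\in(1,\infty)$.

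To carry out the construction, I would fix a non-trivial normal geodesic $\gamma\colon[0,1]\to\hei$, using the explicit parametrization of sub-Finsler Heisenberg geodesics in terms of the convex trigonometric functions associated with the dual unit ball $\Omega$. Strict convexity of the norm ensures that $\gamma$ is the unique minimizer between its endpoints and rules out nontrivial abnormal behaviour, relying on the description of the cut locus and of the geodesic flow in the strictly convex \sF Heisenberg group. The sets $A_\varepsilon, B_\varepsilon$ would be anisotropic boxes around $\gamma(0), \gamma(1)$, of horizontal extent of order $\varepsilon$ and vertical extent of order $\varepsilon^2$, adapted to the Heisenberg dilations $\delta_\lambda(x,y,z)=(\lambda x,\lambda y,\lambda^2 z)$; hence $\m(A_\varepsilon)\asymp\m(B_\varepsilon)\asymp\varepsilon^4$. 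Using the explicit form of the exponential map, the aim is to show that the midpoint set $M_t(A_\varepsilon, B_\varepsilon)$ is contained in a strictly thinner box in which one of the horizontal directions contracts faster than $\varepsilon$, yielding the expected rate $\mathcal N = 5$, the same as in the \sr Heisenberg group.

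The main obstacle is the loss of smoothness. A strictly convex $C^{1,1}$ norm need not have a $C^{1,1}$ dual, so the Hamiltonian $H=\tfrac12\|\cdot\|_*^2$ is only $C^{1}$ and the second-order Jacobi-field expansion underlying Theorem \ref{thm:finite_order_jacobian} is not at our disposal. My plan would be to bypass Jacobi-field calculations altogether and instead exploit the rigid algebraic structure of $\hei$: the explicit formulas for $\sin_\Omega, \cos_\Omega$ together with the stratified group law allow one to estimate the variation of the exponential map in its initial covector using only the $C^{1}$ regularity of the dual norm. The $C^{1,1}$ assumption on the primal norm is used to ensure that this angular dependence of the geodesics is controlled on a full-measure set of covectors, which is enough to upgrade the pointwise contraction into the desired volume estimate and to close the contradiction uniformly in $K$ and $N$.
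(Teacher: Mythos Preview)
Your overall strategy agrees with the paper's: contradict $\bm(K,N)$ along a short normal geodesic, using the explicit exponential map via convex trigonometry to obtain a $t^5$ contraction rate (i.e.\ $\mathcal N=5$). You also correctly identify the central obstacle, namely that the dual norm is only $C^1$, so the Hamiltonian flow lacks the second-order regularity needed for the smooth argument.

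However, the proposal has a genuine gap at exactly the point you flag as ``enough to upgrade the pointwise contraction into the desired volume estimate''. This upgrade is the heart of the paper's proof and is highly nontrivial; your sketch does not supply the missing mechanism. Two concrete difficulties are not addressed:

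\emph{(1) Obtaining the $t^5$ behaviour of the Jacobian.} With $\normdot\in C^{1,1}$ the dual is only $C^1$, so the third-order Taylor expansion of $z(\phi,\omega,r;t)$ that gives the leading $t^3$ term in $\partial_\omega z$ does not exist in the classical sense. The paper resolves this via Alexandrov's theorem applied to $\normdot_*$ (Proposition~\ref{prop:diffCcirc}) together with a delicate density argument (Lemma~\ref{lem:Jestimate2}, using the sets $E_{n,m}$ and density points) to produce the estimate $J(\phi,\omega,r;t)=C(1+O(\varepsilon))|t|^5$ only for $\Leb^1$-a.e.\ initial angle $\phi$. Your phrase ``controlled on a full-measure set of covectors'' hints at this, but the actual derivation (Lemmas~\ref{lem:Jestimate1}--\ref{lem:Jestimate3}, Corollary~\ref{cor:finalmente}) is the technical core and cannot be obtained from $C^1$ regularity of $\normdot_*$ alone without these density/monotonicity arguments.

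\emph{(2) Passing from an a.e.\ covector estimate to a usable geodesic.} Because the good set of initial angles is only of full measure and depends on the base point, one cannot simply fix $\gamma$ and proceed: the Jacobian estimate at $\gamma(1)$, the one at $\gamma(0)$, and the $C^1$-regularity of the inverse geodesic map at the midpoint $\gamma(1/2)$ need to hold \emph{simultaneously}. The paper handles this with a further density argument using left-translations (Theorem~\ref{thm:noCDC11} and Remark~\ref{rmk:flexiamo}), finding $\bar s,\bar t$ via a Fubini-type measure estimate on $[0,1]^2$. Your anisotropic-box proposal does not address how to guarantee that the midpoint map is $C^1$ (hence the Taylor expansion in the Juillet argument is valid) at the specific endpoints you choose; without this, the inclusion $M_{1/2}(A_\varepsilon,B_\varepsilon)\subset\text{(thin set)}$ cannot be justified.

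In short, the plan is correct in spirit, but the two steps above are precisely where the $C^{1,1}$ case differs from the smooth case, and your proposal does not indicate how to carry them out.
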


The proof of this statement follows the same lines of \cite[Cor.\ 1.2]{MR4201410}. However, the low regularity of the norm, and thus of geodesics, prevent us to exploit the same differential tools developed for Theorem \ref{thm:intro1}. Nonetheless, using the explicit expression of geodesics and of the exponential map, we can still recover an analogue result. In particular, guided by the intuition that a contraction rate along geodesics, similar to the one appearing in the smooth case, should still hold, we thoroughly study the Jacobian determinant of the exponential map. Building upon a fine analysis of convex trigonometric functions, cf. Section \ref{sec:convex_trigtrig} and Proposition \ref{prop:diffCcirc} where the regularity of the norm plays a pivotal role, we obtain an estimate on the contraction rate of the Jacobian determinant of the exponential map. However, this behavior is determined only for a large (in a measure-theoretic sense) set of covectors in the cotangent space. This poses additional challenges that we are able to overcome with a delicate density-type argument, together with an extensive use of the left-translations of the group, cf. Theorem \ref{thm:noCDC11} and also Remark \ref{rmk:flexiamo}. Remarkably, for every $C^{1,1}$ reference norm, we obtain the exact same contraction rate, equal to the geodesic dimension $\mathcal N=5$, that characterizes the \sr Heisenberg group. Finally, let us mention that, recently, Borza and Tashiro \cite{borzatashiro} showed that the Heisenberg group equipped with the $l^p$-norm can not satisfy the $\MCP(K,N)$ condition if $p>2$. In particular, $l^p$-norms with $p>2$ are strictly convex $C^{1,1}$ norms, hence their result implies Theorem \ref{thm:intro3} for those specific cases. However, we stress that their technique is based on explicit computations that can not be carried out in our general setting.

Our second result in the \sF Heisenberg group deals with the case of singular (i.e. non-$C^1$) reference norms, cf. Theorem \ref{thm:noCDnonC1}.

\begin{theorem}\label{thm:intro2}
    Let $\hei$ be the \sF Heisenberg group, equipped with a strictly convex norm $\normdot$ which is not $C^1$, and let $\m$ be a positive smooth measure on $\hei$. Then, the metric measure space $(\hei, \di_{SF}, \m)$ does not satisfy the measure contraction property $\MCP(K,N)$ for any $K\in \R$ and $N\in (1,\infty)$.
\end{theorem}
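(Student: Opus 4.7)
The plan is to contradict $\MCP(K,N)$ directly by constructing an explicit set whose contraction toward a base point violates the required measure lower bound. The strategy adapts the collapse argument of \cite{MR4201410}, exploiting the enhanced degeneracy caused by the singularity of the norm $\normdot$. The weaker MCP condition can be violated here (as opposed to the $C^{1,1}$ case of Theorem \ref{thm:intro3}, where only CD was seen to fail) precisely because the contraction rate is not merely slightly worse than expected, but is expected to be faster than any polynomial in the contraction parameter.

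First, I would translate the hypothesis into a geometric statement about the polar body: strict convexity of $\normdot$ together with the failure of $C^1$ regularity means that the unit ball $\Omega = \{\normdot \leq 1\}$ has at least one corner $v_0$, that is, a point of $\partial\Omega$ admitting a nontrivial cone of outer normals. Dually, the polar body $\Omega^\circ$ contains a flat edge $F$ on whose relative interior the outer unit normal is the single vector $v_0$. Consequently, all covectors with horizontal part in the cone generated by $F$ induce geodesics from the origin with the same initial horizontal velocity $v_0$.

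Second, using the description of Heisenberg geodesics via the convex trigonometric functions $\sinom,\cosom,\sinomp,\cosomp$ recalled in Section \ref{sec:convex_trigtrig}, I would analyze the endpoint map $E : (h,\omega) \mapsto \gamma_{(h,\omega)}(1)$. The corner $v_0$ forces $(\cosom, \sinom)$ to be constant on a nontrivial interval of the phase parameter, so that as the initial covector varies within the cone over $F$ with $\omega \neq 0$ fixed, the horizontal trajectory sweeps out essentially the same curve. This translates into a rank-deficient differential of $E$ on a positive-measure set $U$ in the cotangent space at the origin, whose image $E(U)$ is consequently $\m$-negligible in $\hei$. Third, using the left-invariance of the sub-Finsler structure, I would transport this degeneracy to an arbitrary base point $x_0$ and then carefully choose $A \subset \hei$ whose points are joined to $x_0$ only through geodesics parameterized by covectors in (or arbitrarily close to) $U$. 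The contraction $\Phi_t$ sending each $x \in A$ to the point at parameter $t$ on the geodesic from $x_0$ to $x$ then has image of measure vanishing faster than any polynomial $t^N$ as $t \to 0^+$, which contradicts $\MCP(K,N)$ for every $K \in \R$ and $N \in (1,\infty)$, since the MCP inequality would force $\m(\Phi_t(A)) \gtrsim t^N \m(A)$ up to constants depending only on $K$ and on the diameter of $\{x_0\} \cup A$.

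The main obstacle is the low regularity of $\sinom$ and $\cosom$: at the corners of $\Omega$ these functions are merely Lipschitz, so the classical differential toolbox—implicit function theorem, smooth area formula, curvature estimates—is unavailable. I would therefore rely on Rademacher's theorem and the area formula for Lipschitz maps, combined with a density-type argument in the spirit of the proof of Theorem \ref{thm:intro3}. A further subtlety, specific to MCP (as opposed to the Brunn--Minkowski route used elsewhere in the paper), is ensuring that the collapse produced by $U$ is robust under small perturbations, so that the degeneracy passes from a measure-zero artifact to an actual positive-measure obstruction to contraction; systematic use of left-translations of $\hei$ together with the explicit convex-trigonometric formulas should furnish exactly such a perturbative stability.
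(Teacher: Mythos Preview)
Your proposal has the logic inverted at the crucial step. You aim to find a set $U$ of covectors on which the time-$1$ endpoint map $E$ is rank-deficient, so that $E(U)$ is $\m$-negligible, and then choose $A$ to consist of points reached from $x_0$ via covectors in $U$. But this forces $A\subset E(U)$, hence $\m(A)=0$, and the $\MCP$ inequality becomes vacuous. What is actually needed is the opposite: a covector region $\mathscr A$ on which the time-$1$ endpoint map has \emph{full rank}, so that $A:=G(\mathscr A;1)$ has positive measure, while the \emph{intermediate} images $G(\mathscr A;t)$ collapse to a null set for small $t$. The flat segment in $\partial\Omega^\circ$ produces exactly this: for $\phi$ in the flat part and $t$ small enough that $\phi+\omega t$ remains in the flat part, the formulas $\cosomp\equiv\bar x$ and $\sinomp$ linear force $y(\phi,\omega,r;t)=z(\phi,\omega,r;t)=0$, so every such geodesic initially travels along the line $\{y=z=0\}$. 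Once $\phi+\omega t$ exits the flat part, the trajectories fan out and the time-$1$ image acquires positive measure. This is the branching phenomenon: uniqueness of geodesics holds, yet distinct geodesics share a common initial segment.

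Consequently the collapse is not merely ``faster than any polynomial'' but exact: there is $t_0>0$ with $M_t(\{\e\},A)\subset\{y=z=0\}$ for all $t<t_0$, so $\m\big(M_t(\{\e\},A)\big)=0$ while $\m(A)>0$, contradicting \eqref{eq:tj_pantaloncini} immediately. The machinery you invoke (Rademacher, Lipschitz area formula, density-type perturbation arguments) is unnecessary here; the argument is a direct computation with the convex-trigonometric formulas of Proposition \ref{prop:geodesics_nestogol}, together with a short verification (via the map $F$ and the monotonicity of $s\mapsto a(s)/d(s)^2$ in Lemma \ref{lem:facilecontodianalisi1}) that $\Leb^3\big(G(\mathscr A;1)\big)>0$ for a suitable box $\mathscr A$ whose angles lie in the flat part but whose time-$1$ angles $\phi+\omega$ lie just beyond it.
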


Observe that this theorem also shows the failure of the $\cd(K,N)$ condition, which is stronger than the measure contraction property $\MCP(K,N)$. However, Theorem \ref{thm:intro2} has an interest that goes beyond this consequence, as it reveals a phenomenon that stands in contrast to what typically happens in the \sr setting. In fact, as already mentioned in section \ref{sec:Illpayforthatmycourtesy}, the $\MCP(K,N)$ condition holds in many \sr manifolds, and, in the particular case of the \sr Heisenberg group, holds with (sharp) parameters $K=0$ and $N=5$. Therefore, Theorem \ref{thm:intro2} shows that a singularity of the reference norm can cause the failure of the measure contraction property $\MCP(K,N)$. 

Our strategy to show Theorem \ref{thm:intro2} consists in finding a set $A\subset\hei$, having positive $\m$-measure, such that the set of $t$-midpoints $M_t(\{\e\},A)$ (where $\e$ denotes the identity in $\hei$) is $\m$-null for every $t$ sufficiently small. This construction is based on a remarkable geometric property of the space $(\hei, \di_{SF}, \m)$, where geodesics can branch, even though they are unique. This has independent interest, as examples of branching spaces usually occur when geodesics are not unique.

We conclude this section highlighting that the combination of Theorem \ref{thm:intro3} and Theorem \ref{thm:intro2} proves Conjecture \ref{conj:carnot} for a large class of \sF Heisenberg groups. This is particularly interesting as the \sF Heisenberg groups are the unique \sF Carnot groups with Hausdorff dimension less than $5$ (or with topological dimension less than or equal to $3$), up to isometries. 

\subsection*{Structure of the paper}

In Section \ref{sec:gocheckthenumbers} we introduce all the necessary preliminaries. In particular, we present the precise definition of the $\cd(K,N)$ condition with some of its consequences, and we introduce the notion of \sF structure on a manifold. Section \ref{sec:sub-Finsler_geometry} is devoted to the study of the geometry of \sF manifolds. For the sake of completeness, we include generalizations of various \sr results, especially regarding the characterizations of normal and abnormal extremals and the exponential map. In Section \ref{sec:smoothSF}, we present the proof of Theorem \ref{thm:intro1}. We start by developing the building blocks for it, namely the existence of a geodesic without abnormal sub-segments and the regularity of the distance function. Then, we estimate the volume contraction rate, along the previously selected geodesic. Finally, in Section \ref{sec:argument_of_the_mago}, we adapt Juillet's strategy to obtain our first main theorem. Section \ref{sec:Heisenberg} collects our results about the failure of the $\cd(K,N)$ condition in the \sF Heisenberg group. After having introduced the convex trigonometric functions in Section \ref{sec:convex_trigtrig}, we use them to provide the explicit explicit expression of geodesics, cf. Section \ref{sec:saymyname}. We conclude by proving Theorem \ref{thm:intro3} in Section \ref{sec:CDheisenberg} and Theorem \ref{thm:intro2} in Section \ref{sec:noMCP}.

\subsection*{Acknowledgments} 
T.R. acknowledges support from the Deutsche
Forschungsgemeinschaft (DFG, German Research Foundation) through the collaborative research centre “The mathematics of emerging effects” (CRC 1060, Project-ID 211504053). The autors wish to thank Luca Rizzi for stimulating discussions regarding the regularity of the distance function in \sF manifolds and Lorenzo Portinale for his careful reading of the introduction.  

\section{Preliminaries}\label{sec:gocheckthenumbers}

\subsection{The \texorpdfstring{$\cd(K,N)$}{CD(K,N)} condition}

A metric measure space is a triple $(\X,\di,\m)$ where $(\X,\di)$ is a complete and separable metric space and $\m$ is a locally finite Borel measure on it. In the following $C([0, 1], \X)$ will stand for the space of continuous curves from $[0, 1]$ to $\X$. A curve $\gamma\in C([0, 1], \X)$ is called \textit{geodesic} if 
\begin{equation}
    \di(\gamma(s), \gamma(t)) = |t-s| \cdot  \di(\gamma(0), \gamma(1)) \quad \text{for every }s,t\in[0,1],
\end{equation}
and we denote by $\Geo(\X)$ the space of geodesics on $\X$. The metric space $(\X,\di)$ is said to be geodesic if every pair of points $x,y \in \X$ can be connected with a curve $\gamma\in \Geo(\X)$. 
For any $t \in [0, 1]$ we define the evaluation map $e_t \colon C([0, 1], \X) \to \X$ by setting $e_t(\gamma) := \gamma(t)$.
We denote by $\Prob(\X)$ the set of Borel probability measures on $\X$ and by $\Prob_2(\X) \subset \Prob(\X)$ the set of those having finite second moment. We endow the space $\Prob_2(\X)$ with the Wasserstein distance $W_2$, defined by
\begin{equation}
\label{eq:defW2}
    W_2^2(\mu_0, \mu_1) := \inf_{\pi \in \mathsf{Adm}(\mu_0,\mu_1)}  \int \di^2(x, y) \, \de \pi(x, y),
\end{equation}
where $\mathsf{Adm}(\mu_0, \mu_1)$ is the set of all the admissible transport plans between $\mu_0$ and $\mu_1$, namely all the measures in $\Prob(\X\times\X)$ such that $(\p_1)_\sharp \pi = \mu_0$ and $(\p_2)_\sharp \pi = \mu_1$. The metric space $(\Prob_2(\X),W_2)$ is itself complete and separable, moreover, if $(\X,\di)$ is geodesic, then $(\Prob_2(\X),W_2)$ is geodesic as well. In particular, every geodesic $(\mu_t)_{t\in [0,1]}$ in $(\Prob_2(\X),W_2)$ can be represented with a measure $\eta \in \Prob(\Geo(\X))$, meaning that $\mu_t = (e_t)_\# \eta$.

We are now ready to introduce the $\cd(K,N)$ condition, pioneered by Sturm and Lott--Villani \cite{MR2237206,MR2237207,MR2480619}. As already mentioned, this condition aims to generalize, to the context metric measure spaces, the notion of having Ricci curvature bounded from below by $K\in\R$ and dimension bounded above by $N>1$. In order to define the $\cd(K,N)$ condition, let us introduce the following distortion coefficients: for every $K \in \R$ and $N\in (1,\infty)$,
\begin{equation}\label{eq:tau}
    \tau_{K,N}^{(t)}(\theta):=t^{\frac{1}{N}}\left[\sigma_{K, N-1}^{(t)}(\theta)\right]^{1-\frac{1}{N}},
\end{equation}
where
\begin{equation}
\sigma_{K,N}^{(t)}(\theta):= 
\begin{cases}

\displaystyle  \frac{\sin(t\theta\sqrt{K/N})}{\sin(\theta\sqrt{K/N})} & \textrm{if}\  N\pi^{2} > K\theta^{2} >  0, \crcr
t & \textrm{if}\ 
K =0,  \crcr
\displaystyle   \frac{\sinh(t\theta\sqrt{-K/N})}{\sinh(\theta\sqrt{-K/N})} & \textrm{if}\ K < 0.
\end{cases}
\end{equation}

\begin{remark}\label{rmk:limit_dist_coeff}
    Observe that for every $K\in \R$, $N\in (1,\infty)$ and $t\in [0,1]$ we have 
    \begin{equation*}
        \lim_{\theta\to 0} \sigma_{K,N}^{(t)} (\theta) = t \qquad \text{ and } \qquad \lim_{\theta\to 0} \tau_{K,N}^{(t)} (\theta) = t.
    \end{equation*}
\end{remark}

\begin{definition}\label{def:CD}
A metric measure space $(\X,\di,\m)$ is said to be a $\cd(K,N)$ space (or to satisfy the $\cd(K,N)$ condition) if for every pair of measures $\mu_0=\rho_0\m,\mu_1= \rho_1 \m \in \Prob_2(\X)$, absolutely continuous with respect to $\m$, there exists a $W_2$-geodesic $(\mu_t)_{t\in [0,1]}$ connecting them and induced by $\eta \in \Prob(\Geo(\X))$, such that for every $t\in [0,1]$, $\mu_t =\rho_t \m \ll \m$ and the following inequality holds for every $N'\geq N$ and every $t \in [0,1]$
\begin{equation}\label{eq:CDcond}
    \int_\X \rho_t^{1-\frac 1{N'}} \de \m \geq \int_{\X \times \X} \Big[ \tau^{(1-t)}_{K,N'} \big(\di(x,y) \big) \rho_{0}(x)^{-\frac{1}{N'}} +    \tau^{(t)}_{K,N'} \big(\di(x,y) \big) \rho_{1}(y)^{-\frac{1}{N'}} \Big]    \de\pi( x,y),
\end{equation}
where $\pi= (e_0,e_1)_\# \eta$.
\end{definition}


One of the most important merits of the $\cd(K,N)$ condition is that it is sufficient to deduce geometric and functional inequalities that hold in the smooth setting. An example which is particularly relevant for this work is the so-called Brunn--Minkowski inequality, whose definition in the metric measure setting requires the following notion.

\begin{definition}
     Let $(\X,\di)$ be a metric space and let $A,B \subset \X$ be two Borel subsets. Then for $t\in (0,1)$, we defined the set of $t$-\emph{midpoints} between $A$ and $B$ as
    \begin{align}
        M_t(A,B) := 
        \{
            x \in \X \, : \, x = \gamma(t)
                \, , \, 
            \gamma \in \Geo(\X)
                \, , \, 
            \gamma(0) \in A \, , 
                \ \text{and} \  
            \gamma(1) \in B
        \} 
            \, .
    \end{align}
\end{definition}


\noindent We can now introduce the metric measure version of the Brunn--Minkowski inequality, whose formulation is stated in terms of the distortion coefficients \eqref{eq:tau}.

\begin{definition}
    \label{def:bruno}
    Given $K \in \R$ and $N\in (1,\infty)$, we say that a metric measure space $(\X,\di, \m)$ satisfies the \emph{Brunn--Minkowski inequality} $\bm(K,N)$ if, for every nonempty $A,B \subset \text{spt}(\m)$ Borel subsets, $t \in (0,1)$, we have
        \begin{align}   \label{eq:bm}
            \m \big(M_t(A,B)\big) \big)^ \frac{1}{N} 
                \geq 
            \tau_{K,N}^{(1-t)} (\Theta(A,B)) \cdot \m(A)^ \frac{1}{N} 
                + 
            \tau_{K,N}^{(t)} (\Theta(A,B)) \cdot \m(B)^ \frac{1}{N}
                \, ,
        \end{align}
     where 
    \begin{align}   \label{eq:def_Theta}
        \Theta(A,B):=
        \left\{
        \begin{array}{ll}\displaystyle    \inf_{x \in A,\, y \in B} \di(x, y) 
                & \text { if } K \geq 0 \, , \\ 
        \displaystyle
            \sup _{x \in A,\, y \in B} \di(x, y) 
                & \text { if } K<0 \, .
        \end{array}
        \right. 
    \end{align}
\end{definition}

As already mentioned, the Brunn--Minkowski inequality is a consequence of the $\cd(K,N)$ condition, in particular we have that 
\begin{equation*}
    \cd(K,N) \implies \bm(K,N),
\end{equation*}
for every $K\in \R$ and every $N\in (1,\infty)$. 
In Sections \ref{sec:smoothSF} and \ref{sec:Heisenberg}, we are going to disprove the $\cd(K,N)$ condition for every choice of the parameters $K\in \R$ and $N\in (1,\infty)$, by contradicting the Brunn--Minkowski inequality $\bm(K,N)$. A priori, this is a stronger result than the ones stated in Theorem \ref{thm:intro1} and Theorem \ref{thm:intro3}, since the Brunn--Minkowski inequality is (in principle) weaker than the $\cd(K,N)$ condition. However, recent developments (cf. \cite{seminalpaper,Magnabosco-Portinale-Rossi:2022b}) suggest that the Brunn--Minkowski $\bm(K,N)$ could be equivalent to the $\cd(K,N)$ condition in a wide class of metric measure spaces.

Another curvature-dimension bound, which can be defined for metric measure spaces, is the so-called measure contraction property (in short $\MCP(K,N)$), that was introduced by Ohta in \cite{MR2341840}. The idea behind it is basically to require the $\cd(K,N)$ condition to hold when the first marginal degenerates to $\delta_x$, a delta-measure at $x\in\text{spt}(\m)$, and the second marginal is $\frac{\m|_A}{\m(A)}$, for some Borel set $A\subset\X$ with $0<\m(A)<\infty$.

\begin{definition} [$\mathsf{MCP}(K,N)$ condition]
\label{def:mcp}
    Given $K\in\R$ and $N\in (1,\infty)$, a metric measure space $(\X,\di,\m)$ is said to satisfy the \emph{measure contraction property} $\mathsf{MCP}(K,N)$ if for every $x\in\text{spt}(\m)$ and a Borel set $A\subset\X$ with $0<\m(A)<\infty$, there exists a Wasserstein geodesic induced by $\eta \in \Prob(\Geo(\X))$ connecting $\delta_x$ and $\frac{\m|_A}{\m(A)}$ such that, for every $t\in[0,1]$,
    \begin{equation}
    \label{eq:mcp_def}
        \frac{1}{\m(A)}\m\geq(e_t)_\#\Big(\tau_{K,N}^{(t)}\big(\di(\gamma(0),\gamma(1))\big)^N\eta(\text{d}\gamma)\Big).
    \end{equation}
\end{definition}

\begin{remark}
\label{rmk:SIUUUUUUU}
For our purposes, we will use an equivalent formulation of the inequality \eqref{eq:mcp_def}, which holds whenever geodesics are unique, cf. \cite[Lemma 2.3]{MR2341840} for further details. More precisely, let $x\in\text{spt}(\m)$ and a Borel set $A\subset\X$ with $0<\m(A)<\infty$. Assume that for every $y\in A$, there exists a unique geodesic $\gamma_{x,y}:[0,1]\to \X$ joining $x$ and $y$. Then, \eqref{eq:mcp_def} is verified for the measures $\delta_x$ and $\frac{\m|_A}{\m(A)}$ if and only if 
\begin{equation}
\label{eq:tj_pantaloncini}
    \m\big(M_t(\{x\},A'))\big)\geq \int_{ A'}\tau_{K,N}^{(t)}(\di(x,y))^N \de\m(y),\qquad\text{for any Borel }A'\subset A.
\end{equation}
\end{remark}

\noindent The $\MCP(K,N)$ condition is weaker than the $\cd(K,N)$ one, i.e. 
\begin{equation*}
    \cd(K,N) \implies \MCP(K,N),
\end{equation*}
for every $K\in \R$ and every $N\in (1,\infty)$. In Theorem \ref{thm:casosmooth} (for the case of non-ample geodesics) and Theorem \ref{thm:noCDnonC1} (cf. Theorem \ref{thm:intro2}) in the Heisenberg group, equipped with singular norms, we contradict the $\MCP(K,N)$ condition. More precisely, we find a counterexample to \eqref{eq:tj_pantaloncini}.

\subsection{Strictly and strongly convex norms on \texorpdfstring{$\R^k$}{Rk}}

We say that a function $f:\R^k\to \R$ is \emph{strictly convex} if, for any $x,y\in\R^k$ such that $x\neq y$, 
\begin{equation}
    f(tx+(1-t)y)<tf(x)+(1-t)f(y),\qquad\forall\, t\in (0,1).
\end{equation}
Furthermore, let $\normdot:\R^k\to \R_{\geq 0}$ be a norm, then we say that $f$ is \emph{strongly convex} with respect to $\normdot$ if there exists $\alpha>0$ such that, for every $x,y\in\R^k$, 
\begin{equation}
    f(tx+(1-t)y)\leq tf(x)+(1-t)f(y)-\frac{\alpha}{2}t(1-t)\norm{x-y}^2,\qquad\forall\,t\in[0,1].
\end{equation}

\begin{definition}[Strictly and strongly convex norm]
    Let $\normdot:\R^k\to\R_{\geq 0}$ be a (symmetric) norm on $\R^k$. We say that $\normdot$ is \emph{strictly convex} if the function $f:\R^k\to \R$ defined by $f(x):=\norm{x}^2$ is strictly convex. Similarly, we say that $\normdot$ is \emph{strongly convex} if $f$ is strongly convex with respect to $\normdot$.
\end{definition}

Note that, according to \cite[Prop.\ 1.6]{MR1079061}, $\normdot$ is strictly convex if and only if the associated unit ball $B^{\norm{\cdot}}_1(0)$ is a strictly convex set. Whereas, \cite[Prop.\ 2.11]{MR1079061} implies that $\normdot$ is strongly convex if and only if the associated unit ball $B^{\norm{\cdot}}_1(0)$ is a uniformly convex set (see also \cite[Thm.\ 4.1(f)]{kerdreux2021local}). We recall below a well-known result on the relation between a norm $\normdot$ and its dual norm $\normdot_*$, cf. \cite[Prop.\ 2.6]{MR1623472} or \cite[Def.\ 3.1]{kerdreux2021local}.


\begin{prop}\label{prop:propunderduality}
    Let $\normdot$ be a norm on $\R^k$, and let $\normdot_*$ be its dual norm, then:
    \begin{itemize}
        \item [(i)] $\normdot$ is a strictly convex norm if and only if $\normdot_*$ is a $C^1$ norm, i.e. $\normdot_*\in C^1(\R^k\setminus\{0\})$;
        \item [(ii)] $\normdot$ is a strongly convex norm if and only if $\normdot_*$ is a $C^{1,1}$ norm, i.e. $\normdot_*\in C^{1,1}(\R^k\setminus\{0\})$.
    \end{itemize}
\end{prop}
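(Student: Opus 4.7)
The plan is to prove both assertions via Legendre–Fenchel duality applied to the function $f(x):=\tfrac{1}{2}\|x\|^2$. A direct computation from the definition of the dual norm gives
\begin{equation*}
    f^*(y) \;=\; \sup_{x\in\R^k}\bigl(\langle x,y\rangle - \tfrac{1}{2}\|x\|^2\bigr) \;=\; \tfrac{1}{2}\|y\|_*^2,
\end{equation*}
so that smoothness/convexity properties of $\|\cdot\|$ and $\|\cdot\|_*$ are encoded by the same properties for $f$ and $f^*$. I would first establish the (essentially algebraic) observation that a norm $\|\cdot\|$ is strictly (respectively strongly) convex in the sense of the paper if and only if $\tfrac{1}{2}\|\cdot\|^2$ is strictly (respectively strongly) convex as a function on $\R^k$; and similarly that $\|\cdot\|_*$ is $C^1$ (resp.\ $C^{1,1}$) on $\R^k\setminus\{0\}$ if and only if $\tfrac{1}{2}\|\cdot\|_*^2$ is $C^1$ (resp.\ $C^{1,1}$) on all of $\R^k$. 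The ``only if'' direction of this last equivalence uses that at the origin both $\|\cdot\|_*^2$ and its gradient vanish (and for $C^{1,1}$, that near $0$ one controls $\nabla(\tfrac{1}{2}\|\cdot\|_*^2) = \|\cdot\|_*\,\nabla\|\cdot\|_*$ directly); the ``if'' direction follows from the fact that $\sqrt{\cdot}\in C^\infty((0,\infty))$.

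For assertion~(i), I would then invoke the classical Legendre–Fenchel result (cf.\ Rockafellar, \emph{Convex Analysis}, Thm.\ 26.3) stating that for a proper, lower-semicontinuous, convex function $f:\R^k\to\R$ with $\mathrm{dom}\,f=\R^k$, the conjugate $f^*$ is differentiable on the interior of its effective domain if and only if $f$ is strictly convex on $\R^k$. Since $f=\tfrac{1}{2}\|\cdot\|^2$ is finite, continuous and $1$-coercive, both $f$ and $f^*$ have full effective domain. Moreover, because $f^*$ is convex, differentiability on an open set upgrades to $C^1$-regularity. Combining this with the equivalences above proves (i).

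For assertion~(ii), the key ingredient is the duality between strong convexity and Lipschitz gradient: a convex function $f:\R^k\to\R$ is $\alpha$-strongly convex with respect to a norm $\|\cdot\|$ if and only if $f^*$ is of class $C^{1,1}$ with $\nabla f^*$ being $(1/\alpha)$-Lipschitz with respect to the dual norm $\|\cdot\|_*$. This is a well-known fact; the standard reference is Hiriart-Urruty–Lemaréchal, \emph{Convex Analysis and Minimization Algorithms}, Thm.\ X.4.2.1 (see also Kakade–Shalev-Shwartz–Tewari). Applying this equivalence to $f=\tfrac{1}{2}\|\cdot\|^2$, whose conjugate is $f^*=\tfrac{1}{2}\|\cdot\|_*^2$, and again transferring the regularity from $\tfrac{1}{2}\|\cdot\|_*^2$ to $\|\cdot\|_*$ on $\R^k\setminus\{0\}$, yields (ii).

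The main obstacle is not in the duality arguments, which are classical, but rather in making the transfer between ``regularity of $\|\cdot\|_*$'' and ``regularity of $\tfrac{1}{2}\|\cdot\|_*^2$'' precise at the origin: one needs to verify that the quadratic cancels the non-differentiability of a norm at zero exactly once (giving $C^1$) and has the right quadratic-behaviour to transfer Lipschitz-continuity of the gradient (giving $C^{1,1}$). I would handle this by direct computation of $\nabla(\tfrac{1}{2}\|\cdot\|_*^2)(y) = \|y\|_*\,\nabla\|\cdot\|_*(y)$ on $\R^k\setminus\{0\}$ and by using homogeneity of $\|\cdot\|_*$ to bound difference quotients near the origin.
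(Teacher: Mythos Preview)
Your approach is correct and, in fact, provides considerably more than the paper does: the paper states this proposition without proof, citing \cite[Prop.\ 2.6]{MR1623472} and \cite[Def.\ 3.1]{kerdreux2021local} as references for this well-known result. Your Legendre--Fenchel argument via $f=\tfrac12\|\cdot\|^2$ and $f^*=\tfrac12\|\cdot\|_*^2$ is the standard route, and the references you invoke (Rockafellar Thm.\ 26.3 for (i), Hiriart-Urruty--Lemar\'echal for (ii)) are the right ones.

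One small remark: since the paper \emph{defines} strict/strong convexity of the norm as strict/strong convexity of $\|\cdot\|^2$, the first ``essentially algebraic observation'' you mention is actually immediate from the definitions. The only place requiring genuine care is, as you correctly identify, the transfer of $C^{1,1}$ regularity between $\tfrac12\|\cdot\|_*^2$ on $\R^k$ and $\|\cdot\|_*$ on $\R^k\setminus\{0\}$. For the direction ``$\|\cdot\|_*\in C^{1,1}(\R^k\setminus\{0\}) \Rightarrow \tfrac12\|\cdot\|_*^2 \in C^{1,1}(\R^k)$'', the cleanest argument is to note that $\nabla(\tfrac12\|\cdot\|_*^2)$ is $1$-homogeneous, and a $1$-homogeneous map that is Lipschitz on the unit sphere (which follows from local $C^{1,1}$ plus compactness) is automatically globally Lipschitz; you allude to this with your homogeneity remark, but it is worth making explicit.
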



We conclude this section by providing an explicit description of the dual element of $v\in(\R^k,\normdot)$, when $\normdot$ is strictly convex and $C^1$. Recall that its dual vector $v^*\in(\R^k,\normdot)^*$ is uniquely characterized by
    \begin{equation}\label{eq:dualvector}
        \norm{v^*}_*= \norm{v} \qquad \text{and} \qquad \scal{v^*}{v}= \norm{v}^2,
    \end{equation}
where $\scal{\cdot}{\cdot}$ is the dual coupling.

\begin{lemma}\label{lem:banachduality}
Let $(\R^k,\normdot)$ be a normed space and assume $\normdot:\R^k \to \R_+$ is a strictly convex $C^1$ norm. Then, for every non-zero vector $v\in \R^k$, it holds that
\begin{equation}
    v^*= \norm{v} \cdot d_v \normdot.
\end{equation}
\end{lemma}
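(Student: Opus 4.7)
The plan is to verify directly that the vector $w := \|v\|\cdot d_v\normdot$ satisfies the two characterizing identities in \eqref{eq:dualvector}. Uniqueness of $v^*$ follows because strict convexity of $\normdot$ guarantees, via Proposition \ref{prop:propunderduality}, that $\normdot_*$ is $C^1$, and hence (in the finite-dimensional reflexive setting) the duality map is single-valued; so it is enough to exhibit a vector realising both conditions.

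For the pairing identity $\langle w, v\rangle = \|v\|^2$, I would invoke the positive $1$-homogeneity of $\normdot$, i.e.\ $\|\lambda v\| = \lambda \|v\|$ for $\lambda > 0$, and differentiate at $\lambda = 1$ (or, equivalently, apply Euler's identity for $C^1$ positively $1$-homogeneous functions on $\R^k\setminus\{0\}$) to get
\begin{equation}
    \langle d_v\normdot, v\rangle = \|v\|,
\end{equation}
and then multiply through by $\|v\|$. For the norm identity $\|w\|_* = \|v\|$, it suffices to show $\|d_v\normdot\|_* = 1$. The triangle inequality $\big|\|v + h\| - \|v\|\big| \le \|h\|$ shows that $\normdot$ is $1$-Lipschitz with respect to itself, so by the very definition of the differential,
\begin{equation}
    |\langle d_v\normdot, h\rangle| \le \|h\| \qquad \text{for every } h\in\R^k,
\end{equation}
which gives $\|d_v\normdot\|_* \le 1$. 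The matching lower bound is obtained by testing against $v/\|v\|$: differentiating $\|\lambda v\| = \lambda \|v\|$ in $v$ yields $d_{\lambda v}\normdot = d_v\normdot$ for all $\lambda > 0$, so $d_v\normdot$ is positively $0$-homogeneous and Euler's identity evaluated at $v/\|v\|$ gives $\langle d_v\normdot, v/\|v\|\rangle = 1$.

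There is no real obstacle: the argument is essentially bookkeeping built on $1$-homogeneity (Euler's identity), the self-Lipschitz property of a norm, and the uniqueness statement already supplied by Proposition \ref{prop:propunderduality} together with \eqref{eq:dualvector}. The only small care required is in noting the $0$-homogeneity of $d\normdot$ so that the pairing identity can be scaled freely between $v$ and $v/\|v\|$.
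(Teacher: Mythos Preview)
Your proof is correct and follows essentially the same approach as the paper: both verify directly that $\|v\|\cdot d_v\normdot$ satisfies the two defining conditions in \eqref{eq:dualvector}, with you invoking Euler's identity and the self-Lipschitz property where the paper writes out the directional-derivative limits explicitly. One small quibble: single-valuedness of the duality map $v\mapsto v^*$ follows directly from the $C^1$ hypothesis on $\normdot$ (smoothness of the primal norm gives unique supporting functionals), not from smoothness of $\normdot_*$ as you argue; but this is moot, since the paper simply asserts uniqueness before stating the lemma.
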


\begin{proof}
     Set $\lambda:= d_v \normdot\in (\R^k,\normdot)^*$, where we recall that 
    \begin{equation}
        d_v\normdot(u):=\lim_{t\to0} \frac{ \norm{v+tu}-\norm{u}}{t},\qquad\forall\,u,v\in\R^k.
    \end{equation}
    Then, on the one hand, it holds that 
    \begin{equation}
        \scal{\lambda}{v}=d_v\normdot(v)=\lim_{t\to0} \frac{ \norm{v+tv}-\norm{v}}{t}= \norm{v}.
    \end{equation}
    On the other hand, we have
    \begin{equation*}
        \norm{\lambda}_* = \sup_{u \in B_1} \scal{\lambda}{u} = \sup_{u \in B_1}d_v\normdot(u) = \sup_{u \in B_1} \lim_{t\to0} \frac{1}{t} \big( \norm{v+tu}-\norm{v}\big) \leq \sup_{u \in B_1} \norm{u} = 1, 
    \end{equation*}
    where $B_1:=B_1^{\|\cdot\|}(0)\subset\R^k$ is the ball of radius $1$ and centered at $0$, with respect to the norm $\normdot$. The converse inequality can be obtained by taking $u=\frac{v}{\norm{v}}$. Finally, the conclusion follows by homogeneity of the dual norm.
\end{proof}

\subsection{Sub-Finsler structures}
\label{sec:prelim_sf}

Let $M$ be a smooth manifold of dimension $n$ and let $k\in\N$. A \emph{\sF structure} on $M$ is a couple $(\xi,\normdot)$ where $\normdot: \R^k \to \R_+$ is a norm on $\R^k$ and $\xi: M\times\R^k\rightarrow TM$ is a morphism of vector bundles such that:
\begin{enumerate}[label=(\roman*)]
    \item each fiber of the (trivial) bundle $M\times\R^k$ is equipped with the norm $\normdot$;
    \item  The set of horizontal vector fields, defined as 
    \begin{equation}
    \dis := \big\{ \xi\circ\sigma \, :\, \sigma \in \Gamma(M\times\R^k) \big\} \subset \Gamma(TM),
    \end{equation}
    is a \emph{bracket-generating} family of vector fields (or it satisfies the H\"ormander condtion), namely setting
    \begin{equation}
        {\rm Lie}_q(\dis):=\big\{X(q)\,:\, X\in\text{span}\{[X_1,\ldots,[X_{j-1},X_j]]\,:\, X_i\in\dis,j\in\N\} \big\},\qquad\forall\,q\in M,
    \end{equation}
    we assume that ${\rm Lie}_q(\dis)=T_qM$, for every $q\in M$.
\end{enumerate}

\begin{definition}[Smooth \sF manifold]
   Let $(\xi,\normdot)$ be a \sF structure on $M$. We say that $M$ is a \emph{smooth \sF manifold}, if the norm $\normdot$ is strongly convex and smooth, i.e.\ $\normdot\in C^\infty(\R^k\setminus\{0\})$.
\end{definition}

\begin{remark}
    Although this definition is not completely general in \sF context, since it does not allow the norm to vary on the fiber of $M\times\R^k$, it includes \sr geometry (where $\normdot$ is induced by a scalar product), as every \sr structure is equivalent to a free one, cf. \cite[Sec.\ 3.1.4]{ABB-srgeom}.  
\end{remark}

At every point $q\in M$ we define the \emph{distribution} at $q$ as 
\begin{equation}
\label{eq:distribution}
    \dis_q := \big\{ \xi(q,w) \, :\, w \in \R^k \big\}=\big\{X(q)\,:\,X\in\dis\big\} \subset T_qM. 
\end{equation}
This is a vector subspace of $T_qM$ whose dimension is called \emph{rank} (of the distribution) and denoted by $r(q):=\dim\dis_q\leq n$. Moreover, the distribution is described by a family of horizontal vector fields. Indeed, letting $\{e_i\}_{i=1, \dots, k}$ be the standard basis of $\R^k$, the \emph{generating frame} is the family $\{X_i\}_{i=1, \dots, k}$, where 
\begin{equation*}
    X_i(q) := \xi(q, e_i) \qquad\forall\,q\in M, \quad \text{for }i= 1, \dots ,k.
\end{equation*}
Then, according to \eqref{eq:distribution}, $\dis_q= \text{span} \{X_1(q), \dots, X_k(q)\}$. On the distribution we define the \emph{induced norm} as
\begin{equation*}
    \norm{v}_q := \inf\big\{ \norm{w} \, :\, v = \xi(q,w) \big\} \qquad \text{for every }v\in \dis_q.
\end{equation*}
Since the infimum is actually a minimum, the function $\normdot_q$ is a norm on $\dis_q$, so that $(\dis_q, \normdot_q)$ is a normed space. Moreover, the norm depends smoothly on the base point $q\in M$. A curve $\gamma: [0,1]\to M$ is \emph{admissible} if its velocity $\dot\gamma(t)$ exists almost everywhere and there exists a function $u=(u_1,\ldots,u_k)\in\Ltwo$ such that
\begin{equation}
\label{eq:admissible_curve}
    \dot\gamma(t)=\sum_{i=1}^k u_i(t)X_i(\gamma(t)),\qquad\text{for a.e. }t\in [0,1].
\end{equation}
The function $u$ is called \emph{control}. Furthermore, given an admissible curve $\gamma$, there exists $\bar u=(\bar u_1,\dots, \bar u_k): [0,1]\to \R^k$ such that 
\begin{equation}
\label{eq:minimal_control}
    \dot\gamma(t) = \sum_{i=1}^k \bar u_i(t) X_i(\gamma(t)),
\qquad\text{and}\qquad
    \norm{\dot\gamma(t)}_{\gamma(t)} = \norm{\bar u(t)},\qquad\text{for a.e. }t\in[0,1].
\end{equation}
The function $\bar u$ is called \emph{minimal control}, and it belongs to $\Ltwo$, cf. \cite[Lem. 3.12]{ABB-srgeom}. We define the \emph{length} of an admissible curve:
\begin{equation}
    \ell(\gamma):=\int_0^1 \norm{\dot\gamma(t)}_{\gamma(t)} \de t\in[0,\infty).
\end{equation}
We can rewrite the length of a curve as the $L^1$-norm of the associated minimal control, indeed by \eqref{eq:minimal_control},
\begin{equation}
\label{eq:length_fun_control}
    \ell(\gamma)=\int_0^1\norm{\bar u(t)}\de t=\norm{\bar u}_{L^1([0,1];(\R^k,\|\cdot\|))}.
\end{equation}
For every couple of points $q_0,q_1\in M$, define the \emph{\sF distance} between them as
\begin{equation*}
    \di_{SF} (q_0,q_1)= \inf \left\{\ell(\gamma)\, :\, \gamma \text{ admissible, } \gamma(0)=q_0 \text{ and }\gamma(1)=q_1\right\}.
\end{equation*}
Since every norm on $\R^k$ is equivalent to the standard scalar product on $\R^k$, it follows that the \sr structure on $M$ given by $(\xi,\scal{\cdot}{\cdot})$ induces an equivalent distance. Namely, denoting by $\di_{SR}$ the induced \sr distance, there exist constants $C>c>0$ such that
\begin{equation}
\label{eq:equivalent_sr_distance}
    c\,\di_{SR}\leq \di_{SF} \leq C\di_{SR},\qquad\text{on }M\times M.
\end{equation}
Thus, as a consequence of the classical Chow--Rashevskii Theorem in \sr geometry, we obtain the following.
\begin{prop}[Chow--Rashevskii]
    Let $M$ be a \sF manifold. The \sF distance is finite, continuous on $M\times M$ and the induced topology is the manifold one. 
\end{prop}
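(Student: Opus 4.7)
The plan is to reduce this to the classical sub-Riemannian Chow--Rashevskii theorem via the bi-Lipschitz estimate \eqref{eq:equivalent_sr_distance}, which is already observed in the excerpt. The underlying reason why the comparison \eqref{eq:equivalent_sr_distance} holds is the equivalence of all norms on the finite-dimensional vector space $\R^k$: there exist constants $c,C>0$ such that $c|w|\leq \norm{w}\leq C|w|$ for every $w\in\R^k$, where $|\cdot|$ denotes the Euclidean norm induced by $\scal{\cdot}{\cdot}$. Applying this pointwise to any admissible control $u\in\Ltwo$ and integrating gives, via the representation \eqref{eq:length_fun_control} of the length as the $L^1$-norm of the minimal control, the two-sided bound on the lengths and hence on the induced distances.

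Next, I would observe that the sub-Riemannian structure $(\xi,\scal{\cdot}{\cdot})$ shares the same horizontal distribution $\dis$ as $(\xi,\normdot)$, and therefore satisfies the same H\"ormander condition. The classical Chow--Rashevskii theorem (see e.g.\ \cite[Thm.~3.31]{ABB-srgeom}) then guarantees that $\di_{SR}$ is finite on $M\times M$, continuous with respect to the manifold topology, and that the metric topology induced by $\di_{SR}$ coincides with the manifold topology.

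Finally, I would transfer all three properties from $\di_{SR}$ to $\di_{SF}$ by means of \eqref{eq:equivalent_sr_distance}. Finiteness follows immediately from $\di_{SF}\leq C\,\di_{SR}<\infty$. Continuity follows because if $(q_0^j,q_1^j)\to(q_0,q_1)$ in $M\times M$, then $\di_{SR}(q_0^j,q_1^j)\to\di_{SR}(q_0,q_1)$, and since $|\di_{SF}(q_0^j,q_1^j)-\di_{SF}(q_0,q_1)|$ is controlled by the $\di_{SR}$-distance up to the constants $c,C$, the same convergence holds for $\di_{SF}$. The identity of topologies is a direct consequence of the bi-Lipschitz equivalence, which implies that a net is $\di_{SF}$-Cauchy (resp.\ convergent) if and only if it is $\di_{SR}$-Cauchy (resp.\ convergent).

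There is essentially no genuine obstacle here: the entire argument is a mechanical transfer of known facts through the uniform norm comparison on $\R^k$. The only conceptual point worth emphasizing is that, since the reference norm $\normdot$ does not vary with the base point $q\in M$, the comparison constants $c,C$ are uniform, so the bi-Lipschitz bound is global rather than merely local. This makes the reduction to the sub-Riemannian setting clean and immediate.
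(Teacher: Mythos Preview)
Your proposal is correct and follows exactly the route the paper takes: the paper does not give a detailed proof but simply observes that the bi-Lipschitz equivalence \eqref{eq:equivalent_sr_distance} (coming from the equivalence of norms on $\R^k$) reduces everything to the classical sub-Riemannian Chow--Rashevskii theorem. Your write-up just unpacks this reduction in slightly more detail, and the one point worth tightening is the continuity step, where you should invoke the triangle inequality for $\di_{SF}$ explicitly to get $|\di_{SF}(q_0^j,q_1^j)-\di_{SF}(q_0,q_1)|\leq \di_{SF}(q_0^j,q_0)+\di_{SF}(q_1^j,q_1)\leq C\big(\di_{SR}(q_0^j,q_0)+\di_{SR}(q_1^j,q_1)\big)\to 0$.
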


\noindent From this proposition, we get that $(M,\di_{SF})$ is a locally compact metric space. The local existence of minimizers of the length functional can be obtained as in the \sr setting, in particular, one can repeat the proof of \cite[Thm. 3.43]{ABB-srgeom}. Finally, if $(M,\di_{SF})$ is complete, then it is also a geodesic metric space. 

\section{The geometry of smooth \sF manifolds}
\label{sec:sub-Finsler_geometry}

\subsection{The energy functional and the optimal control problem}

Let $\gamma :[0,1]\rightarrow M$ be an admissible curve. Then, we define the \emph{energy} of $\gamma$ as 
\begin{equation}
    J(\gamma)=\frac{1}{2} \int_0^1 \norm{\dot\gamma(t)}^2_{\gamma(t)} \de t.
\end{equation}
By definition of admissible curve, $J(\gamma)<+\infty$. In addition, a standard argument shows that $\gamma:[0,1]\rightarrow M$ is a minimum for the energy functional if and only if it is a minimum of the length functional with constant speed. 

\begin{remark}
    The minimum of $J$ is not invariant under reparametrization of $\gamma$, so one needs to fix the interval where the curve is defined. Here and below, we choose $[0,1]$. 
\end{remark}

\noindent The problem of finding geodesics between two points $q_0,q_1\in M$ can be formulated using the energy functional as the following costrained minimization problem:

\begin{equation}\label{eq:Ptimalcontrolproblem}\tag{P}
    \begin{cases}
    \gamma:[0,1]\to M,\quad\text{admissible},\\
    \gamma(0)=q_0 \text{ and }\gamma(1)=q_1, \\
    J(\gamma) = \displaystyle\frac{1}{2} \int_0^1 \norm{\dot\gamma(t)}^2_{\gamma(t)} \de t \to \min.
    \end{cases}
\end{equation}

The problem \eqref{eq:Ptimalcontrolproblem} can be recasted as an optimal control problem. First of all, a curve is admissible if and only if there exists a control in $\Ltwo$ satisfying \eqref{eq:admissible_curve}. Second of all, we can consider the energy as a functional on the space of controls. Indeed, as in \eqref{eq:length_fun_control}, given an admissible curve $\gamma:[0,1]\to M$, we let $\bar u\in \Ltwo$ be its minimal control, as in \eqref{eq:minimal_control}. Then, we have
\begin{equation}
\label{eq:energy_minimal_control}
    J(\gamma)=\frac12 \int_0^1\norm{\bar u(t)}^2\de t = \frac12 \norm{\bar u}^2_{L^2([0,1];(\R^k,\|\cdot\|))}.
\end{equation}
Hence, we regard the energy as a functional on $\Ltwo$, namely 
\begin{equation}
    J:\Ltwo\to\R_+;\qquad J(u):=\frac12\norm{u}^2_{L^2([0,1];(\R^k,\|\cdot\|))},
\end{equation}
and we look for a constrained minimum of it. Thus, the problem \eqref{eq:Ptimalcontrolproblem} becomes: 
\begin{equation}\label{eq:P'timalcontrolproblem}\tag{P$'$}
    \begin{cases}
    \dot\gamma(t) = \displaystyle \sum_{i=1}^k u_i(t) X_i(\gamma(t)),  \\
    \gamma(0)=q_0 \text{ and }\gamma(1)=q_1, \\
    J(u) = \displaystyle\frac{1}{2} \int_0^1 \norm{u(t)}^2 \de t \to \min.
    \end{cases}
\end{equation}
Note that, by \eqref{eq:energy_minimal_control}, the solutions of \eqref{eq:Ptimalcontrolproblem} and \eqref{eq:P'timalcontrolproblem} coincide. An application of Pontryagin Maximum Principle (see \cite[Thm. 12.10]{AS-GeometricControl}) yields necessary conditions for optimality. For every $u\in \R^k$ and $\nu\in \R$, introduce the following Hamiltonian: 
\begin{equation}
\label{eq:hamiltonian_fun}
    h_u^\nu (\lambda) := \scal{\lambda}{\xi(\pi(\lambda),u)}+ \frac \nu 2 \norm{u}^2, \qquad \forall \lambda \in T^*M.
\end{equation}
Recall that for $h\in C^1(T^*M)$, its Hamiltonian vector field $\vec h\in {\rm Vec}(T^*M)$ is defined as the unique vector field in $T^*M$ satisfying
\begin{equation}
    d_\lambda h = \sigma(\cdot, \vec h(\lambda)),\qquad\forall\,\lambda\in T^*M,
\end{equation}
where $\sigma$ is the canonical symplectic form on $T^*M$.

\begin{theorem}[Pontryagin Maximum Principle]
\label{thm:PMP}
Let $M$ be a \sF manifold and let $(\gamma,\bar u)$ be a solution of \eqref{eq:P'timalcontrolproblem}. Then, there exists $(\nu,\lambda_t)\ne 0$, where $\nu\in \R$ and $\lambda_t\in T^*_{\gamma(t)}M$ for every $t\in [0,1]$, such that
\begin{equation}\tag{H}\label{eq:Hamiltonian_system}
    \begin{cases}
        \displaystyle \dot \lambda_t = \vec h_{\bar u(t)}^\nu (\lambda_t) & \text{for a.e. }t\in [0,1],\\
        \displaystyle h_{\bar u(t)}^\nu (\lambda_t)= \max_{v\in \R^k} h_{v}^\nu (\lambda_t) & \text{for a.e. }t\in [0,1], \\
        \nu\leq 0.&
    \end{cases}
\end{equation}
\end{theorem}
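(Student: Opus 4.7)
The plan is to view \eqref{eq:P'timalcontrolproblem} as a standard fixed-endpoint optimal control problem with smooth control-affine dynamics $\dot\gamma = \xi(\gamma,u)$, smooth state manifold $M$, and running cost $L(u) = \tfrac12\|u\|^2$, and to invoke the classical Pontryagin Maximum Principle, e.g.\ in the form of \cite[Thm. 12.10]{AS-GeometricControl}. The theorem at hand is essentially the specialization of that abstract statement to our \sF control system, so the bulk of the argument is a verification of hypotheses plus a translation of the abstract adjoint equation into the Hamiltonian form \eqref{eq:Hamiltonian_system}.

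First, I would check the regularity hypotheses needed to run the abstract PMP. The map $(q,u)\mapsto \xi(q,u)=\sum_{i=1}^k u_iX_i(q)$ is smooth in $q$ (since $M$ and the $X_i$ are smooth) and linear, hence smooth, in $u$, so the control system is as regular as required. The Lagrangian $\tfrac12\|u\|^2$ is smooth on $\R^k\setminus\{0\}$ because the reference norm is, and the strong convexity assumption ensures that its square extends to a $C^1$ function at the origin with vanishing differential there; this is more than enough regularity for the variational needle-perturbation argument underlying the PMP. The fixed-endpoint constraints $\gamma(0)=q_0$, $\gamma(1)=q_1$ are of the classical form treated in \cite{AS-GeometricControl}.

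Next I would unpack what the abstract PMP produces in our geometric language. It yields a Lipschitz lift $t\mapsto \lambda_t\in T^*_{\gamma(t)}M$ and a scalar $\nu\leq 0$ with $(\nu,\lambda_t)\neq 0$ for every $t\in[0,1]$, together with an adjoint equation and a pointwise maximality condition formulated in terms of the control-dependent Hamiltonian
\begin{equation*}
    h_u^\nu(\lambda) \;=\; \scal{\lambda}{\xi(\pi(\lambda),u)} + \frac{\nu}{2}\|u\|^2,
\end{equation*}
which is exactly \eqref{eq:hamiltonian_fun}. Using the standard fact that for a $C^1$ function $h\colon T^*M\to\R$ the Hamiltonian vector field is uniquely characterized by $d_\lambda h=\sigma(\cdot,\vec h(\lambda))$, the adjoint equation given by the PMP is precisely $\dot\lambda_t=\vec h^\nu_{\bar u(t)}(\lambda_t)$, and the maximum condition over $v\in\R^k$ is the standard one. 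The sign convention $\nu\leq 0$ is fixed so that in the normal case one is genuinely \emph{maximizing} $h_u^\nu(\lambda)$ rather than minimizing.

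The delicate point I expect is the interplay between the low regularity of $L$ at $u=0$ and the maximality condition. Because $\|\cdot\|^2$ need not be $C^2$ at the origin, one cannot in general differentiate the maximum condition through controls that pass through $u=0$; however, for the \emph{statement} being proved this is immaterial, since the PMP only asserts existence of $(\nu,\lambda_t)$ and pointwise maximality, not smoothness of $\bar u$, so the argument goes through. The finer analysis of what the maximality condition forces — in the normal case $\nu<0$ the unique maximizer in $u$ is determined by the duality of Lemma \ref{lem:banachduality}, while in the abnormal case $\nu=0$ it forces $\lambda_t$ to annihilate $\dis_{\gamma(t)}$ — belongs to the characterization of normal and abnormal extremals developed later and is not needed here.
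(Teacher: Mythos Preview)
Your approach is exactly what the paper does: it states the theorem as a direct application of \cite[Thm.~12.10]{AS-GeometricControl} without giving an independent proof. One small remark: the theorem is stated for an arbitrary \sF manifold, not a smooth one, so the regularity discussion of $\tfrac12\|u\|^2$ is unnecessary---since $h_u^\nu$ is affine in $\lambda$ for each fixed $u$, the Hamiltonian vector field $\vec h_{\bar u(t)}^\nu$ and the adjoint equation make sense regardless of the regularity of the norm in $u$.
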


\begin{definition}
If $\nu <0 $ in \eqref{eq:Hamiltonian_system}, $(\lambda_t)_{t\in [0,1]}$ is called \emph{normal extremal}. If $\nu =0 $ in \eqref{eq:Hamiltonian_system}, $(\lambda_t)_{t\in [0,1]}$ is called \emph{abnormal extremal}.
\end{definition}

\begin{remark}
By homogeneity of the Hamiltonian system, if $\nu \ne 0$ in \eqref{eq:Hamiltonian_system} we can fix $\nu=-1$.
\end{remark}

\subsection{Characterization of extremals and the exponential map}

In this section, we recall some characterizations of normal and abnormal extremal, which are well-known in \sr geometry. We include the proofs in our case, for the sake of completeness.

Recall that the annihilator $\ann(\dis)\subset T^*M$ is defined by 
\begin{equation}
    \ann(\dis)_q := \{\lambda \in T^*_qM \,:\, \scal{\lambda}{w}=0, \, \forall\, w\in \dis_q\},\qquad\forall\,q\in M.
\end{equation}

\begin{lemma}
\label{lem:abnormal_annihilator}
    Let $M$ be a \sF manifold. Let $(\gamma,\bar u)$ be a non-trivial solution to \eqref{eq:P'timalcontrolproblem} and let $(\lambda_t)_{t\in [0,1]}$ be its lift. Then, $(\lambda_t)_{t\in [0,1]}$ is an abnormal extremal if and only if $\lambda_t\neq 0$ and $\lambda_t\in\ann(\dis)_{\gamma(t)}$ for every $t\in [0,1]$, where $\gamma(t):=\pi(\lambda_t)$.
\end{lemma}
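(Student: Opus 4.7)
My plan is to read off the claim directly from the Pontryagin system \eqref{eq:Hamiltonian_system}, by analyzing the maximization condition in the two cases $\nu=0$ and $\nu=-1$ and exploiting the linearity of $h_u^\nu$ in the control $u$.

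For the forward implication, I would assume $(\lambda_t)$ is abnormal, so that $\nu=0$ and the Hamiltonian \eqref{eq:hamiltonian_fun} reduces to $h^0_v(\lambda_t)=\scal{\lambda_t}{\xi(\gamma(t),v)}$, which is \emph{linear} in $v\in\R^k$. The supremum $\max_{v\in\R^k}h^0_v(\lambda_t)$ is therefore finite, and necessarily attained at $\bar u(t)$, if and only if $\scal{\lambda_t}{\xi(\gamma(t),v)}=0$ for every $v\in\R^k$, which is precisely $\lambda_t\in\ann(\dis)_{\gamma(t)}$. To upgrade the PMP non-triviality $(\nu,\lambda_t)\neq 0$ from pointwise to global non-vanishing of $\lambda_t$, I would note that, since $\gamma$ is already fixed by $(\gamma,\bar u)$, in canonical coordinates $(q,p)$ on $T^*M$ the equation $\dot\lambda_t=\vec h^0_{\bar u(t)}(\lambda_t)$ becomes a \emph{linear} ODE in the fiber variable $p$ along $\gamma$; uniqueness for linear ODEs then propagates $\lambda_{t_0}\neq 0$ at one time to $\lambda_t\neq 0$ on the whole interval $[0,1]$.

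For the reverse implication, I would suppose $\lambda_t\neq 0$ and $\lambda_t\in\ann(\dis)_{\gamma(t)}$ for every $t\in[0,1]$, and argue by contradiction assuming that the lift is normal, so $\nu=-1$. Then, for every $v\in\R^k$,
\begin{equation*}
    h^{-1}_v(\lambda_t)=\scal{\lambda_t}{\xi(\gamma(t),v)}-\tfrac12\norm{v}^2=-\tfrac12\norm{v}^2,
\end{equation*}
whose unique maximum on $\R^k$ is attained at $v=0$ since $\normdot$ is strictly positive off the origin. The maximality line of \eqref{eq:Hamiltonian_system} then forces $\bar u(t)=0$ for a.e.\ $t$, hence $\dot\gamma\equiv 0$ via \eqref{eq:admissible_curve}, contradicting the non-triviality of $(\gamma,\bar u)$. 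This forces $\nu=0$, i.e.\ $(\lambda_t)$ is abnormal, and concludes the proof.

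The argument is essentially formal, consisting in a careful dissection of the PMP; the only mildly delicate point I anticipate is the propagation of $\lambda_t\neq 0$ across $[0,1]$ in the forward direction, which rests on the fiber-linearity of $\vec h^0_{\bar u(t)}$ and becomes transparent in local canonical coordinates. No geometric input beyond Theorem \ref{thm:PMP} and the strict positivity of the norm is required.
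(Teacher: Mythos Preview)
Your proof is correct and follows essentially the same approach as the paper: both argue the forward implication from the linearity of $v\mapsto h^0_v(\lambda_t)$ (a linear functional has a finite maximum only when it vanishes identically), and both handle the converse by showing that $\nu=-1$ together with $\lambda_t\in\ann(\dis)$ forces $\bar u\equiv 0$ and hence a trivial solution. Your explicit justification of $\lambda_t\neq 0$ for all $t$ via fiber-linearity of the Hamiltonian ODE along $\gamma$ is a slightly more detailed treatment of a point the paper takes as part of the PMP non-triviality, but this is not a substantive difference.
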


\begin{proof}
The claim is an easy consequence of the maximization property of the Hamiltonian along the dynamic. More precisely, by \eqref{eq:Hamiltonian_system}, we have 
\begin{equation}
\label{eq:maximization_aux}
    h_{\bar u(t)}^\nu(\lambda_t)=\max_{v\in\R^k}h_v^\nu(\lambda_t), \qquad\text{for a.e. }t\in [0,1],
\end{equation}
where the function $h_u^\nu$ is defined in \eqref{eq:hamiltonian_fun}. Assume that $\nu=0$, then \eqref{eq:maximization_aux} reads as 
\begin{equation}
    \scal{\lambda_t}{\xi(\gamma(t),\bar u(t))}=\max_{v\in\R^k}\,\scal{\lambda_t}{\xi(\gamma(t),v)},\qquad\text{for a.e. }t\in [0,1],
\end{equation}
with $\lambda_t\neq0$ for every $t\in [0,1]$. Now, since $\xi$ is linear in the controls, the right-hand side is $+\infty$ unless $\lambda_t\in\ann(\dis)_{\gamma(t)}$ for a.e. $t\in [0,1]$. By continuity of $t\mapsto\lambda_t$, this is true for every $t\in[0,1]$. Conversely, assume that $\lambda_t\in\ann(\dis)_{\gamma(t)}$, then the maximization condition \eqref{eq:maximization_aux} becomes
\begin{equation}
    \frac \nu 2 \|\bar u(t)\|^2=\max_{v\in\R^k}\frac \nu 2 \|v\|^2.
\end{equation}
Since $\nu\leq 0$, we may distinguish two cases, either $\nu=0$ and the extremal is abnormal, or $\nu=-1$ and the extremal is normal. In the second case, the optimal control must be $0$, so that $\dot\lambda_t=0$ and the extremal is constant and constantly equal to $\lambda_0$. Since we are assuming $(\lambda_t)_{t\in[0,1]}$ to be non-constant the latter can not happen.
\end{proof}

The \emph{\sF (or maximized) Hamiltonian} is defined as 
\begin{equation}
\label{eq:sf_hamiltonian}
    H(\lambda) := \max_{u\in \R^k} h_{u}^{-1} (\lambda_t) = \max_{u\in \R^k} \Bigg(\sum_{i=1}^k \scal{\lambda}{ u_i X_i(\pi(\lambda))} - \frac{\norm{u}^2}{2}\Bigg)
\end{equation}
The \sF Hamiltonian can be explicitly characterized in terms of the dual norm $\normdot_*$, making use of Lemma \ref{lem:banachduality}. 

\begin{lemma}
\label{lem:maximized_hamiltonian}
    Let $M$ be a smooth \sF manifold. Given $\lambda \in T^*M$, define $\hat\lambda=(\hat\lambda_i)_{i=1,\ldots,k}$ where $\hat\lambda_i:=\scal{\lambda}{X_i(\pi(\lambda))}$ for every $i=1,\ldots,k$. Then, $H^{-1}(0)=\ann(\dis)$ and 
    \begin{equation}
        H(\lambda)=\frac12\big\|\hat\lambda\big\|^2_*\,,\qquad\forall\lambda \in T^*M\setminus\ann(\dis).
    \end{equation}
    where $\normdot_*$ is the dual norm to $\normdot$ in $\R^k$. Moreover, $H\in C^\infty(T^*M\setminus\ann(\dis))\cap C^1(T^*M)$. 
\end{lemma}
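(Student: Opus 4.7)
The plan is to rewrite the maximization defining $H$ as a Legendre--Fenchel transform in $\mathbb{R}^k$ depending only on the horizontal components of $\lambda$, then compute that transform explicitly, and finally transfer the regularity of the dual norm to $H$ via the chain rule.

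\textbf{Step 1 (reduction to a transform on $\mathbb{R}^k$).} Plugging the generating frame into the definition \eqref{eq:sf_hamiltonian} and using $\xi(q,u)=\sum_{i}u_iX_i(q)$, the Hamiltonian to be maximized at $\lambda$ becomes
\begin{equation*}
    h_u^{-1}(\lambda) \;=\; \sum_{i=1}^k \hat\lambda_i\, u_i - \tfrac12 \norm{u}^2 \;=\; \scal{\hat\lambda}{u}_{\R^k} - \tfrac12\norm{u}^2,
\end{equation*}
so $H(\lambda)$ depends on $\lambda$ only through $\hat\lambda\in\R^k$. Writing $\phi(p):=\sup_{u\in\R^k}\big(\scal{p}{u}_{\R^k}-\tfrac12\norm{u}^2\big)$, we thus have $H(\lambda)=\phi(\hat\lambda)$, which recasts the problem as a computation on the normed space $(\R^k,\normdot)$.

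\textbf{Step 2 (identification of $\phi$).} I would compute $\phi$ by optimizing along rays: for each unit vector $v$ with $\norm{v}=1$, set $u=tv$ and maximize $t\scal{p}{v}_{\R^k}-t^2/2$ over $t\in\R$, giving $\tfrac12\scal{p}{v}_{\R^k}^2$. Supping over $\norm{v}=1$ and using the definition of the dual norm yields $\phi(p)=\tfrac12\norm{p}_*^2$. Combined with Step 1, this gives the formula
\begin{equation*}
    H(\lambda)=\tfrac12\big\|\hat\lambda\big\|_*^2 \qquad \text{for every }\lambda\in T^*M.
\end{equation*}
The characterization $H^{-1}(0)=\ann(\dis)$ is then immediate, since $\hat\lambda=0$ iff $\scal{\lambda}{X_i(\pi(\lambda))}=0$ for all $i$, iff $\lambda$ annihilates $\dis_{\pi(\lambda)}$.

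\textbf{Step 3 (regularity).} The map $T^*M\ni\lambda\mapsto\hat\lambda\in\R^k$ is a smooth bundle morphism (linear on fibers with smoothly varying coefficients given by the frame $\{X_i\}$). For the second factor, strong convexity of $\normdot$ together with smoothness $\normdot\in C^\infty(\R^k\setminus\{0\})$ implies, via the Legendre transform and the inverse function theorem, that $\normdot_*\in C^\infty(\R^k\setminus\{0\})$; hence $p\mapsto\tfrac12\norm{p}_*^2$ is smooth away from the origin. Composition gives $H\in C^\infty(T^*M\setminus\ann(\dis))$.

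\textbf{Step 4 (global $C^1$ regularity).} The main subtle point, which I expect to be the only real obstacle, is showing $H\in C^1(T^*M)$ across $\ann(\dis)$. For this it suffices to prove that $p\mapsto\tfrac12\norm{p}_*^2$ is $C^1(\R^k)$. By Proposition \ref{prop:propunderduality}(ii), strong convexity of $\normdot$ upgrades $\normdot_*$ to class $C^{1,1}(\R^k\setminus\{0\})$; a direct application of Lemma \ref{lem:banachduality} to the dual norm shows that the differential of $\tfrac12\norm{\cdot}_*^2$ at a non-zero $p$ equals the dual vector $p^{**}\in\R^k$, whose norm is $\norm{p}_*$. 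As $p\to 0$ this differential tends to $0$, matching the fact that $\tfrac12\norm{\cdot}_*^2$ is differentiable at $0$ with vanishing differential (quadratic control in $p$). Continuity of the differential at $0$ then yields $\tfrac12\norm{\cdot}_*^2\in C^1(\R^k)$, and composing with the smooth bundle map $\lambda\mapsto\hat\lambda$ gives $H\in C^1(T^*M)$, completing the proof.
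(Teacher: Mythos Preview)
Your proposal is correct and follows essentially the same route as the paper: reduce $H(\lambda)$ to a Legendre--Fenchel transform in $\R^k$ through the horizontal components $\hat\lambda$, evaluate it as $\tfrac12\norm{\hat\lambda}_*^2$, and deduce regularity from smoothness of $\normdot_*$ on $\R^k\setminus\{0\}$ via the inverse function theorem applied to the duality map. The only cosmetic differences are that the paper computes the supremum by locating the unique critical point $u=\hat\lambda^*$ of $F(u)=\langle\hat\lambda,u\rangle-\tfrac12\norm{u}^2$ through Lemma~\ref{lem:banachduality}, whereas you optimize first in the radial direction and then over the unit sphere; and your Step~4 spells out the $C^1$-regularity at $\ann(\dis)$ more explicitly than the paper does.
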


\begin{proof}
    Let $q\in M$. Assume that $\lambda\in T_q^*M\setminus\ann(\dis)_q$ and set
    \begin{equation}
        F(u):= \scal{\hat\lambda}{u} - \frac{\norm{u}^2}{2},\qquad\forall u\in\R^k.
    \end{equation}
    Since $\normdot$ is smooth, its square is a $C^1$-function, thus $F\in C^1(\R^k)$. Moreover, by homogeneity of the norm, $F(u)\to-\infty$ as $\norm{u}\to\infty$, hence $F$ admits a maximum. We compute its differential:
    \begin{equation}
    \label{eq:critical_point_F}
        d_uF=\hat\lambda - \norm{u}\cdot d_u\normdot= \hat\lambda - u^*,
    \end{equation}
    according to Lemma \ref{lem:banachduality}. Therefore, $F$ has a unique critical point (which is also the unique point of maximum) given by $u=u^{**}=\hat\lambda^*$. Finally, using also \eqref{eq:dualvector}, this implies that
    \begin{equation}
        H(\lambda)=\max_{u\in\R^k} F(u) = F(\hat\lambda^*)=\scal{\hat\lambda}{\hat\lambda^*}-\frac{1}{2}\big\|\hat\lambda\big\|_*^2=\frac{1}{2}\big\|\hat\lambda\big\|_*^2.
    \end{equation}
    To conclude, observe that if $\lambda\in\ann(\dis)_q$, then $\hat\lambda=0$ and 
    \begin{equation}
        H(\lambda)=\max_{u\in \R^k}\left(-\frac{\norm{u}^2}{2}\right)=0.
    \end{equation}
    Conversely, if $H(\lambda)=0$ we must have $\lambda\in\ann(\dis)$. Indeed, if this is not the case, $\hat\lambda\neq 0$ and hence $\big\|\hat\lambda\big\|_*\neq0$, giving a contradiction. This proves that $H^{-1}(0)=\ann(\dis)$. 
    
    Finally, we prove the regularity of $H$. Note that $\normdot_*$ is a smooth norm itself. Indeed, as $\normdot$ is smooth and strongly convex, the dual map of Lemma \ref{lem:banachduality}, which is
    \begin{equation}
        v^*=\norm{v} d_v\normdot=\frac12 d_v\big(\normdot^2\big)=:N(v),
    \end{equation}
    is smooth on $\{v\neq 0\}$, invertible and with invertible differential on $\{v\neq 0\}$. Thus, by the inverse function theorem, $N^{-1}\in C^\infty(\R^k\setminus \{0\})$. But now the dual norm satisfies \eqref{eq:dualvector}, hence
    \begin{equation}
        \|\hat\lambda\|_*=\|N^{-1}(\hat\lambda)\|,
    \end{equation}
    and the claim follows. Therefore, we deduce that $H\in  C^\infty(T^*M\setminus H^{-1}(0))\cap C^1(T^*M) = C^\infty(T^*M\setminus\ann(\dis))\cap C^1(T^*M)$.
\end{proof}

\begin{corollary}
\label{cor:control_normal_extremal}
Let $(\lambda_t)_{t\in[0,1]}$ be a normal extremal for the problem \eqref{eq:Hamiltonian_system}, then the associated control is given by
\begin{equation}
\label{eq:control_normal_extremal}
        \bar u(t)=\hat\lambda_t^*, \qquad\text{for a.e. }t\in [0,1].
    \end{equation}
\end{corollary}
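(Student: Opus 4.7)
The plan is to read the statement as an immediate extraction of what was already established in the proof of Lemma \ref{lem:maximized_hamiltonian}, specialized to the case of a normal extremal where $\nu=-1$.

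First, I would invoke Pontryagin's Maximum Principle (Theorem \ref{thm:PMP}) in the normal case: since $(\lambda_t)_{t\in[0,1]}$ is a normal extremal, we may fix $\nu=-1$, so the maximization condition in \eqref{eq:Hamiltonian_system} reads
\begin{equation}
    h_{\bar u(t)}^{-1}(\lambda_t) = \max_{v \in \R^k} h_v^{-1}(\lambda_t), \qquad \text{for a.e.\ } t\in [0,1].
\end{equation}
Using the notation $\hat\lambda_t = (\scal{\lambda_t}{X_i(\gamma(t))})_{i=1,\dots,k}$ introduced in Lemma \ref{lem:maximized_hamiltonian}, and expanding the definition \eqref{eq:hamiltonian_fun} of the Hamiltonian, the function to be maximized at each time $t$ is exactly
\begin{equation}
    u \longmapsto \scal{\hat\lambda_t}{u} - \frac{\norm{u}^2}{2},
\end{equation}
which is precisely the function $F$ considered in the proof of Lemma \ref{lem:maximized_hamiltonian}.

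Next, I would recall the conclusion of that proof: for any $\hat\lambda$, the function $F$ is $C^1$, coercive (since $F(u)\to -\infty$ as $\norm{u}\to\infty$ by homogeneity), and admits a unique critical point, namely $u = \hat\lambda^*$, which is therefore the unique maximizer. This identification of the maximizer was obtained via \eqref{eq:critical_point_F} together with Lemma \ref{lem:banachduality}. Applying this at each time $t$ with $\hat\lambda = \hat\lambda_t$, the uniqueness of the maximizer forces $\bar u(t) = \hat\lambda_t^*$ for a.e.\ $t\in [0,1]$, which is exactly \eqref{eq:control_normal_extremal}. The case $\hat\lambda_t = 0$ is automatic: the dual map sends $0$ to $0$, and the maximizer of $-\tfrac12\norm{u}^2$ is $u=0$.

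There is essentially no obstacle here — the corollary is a direct rephrasing of Lemma \ref{lem:maximized_hamiltonian} combined with PMP. The only point worth flagging is that the identification of the maximizer genuinely requires the smoothness and strong convexity of $\normdot$ (used to guarantee $\normdot_*\in C^1$ and hence the explicit form of the dual element through Lemma \ref{lem:banachduality}); without this regularity one would only get $\bar u(t)$ as a subgradient-type selection rather than the clean formula $\bar u(t) = \hat\lambda_t^*$.
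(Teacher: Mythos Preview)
Your proof is correct and follows essentially the same approach as the paper: both invoke the maximization condition from \eqref{eq:Hamiltonian_system} with $\nu=-1$ and then appeal to the analysis in the proof of Lemma~\ref{lem:maximized_hamiltonian} (specifically the identification of the unique maximizer of $F$ via \eqref{eq:critical_point_F}) to conclude $\bar u(t)=\hat\lambda_t^*$. Your additional remarks on the degenerate case $\hat\lambda_t=0$ and on the role of the regularity assumptions are correct but go slightly beyond what the paper records.
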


\begin{proof}
This is again a consequence of the maximality condition in \eqref{eq:Hamiltonian_system}, together with the characterization of the \sF Hamiltonian. In particular, we must have \begin{equation}
\label{eq:critical_point}
    h^{-1}_{\bar u(t)}(\lambda_t)=\max_{u\in\R^k} h^{-1}_u(\lambda_t)=H(\lambda_t), \qquad\forall\,t\in [0,1],
\end{equation}
From this and \eqref{eq:critical_point_F}, we deduce that the control associated with $(\lambda_t)_{t\in[0,1]}$ satisfies the identity \eqref{eq:control_normal_extremal}. 
\end{proof}

The next result relates the system \eqref{eq:Hamiltonian_system} with the Hamiltonian system associated with the \sF Hamiltonian \eqref{eq:sf_hamiltonian}. A similar statement can be found in \cite[Prop. 12.3]{AS-GeometricControl}. The main difference is the regularity of the Hamiltonian function, which in the classical statement is assumed to be smooth outside the zero section.

\begin{prop}
Let $M$ be a smooth \sF manifold. Let $H\in C^\infty(T^* M \setminus \ann(\dis))\cap C^1(T^*M)$ be the \sF Hamiltonian defined in \eqref{eq:sf_hamiltonian}. If $(\lambda_t)_{t\in[0,1]}$ is a normal extremal, and $\lambda_0\in\ann(\dis)$, then $\lambda_t\equiv\lambda_0$. If $\lambda_0\in T^* M \setminus \ann(\dis)$, then 
\begin{equation}\label{eq:maximizedsystem}
    \dot \lambda_t = \vec H(\lambda_t).
\end{equation}
Conversely, if $(\lambda_t)_{t\in[0,1]}$ is a solution of \eqref{eq:maximizedsystem} with initial condition $\lambda_0\in T^* M \setminus \ann(\dis)$, then there exists $\bar u\in \Ltwo$ such that $(\lambda_t)_{t\in[0,1]}$ is a normal extremal with control $\bar u$ (i.e. the pair $(-1,(\lambda_t))$ is a solution of \eqref{eq:Hamiltonian_system}).
\end{prop}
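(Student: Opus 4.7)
The plan is to unify the three claims through a single key identity relating $\vec H$ to the control-dependent Hamiltonian vector fields $\vec{h^{-1}_u}$, together with the fact that $H$ is a first integral of its own flow.

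First I would establish that for every $\lambda\in T^*M$,
\begin{equation*}
    \vec H(\lambda) = \vec{h^{-1}_{\hat\lambda^*}}(\lambda).
\end{equation*}
On $T^*M\setminus\ann(\dis)$, where $H$ is smooth, this is an envelope-theorem argument: by the proof of Corollary \ref{cor:control_normal_extremal}, $u^*(\lambda):=\hat\lambda^*$ is the unique (and smooth in $\lambda$) maximizer of $u\mapsto h^{-1}_u(\lambda)$, so the contribution from differentiating $u^*$ vanishes against the first-order optimality condition and $d_\lambda H = d_\lambda h^{-1}_{u^*(\lambda)}$. On $\ann(\dis)$, Lemma \ref{lem:maximized_hamiltonian} together with $H\ge 0$ (insert $u=0$ in \eqref{eq:sf_hamiltonian}) shows that $\ann(\dis)=H^{-1}(0)$ is the global minimum set of the $C^1$ function $H$, hence $d_\lambda H = 0$ there and $\vec H(\lambda)=0$; on the other hand $\hat\lambda=0$ implies $\hat\lambda^*=0$, and $h^{-1}_0\equiv 0$ as a function on $T^*M$, so $\vec{h^{-1}_{\hat\lambda^*}}(\lambda)=0$ as well.

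Parts 1 and 2 then follow quickly. For any normal extremal, Corollary \ref{cor:control_normal_extremal} gives $\bar u(t)=\hat\lambda_t^*$ for a.e.\ $t$, so the lift equation in \eqref{eq:Hamiltonian_system} reads $\dot\lambda_t = \vec{h^{-1}_{\bar u(t)}}(\lambda_t) = \vec H(\lambda_t)$, which proves \eqref{eq:maximizedsystem} and hence part 2. By antisymmetry of $\sigma$, $H$ is conserved along its own flow, since $\tfrac{d}{dt}H(\lambda_t) = dH(\vec H)(\lambda_t) = \sigma(\vec H,\vec H)(\lambda_t)=0$. For part 1, the assumption $\lambda_0\in\ann(\dis)$ forces $H(\lambda_0)=0$, so $H(\lambda_t)\equiv 0$, hence $\lambda_t\in H^{-1}(0)=\ann(\dis)$ for all $t$. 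Consequently $\bar u(t)=\hat\lambda_t^*=0$ a.e., so $\vec{h^{-1}_{\bar u(t)}}=\vec{h^{-1}_0}=0$, and $\dot\lambda_t\equiv 0$ yields $\lambda_t\equiv\lambda_0$.

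For the converse (part 3), I would run the argument in reverse. Given a trajectory $(\lambda_t)$ of $\vec H$ with $\lambda_0\notin\ann(\dis)$, conservation of $H$ combined with Lemma \ref{lem:maximized_hamiltonian} gives $H(\lambda_t)=H(\lambda_0)>0$, so $\lambda_t$ remains in the smooth locus $T^*M\setminus\ann(\dis)$ for every $t\in[0,1]$. Setting $\bar u(t):=\hat\lambda_t^*$, continuity of $t\mapsto\lambda_t$ and of the dual map on $\R^k\setminus\{0\}$ provides a continuous, hence bounded, and therefore $\Ltwo$ control. The key identity gives $\dot\lambda_t=\vec H(\lambda_t)=\vec{h^{-1}_{\bar u(t)}}(\lambda_t)$, while $h^{-1}_{\bar u(t)}(\lambda_t)=\max_v h^{-1}_v(\lambda_t)$ holds by the very construction of $\hat\lambda^*$ as the maximizer, so \eqref{eq:Hamiltonian_system} is satisfied with $\nu=-1$. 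The main subtlety throughout is the $C^1$-only regularity of $H$: one cannot naively invoke the envelope theorem nor smooth ODE theory on all of $T^*M$, and my strategy addresses this by treating $\ann(\dis)$ separately via the minimum property of $H$, then using the first-integral property to confine each flow either to the smooth locus (for part 3) or to the zero set (for part 1).
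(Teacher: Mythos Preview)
Your proof is correct and follows essentially the same route as the paper: both hinge on an envelope-type identity $d_{\lambda_t}H = d_{\lambda_t}h^{-1}_{\bar u(t)}$ along the extremal, combined with the conservation of $H$ to confine trajectories to the smooth locus or to $\ann(\dis)$. The only cosmetic difference is that you package the envelope step as a global pointwise identity $\vec H(\lambda)=\vec{h^{-1}_{\hat\lambda^*}}(\lambda)$ (differentiating through the smooth maximizer $u^*(\lambda)$), whereas the paper fixes $\bar u(t)$ and observes that $\lambda\mapsto H(\lambda)-h^{-1}_{\bar u(t)}(\lambda)$ has a minimum at $\lambda_t$; the latter avoids invoking smoothness of $u^*$, but in this setting both are valid.
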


\begin{proof}
If $(\lambda_t)_{t\in [0,1]}$ is a normal extremal, there exists an optimal control $\bar u$ such that the pair $(-1,(\lambda_t))$ is a solution to \eqref{eq:Hamiltonian_system}. Now, if the initial covector $\lambda_0\in\ann(\dis)$, then $\lambda_t\in\ann(\dis)$ for all $t\in [0,1]$, as the \sF Hamiltonian is constant along the motion and $H(\lambda_0)=0$. Using Corollary \ref{cor:control_normal_extremal}, this implies that the control $\bar u\equiv 0$ and that $\lambda_t\equiv \lambda_0$ as claimed. If $\lambda_0\in T^*M\setminus\ann(\dis)$, we follow the blueprint of \cite[Prop. 12.3]{AS-GeometricControl}. By the definition of \sF Hamiltonian \eqref{eq:sf_hamiltonian}, we have 
\begin{equation}
    H(\lambda)\geq h_{\bar u(t)}(\lambda),\qquad\forall\,\lambda\in T^*M,\, t\in [0,1],
\end{equation}
with equality along the dynamic $t\mapsto \lambda_t$. This means that the function $T^*M\ni \lambda\mapsto H(\lambda)-h_{\bar u(t)}(\lambda)$ has a maximum at $\lambda_t$. Therefore, using that $H\in C^1(T^*M)$, we deduce that
\begin{equation}
    d_{\lambda_t}H=d_{\lambda_t}h_{\bar u(t)}, \qquad\forall\,t\in [0,1].
\end{equation}
Such an equality immediately implies that the Hamiltonian vector fields are equal along the dynamic, namely 
\begin{equation}
    \vec{H}(\lambda_t)=\vec{h}_{\bar u(t)}(\lambda_t),\qquad\forall\,t\in[0,1].
\end{equation}

For the converse implication, recall that the Hamiltonian is constant along the motion. So, if $(\lambda_t)_{t\in[0,1]}$ is a solution to \eqref{eq:maximizedsystem} with initial condition $\lambda_0\in T^*M\setminus \ann(\dis)$, then $H(\lambda_0)=H(\lambda_t)$ and $\lambda_t\in  T^*M\setminus \ann(\dis)$, for every $t\in [0,1]$. 
Since $H$ is smooth outside the annihilator bundle of $\dis$, we deduce that $(\lambda_t)_{t\in [0,1]}$ is uniquely determined by $\lambda_0$ and, repeating verbatim the argument of \cite[Prop. 12.3]{AS-GeometricControl}, we conclude the proof.
\end{proof}




Fix $t\in\R$ and consider the \emph{(reduced) flow of} $\vec H$ on $T^*M\setminus\ann(\dis)$: 
\begin{equation}
        e_{\mathrm{r}}^{t\vec H}:\mathscr{A}_t\to T^*M\setminus\ann(\dis),
    \end{equation}
where $\mathscr{A}_t\subset T^*M\setminus\ann(\dis)$ is the set of covectors such that the associated maximal solution $(\lambda_s)_{s\in I}$, with $I\subset\R$ such that $0\in I$, is defined up to time $t$. Under the assumption of completeness of $(M,\di_{SF})$, $\vec H$ is complete as a vector field on $T^*M\setminus\ann(\dis)$ (and thus $\mathscr A_t=T^*M\setminus\ann(\dis)$). We state below this result without proof as the latter is analogous to the classical \sr proof, cf. \cite[Prop.\ 8.38]{ABB-srgeom}, in view of Lemma \ref{lem:banachduality} and Lemma \ref{lem:maximized_hamiltonian}.

\begin{prop}
\label{prop:maximal_extension_normal_geod}
Let $M$ be a smooth complete \sF manifold. Then any normal extremal $t\mapsto\lambda_t=e^{t\vec H}_{\mathrm r}(\lambda_0)$, with $\lambda_0\in T^*M\setminus\ann(\dis)$, is extendable to $\R$.   
\end{prop}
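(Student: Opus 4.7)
The plan is to adapt the classical sub-Riemannian argument from \cite[Prop.\ 8.38]{ABB-srgeom}, leveraging the characterization of the sub-Finsler Hamiltonian established in Lemma \ref{lem:maximized_hamiltonian} and the control formula from Corollary \ref{cor:control_normal_extremal}. Fix $\lambda_0\in T^*M\setminus\ann(\dis)$ and suppose, by contradiction, that the maximal forward interval of existence of $t\mapsto\lambda_t=e^{t\vec H}_{\mathrm r}(\lambda_0)$ is $[0,T_{\max})$ with $T_{\max}<+\infty$ (the argument in negative time is analogous).

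First, I would invoke conservation of $H$ along the Hamiltonian flow. Since $H\in C^1(T^*M)$ and $\vec H$ is its Hamiltonian vector field on the open set $T^*M\setminus\ann(\dis)$, the value $H(\lambda_t)=H(\lambda_0)>0$ is constant on $[0,T_{\max})$. In particular, $\lambda_t$ never reaches $\ann(\dis)=H^{-1}(0)$, so the whole trajectory remains in the smooth domain of $\vec H$. Next, by Corollary \ref{cor:control_normal_extremal} and Lemma \ref{lem:maximized_hamiltonian}, the minimal control associated to $\gamma(t):=\pi(\lambda_t)$ satisfies $\|\bar u(t)\|=\|\hat\lambda_t\|_*=\sqrt{2H(\lambda_0)}$, a positive constant. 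Therefore $\gamma$ is an admissible curve of constant horizontal speed, so its length on $[0,T_{\max})$ is bounded by $T_{\max}\sqrt{2H(\lambda_0)}$. Completeness of $(M,\di_{SF})$ then forces $\gamma([0,T_{\max}))$ to lie in a compact set $K\subset M$ and to admit a limit $q^*\in M$ as $t\to T_{\max}^-$.

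The remaining step is to show that $\lambda_t$ itself stays in a compact subset of $T^*M\setminus\ann(\dis)$, preventing blow-up in the fibers. I would work in local coordinates $(q,p)$ around $q^*$, in which Hamilton's equations read $\dot q=\partial_p H$, $\dot p=-\partial_q H$ with $H$ smooth on $\{p\notin\ann(\dis)_q\}$ and $2$-homogeneous in $p$. Using the conserved quantity $H(q(t),p(t))=H(\lambda_0)>0$, the boundedness of $q(t)$ in $K$, and the explicit form $H(q,p)=\tfrac12\|\hat p\|_*^2$ from Lemma \ref{lem:maximized_hamiltonian} (which gives uniform control on the horizontal components $\hat p_i=\langle p,X_i(q)\rangle$), one derives a differential inequality of the form $\tfrac{d}{dt}|p|^2\le C(1+|p|^2)$, with $C$ depending only on $K$ and on the derivatives of $H$ on the relevant compact region. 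A Gr\"onwall-type estimate then yields an a priori bound for $|p(t)|$ on $[0,T_{\max})$, so the trajectory is contained in a compact subset of the smoothness domain of $\vec H$. The standard ODE extension theorem contradicts the maximality of $T_{\max}$, completing the argument.

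The main obstacle is exactly this last step: the level set $\{H=H(\lambda_0)\}\cap\pi^{-1}(K)$ is \emph{not} compact by itself, since adding an annihilator covector to $\lambda_t$ does not alter $H$. Controlling the annihilator component of $p(t)$ is therefore the nontrivial piece, and it is precisely what the regularity of $H$ on $T^*M\setminus\ann(\dis)$ and the explicit formula in terms of the dual norm — both established in the preceding subsection — are tailored to deliver. The remainder of the proof is a direct transcription of the sub-Riemannian one, which is why the authors omit it.
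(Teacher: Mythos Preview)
Your proposal is correct and follows exactly the approach the paper indicates: the authors explicitly omit the proof, stating it is analogous to \cite[Prop.\ 8.38]{ABB-srgeom} in view of Lemma~\ref{lem:banachduality} and Lemma~\ref{lem:maximized_hamiltonian}, and your outline is precisely that adaptation. The ``obstacle'' you flag is in fact resolved by the mechanism you describe: since $H(q,p)=\tfrac12\|\hat p\|_*^2$ depends on $p$ only through $\hat p$, the vertical equation $\dot p=-\partial_q H$ has the form $\dot p=-\sum_i \big(\partial_{\hat p_i}\tfrac12\|\hat p\|_*^2\big)\,\langle p,\partial_q X_i(q)\rangle$, where the first factor is bounded (because $\|\hat p\|_*$ is conserved and $\normdot_*$ is smooth away from $0$) and the second is linear in $p$ with coefficients bounded on $K$, yielding the Gr\"onwall inequality directly.
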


\begin{definition}[Sub-Finsler exponential map]
    Let $(M,\di_{SF})$ be a complete smooth \sF manifold and let $q\in M$. Then, the \emph{\sF exponential map} at $q$ is defined as
    \begin{equation}
        \exp_q(\lambda):=
        \begin{cases}
            \pi\circ e^{\vec H}_{\rm r}(\lambda) & \text{if }\lambda\in T^*_qM\setminus \ann(\dis)_q,\\
            q & \text{if }\lambda\in\ann(\dis)_q.
        \end{cases}
    \end{equation}
\end{definition}

\begin{remark}
\label{rmk:exponential_map_not_reg}
    The exponential map is smooth in $T_q^*M\setminus\ann(\dis)_q$ and, by homogeneity, we also have $\exp_q\in C^1(T^*_qM)$. However, note that the we do not have spatial regularity. More precisely, setting $\mathcal{E}: T^*M\to M$ to be $\mathcal{E}(q,\lambda)= \exp_q(\lambda)$, then
    \begin{equation}
        \mathcal{E}\in C(T^*M)\cap C^\infty(T^*M\setminus \ann(\dis)),
    \end{equation}
    and we can not expect a better spatial regularity as the vector field $\vec H$ is only continuous on the annihilator bundle. 
\end{remark}

\subsection{The end-point map}

Let $M$ be a \sF manifold, consider $u\in \Ltwo$ and fix $t_0\in [0,1]$. Define the non-autonomous vector field
\begin{equation}
    \xi_{u(t)}(q):=\xi(q,u(t))=\sum_{i=1}^k u_i(t) X_i(q),\qquad\forall\, q\in M, t\in[0,1],
\end{equation}
and denote by $P^u_{t_0,t}: M\to M$ its flow. This means that, for $q_0 \in M$, the curve $t\mapsto P^u_{t_0,t}(q_0)$ is the unique maximal solution to the Cauchy problem 
\begin{equation*}
\begin{cases}
    \displaystyle\dot{\gamma}(t)=\sum_{i=1}^{m} u_{i}(t) X_{i}(\gamma(t)),\\
    \gamma(t_0)=q_{0}.
\end{cases}
\end{equation*}
Moreover, we denote by $\gamma_u: I\to M$, the trajectory starting at $q_0$ and corresponding to $u$, namely $\gamma_u(t):=P_{0,t}^u(q_0)$, for every $t\in I$, which is the maximal interval of definition of $\gamma_u$.

\begin{definition}[End-point map]
    Let $M$ be a \sF manifold and let $q_0\in M$. We call $\mathcal{U}_{q_0}\subset \Ltwo$ the open set of controls for which the corresponding trajectory $\gamma_u$ is defined on the interval $[0,1]$. We define the \emph{end-point map} based at $q_0$ as
\begin{equation*}
    E_{q_0} : \mathcal{U}_{q_0} \to M, \qquad E_{q_0}(u)= \gamma_u(1).
\end{equation*} 
\end{definition}

\begin{lemma}[Differential of the end-point map, {\cite[Prop. 8.5]{ABB-srgeom}}]\label{lem:diffendpoint}
Let $M$ be a \sF manifold. The end-point map is smooth on $\mathcal{U}_{q_0}$. Moreover, for every $u\in \mathcal{U}_{q_0}$ the differential $d_{u} E_{q_{0}}: \Ltwo \rightarrow T_{E_{q_0}(u)} M$ has the expression 
\begin{equation*}
    d_{u} E_{q_{0}}(v)=\int_{0}^{1}\left(P_{t, 1}^{u}\right)_{*} \xi_{v(t)}(E_{q_0}(u)) \de t, \qquad \text{for every }v \in \Ltwo.
\end{equation*}
\end{lemma}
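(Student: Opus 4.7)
The plan is to adapt the classical sub-Riemannian proof (cf.\ \cite[Prop.\ 8.5]{ABB-srgeom}) essentially verbatim to the smooth \sF setting, since the norm $\normdot$ plays no role in this particular argument: only the smoothness of the generating frame $\{X_1,\ldots,X_k\}$ and the linear dependence of $\xi_{u(t)}$ on the control are used. Smoothness of $E_{q_0}$ on $\mathcal U_{q_0}$ is a direct consequence of the classical theorem on smooth dependence on parameters for ODEs, applied to the non-autonomous vector field $\xi_{u(t)}=\sum_i u_i(t)X_i$. A Gr\"onwall-type estimate combined with the Cauchy--Schwarz inequality (to pass from $L^1$ to $L^2$ bounds on controls) shows that the map $u\mapsto P^u_{0,1}(q_0)$ is smooth on the Banach space $\Ltwo$, and the same estimates yield openness of $\mathcal U_{q_0}$.

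For the differential, I would perturb the control to $u+sv$, set $\gamma_s(t):=P^{u+sv}_{0,t}(q_0)$, and differentiate the trajectory equation at $s=0$. The infinitesimal variation $\eta(t):=\partial_s|_{s=0}\gamma_s(t)\in T_{\gamma_u(t)}M$ then solves the linear inhomogeneous Cauchy problem
\begin{equation*}
    \dot\eta(t) = D_{\gamma_u(t)}\xi_{u(t)}\cdot\eta(t) + \xi_{v(t)}(\gamma_u(t)), \qquad \eta(0)=0.
\end{equation*}
The key observation is that the fundamental solution of the homogeneous equation is precisely the flow pushforward $(P^u_{0,t})_*$ based at $q_0$: this is obtained by differentiating the identity $P^u_{0,t}\circ P^u_{0,0}^{-1}(q_0)=\gamma_u(t)$ in the initial condition, which produces the linearization along $\gamma_u$.

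The formula then follows from Duhamel's variation of constants, which gives
\begin{equation*}
    \eta(1) = \int_0^1 (P^u_{0,1})_* \circ \bigl((P^u_{0,t})_*\bigr)^{-1}\, \xi_{v(t)}(\gamma_u(t))\, \de t,
\end{equation*}
together with the cocycle relation $P^u_{0,1} = P^u_{t,1}\circ P^u_{0,t}$, whose pushforward yields $(P^u_{0,1})_*\circ ((P^u_{0,t})_*)^{-1} = (P^u_{t,1})_*$, matching the claimed expression. The only genuine technical point is upgrading the above pointwise $s$-derivative to a bona fide Fr\'echet derivative on the infinite-dimensional space $\Ltwo$; this amounts to checking that the remainder in the first-order expansion is $o(\norm{v}_{L^2})$ uniformly in directions, which I would handle via a standard Gr\"onwall estimate in Banach space, exploiting smoothness and boundedness of $X_1,\ldots,X_k$ on compact subsets of $M$ together with Cauchy--Schwarz to control integrals of $\norm{v(t)}$ in terms of $\norm{v}_{L^2}$.
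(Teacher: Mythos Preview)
The paper does not actually supply a proof of this lemma: it is stated as a citation of \cite[Prop.\ 8.5]{ABB-srgeom} and used without further argument. Your outline is a correct sketch of the standard sub-Riemannian proof from that reference, and your observation that the norm plays no role (only the smooth generating frame and the linearity of $\xi_u$ in $u$ matter) is exactly why the paper can import the result verbatim.
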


Using the explicit expression for the differential of the end-point map we deduce the following characterization for normal and abnormal extremals in \sF geometry, see \cite[Prop. \ 8.9]{ABB-srgeom} for the analogous \sr result.

\begin{prop}
\label{prop:endpoint_map_extremals}
Let $M$ be a smooth \sF manifold and let $(\gamma,u)$ be a non-trivial solution to \eqref{eq:P'timalcontrolproblem}. Then, there exists $\lambda_1\in T_{q_1}^*M$, where $q_1= E_{q_0}(u)$, such that the curve $(\lambda_t)_{t\in[0,1]}$, with
\begin{equation}
\label{eq:extremal_hamiltonian}
    \lambda_t:=(P_{t,1}^u)^* \lambda_1\in T^*_{\gamma(t)}M, \qquad\forall\,t\in[0,1],
\end{equation}
is a solution to \eqref{eq:Hamiltonian_system}. Moreover, one of the following conditions is satisfied:
\begin{itemize}
    \item[(i)]$(\lambda_t)_{t\in[0,1]}$ is a normal extremal if and only if $u$ satisfies
\begin{equation}\label{eq:normalchar}
    \scal{\lambda_1}{d_u E_{q_0}(v)} = \scal{u^*}{v} \qquad \text{for every }v \in \Ltwo;
\end{equation}
\item[(ii)]$(\lambda_t)_{t\in[0,1]}$ is an abnormal extremal if and only if $u$ satisfies
\begin{equation}\label{eq:abnormalchar}
    \scal{\lambda_1}{d_u E_{q_0}(v)} = 0 \qquad \text{for every }v \in \Ltwo.
\end{equation}
\end{itemize}
\end{prop}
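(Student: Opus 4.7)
The plan is to read the optimality of $(\gamma,u)$ as a constrained critical point problem on $\mathcal U_{q_0}\subset L^2([0,1];(\R^k,\|\cdot\|))$: minimize $J(u)=\tfrac12\|u\|_{L^2}^2$ subject to $E_{q_0}(u)=q_1$. By the Lagrange multiplier theorem for smooth maps between Banach spaces, exactly one of two alternatives occurs. Either $d_uE_{q_0}\colon L^2\to T_{q_1}M$ is surjective, in which case there exists $\lambda_1\in T^*_{q_1}M$ with
\begin{equation*}
 d_uJ(v)=\scal{\lambda_1}{d_uE_{q_0}(v)}\qquad\forall v\in L^2,
\end{equation*}
or $\mathrm{Im}(d_uE_{q_0})$ is a proper closed subspace of $T_{q_1}M$, and by Hahn–Banach there exists a non-zero $\lambda_1\in T^*_{q_1}M$ annihilating it. Using the chain rule together with Lemma \ref{lem:banachduality}, one computes $d_uJ(v)=\int_0^1\scal{u(t)^*}{v(t)}\de t=\scal{u^*}{v}$, so these two alternatives are exactly \eqref{eq:normalchar} (with $\nu=-1$) and \eqref{eq:abnormalchar} (with $\nu=0$).

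Next, I would set $\lambda_t:=(P^u_{t,1})^*\lambda_1\in T^*_{\gamma(t)}M$ and verify that $(\nu,\lambda_t)$ solves \eqref{eq:Hamiltonian_system}. The Hamiltonian equation $\dot\lambda_t=\vec h^\nu_{u(t)}(\lambda_t)$ is a standard consequence of the fact that the cotangent lift of the (non-autonomous) flow $P^u_{t,1}$ is generated precisely by the fiberwise-linear Hamiltonian $\lambda\mapsto\scal{\lambda}{\xi(\pi(\lambda),u(t))}$; since the extra term $\tfrac\nu2\|u(t)\|^2$ in $h^\nu_{u(t)}$ depends only on $t$, it contributes nothing to the Hamiltonian vector field. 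Hence $\dot\lambda_t=\vec h_{u(t)}^\nu(\lambda_t)$ independently of the value of $\nu$.

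For the maximality condition I would transport the pairing $\scal{\lambda_1}{d_uE_{q_0}(v)}$ back along the flow using the explicit formula in Lemma \ref{lem:diffendpoint} to get
\begin{equation*}
\scal{\lambda_1}{d_uE_{q_0}(v)}=\int_0^1\scal{(P^u_{t,1})^*\lambda_1}{\xi_{v(t)}(\gamma(t))}\de t=\int_0^1\scal{\lambda_t}{\xi(\gamma(t),v(t))}\de t.
\end{equation*}
In the normal alternative, equating this to $\int_0^1\scal{u(t)^*}{v(t)}\de t$ for every $v\in L^2$ and localizing in time via test controls $v(t)=\phi(t)e_i$ gives the pointwise identity $\scal{\lambda_t}{\xi(\gamma(t),\cdot)}=\scal{u(t)^*}{\cdot}$ for a.e.\ $t$, which by Corollary \ref{cor:control_normal_extremal} is exactly the maximization condition in \eqref{eq:Hamiltonian_system} with $\nu=-1$, so $(\lambda_t)$ is a normal extremal. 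In the abnormal alternative, the same localization yields $\lambda_t\in\ann(\dis)_{\gamma(t)}$ for every $t$, and non-triviality of $\lambda_1$ combined with Lemma \ref{lem:abnormal_annihilator} shows $(\lambda_t)$ is a (non-trivial) abnormal extremal. The reverse implications in (i) and (ii) follow by reversing the same computation, since $\lambda_t=(P^u_{t,1})^*\lambda_1$ is uniquely determined by $\lambda_1$.

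The main obstacle I expect is not conceptual but technical: justifying the Lagrange multiplier dichotomy on the open set $\mathcal U_{q_0}$ in the Banach setting, and passing from integral identities in $v\in L^2$ to pointwise identities for a.e.\ $t$, keeping in mind that $u$ is only in $L^2$ while $\lambda_t$ and $\gamma(t)$ are continuous. The low regularity of $\normdot$ at the origin of $\R^k$ does not interfere here because $J$ is smooth as a functional on $L^2$, its differential being controlled by the $C^1$ duality map $v\mapsto v^*$ (well-defined on $L^2$ since it is linear in norm), and the characterization is only used at the specific element $u$ where all the relevant derivatives exist.
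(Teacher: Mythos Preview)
Your proposal is correct and follows essentially the same route as the paper: both hinge on the integral identity $\scal{\lambda_1}{d_uE_{q_0}(v)}=\int_0^1\scal{\lambda_t}{\xi_{v(t)}(\gamma(t))}\de t$ from Lemma~\ref{lem:diffendpoint}, then localize in $v$ to pass between \eqref{eq:normalchar}--\eqref{eq:abnormalchar} and the pointwise maximization condition via Corollary~\ref{cor:control_normal_extremal} and Lemma~\ref{lem:abnormal_annihilator}. The only difference is packaging: the paper cites Theorem~\ref{thm:PMP} as a black box for the existence of $\lambda_1$ and the form \eqref{eq:extremal_hamiltonian}, whereas you re-derive this via the Lagrange multiplier dichotomy---which is precisely the proof of PMP in this smooth setting (one minor quibble: the two alternatives are not mutually exclusive, so ``exactly one'' should read ``at least one'').
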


\begin{proof}
Firstly, \eqref{eq:extremal_hamiltonian} is a well-known consequence of the Pontryagin maximum principle. We prove {\slshape(i)}, the proof of {\slshape(ii)} is analogous. For every $v \in \Ltwo$, using Lemma \ref{lem:diffendpoint} we deduce that
\begin{equation}\label{eq:thecomputation}
     \begin{split}
        \scal{\lambda_1}{d_u E_{q_0}(v)} &= \int_0^1 \scal{\lambda_1}{(P^u_{t,1})_* \xi_{v(t)}(E_{q_0}(u))} \de t =\int_0^1 \scal{(P^u_{t,1})^*\lambda_1}{ \xi_{v(t)}(\gamma (t))} \de t \\
        & = \int_0^1 \scal{\lambda_t}{ \xi_{v(t)}(\gamma (t))} \de t = \int_0^1 \sum_{i=1}^k \scal{\lambda_t}{ \xi_{i}(\gamma (t))} v_i(t)\de t.
    \end{split}
\end{equation}
Assume $(\lambda_t)_{t\in [0,1]}$ is a normal extremal, then, by Corollary \ref{cor:control_normal_extremal}, the associated optimal control $u$ satisfies $\scal{\lambda_t}{\xi_i(\gamma(t))}= u^*_i(t)$ for $i=1,\dots,k$.
Therefore, using \eqref{eq:thecomputation} we deduce that for every $v\in \Ltwo$ it holds

\begin{equation*}
    \scal{\lambda_1}{ d_{u} E_{q_{0}}(v)} = \int_0^1 \sum_{i=1}^k \scal{\lambda_t}{ \xi_{i}(\gamma (t))} v_i(t)\de t = \int_0^1 \scal{u^*(t)}{v(t)}\de t = \scal{u^*}{v}.
\end{equation*}
This proves \eqref{eq:normalchar}. 

Conversely, assume that the control $u$ satisfies \eqref{eq:normalchar}. We are going to prove that $(\lambda_t)_{t\in[0,1]}$ is a normal extremal. Using \eqref{eq:normalchar} and \eqref{eq:thecomputation} we deduce that for every $v\in \Ltwo$  
    \begin{equation*}
        \scal{u^*}{v}_{L^2} = \scal{\lambda_1}{ d_{u} E_{q_{0}}(v)} = \int_0^1 \sum_{i=1}^k \scal{\lambda_t}{ \xi_{i}(\gamma (t))} v_i(t)\de t,
    \end{equation*}
    As a consequence, since $v$ is arbitrary, we conclude that $\scal{\lambda_t}{\xi_i(\gamma(t))}= u^*_i(t)$ for $i=1,\dots,k$.
\end{proof}

\section{Failure of the \texorpdfstring{$\cd(K,N)$}{CD(K,N)} condition in smooth \sF manifolds}\label{sec:smoothSF}

In this section, we prove our main result regarding smooth \sF manifolds, cf. Theorem \ref{thm:intro1}. One of the crucial ingredients for the proof is the construction of a geodesic, enjoying good regularity properties, cf. Theorem \ref{thm:existence_good_geod}. 

\subsection{Construction of a geodesic without abnormal sub-segments}

This section is devoted to the construction of a geodesic without abnormal sub-segments, in smooth \sF manifolds. The main idea is to choose a short segment of a  normal geodesic that minimizes the distance from a hypersurface without characteristic points. We recall the definition of strongly normal geodesic and of geodesic without abnormal sub-segments.

\begin{definition}
\label{def:strongly_normal_geodesic}
    Let $M$ be a \sF manifold and let $\gamma : [0, 1] \to M$ be a normal geodesic. Then, we say that $\gamma$ is
    \begin{enumerate}
        \item[(i)] \emph{left strongly normal}, if for all $s \in [0, 1]$, the restriction $\gamma|_{[0,s]}$ is not abnormal;
        \item[(ii)] \emph{right strongly normal}, if for all $s \in [0, 1]$, the restriction $\gamma|_{[s,1]}$ is not abnormal; 
        \item[(iii)] \emph{strongly normal}, if $\gamma$ is left and right strongly normal. 
    \end{enumerate}
    Finally, we say that $\gamma$ does not admit abnormal sub-segments if any restriction of $\gamma$ is strongly normal.
\end{definition}
Let $\Sigma\subset M$ be a hypersurface and let $\gamma\colon[0,T]\rightarrow M$ be a horizontal curve, parameterized with constant speed, such that $\gamma(0)\in \Sigma$, $\gamma(T) = p \in M\setminus \Sigma$. Assume $\gamma$ is a minimizer for $\di_{SF}(\cdot,\Sigma)$, that is $\ell(\gamma)=\di_{SF}(p,\Sigma)$. Then, $\gamma$ is a geodesic and any corresponding normal or abnormal lift, say $\lambda :[0,T]\to T^*M$, must satisfy the transversality conditions, cf.\ \cite[Thm 12.13]{AS-GeometricControl},
\begin{equation}
\label{eq:trcondition}
\langle \lambda_0, w\rangle=0,\qquad \forall \,w\in T_{\gamma(0)} \Sigma.
\end{equation}
Equivalently, the initial covector $\lambda_0$ must belong to the annihilator bundle $\ann(\Sigma)$ of $\Sigma$ with fiber $\ann(\Sigma)_q= \{\lambda \in T_q^*M \mid \langle \lambda, T_q \Sigma\rangle = 0\}$, for any $q\in\Sigma$. 

\begin{remark}
    In the \sr setting, the normal exponential map $E$, defined as the restriction of the exponential map to the annihilator bundle of $\Sigma$, allows to build (locally) a smooth tubular neighborhood around non-characteristic points, cf. \cite[Prop. 3.1]{FPR-sing-lapl}. This may fail in the \sF setting as $E$ is not regular at $\Sigma$, cf. Remark \ref{rmk:exponential_map_not_reg}. Nonetheless, we are able to deduce a weaker result that is enough for our construction, see Theorem \ref{thm:E_diffeo_delta_smooth}. 
\end{remark}
Recall that $q\in\Sigma$ is a \emph{characteristic point}, and we write $q\in C(\Sigma)$, if $\dis_q\subset T_q\Sigma$. As it happens in the \sr case, also in the \sF setting, minimizers of $\di_{SF}(\cdot,\Sigma)$ whose initial point is a non-characteristic point, can not be abnormal geodesics. 

\begin{lemma}
\label{lem:abn_vs_char_pts}
    Let $M$ be a smooth \sF manifold. Let $p\in M\setminus \Sigma$ and let $\gamma:[0,1]\to M$ be a horizontal curve such that
    \begin{equation}
        \gamma(0)\in\Sigma,\quad \gamma(1)=p\quad\text{and}\quad \ell(\gamma)=\di_{SF}(p,\Sigma).
    \end{equation}
    Then, $\gamma(0)\in C(\Sigma)$ if and only if $\gamma$ is an abnormal geodesic. 
\end{lemma}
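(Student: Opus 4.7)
The plan is to combine the transversality condition \eqref{eq:trcondition}, which any Pontryagin lift of $\gamma$ must satisfy since $\gamma$ minimizes the distance from $p$ to the free endpoint on $\Sigma$, with the annihilator characterization of abnormal extremals in Lemma \ref{lem:abnormal_annihilator} and the fact, established just before Proposition \ref{prop:maximal_extension_normal_geod}, that a normal extremal with initial covector in $\ann(\dis)$ is necessarily constant. Applying PMP to the free-endpoint minimization problem produces a lift $(\lambda_t)_{t\in[0,1]}$, either normal or abnormal, with $\lambda_0 \in \ann(\Sigma)_{\gamma(0)}$; the nature of this initial covector will be the linchpin of both implications.

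For the forward direction, I would argue by contradiction. Assume $\gamma(0) \in C(\Sigma)$, so that $\dis_{\gamma(0)} \subset T_{\gamma(0)}\Sigma$; taking annihilators reverses the inclusion and yields $\ann(\Sigma)_{\gamma(0)} \subset \ann(\dis)_{\gamma(0)}$. If the PMP lift were normal, transversality would place $\lambda_0$ into $\ann(\dis)_{\gamma(0)}$. The rigidity of normal extremals on the annihilator bundle recalled above would then force $\lambda_t \equiv \lambda_0$, so that $\gamma = \pi\circ \lambda$ would be constant, contradicting $\gamma(0)\in \Sigma$ and $\gamma(1)=p\notin \Sigma$. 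Hence the PMP lift must be abnormal, i.e.\ $\gamma$ is abnormal.

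For the converse, suppose $\gamma$ is abnormal with lift $(\lambda_t)_{t\in[0,1]}$. Lemma \ref{lem:abnormal_annihilator} gives $\lambda_0 \neq 0$ and $\lambda_0 \in \ann(\dis)_{\gamma(0)}$, while transversality provides $\lambda_0 \in \ann(\Sigma)_{\gamma(0)}$. The nonzero functional $\lambda_0 \in T^*_{\gamma(0)}M$ therefore vanishes on $T_{\gamma(0)}\Sigma + \dis_{\gamma(0)}$. Since $\Sigma$ is a hypersurface, $T_{\gamma(0)}\Sigma$ already has codimension one in $T_{\gamma(0)}M$, so if $\dis_{\gamma(0)} \not\subset T_{\gamma(0)}\Sigma$ the sum would exhaust $T_{\gamma(0)}M$, forcing $\lambda_0=0$, a contradiction. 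Thus $\dis_{\gamma(0)} \subset T_{\gamma(0)}\Sigma$, that is $\gamma(0) \in C(\Sigma)$.

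I do not anticipate any serious obstacle: once the transversality condition from PMP for the free-endpoint problem is in place, the whole argument reduces to a short linear-algebraic dimension count that couples the geometry of $\Sigma$ with the location of the initial covector on the two relevant annihilator bundles. The only point meriting care is to invoke the correct version of PMP, namely the one for problems with one fixed and one free endpoint, which is exactly the statement of \cite[Thm 12.13]{AS-GeometricControl} cited in the excerpt preceding \eqref{eq:trcondition}.
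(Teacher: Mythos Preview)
Your proof is correct and follows essentially the same approach as the paper's: apply the free-endpoint PMP to obtain a lift satisfying the transversality condition \eqref{eq:trcondition}, then combine this with the annihilator characterization of abnormal extremals from Lemma \ref{lem:abnormal_annihilator}. You simply make explicit the two steps the paper leaves implicit, namely the constancy of a normal extremal with $\lambda_0\in\ann(\dis)$ in the forward direction and the codimension-one dimension count in the converse.
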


\begin{proof}
    The proof is a straightforward adaptation of the analogous result in the \sr setting. We sketch here the argument for completeness. By the Pontryagin maximum principle, cf. Theorem \ref{thm:PMP}, there exists a lift $\lambda:[0,1]\to T^*M$ verifying the system \eqref{eq:Hamiltonian_system} with the additional condition \eqref{eq:trcondition}. Using the characterization of Lemma \ref{lem:abnormal_annihilator}, $\lambda$ is an abnormal lift if and only if $\lambda_0\in \ann(\dis)_{\gamma(0)}$. The latter, combined with the transversality condition concludes the proof.  
\end{proof}

From now on, we assume that $\Sigma$ is the boundary of an open set $\Omega\subset M$. As our results are local in nature, this assumption is not necessary, however it makes the presentation easier. Let $\Omega\subset M$ be a non-characteristic domain in $M$, so that $\partial\Omega$ is compact and without characteristic points. Then, there exists a never-vanishing smooth section of $\ann(\partial\Omega)$, i.e.\ a smooth map $\lambda^+:\partial\Omega\to \ann(\partial\Omega)$ such that
\begin{equation}
\label{eq:section_trivialization}
    \lambda^+(q)\in \ann_q(\partial\Omega)\qquad\text{and}\qquad 2H(\lambda^+)=1,
\end{equation}
which is uniquely determined, up to a sign. Define the \emph{normal exponential map} as the restriction of the \sF exponential map to the annihilator bundle, namely
\begin{equation}
    E:D\to M,\qquad E(q,\lambda)=\exp_q(\lambda),
\end{equation}
where $D\subset\ann(\partial\Omega)$ is the largest open sub-bundle where $E$ is defined. Furthermore, we define the \emph{distance function from} $\partial\Omega$ as 
\begin{equation}
    \delta: M\to [0,\infty),\qquad \delta(p):=\di_{SF}(p,\partial\Omega).
\end{equation} 


\begin{lemma}
\label{lem:normal_exponential_map}
    Let $M$ be a smooth \sF manifold. There exists $\epsilon>0$ such that on the sub-bundle 
    \begin{equation}
        D_\epsilon:=\{(q,\lambda)\in\ann(\partial\Omega): E(q,\lambda)\in\Omega\  \text{ and }\ 0<\sqrt{2H(\lambda)}<\epsilon\}   \subset D 
    \end{equation}
    the map $E|_{D_\epsilon}$ is injective and $E(D_\epsilon)=\{0<\delta<\epsilon\} \cap \Omega$. 
\end{lemma}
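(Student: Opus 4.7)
The proof naturally splits into the image identification $E(D_\epsilon) = \{0 < \delta < \epsilon\}\cap\Omega$ and the injectivity of $E|_{D_\epsilon}$; the parameter $\epsilon > 0$ is shrunk throughout the argument as needed.

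For the image, the inclusion $E(D_\epsilon)\subseteq\{0 < \delta < \epsilon\}\cap\Omega$ is immediate: if $(q,\lambda)\in D_\epsilon$, the horizontal curve $t\mapsto \pi(e^{t\vec H}_{\mathrm{r}}\lambda)$ joins $q\in\partial\Omega$ to $E(q,\lambda)\in\Omega$ with length $\sqrt{2H(\lambda)} < \epsilon$, so $0 < \delta(E(q,\lambda)) < \epsilon$. For the opposite inclusion, given $p\in\Omega$ with $0 < \delta(p) < \epsilon$, I would invoke compactness of $\partial\Omega$ and local compactness of $(M,\di_{SF})$ to extract a length-minimizing horizontal curve $\gamma\colon[0,1]\to M$ with $\gamma(0)\in\partial\Omega$, $\gamma(1) = p$, $\ell(\gamma) = \delta(p)$. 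Since $\partial\Omega$ is non-characteristic, Lemma \ref{lem:abn_vs_char_pts} excludes an abnormal lift, so $\gamma$ admits a normal lift $(\lambda_t)$ obeying the transversality condition \eqref{eq:trcondition}, i.e.\ $\lambda_0\in\ann(\partial\Omega)_{\gamma(0)}$; conservation of $2H$ along the Hamiltonian flow combined with the identity between $2H$ and the squared constant speed of $\gamma$ gives $\sqrt{2H(\lambda_0)} = \delta(p) < \epsilon$, so $(\gamma(0),\lambda_0)\in D_\epsilon$ and $E(\gamma(0),\lambda_0) = p$.

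The injectivity is the harder part, because $E$ is only continuous along the zero section of $\ann(\partial\Omega)$ (cf.\ Remark \ref{rmk:exponential_map_not_reg}), forbidding the direct inverse-function-theorem argument available in the sub-Riemannian setting. My plan is to trivialize the line bundle $\ann(\partial\Omega)\to\partial\Omega$ through $\lambda^+$, with the sign fixed so that the induced geodesics point into $\Omega$, and to study the continuous map $\Phi\colon\partial\Omega\times[0,\infty)\to M$ defined by $\Phi(q,s) := E(q,s\lambda^+(q))$ and $\Phi(q,0) := q$; this $\Phi$ is smooth on $\{s>0\}$. The key preliminary point is that the normal geodesic $s\mapsto\Phi(q,s)$ leaves $\partial\Omega$ transversally at $s=0^+$: by Corollary \ref{cor:control_normal_extremal} its initial unit-speed velocity is $v(q) := \xi(q,\hat\lambda^+(q)^*)$, and a direct computation based on \eqref{eq:dualvector} gives $\scal{\lambda^+(q)}{v(q)} = \|\hat\lambda^+(q)\|_*^2 = 2H(\lambda^+(q)) = 1$, so $v(q)\notin T_q\partial\Omega$.

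Finally, this transversality combined with the compactness of $\partial\Omega$ should yield global injectivity of $\Phi$ on $\partial\Omega\times(0,\epsilon)$ for small $\epsilon$ via a contradiction-compactness argument: were $\Phi$ non-injective for every $\epsilon = 1/n$, one could extract sequences $(q_n,s_n)\neq(q_n',s_n')$ with $s_n, s_n'\to 0^+$ and $\Phi(q_n,s_n)=\Phi(q_n',s_n')$; compactness of $\partial\Omega$ produces convergent subsequences with limits $(q_\infty,0)$ and $(q_\infty',0)$, and continuity of $\Phi$ together with $\Phi(\cdot,0) = \mathrm{id}_{\partial\Omega}$ forces $q_\infty = q_\infty'$, contradicting a local injectivity statement near this common limit point. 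The main obstacle I anticipate is precisely converting the transversal behaviour of $\Phi$ as $s\to 0^+$ into a rigorous local injectivity assertion near $(q_\infty,0)$, uniformly in $q$: since $\Phi$ is not differentiable at $s=0$ the inverse function theorem is not directly available, and one has to analyse $d\Phi$ for $s>0$ small via the flow equation \eqref{eq:maximizedsystem}, exploiting the smoothness of $H$ on $T^*M\setminus\ann(\dis)$ together with the observation that $s\lambda^+(q)\notin\ann(\dis)_q$ for $s\neq 0$ (which holds since $\partial\Omega$ is non-characteristic).
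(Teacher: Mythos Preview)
Your overall structure coincides with the paper's: the image identification is argued identically (the reverse inclusion via a length-minimizer to $\partial\Omega$, Lemma~\ref{lem:abn_vs_char_pts} to exclude abnormality, and the transversality condition to place the initial covector in $\ann(\partial\Omega)$), and for injectivity the paper runs exactly your contradiction--compactness scheme, extracting sequences $(q_n,\lambda_n)\neq(q_n',\lambda_n')$ with $E(q_n,\lambda_n)=E(q_n',\lambda_n')$ and $H(\lambda_n),H(\lambda_n')\to 0$, passing to subsequential limits $(q,0)$ and $(q',0)$ by compactness of $\partial\Omega$, and invoking continuity of $E$ at the zero section to obtain $q=q'$.

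The only divergence is in how the contradiction is closed once $q=q'$. You anticipate needing a local-injectivity statement for $\Phi$ near $s=0$ and propose to extract it from an analysis of $d\Phi$ for small positive $s$ via the Hamiltonian flow---a route you correctly flag as delicate, since $\Phi$ is not differentiable at $s=0$. The paper sidesteps this differential analysis altogether: it writes $\lambda_n=t_n\lambda^+(q_n)$ and $\lambda_n'=t_n'\lambda^+(q_n')$ (the one-dimensionality of the fibres of $\ann(\partial\Omega)$ is what is used here), observes that the signs of $t_n,t_n'$ agree because both images lie in $\Omega$, and then invokes that the length of the normal curve $[0,1]\ni t\mapsto E(q,t\lambda)$ equals $\sqrt{2H(\lambda)}$ to force $t_n=t_n'$, yielding the contradiction. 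This metric route is considerably lighter than the inverse-function-type argument you had in mind; the paper's last inference is, however, written very tersely, and your instinct that something substantive is happening at this step is sound.
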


\begin{proof}
    Without loss of generality, we assume that $M$ is complete, so that $D=\ann(\partial\Omega)$. We may proceed by contradiction and assume that there does not exist a choice of $\epsilon>0$ so that $E|_{D_\epsilon}$ is injective. Hence, we can find sequences $\{(q_n,\lambda_n)\},\{(q_n',\lambda_n')\}\subset\ann(\partial\Omega)$ such that
    \begin{equation}
    \label{eq:properties_sequences}
        (q_n,\lambda_n)\neq (q_n',\lambda_n')\qquad E(q_n,\lambda_n)=E(q_n',\lambda_n'),\qquad\text{and}\qquad H(\lambda_n),\, H(\lambda_n')\to0.
    \end{equation}
    Note that, as $\partial\Omega$ has no characteristic points, the \sF Hamiltonian is a norm on the fibers of $\ann(\partial\Omega)$. Therefore, by compactness, $(q_n,\lambda_n)\to (q,0)$ and $(q_n',\lambda_n')\to(q',0)$, up to subsequences. Thus, recalling that $E$ is continuous on $D$, passing to the limit in \eqref{eq:properties_sequences}, we get that $E(q,0)=E(q',0)$, meaning that $q=q'$. As a consequence $\lambda_n,\lambda_n'\in\ann(\partial\Omega)_q$ so they are multiple of the section defined in \eqref{eq:section_trivialization}, namely
    \begin{equation}
        \lambda_n=t_n\lambda^+(q),\qquad \lambda_n'=t_n'\lambda^+(q),
    \end{equation}
    where $t_n,t_n'\to 0$ and their signs agree. Finally, recall that the length of the normal curve $[0,1]\ni t \mapsto E(q,t\lambda)$ is exactly $\sqrt{2H(\lambda)}$. This forces $t_n=t_n'$ which is a contradiction with \eqref{eq:properties_sequences}. We are left to prove the last part of the statement. Fix $\epsilon>0$ so that $E|_{D_\epsilon}$ is injective, then $E(D_\epsilon)\subset \{0<\delta<\epsilon\}\cap \Omega$. For the converse inclusion, pick $p\in \{0<\delta<\epsilon\}\cap\Omega$ and let $\gamma:[0,1]\to M$ be a geodesic joining $\gamma(0)\in\partial\Omega$ and $p=\gamma(1)$ such that $\ell(\gamma)=\delta(p)$. Then, $\gamma(0)$ is a non-characteristic point, therefore $\gamma$ is a normal geodesic, whose lift satisfies \eqref{eq:trcondition}, according to Lemma \ref{lem:abn_vs_char_pts}. Hence, there exists $0\neq\lambda\in\ann(\partial\Omega)_{\gamma(0)}$ such that $\gamma(t)=E(\gamma(0),t\lambda)$ and $\ell(\gamma)=\sqrt{2 H(\lambda)}<\epsilon$. Thus, $(q,\lambda)\in D_\epsilon$ concluding the proof. 
\end{proof}

We state here a useful lemma regarding the regularity of the distance function from a boundary. Recall that a function $f:M\to\R$ is said to be \emph{locally semiconcave} if, for every $p\in M$, there exist a coordinate chart $\varphi: U\subset M\to \R^n$, with $p\in U$, and a constant $C\in\R$ such that
\begin{equation}
\label{eq:local_semiconcavity}
    F:\R^n\to \R;\qquad F(x):=f\circ \varphi^{-1}(x) - C\frac{|x|^2}{2},
\end{equation}
is concave, where $|\cdot|$ denotes the Euclidean norm. 

\begin{lemma}
\label{lem:local_semiconcavity_delta}
    Let $M$ be a smooth \sF manifold. Let $\Omega\subset M$ be an open and bounded subset. Assume that $\partial\Omega$ is smooth and without characteristic points. Then, the distance function from $\partial\Omega$, $\delta$ is locally semiconcave in $\Omega$. 
\end{lemma}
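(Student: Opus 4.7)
The plan is to construct, at every $p_0\in\Omega$ with $\delta(p_0)>0$, a smooth function $\psi\geq\delta$ defined in a neighborhood of $p_0$ with $\psi(p_0)=\delta(p_0)$, and then to upgrade this to uniform quadratic upper bounds via a compactness argument, which is the standard route to local semiconcavity.

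First, I would fix a constant-speed minimizing geodesic $\gamma_0:[0,1]\to M$ from some $q_0\in\partial\Omega$ to $p_0$, with minimal control $u_0\in\Ltwo$, and introduce the joint end-point map
\begin{equation*}
\mathcal F\colon\Ltwo\times\partial\Omega \to M,\qquad \mathcal F(u,q):= E_q(u),
\end{equation*}
which is smooth on its (open) domain. The key step is showing that $d_{(u_0,q_0)}\mathcal F$ is surjective onto $T_{p_0}M$. Arguing by contradiction, suppose some $0\neq \lambda_1\in T^*_{p_0}M$ annihilates its image: testing against $\Ltwo$-variations, together with Proposition \ref{prop:endpoint_map_extremals}(ii) and Lemma \ref{lem:abnormal_annihilator}, gives that $\lambda_0:=(P^{u_0}_{0,1})^*\lambda_1\in\ann(\dis)_{q_0}$, while testing against $T_{q_0}\partial\Omega$-variations gives $\lambda_0\in\ann(T_{q_0}\partial\Omega)$; combining these, $\lambda_0\in\ann(\dis_{q_0}+T_{q_0}\partial\Omega)=\{0\}$ by the non-characteristic hypothesis, contradicting $\lambda_1\neq 0$ since $(P^{u_0}_{0,1})^*$ is an isomorphism.

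With surjectivity established, the Banach-space implicit function theorem yields a neighborhood $U$ of $p_0$ and smooth maps $p\mapsto(u(p),q(p))$ with $\mathcal F(u(p),q(p))=p$ and $(u(p_0),q(p_0))=(u_0,q_0)$. Cauchy--Schwarz then provides the upper bound
\begin{equation*}
\delta(p)\leq \ell(\gamma_{u(p)})\leq \sqrt{2\,J(u(p))}=:\psi(p),
\end{equation*}
and constant-speed parametrization of $\gamma_0$ gives $J(u_0)=\delta(p_0)^2/2$, whence $\psi(p_0)=\delta(p_0)$. Some care is needed for the regularity $\psi\in C^2(U)$, since $J$ is only $C^{1,1}$ near controls vanishing on sets of positive measure; however, working with the control space $C^0([0,1],\R^k)$ (on which the same surjectivity argument survives by density of smooth controls in $\Ltwo$) ensures that $p\mapsto u(p)$ stays in a $C^0$-neighborhood of the smooth non-vanishing $u_0$ (smoothness of $u_0$ follows from Corollary \ref{cor:control_normal_extremal}), which keeps the integrand safely away from the origin and yields $\psi\in C^\infty(U)$.

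To upgrade this pointwise upper support to local semiconcavity, I would repeat the construction at every $p\in U'\Subset U$, using any minimizing geodesic from $\partial\Omega$ to $p$, obtaining smooth upper supports $\psi_p$ with $\psi_p(p)=\delta(p)$. The hard part is guaranteeing uniform $C^2$-bounds on this family: provided the base data $(u_p,q_p)$ can be chosen in a precompact set on which $d\mathcal F$ remains uniformly surjective, the regularity estimates from the implicit function theorem depend continuously on the base point and produce uniform quadratic upper bounds. Establishing this compactness and uniformity, which rests on the upper semicontinuity of the set of minimizing geodesics under perturbation of the endpoint, is the main technical obstacle.
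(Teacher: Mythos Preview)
Your approach is correct and is in fact the classical ``calibration'' route to semiconcavity (smooth upper supports with locally uniform $C^2$ bounds), whereas the paper gives essentially no proof at all: it simply invokes \cite{MR3741391}, observing that Lemma~\ref{lem:abn_vs_char_pts} excludes abnormal minimizers to $\partial\Omega$, and then quotes that the sub-Riemannian argument (local Lipschitzness of $\delta$ in charts, plus the implication $(3)\Rightarrow(2)$ of \cite[Thm.~4.1]{MR3741391}) carries over verbatim to the smooth sub-Finsler setting.

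A few comments on the comparison. Your surjectivity step and the paper's invocation of Lemma~\ref{lem:abn_vs_char_pts} are the same computation seen from two sides: the covector $\lambda_0$ you produce lies in $\ann(\dis)_{q_0}\cap\ann(T_{q_0}\partial\Omega)$, and it is precisely the non-characteristic hypothesis that kills it. Your passage from $L^2$ to $C^0$ controls is sound (the image of $d_uE_q$ is the same finite-dimensional subspace of $T_{E_q(u)}M$ in either topology), and it does solve the regularity issue for $\psi$, since $u_0$ has constant positive norm. Finally, the ``main technical obstacle'' you flag --- uniform quantitative surjectivity and precompactness of the family $(u_p,q_p)$ --- is genuine but routine here: the foot-points $q_p$ live on the compact $\partial\Omega$, the minimal controls $u_p$ arise from normal lifts (again by Lemma~\ref{lem:abn_vs_char_pts}) and hence are images under the smooth Hamiltonian flow of covectors in a compact subset of $\ann(\partial\Omega)\setminus\ann(\dis)$, which gives the required $C^0$-precompactness and continuous dependence of all implicit-function-theorem constants. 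What your approach buys is a self-contained argument that does not appeal to the external reference; what the paper's buys is brevity.
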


\begin{proof}
    We do not report here a complete proof, since it follows the same arguments of \cite{MR3741391}, with the obvious modification for the \sF case. In particular, applying Lemma \ref{lem:abn_vs_char_pts}, we deduce there are no abnormal geodesic joining points of $\Omega$ to its boundary and realizing $\delta$. Thus, the proof of \cite[Thm. 3.2]{MR3741391} shows that $\delta$ is locally Lipschitz in coordinates, meaning that the function $\delta$ written in coordinates is Lipschitz with respect to the Euclidean distance. Then, using \cite[Thm. 4.1]{MR3741391} implication $(3)\Rightarrow (2)$, we conclude.
\end{proof}

Since $\delta$ is locally semiconcave, Alexandrov's theorem ensures that $\delta$ is differentiable two times $\Leb^n$-a.e. (in coordinates) and, letting $\mathcal U\subset\Omega$ be the set where $\delta$ is differentiable, the function $d\delta:\mathcal U\to T^*M$ is differentiable $\Leb^n$-a.e., cf. \cite[Thm. 6.4]{MR4294651} for the precise statement of Alexandrov's theorem. This observation, combined with Lemma \ref{lem:normal_geod_potential} below, gives us an alternative description of geodesics joining $\partial\Omega$ and differentiablity points of $\delta$ in $\{0<\delta<\epsilon\}\cap\Omega$.

\begin{lemma}
\label{lem:normal_geod_potential}
    Let $M$ be a smooth \sF manifold. Let $p,q\in M$ be distinct points and assume there is a function $\phi : M\to\R$ differentiable at $p$ and such that
    \begin{equation}
    \label{eq:ineq_distance_potential}
        \phi(p) = \frac12\di^2_{SF}(p, q)\qquad\text{and}\qquad \frac12\di^2_{SF}(z, q) \geq \phi(z),\quad\forall\,z \in M.
    \end{equation}
    Then, the geodesic joining $p$ and $q$ is unique, has a normal lift and is given by $\gamma : [0, 1] \to M$; $\gamma(t) = \exp_p(-td_p\phi)$. 
\end{lemma}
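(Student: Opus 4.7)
The strategy is a variational argument on the time-reversed trajectory, using $\phi$ as a support from below for $\tfrac12\di_{SF}^2(\cdot,q)$ at $p$, followed by an application of the normal-extremal characterization in Proposition \ref{prop:endpoint_map_extremals}. Let $\gamma:[0,1]\to M$ be any geodesic from $p$ to $q$ with minimal control $u$, so that $\tfrac12\|u\|_{L^2}^2=\tfrac12\di_{SF}^2(p,q)$. Since $\normdot$ is symmetric, the reversed curve $\bar\gamma(t):=\gamma(1-t)$ is a geodesic from $q$ to $p$ with minimal control $\bar u(t):=-u(1-t)$ satisfying $E_q(\bar u)=p$ and $\|\bar u\|_{L^2}^2=\di_{SF}^2(p,q)$.

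The first step is to show that $\bar u$ is a global maximum on $\mathcal U_q$ of the functional
\[
    G(v):=\phi\bigl(E_q(v)\bigr)-\tfrac12\|v\|_{L^2}^2.
\]
Indeed, Cauchy--Schwarz gives $\di_{SF}^2(q,E_q(v))\le\ell(\gamma_v)^2\le\|v\|_{L^2}^2$, which combined with the hypothesis $\phi(z)\le\tfrac12\di_{SF}^2(z,q)$ yields $G(v)\le 0$ for every $v\in\mathcal U_q$, while by construction $G(\bar u)=\phi(p)-\tfrac12\di_{SF}^2(p,q)=0$. The second step is to differentiate: since $E_q$ is smooth (Lemma \ref{lem:diffendpoint}), $\tfrac12\normdot^2$ is smooth on $\R^k$, and $\phi$ is differentiable at $p=E_q(\bar u)$, the functional $G$ is Fr\'echet differentiable at $\bar u$. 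Using Lemma \ref{lem:banachduality} to identify $d_v\bigl(\tfrac12\normdot^2\bigr)(w)=\langle v^*,w\rangle$, the vanishing of $d_{\bar u}G$ reads
\[
    \langle d_p\phi,\,d_{\bar u}E_q(w)\rangle=\langle\bar u^*,w\rangle \qquad\forall\,w\in \Ltwo.
\]
This is precisely the condition in Proposition \ref{prop:endpoint_map_extremals}(i) with $q_0=q$ and $\lambda_1=d_p\phi$, hence $(\bar\gamma,\bar u)$ admits a normal lift $\bar\lambda_t$ with terminal covector $d_p\phi$ at $p$.

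Finally I pass back to $\gamma$. Since $\bar\gamma$ is non-constant, conservation of $H$ along the Hamiltonian flow forces $\bar\lambda_t\notin\ann(\dis)$ for every $t\in[0,1]$; in particular $d_p\phi\notin\ann(\dis)_p$, so $\bar\lambda_t$, and therefore $\bar\gamma$, are uniquely recovered on $[0,1]$ by running the reduced flow of $\vec H$ backward in time from $(p,d_p\phi)$. As the dual norm is symmetric we have $H(-\lambda)=H(\lambda)$, and a direct check in canonical coordinates shows that $\lambda_t:=-\bar\lambda_{1-t}$ is a normal lift of $\gamma$ with $\lambda_0=-d_p\phi$, i.e.\ $\gamma(t)=\exp_p(-t\,d_p\phi)$; since this conclusion holds for every geodesic joining $p$ and $q$, uniqueness follows. \textbf{Main obstacle.} The delicate point is setting up the variational problem correctly: $\phi$ is differentiable only at $p$, so one must reverse the time direction to place $p$ at the endpoint where the chain rule is applied, and then verify that the resulting terminal covector lies outside $\ann(\dis)$, which is what guarantees that the reduced Hamiltonian flow determines the trajectory uniquely.
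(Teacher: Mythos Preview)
Your argument is correct and is precisely the standard sub-Riemannian proof that the paper invokes (it cites \cite[Lem.\ 2.15]{MR3308395} and says the same argument works via Proposition~\ref{prop:endpoint_map_extremals}). The time-reversal to place $p$ at the endpoint, the maximization of $G(v)=\phi(E_q(v))-\tfrac12\|v\|_{L^2}^2$, the identification $\lambda_1=d_p\phi$ through Proposition~\ref{prop:endpoint_map_extremals}(i), and the symmetry $H(-\lambda)=H(\lambda)$ to pass back to $\gamma$ are exactly the expected steps; nothing is missing.
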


\begin{proof}
    This is a well-known result in \sr geometry, cf. \cite[Lem. 2.15]{MR3308395}. The same proof can be carried out without substantial modifications in the setting of \sF manifolds, in light of Proposition \ref{prop:endpoint_map_extremals}.
\end{proof}

\begin{corollary}
\label{cor:geod_boundary_potential}
    Let $M$ be a smooth \sF manifold and let $\Omega\subset M$ be an open and bounded subset. Assume that $\partial\Omega$ is smooth and without characteristic points. Let $p\in\{0<\delta<\epsilon\}\cap\Omega$ be a differentiability point of $\delta$. Then, the unique geodesic $\gamma:[0,1]\to M$ joining $p$ and $\partial\Omega$ and such that $\delta(p)=\ell(\gamma)$ is defined by $\gamma(t)=\exp_p\big(-\frac{t}{2}d_p\delta^2\big)$.
\end{corollary}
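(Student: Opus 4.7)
The plan is to reduce the claim to Lemma \ref{lem:normal_geod_potential}, applied with the potential $\phi := \tfrac12\delta^2$. First, I would use Lemma \ref{lem:normal_exponential_map} to single out the candidate endpoint on $\partial\Omega$: since $p\in\{0<\delta<\epsilon\}\cap\Omega=E(D_\epsilon)$ and $E|_{D_\epsilon}$ is injective, there is a unique pair $(q,\lambda)\in D_\epsilon$ with $E(q,\lambda)=p$. The curve $s\mapsto E(q,s\lambda)$ is then a normal geodesic of length $\sqrt{2H(\lambda)}=\delta(p)$ joining $q\in\partial\Omega$ to $p$, so running it backwards provides a length-minimizing geodesic from $p$ to $q$ realizing $\delta(p)$.

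Next, I would check the hypotheses of Lemma \ref{lem:normal_geod_potential} for this pair $(p,q)$. Differentiability of $\phi$ at $p$ is immediate from the hypothesis that $\delta$ is differentiable at $p$. The equality $\phi(p)=\tfrac12\di_{SF}^2(p,q)$ holds because $q$ attains the distance from $p$ to $\partial\Omega$, and for every $z\in M$ the bound $\phi(z)=\tfrac12\delta(z)^2\leq \tfrac12\di_{SF}^2(z,q)$ is a direct consequence of $q\in\partial\Omega$ together with the very definition $\delta(z)=\di_{SF}(z,\partial\Omega)$. Lemma \ref{lem:normal_geod_potential} then yields both that the geodesic from $p$ to $q$ is unique and that it admits the explicit expression
\begin{equation*}
\gamma(t)=\exp_p\bigl(-t\,d_p\phi\bigr)=\exp_p\bigl(-\tfrac{t}{2}d_p\delta^2\bigr).
\end{equation*}

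Finally, to upgrade the uniqueness from "geodesics between $p$ and this specific $q$" to "geodesics joining $p$ and $\partial\Omega$ that realize $\delta(p)$", I would argue as follows. Let $\tilde\gamma:[0,1]\to M$ be any such minimizing geodesic with $\tilde\gamma(0)=p$ and $\tilde\gamma(1)\in\partial\Omega$. Reversing it produces a length-minimizer for $\di_{SF}(\cdot,\partial\Omega)$ emanating from a non-characteristic boundary point, which by Lemma \ref{lem:abn_vs_char_pts} must be a normal geodesic; its lift satisfies the transversality condition \eqref{eq:trcondition}, so it takes the form $s\mapsto E(\tilde\gamma(1),s\lambda')$ for some $\lambda'\in\ann(\partial\Omega)_{\tilde\gamma(1)}$ with $\sqrt{2H(\lambda')}=\delta(p)<\epsilon$. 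Thus $(\tilde\gamma(1),\lambda')\in D_\epsilon$ and $E(\tilde\gamma(1),\lambda')=p$, and the injectivity of $E|_{D_\epsilon}$ forces $(\tilde\gamma(1),\lambda')=(q,\lambda)$, identifying $\tilde\gamma$ with the curve already exhibited. I do not foresee any serious obstacle: every ingredient — injectivity of the normal exponential map, absence of abnormal minimizers at non-characteristic points, and the covector formula of Lemma \ref{lem:normal_geod_potential} — has already been established, and the proof essentially consists in stringing these facts together.
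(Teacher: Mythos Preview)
Your proposal is correct and follows essentially the same approach as the paper: both reduce to Lemma~\ref{lem:normal_geod_potential} with $\phi=\tfrac12\delta^2$, after invoking Lemma~\ref{lem:normal_exponential_map} (together with Lemma~\ref{lem:abn_vs_char_pts}) to secure uniqueness of the minimizing geodesic to $\partial\Omega$. Your final paragraph makes the uniqueness step more explicit than the paper does, but the underlying argument is the same.
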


\begin{proof}
    Since $p\in\{0<\delta<\epsilon\}\cap\Omega$, from Lemma \ref{lem:normal_exponential_map} we know that the geodesic joining $p$ and $\partial\Omega$, and realizing $\delta$ is normal and unique. Let $q\in\partial\Omega$ be its endpoint and define 
    \begin{equation}
        \phi:M\to\R;\qquad \phi(z):=\frac12\delta^2(z).
    \end{equation}
    Note that $\phi$ is differentiable at the point $p$, $\phi(p)=\frac12\ell(\gamma)^2=\frac12\di_{SF}^2(p,q)$ and, since $q\in\partial\Omega$, it also satisfies the inequality in \eqref{eq:ineq_distance_potential}. Thus, we may apply Lemma \ref{lem:normal_geod_potential} and conclude the proof. 
    \end{proof}

Collecting all the previous results, we are in position to prove the following theorem concerning the regularity of the normal exponential map. 

\begin{theorem}
\label{thm:E_diffeo_delta_smooth}
    Let $M$ be a smooth \sF manifold. The restriction of the \sF normal exponential map to $D_\epsilon$, namely $E|_{D_\epsilon}:D_\epsilon\to \{0<\delta<\epsilon\}\cap \Omega$, defines a diffeomorphism on an open and dense subset $\mathcal O\subset D_\epsilon$. Moreover, $\delta$ is smooth on $E(\mathcal O)\subset\{0<\delta<\epsilon\}\cap \Omega$, which is open and with full-measure.
\end{theorem}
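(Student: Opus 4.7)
The plan is to define $\mathcal O\subset D_\epsilon$ as the set of covectors at which $dE$ is an isomorphism, and to derive both the density of $\mathcal O$ and the full-measure statement by combining Sard's theorem with Brouwer's invariance of domain, using crucially the equality of dimensions $\dim D_\epsilon=\dim M=n$ (since $\ann(\partial\Omega)$ is a line bundle over the hypersurface $\partial\Omega$). First I would observe that $E$ is smooth on the \emph{entire} set $D_\epsilon$: on $D_\epsilon$ one has $H(\lambda)>0$, hence $\lambda\notin\ann(\dis)$ by Lemma \ref{lem:maximized_hamiltonian}; since $\vec H$ is smooth on $T^*M\setminus\ann(\dis)$ and $H$ is conserved along its flow, the trajectory $t\mapsto e^{t\vec H}_{\rm r}(\lambda)$ stays in the smooth locus. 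Consequently, $\mathcal O$ is open in $D_\epsilon$, and the inverse function theorem together with the global injectivity of $E|_{D_\epsilon}$ provided by Lemma \ref{lem:normal_exponential_map} yields that $E|_{\mathcal O}\colon\mathcal O\to E(\mathcal O)$ is a diffeomorphism onto the open set $E(\mathcal O)$.

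For density of $\mathcal O$ in $D_\epsilon$ I would argue by contradiction: if some nonempty open $V\subset D_\epsilon\setminus\mathcal O$ existed, then $dE$ would be everywhere singular on $V$ and Sard's theorem would force $E(V)$ to be Lebesgue-negligible in $M$. On the other hand, $E|_{D_\epsilon}$ is a continuous injection between manifolds of equal dimension $n$, so Brouwer's invariance of domain forces $E(V)$ to be open and nonempty, giving a contradiction. The same Sard estimate shows that $E(D_\epsilon\setminus\mathcal O)$ has Lebesgue measure zero, hence, by injectivity of $E|_{D_\epsilon}$,
\begin{equation*}
    \big(\{0<\delta<\epsilon\}\cap\Omega\big)\setminus E(\mathcal O)=E(D_\epsilon\setminus\mathcal O)
\end{equation*}
is Lebesgue-negligible, and $E(\mathcal O)$ is therefore open and of full measure in $\{0<\delta<\epsilon\}\cap\Omega$.

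Finally, for the smoothness of $\delta$, fix $p\in E(\mathcal O)$ and set $(q,\lambda):=(E|_{\mathcal O})^{-1}(p)$. The curve $t\mapsto E(q,t\lambda)$ is a normal geodesic from $\partial\Omega$ to $p$ with constant speed $\sqrt{2H(\lambda)}$ (as $H$ is conserved along the Hamiltonian flow), and by Lemma \ref{lem:normal_exponential_map} it realises $\delta(p)$, so $\delta(p)=\sqrt{2H(\lambda)}$. As $(E|_{\mathcal O})^{-1}$ and $H$ are smooth on their respective domains and $H>0$ on $D_\epsilon$, this identity exhibits $\delta$ as a composition of smooth maps on $E(\mathcal O)$. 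The subtle point in the whole argument is to accommodate the fact that $E$ cannot be smooth across the zero section of $\ann(\partial\Omega)$, due to the singularity of the \sF Hamiltonian along $\ann(\dis)$: the definition of $D_\epsilon$, which excludes $H=0$, is exactly what is needed to make both the inverse function theorem and invariance of domain applicable.
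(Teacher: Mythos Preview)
Your proof is correct and takes a genuinely different route from the paper's. The paper does not use Sard's theorem or invariance of domain; instead it builds an explicit right-inverse. Concretely, it invokes Lemma~\ref{lem:local_semiconcavity_delta} (local semiconcavity of $\delta$) together with Alexandrov's theorem to obtain a full-measure set $U\subset\{0<\delta<\epsilon\}\cap\Omega$ on which $\delta$ is twice differentiable, and then defines $\Phi(p)=e^{-\vec H}(d_p\delta)$ via Corollary~\ref{cor:geod_boundary_potential}. From the identity $E\circ\Phi=\id_U$ and the second-order Taylor expansion of $\delta$ at points of $U$, it deduces that $d_{(q,\lambda)}E$ is invertible on $\Phi(U)$, and takes $\mathcal O$ to be the open set of regular points, which contains $\Phi(U)$ and is therefore dense; the inclusion $U\subset E(\mathcal O)$ then yields the full-measure statement directly.

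Your argument is more elementary: it bypasses the semiconcavity lemma, Alexandrov's theorem, and the construction of $\Phi$ entirely, using only the smoothness and injectivity of $E$ on $D_\epsilon$ (Lemma~\ref{lem:normal_exponential_map}) together with the equality $\dim D_\epsilon=\dim M$. The paper's approach, on the other hand, yields as a by-product the explicit identification $\Phi(p)=e^{-\vec H}(d_p\delta)$, tying the local inverse of $E$ to the differential of $\delta$; this is not needed for the theorem itself but reflects the surrounding development (Lemma~\ref{lem:normal_geod_potential}, Corollary~\ref{cor:geod_boundary_potential}). Note also that the paper's claim that $\Phi(U)$ is dense in $D_\epsilon$ tacitly relies on the continuity of $E^{-1}$, i.e.\ on the same invariance-of-domain principle you invoke explicitly.
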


\begin{proof}
    We are going to show that $d_{(q,\lambda)}E$ is invertible for every $(q,\lambda)$ in a suitable subset of $D_\epsilon$. By Corollary \ref{cor:geod_boundary_potential}, letting $U\subset\{0<\delta<\epsilon\}\cap \Omega$ be the set where $\delta$ is twice-differentiable, the map 
    \begin{equation}
        \Phi: U\to \ann(\partial\Omega); \qquad \Phi(p)=e^{-\vec H}(d_p\delta) 
    \end{equation}
    is a right-inverse for the normal exponential map, namely $E\circ\Phi=\id_U$. Note that $\Phi(U)\subset \ann(\partial\Omega)$ by Corollary \ref{cor:geod_boundary_potential}, in combination with the transversality condition \eqref{eq:trcondition}. Moreover, recalling that the Hamiltonian is constant along the motion, we also have:
    \begin{equation}
        \sqrt{2H(\Phi(p))}=\ell(\gamma)=\delta(p)\in (0,\epsilon),
    \end{equation}
    so that $\Phi(U)\subset D_\epsilon$. But now by the choice of the set $U$, $\delta$ is twice-differentiable on this set and it has a Taylor expansion up to order $2$. Thus, expanding the identity $E\circ\Phi=\id_U$ at a point $p=E(q,\lambda)$, we deduce that $d_{(q,\lambda)}E$ must be invertible for every $(q,\lambda)\in \Phi(U)\subset D_\epsilon$, and thus $E$ is a local diffeomorphism around every point in $\Phi(U)$. Furthermore, observing that $\Phi(U)$ is dense in $D_\epsilon$, we see that $E$ is a local diffeomorphism everywhere on a open and dense subset $\mathcal{O}\subset D_\epsilon$, containing $\Phi(U)$. Hence, we conclude that $E|_{\mathcal{O}}$ is a diffeormorphism onto its image, being a local diffeomorphism that is also invertible, thanks to Lemma \ref{lem:normal_exponential_map}. Finally, in order to prove that $\delta$ is smooth on $E(\mathcal{O})\subset\{0<\delta<\epsilon\}\cap\Omega$, it is enough to observe that, by construction, 
    \begin{equation}
        \delta(E(q,\lambda))=\sqrt{2H(\lambda)},\qquad\forall\,(q,\lambda)\in D_\epsilon.
    \end{equation}
    On $D_\epsilon$, $H$ is smooth, hence we conclude that $\delta$ is smooth on $E(\mathcal{O})$. Now $U\subset E(\mathcal O)$ so that $E(\mathcal O)$ is open, dense and has full-measure in $\{0<\delta<\epsilon\}\cap \Omega$.
\end{proof}

An immediate consequence of the previous theorem is the existence of many geodesics that are strongly normal in the sense of Definition \ref{def:strongly_normal_geodesic}.

\begin{corollary}
\label{cor:existence_strongly_normal_geod}
    Let $M$ be a smooth \sF manifold and let $\Omega\subset M$ be an open and bounded subset. Assume that $\partial\Omega$ is smooth and without characteristic points. Let $p\in E(\mathcal{O})\subset\{0<\delta<\epsilon\}\cap \Omega$ and let $\gamma:[0,1]\to M$ be the unique geodesic joining $p$ and $\partial\Omega$ and realizing $\delta$. Then, $\gamma$ is strongly normal. 
\end{corollary}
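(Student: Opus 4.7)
The plan is to verify right and left strong normality of $\gamma$ separately, each time applying Lemma~\ref{lem:abn_vs_char_pts} to a judiciously chosen non-characteristic hypersurface.

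For right strong normality, fix $s\in[0,1)$. Constant-speed minimality of $\gamma$ combined with the triangle inequality $\delta(p) \leq s\delta(p) + \delta(\gamma(s))$ forces $\ell(\gamma|_{[s,1]}) = (1-s)\delta(p) = \delta(\gamma(s))$, so the reversal of $\gamma|_{[s,1]}$ realizes $\di_{SF}(\gamma(s),\partial\Omega)$ with initial endpoint $\gamma(1)\in \partial\Omega\setminus C(\partial\Omega)$. Lemma~\ref{lem:abn_vs_char_pts}, together with the invariance of abnormality under orientation reversal, gives that $\gamma|_{[s,1]}$ is not abnormal.

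For left strong normality, the restrictions $\gamma|_{[0,s]}$ connect two interior points, so $\partial\Omega$ cannot directly play the role of $\Sigma$; I would instead use level sets of $\delta$ as auxiliary hypersurfaces. Parametrize from the boundary as $\tilde\gamma(t) = E(q, t\lambda)$ with $q := \gamma(1)$ and $(q,\lambda) := (E|_{\mathcal O})^{-1}(p) \in \mathcal O$, relying on Theorem~\ref{thm:E_diffeo_delta_smooth}. By openness of $\mathcal O$, there exists $s_0\in(0,1)$ with $(q,(1-s)\lambda)\in\mathcal O$ for every $s\in[0,s_0)$, hence $\delta$ is smooth near $\gamma(s)$ in this range. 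Differentiating the identity $\delta(\tilde\gamma(t)) = t\sqrt{2H(\lambda)}$ yields $d_{\gamma(s)}\delta\bigl(\dot{\tilde\gamma}(1-s)\bigr) = \sqrt{2H(\lambda)} > 0$, which simultaneously shows that the local level set $\Sigma_s := \delta^{-1}(\delta(\gamma(s)))$ is a smooth hypersurface near $\gamma(s)$ and that the horizontal velocity $\dot{\tilde\gamma}(1-s)\in\dis_{\gamma(s)}$ is transverse to $\Sigma_s$, so $\gamma(s)\notin C(\Sigma_s)$. A further triangle-inequality estimate gives $\di_{SF}(p,\Sigma_s) = s\delta(p) = \ell(\tilde\gamma|_{[1-s,1]})$, so Lemma~\ref{lem:abn_vs_char_pts} applied to the minimizer $\tilde\gamma|_{[1-s,1]}$ implies that $\gamma|_{[0,s]}$ (its reversal) is not abnormal for every $s\in(0,s_0)$.

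The final step extends this to every $s\in(0,1]$ via the monotonicity of abnormality under left-restriction: by Lemma~\ref{lem:abnormal_annihilator}, an abnormal lift $\mu$ of $\gamma|_{[0,s]}$ restricts to an abnormal lift $\mu|_{[0,s']}$ of $\gamma|_{[0,s']}$ for every $s'\leq s$. Contrapositively, the existence of \emph{any} $s'\in(0,s_0)\cap(0,s]$ with $\gamma|_{[0,s']}$ non-abnormal forces $\gamma|_{[0,s]}$ to be non-abnormal as well, and such $s'$ is available for every $s\in(0,1]$. The main obstacle is exactly this final bootstrap: Theorem~\ref{thm:E_diffeo_delta_smooth} only guarantees smoothness of $\delta$ — and hence the level-set construction — in a small neighborhood of the basepoint $p$, so the monotonicity of abnormality under left-restriction is the essential tool that propagates the conclusion from a narrow window near $s=0$ to the entire interval $[0,1]$.
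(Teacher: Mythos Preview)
Your proof is correct and follows the same two-hypersurface strategy as the paper (use $\partial\Omega$ for one direction, a level set of $\delta$ for the other), but your execution of the level-set half is more elaborate than necessary. Rather than working with a family of level sets $\Sigma_s$ through the moving point $\gamma(s)$ --- which only lie in $E(\mathcal O)$ for small $s$ and forces the bootstrap via monotonicity of abnormality --- the paper uses the \emph{single} level set $\Sigma := \{\delta=\delta(p)\}$ through the fixed endpoint $p\in E(\mathcal O)$. Since $\delta$ is smooth near $p$ with nonvanishing differential (Theorem~\ref{thm:E_diffeo_delta_smooth} and Corollary~\ref{cor:geod_boundary_potential}), $\Sigma$ is a smooth hypersurface near $p$; and because $\delta$ is $1$-Lipschitz, every restriction $\gamma|_{[0,s]}$ (in your orientation) minimizes $\di_{SF}(\gamma(s),\Sigma)$ with initial point $p\in\Sigma$. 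Thus Lemma~\ref{lem:abn_vs_char_pts} applies directly for every $s\in(0,1]$ once $p\notin C(\Sigma)$ is checked, and the monotonicity step becomes unnecessary. Your transversality argument for verifying non-characteristic points is a fine alternative to the paper's (which instead recycles Lemma~\ref{lem:abn_vs_char_pts} in the reverse direction, using the already-established non-abnormality of $\gamma$ to infer $p\notin C(\Sigma)$).
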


\begin{proof}
    Let $q:=\gamma(0)\in\partial\Omega$ the endpoint of $\gamma$ on the boundary of $\Omega$. Then, since $q$ is non-characteristic point, Lemma \ref{lem:abn_vs_char_pts} ensures that $\gamma|_{[0,s]}$ can not have an abnormal lift. Hence, $\gamma$ is left strongly normal. In order to prove that $\gamma$ is also right strongly normal, we reason in a similar way but with $\{\delta=\delta(p)\}$ in place of $\partial\Omega$. Indeed, since $\delta$ is smooth on the open set $E(\mathcal{O})$ by Theorem \ref{thm:E_diffeo_delta_smooth} and $d_{\bar p}\delta$ is not vanishing for every $\bar p\in E(\mathcal{O})$ as a consequence of Corollary \ref{cor:geod_boundary_potential}, the set $\Sigma:=\{\delta=\delta(p)\}$ defines a smooth hypersurface in a neighborhood of the point $p$. In addition, $\delta_\Sigma(q):=\di_{SF}(q,\Sigma)=\delta(p)$ and $\gamma$ is the unique geodesic realizing $\delta_\Sigma$. Finally, applying once again Lemma \ref{lem:abn_vs_char_pts}, we also deduce that $p\notin C(\Sigma)$, so repeating the argument we did before, we conclude that $\gamma$ must be also right strongly normal. This concludes the proof. 
\end{proof}

\begin{remark}
\label{rmk:abnormal_sub_segments}
    Since $E(\mathcal O)$ has full measure in $\{0<\delta<\epsilon\}\cap \Omega$, we can find $(q,\lambda)\in\mathcal {O}$ such that, denoting by $\gamma:[0,1]\to M$ the corresponding geodesic minimizing $\delta$, we have that $\gamma(t)\in E(\mathcal{O})$ for $\Leb^1$-a.e. $t\in [0,1]$. This means that $\Leb^1$-almost every level set defines locally a hypersurface and, recalling that restrictions of abnormal geodesics are still abnormal, the proof of Corollary \ref{cor:existence_strongly_normal_geod} can be repeated to show that the curve $\gamma$ does not contain abnormal sub-segments.
\end{remark}

\begin{theorem}[Existence of strongly normal geodesics without abnormal sub-segments]
\label{thm:existence_good_geod}
    Let $M$ be a smooth \sF manifold. Then, there exists a strongly normal geodesic $\gamma:[0,1]\to M$, which does not contain abnormal sub-segments.
\end{theorem}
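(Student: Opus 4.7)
The plan is to promote Corollary \ref{cor:existence_strongly_normal_geod} using the observation of Remark \ref{rmk:abnormal_sub_segments}. I would fix the same geometric data as in Section 4.1: an open bounded domain $\Omega\subset M$ with smooth boundary $\partial\Omega$ free of characteristic points (which certainly exists locally, for instance as a small Riemannian ball in a coordinate chart around a chosen point), together with the threshold $\epsilon>0$ and the open dense set $\mathcal{O}\subset D_\epsilon$ supplied by Lemma \ref{lem:normal_exponential_map} and Theorem \ref{thm:E_diffeo_delta_smooth}. Recall that $\delta$ is smooth and has non-vanishing differential on the full-measure open subset $E(\mathcal{O})\subset\{0<\delta<\epsilon\}\cap\Omega$.

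The key preliminary step is the choice of the starting covector. Consider the Hamiltonian flow on $D_\epsilon$ along the radial direction (since $H$ is homogeneous of degree $2$, for $(q,\lambda)\in\ann(\partial\Omega)$ the map $t\mapsto E(q,t\lambda)$ is a constant-speed geodesic for $t$ in an interval containing $0$). Since $\mathcal{O}$ has full Lebesgue measure in $D_\epsilon$ and is preserved under the local action of the flow in a natural way, a Fubini-type argument along the fibres of this flow produces $(q_0,\lambda_0)\in\mathcal{O}$ such that the geodesic
\begin{equation*}
    \gamma_0:[0,1]\to M, \qquad \gamma_0(t):=E\big(q_0,(1-t)\lambda_0\big)\,\text{ reparameterized to start inside }\Omega,
\end{equation*}
satisfies $\gamma_0(t)\in E(\mathcal{O})$ for $\Leb^1$-almost every $t\in[0,1]$. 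Concretely, the set of bad times $B:=\{t\in[0,1]\suchthat \gamma_0(t)\notin E(\mathcal{O})\}$ has measure zero for almost every choice of $(q_0,\lambda_0)$, because the exceptional set $D_\epsilon\setminus\mathcal{O}$ is a Borel null set and its preimage under the bijective flow map $(q,\lambda,t)\mapsto E(q,t\lambda)$ is null.

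With $\gamma_0$ fixed as above, I would show that no sub-segment of $\gamma_0$ is abnormal. Pick $0\leq a<b\leq 1$ and any good time $s\in(a,b)$ with $\gamma_0(s)\in E(\mathcal{O})$; such times are dense in $(a,b)$. Near $\gamma_0(s)$ the function $\delta$ is smooth with non-vanishing differential, so the level set $\Sigma_s:=\{\delta=\delta(\gamma_0(s))\}$ is a smooth hypersurface whose points closest to $\gamma_0(0)\in\partial\Omega$ are non-characteristic by Lemma \ref{lem:abn_vs_char_pts} and Corollary \ref{cor:geod_boundary_potential}. The sub-arcs $\gamma_0|_{[0,s]}$ and $\gamma_0|_{[s,1]}$ are exactly the distance realizers from $\gamma_0(0)$ and $\gamma_0(1)$, respectively, to $\Sigma_s$, so replaying the argument of Corollary \ref{cor:existence_strongly_normal_geod} with $\Sigma_s$ in place of $\partial\Omega$ shows that these sub-arcs are strongly normal. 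As abnormality is inherited by restrictions, this already forces $\gamma_0|_{[a,b]}$ to be normal: otherwise it would admit an abnormal lift, which would restrict to an abnormal lift of $\gamma_0|_{[a,s]}\subset\gamma_0|_{[0,s]}$ or $\gamma_0|_{[s,b]}\subset\gamma_0|_{[s,1]}$, contradicting what we just proved. Letting $s$ approach $a$ and $b$ through the dense set of good times promotes ``normal'' to ``strongly normal'' on $[a,b]$, and because $[a,b]\subset[0,1]$ was arbitrary we conclude that $\gamma_0$ has no abnormal sub-segments.

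The main obstacle is the Fubini step producing a geodesic whose trace meets $E(\mathcal{O})$ in a full-measure set of times. Knowing that $E(\mathcal{O})$ has full measure in $\Omega$ is not automatically enough, because a one-dimensional curve can a priori avoid a full-measure set; one really needs the slicing/parameterization to be absolutely continuous. The cleanest way around this is to use that $E$ restricted to $\mathcal{O}$ is a diffeomorphism, so one can pull back the null set $E(D_\epsilon)\setminus E(\mathcal{O})$ to a null set in $\mathcal{O}$ and apply Fubini in coordinates adapted to the radial flow of $\vec H$. Once this generic choice is made, the rest of the proof is a clean iteration of the argument already developed for Corollary \ref{cor:existence_strongly_normal_geod}.
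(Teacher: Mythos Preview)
Your overall strategy matches the paper's: use the Fubini argument of Remark~\ref{rmk:abnormal_sub_segments} to select a $\delta$-minimizing geodesic $\gamma_0$ hitting $E(\mathcal O)$ at almost every time, then for any sub-interval $[a,b]$ pick a good time $s\in(a,b)$ and exploit the level set $\Sigma_s=\{\delta=\delta(\gamma_0(s))\}$ together with Lemma~\ref{lem:abn_vs_char_pts} to exclude abnormality. The Fubini step and its justification via the diffeomorphism $E|_{\mathcal O}$ are correct and are exactly what the paper does.

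There is, however, a genuine gap at the very start. Your claim that a small Riemannian ball supplies a domain with non-characteristic boundary is false in general: in a truly \sF structure (rank $<n$) closed hypersurfaces typically \emph{must} carry characteristic points---for instance, in any contact $3$-manifold (such as the Heisenberg group) this is forced topologically. The paper avoids this by observing that the whole machinery of Lemma~\ref{lem:normal_exponential_map}, Theorem~\ref{thm:E_diffeo_delta_smooth} and Corollary~\ref{cor:existence_strongly_normal_geod} localizes to a neighbourhood of a \emph{single} non-characteristic point $q$ of an arbitrary smooth hypersurface $\Sigma$, and then argues via the H\"ormander condition (through Frobenius) that such a point always exists. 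You should replace your global non-characteristic domain by this localization. A minor aside: the final ``letting $s$ approach $a$ and $b$'' step is superfluous---once $\gamma_0|_{[a,s]}$ and $\gamma_0|_{[s,b]}$ are shown not to be abnormal for one good $s\in(a,b)$, neither is $\gamma_0|_{[a,b]}$, and since $[a,b]$ was arbitrary this is already the statement ``no abnormal sub-segments''.
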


\begin{proof}
    Note that Theorem \ref{thm:E_diffeo_delta_smooth} was stated for a hypersurface that is the boundary of non-characteri\-stic domain $\Omega$. However, without substantial modifications, one can prove that an analogous result holds locally around a non-characteristic point of a given smooth hypersurface $\Sigma\subset M$. In particular, letting $q\in\Sigma\setminus C(\Sigma)$, there exists $V_q\subset\Sigma$ open neighborhood of $q$, and $\epsilon>0$ such that, denoting by 
    \begin{equation}
        \tilde D_\epsilon:=\{(\bar q,\lambda):\bar q\in V_q,\, 0<\sqrt{2H(\lambda)}<\epsilon\},
    \end{equation}
    the map $E|_{\tilde D_\epsilon}:\tilde D_\epsilon\to E(\tilde D_\epsilon)\subset \{0<\delta_\Sigma<\epsilon\} $ is a diffeomorphism on an open and dense subset $\mathcal{O}\subset \tilde D_\epsilon$ and $\delta_\Sigma$ is smooth on $E(\mathcal {O})$. Now, Corollary \ref{cor:existence_strongly_normal_geod} shows that there exists a point $p\in E(\mathcal O)$ such that  the unique geodesic $\gamma:[0,1]\to M$ minimizing $\delta$ is strongly normal and, also according to Remark \ref{rmk:abnormal_sub_segments}, it does not contain abnormal sub-segments. In order to conclude, we need to show that there exists a hypersurface $\Sigma$ with $\Sigma\setminus C(\Sigma)\neq \emptyset$. But this is a consequence of the H\"ormander condition, indeed if $\dis_q\subset\Sigma_q$ for every $q\in\Sigma$, then Frobenius' theorem would ensure $\dis$ be involutive and thus it would not be bracket-generating.
\end{proof}

\subsection{Regularity of the distance function}

We state below the definition of conjugate and cut loci in a \sF manifold, following the blueprint of the \sr setting, cf.\ \cite[Chap.\ 11]{ABB-srgeom} or \cite{MR3935035}. 

\begin{definition}[Conjugate point]
    Let $M$ be a smooth \sF manifold and let $\gamma : [0, 1] \to M$ be a normal geodesic with initial covector $\lambda\in T^*_pM$, that is $\gamma(t)=\exp_p(t\lambda)$. We say that $q = \exp_p(\bar t\lambda)$ is a \emph{conjugate point} to $p$ along $\gamma$ if $\bar t\lambda$ is a critical point for $\exp_p$. 
\end{definition}

\begin{definition}[Cut locus]
    Let $M$ be a smooth \sF manifold and let $p\in M$. We say that $q\in M$ is a \emph{smooth point} with respect to $p$, and write $q\in \Sigma_p$, if there exists a unique geodesic $\gamma:[0,1]\to M$ joining $p$ and $q$, which is not abnormal and such that $q$ is not conjugate along $p$. Define the \emph{cut locus} of $p\in M$ as $\cut(p):=M\setminus\Sigma_p$. Finally, the cut locus of $M$ is the set 
    \begin{equation}
        \cut(M):=\{(p,q)\in M\times M: q\in\cut(p)\}\subset M\times M.
    \end{equation}
\end{definition}

\begin{remark}
    In the \sr setting, according to \cite{MR2513150}, the set of smooth points with respect to $p$ is open and dense. However, it is an open question to understand whether its complement, that is the cut locus, is negligible.   
\end{remark}

Outside the cut locus of a \sF manifold, we can define the \emph{$t$-midpoint map}, for $t\in [0,1]$, as the map $\phi_t:M\times M \setminus \cut(M)\to M$ assigning to $(p,q)$ the $t$-midpoint of the (unique) geodesic $\gamma_{p,q}$ joining $p$ and $q$. More precisely, for every $(p,q)\in M\times M \setminus \cut(M)$,
\begin{equation}
\label{eq:t-midpoint_map}
    \phi_t(p,q):=e_t(\gamma_{p,q})=\exp_q((t-1)\lambda_p), \qquad \text{where } \lambda_p \in T^*_q M\text{ such that } p=\exp_q(-\lambda_p).
\end{equation}
Note that, by definition of cut locus, the $t$-midpoint map is well-defined since the geodesic joining $p$ and $q$ for $q\notin \cut(p)$ is unique and strictly normal, i.e.\ without abnormal lifts. 

We report a useful result relating the regularity of the squared distance function on a \sF manifold $M$ with the cut locus. Such result can be proved repeating verbatim the proof of \cite[Prop.\ 11.4]{ABB-srgeom}, in light of Proposition \ref{prop:endpoint_map_extremals} and Lemma \ref{lem:normal_geod_potential}. For every $p\in M$, let $\mathfrak f_p:=\frac12\di_{SF}^2(\cdot,p)$.

\begin{prop}
\label{prop:smoothness_implies_smoothness}
    Let $M$ be a smooth \sF manifold and let $p,q\in M$. Assume there exists an open neighborhood $\mathcal O_q\subset M$ of $q$ such that $\mathfrak f_p$ is smooth. Then, $\mathcal O_q\subset \Sigma_p$ and 
    \begin{equation}
        \phi_t(p,z)=\exp_z((t-1)d_z\mathfrak f_p),\qquad \forall\, z\in\mathcal O_q.
    \end{equation}
\end{prop}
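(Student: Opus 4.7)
The plan is to derive the proposition from Lemma \ref{lem:normal_geod_potential} applied to the globally defined function $\mathfrak f_p$, and then to upgrade the conclusion from ``unique normal minimizing geodesic'' to ``$z\in\Sigma_p$'' by constructing a smooth right inverse of $\exp_p$; this last construction is precisely where the full smoothness of $\mathfrak f_p$ (and not just its differentiability) enters.

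Fix $z\in\mathcal O_q$. The case $z=p$ is trivial: $d_p\mathfrak f_p=0$, both sides of the claimed identity reduce to the constant curve at $p$, and $p\in\Sigma_p$ by convention. Assume therefore $z\neq p$. Since $\mathfrak f_p(w)=\tfrac12\di_{SF}^2(w,p)$ for every $w\in M$, the hypotheses of Lemma \ref{lem:normal_geod_potential} hold trivially (with the roles $(p,q,\phi)$ of the lemma played here by $(z,p,\mathfrak f_p)$). The lemma then yields that the geodesic from $z$ to $p$ is unique, admits a normal lift, and equals $\bar\gamma(s)=\exp_z(-s\,d_z\mathfrak f_p)$. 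Reparametrizing by $t=1-s$ and recalling \eqref{eq:t-midpoint_map} delivers the identity
\begin{equation*}
    \phi_t(p,z)=\exp_z((t-1)\,d_z\mathfrak f_p),
\end{equation*}
which is the formula in the statement.

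It remains to show that $z\in\Sigma_p$: uniqueness and non-abnormality are provided by Lemma \ref{lem:normal_geod_potential}, so only the absence of conjugate points must be checked. Since $\bar\gamma$ is a non-constant normal extremal, its initial covector lies outside $\ann(\dis)$, that is $d_z\mathfrak f_p\in T_z^*M\setminus\ann(\dis)_z$. Combining this with the smoothness of $\mathfrak f_p$ on $\mathcal O_q$ and with the smoothness of the Hamiltonian flow of $\vec H$ on $T^*M\setminus\ann(\dis)$ (cf.\ Lemma \ref{lem:maximized_hamiltonian} and Proposition \ref{prop:maximal_extension_normal_geod}), together with the symmetry $H(-\lambda)=H(\lambda)$ of the sub-Finsler Hamiltonian, one defines a smooth map
\begin{equation*}
    G:\mathcal O_q\setminus\{p\}\to T_p^*M\setminus\ann(\dis)_p,\qquad G(z):=e^{-\vec H}_{\mathrm r}(d_z\mathfrak f_p),
\end{equation*}
which assigns to $z$ the initial covector at $p$ of the (reversed) geodesic $\gamma_{p,z}$ and satisfies $\exp_p\circ G=\mathrm{id}$. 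Differentiating the latter at $z$ gives $d_{G(z)}\exp_p\circ d_zG=\mathrm{id}_{T_zM}$, so $d_{G(z)}\exp_p$ is surjective and, by equality of dimensions, an isomorphism. Therefore $G(z)$ is not a critical point of $\exp_p$, which means $z$ is not conjugate to $p$ along $\gamma_{p,z}$, and thus $z\in\Sigma_p$.

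The main obstacle is exactly this last step: Lemma \ref{lem:normal_geod_potential} itself only uses pointwise differentiability, whereas the non-conjugacy conclusion genuinely requires the openness of $\mathcal O_q$ and the smoothness of $\mathfrak f_p$ on it, in order for $G$ to be a smooth right inverse and for the identity $\exp_p\circ G=\mathrm{id}$ to be differentiable and yield the invertibility of $d_{G(z)}\exp_p$.
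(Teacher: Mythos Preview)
Your argument is essentially correct and follows the approach the paper indicates (namely, the proof of \cite[Prop.~11.4]{ABB-srgeom} adapted via Lemma \ref{lem:normal_geod_potential} and Proposition \ref{prop:endpoint_map_extremals}). The use of the smooth right inverse $G(z)=e^{-\vec H}_{\mathrm r}(d_z\mathfrak f_p)$ to deduce invertibility of $d_{G(z)}\exp_p$ is exactly the standard mechanism, and your justification that $d_z\mathfrak f_p\notin\ann(\dis)_z$ (so that the Hamiltonian flow is smooth there) is correct.

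There is one genuine, if minor, gap. You write that ``uniqueness and non-abnormality are provided by Lemma \ref{lem:normal_geod_potential}'', but the lemma as stated in the paper only asserts that the geodesic \emph{has a normal lift}; it does not exclude the existence of an abnormal lift as well. The definition of $\Sigma_p$ requires the geodesic to be \emph{not abnormal}, so this point needs a separate argument. Fortunately, your own construction already supplies it: since $\exp_p$ factors through the end-point map via the control assignment of Corollary \ref{cor:control_normal_extremal}, i.e.\ $\exp_p(\lambda)=E_p(u(\lambda))$, the surjectivity of $d_{G(z)}\exp_p$ forces $d_{u(G(z))}E_p$ to be surjective. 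By Proposition \ref{prop:endpoint_map_extremals}(ii), surjectivity of $d_uE_p$ precludes any abnormal lift, yielding strict normality. Adding this one sentence closes the gap.
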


Thanks to Proposition \ref{prop:smoothness_implies_smoothness}, the regularity of the squared distance ensures uniqueness of geodesics and smoothness of the $t$-midpoint map. Thus, it is desirable to understand where the squared distance is smooth. In this regard, \cite[Thm.\ 2.19]{MR3852258} proves the regularity of the squared distance function along left strongly normal geodesics. We refer to \cite[App.\ A]{MR3852258} for further details. 

\begin{theorem}[{\cite[Thm.\ 2.19]{MR3852258}}]
\label{thm:regularity_distance_fun}
    Let $M$ be a smooth \sF manifold and let $\gamma : [0, 1] \to M$ be a left strongly normal geodesic. Then there exists $\epsilon > 0$ and an open neighborhood $U \subset M \times M$ such that:
    \begin{enumerate}
        \item[(i)] $(\gamma(0), \gamma(t)) \in U$ for all $t \in (0, \epsilon)$;
        \item[(ii)] For any $(p, q) \in U$ there exists a unique (normal) geodesic joining $p$ and $q$, shorter than $\epsilon$;
        \item[(iii)] The squared distance function $(p,q)\mapsto \di_{SF}^2(p, q)$ is smooth on $U$.
    \end{enumerate}
\end{theorem}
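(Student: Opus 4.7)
The plan is to apply an inverse function theorem argument to the map
\begin{equation*}
    F : T^*M \setminus \ann(\dis) \to M \times M, \qquad F(\lambda) := \bigl(\pi(\lambda),\ \exp_{\pi(\lambda)}(\lambda)\bigr),
\end{equation*}
in a neighborhood of $s\lambda_0$, where $\lambda_0 \in T^*_{\gamma(0)}M$ is the initial covector of the normal lift of $\gamma$ and $s\in(0,1]$. Since $\gamma$ is non-constant and left strongly normal, the initial covector $\lambda_0$ cannot lie in $\ann(\dis)$: otherwise the Hamiltonian dynamic would be stationary and $\gamma$ would be constant. By conservation of $H$ along the flow, $\lambda_t \notin \ann(\dis)$ for every $t\in[0,1]$, so Lemma \ref{lem:maximized_hamiltonian} guarantees that $H$, and hence $F$, is smooth in a neighborhood of the entire extremal.

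The technical heart of the argument, which I expect to be the main obstacle, is to show that $d_{s\lambda_0}F$ is an isomorphism for all sufficiently small $s>0$. Since source and target both have dimension $2n$, injectivity suffices. A vector $V\in \ker d_{s\lambda_0}F$ must satisfy $d\pi(V)=0$, hence is vertical and identifies with an element of $T^*_{\gamma(0)}M$; the vanishing of the second component of $dF(V)$ then forces $d_{s\lambda_0}\exp_{\gamma(0)}(V)=0$. Consequently, $d_{s\lambda_0}F$ fails to be injective precisely when $s\lambda_0$ is a critical point of $\exp_{\gamma(0)}$, i.e.\ a conjugate covector along $\gamma$. The theorem thus reduces to the assertion that \emph{a left strongly normal geodesic has no conjugate points on a sufficiently small initial interval}. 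This is a second variation statement, proved in the \sr setting in \cite[Ch.~8]{MR3852258}. Its adaptation to the \sF case uses Proposition \ref{prop:endpoint_map_extremals} to describe the index form of the energy restricted to the space of controls with the same end-point, while the strong convexity of $\normdot$---equivalently, the $C^{1,1}$ regularity of $\normdot_*$ given by Proposition \ref{prop:propunderduality}---supplies the coercivity of the quadratic part needed to make the index form positive definite for $s$ small, ruling out conjugate times.

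Granted the claim, let $W\subset T^*M\setminus \ann(\dis)$ be an open neighborhood of the ray $\{s\lambda_0 : s\in(0,\epsilon)\}$ on which $F$ restricts to a diffeomorphism onto an open set $U \subset M\times M$, after possibly shrinking $\epsilon$ so that $\sqrt{2H(F^{-1}(p,q))}<\epsilon$ throughout $U$. Item (i) holds by construction and item (ii) follows at once: the curve $t\mapsto \exp_p(t\, F^{-1}(p,q))$ is a normal geodesic from $p$ to $q$ of length less than $\epsilon$, unique among those whose initial covector sits in $W$. For (iii), define the smooth function $\phi(p,q) := 2H(F^{-1}(p,q))$ on $U$. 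By construction $\di_{SF}^2 \le \phi$ everywhere on $U$, with equality along the set $\{(\gamma(0),\gamma(t)) : t\in(0,\epsilon)\}$ since $\gamma$ is length-minimizing on short subintervals. A compactness and continuity argument---exploiting properness of $H$ on fibers and the fact that any competing minimizer would produce, in the limit, a second $F$-preimage of $(\gamma(0),\gamma(t))$ distinct from $t\lambda_0$ (contradicting local injectivity of $F$) or an abnormal lift at $\gamma(0)$ (excluded by left strong normality)---yields $\di_{SF}^2 = \phi$ on a possibly smaller $U$, hence smoothness.
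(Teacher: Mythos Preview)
The paper does not give its own proof of this statement: Theorem~\ref{thm:regularity_distance_fun} is stated as a citation of \cite[Thm.~2.19]{MR3852258}, with the remark ``We refer to \cite[App.~A]{MR3852258} for further details.'' There is therefore nothing in the paper to compare your argument against beyond the reference itself.

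That said, your sketch follows the standard route of the cited reference: invert the extended exponential $F(\lambda)=(\pi(\lambda),\exp_{\pi(\lambda)}(\lambda))$ near the initial ray, reduce invertibility to the absence of conjugate points for small $s$, and identify $\di_{SF}^2$ with $2H\circ F^{-1}$. Two points deserve tightening if you intend this as a self-contained proof. First, your reduction ``$d_{s\lambda_0}F$ fails to be injective precisely when $s\lambda_0$ is a critical point of $\exp_{\gamma(0)}$'' treats only vertical vectors; you should also check that horizontal directions (variations of the base point) contribute a complementary image, which follows from the transversality of $\pi$ and the normal Hamiltonian flow. Second, the final ``compactness and continuity argument'' showing that $\di_{SF}^2=\phi$ on a possibly smaller $U$ is the delicate step: you must rule out short minimizers from $p$ to $q$ whose initial covector escapes $W$, and the dichotomy you invoke (a second $F$-preimage versus an abnormal lift) requires a genuine limiting argument along a sequence $(p_n,q_n)\to(\gamma(0),\gamma(t))$ together with the characterization of abnormals via Proposition~\ref{prop:endpoint_map_extremals}. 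These are exactly the points handled in \cite[App.~A]{MR3852258}, so your outline is correct but not yet a proof on its own.
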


\noindent The regularity of the squared distance function can be ``propagated'' along geodesics that do not admit abnormal sub-segments, applying the previous theorem for every sub-segment. 

\begin{corollary}
\label{cor:regularity_distance_fun}
    Let $M$ be a smooth \sF manifold and let $\gamma:[0,1]\to M$ be a geodesic that does not admit abnormal sub-segments. Then, for every $s\in [0,1]$, there exists $\epsilon > 0$ and an open neighborhood $U \subset M \times M$ such that:
    \begin{enumerate}
        \item[(i)] $(\gamma(s), \gamma(t)) \in U$ for all $t \in [0,1]$ such that $0<|t-s|<\epsilon$;
        \item[(ii)] For any $(p, q) \in U$ there exists a unique (normal) geodesic joining $p$ and $q$, shorter than $\epsilon$;
        \item[(iii)] The squared distance function $(p,q)\mapsto \di_{SF}^2(p, q)$ is smooth on $U$.
    \end{enumerate}
\end{corollary}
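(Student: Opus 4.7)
The plan is to reduce the statement to two applications of Theorem \ref{thm:regularity_distance_fun}, one for each time direction from $\gamma(s)$, using the reversibility of sub-Finsler geodesics. Since the reference norm $\normdot$ is symmetric, a curve $t\mapsto \gamma(t)$ is a (normal) geodesic if and only if its reversal $t\mapsto\gamma(1-t)$ is, and by Lemma \ref{lem:abnormal_annihilator} the abnormality condition $\lambda_t\in\ann(\dis)_{\gamma(t)}$ is manifestly preserved under time-reversal. Hence the property of admitting no abnormal sub-segments is invariant under both reversal and affine reparametrization, and restrictions of such a geodesic retain this property.

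Fix $s\in[0,1]$. When $s<1$, consider the forward rescaling $\gamma_+:[0,1]\to M$ defined by $\gamma_+(\tau):=\gamma(s+\tau(1-s))$. By the remarks above, $\gamma_+$ has no abnormal sub-segments, and in particular it is left strongly normal in the sense of Definition \ref{def:strongly_normal_geodesic}. Applying Theorem \ref{thm:regularity_distance_fun} to $\gamma_+$ produces $\tilde\epsilon_+>0$ and an open neighborhood $U_+\subset M\times M$ such that properties (ii) and (iii) of the statement hold on $U_+$ (with constant $\tilde\epsilon_+$), and such that
\begin{equation*}
    (\gamma(s),\gamma(t))=(\gamma_+(0),\gamma_+(\tau))\in U_+ \qquad \text{whenever } t=s+\tau(1-s), \ \tau\in(0,\tilde\epsilon_+).
\end{equation*}
When $s>0$, perform the symmetric construction with the backward rescaling $\gamma_-(\tau):=\gamma(s-\tau s)$, obtaining $\tilde\epsilon_->0$ and an open set $U_-\subset M\times M$ satisfying the analogous properties, with $(\gamma(s),\gamma(t))\in U_-$ for $t\in(s-\tilde\epsilon_- s,s)$.

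Setting $U:=U_+\cup U_-$ (with the missing factor replaced by $\emptyset$ at the endpoints $s\in\{0,1\}$) and choosing $\epsilon>0$ small enough that the interval $(s-\epsilon,s+\epsilon)\cap[0,1]$ is covered by the $t$-ranges above, properties (i), (ii), and (iii) follow immediately from the corresponding conclusions of Theorem \ref{thm:regularity_distance_fun} applied on $U_+$ and $U_-$, once one shrinks the respective $\tilde\epsilon_\pm$ so that the uniform bound $\epsilon$ on geodesic lengths in (ii) holds on the union. No substantive obstacle is expected: the entire argument is bookkeeping on top of Theorem \ref{thm:regularity_distance_fun}, the only conceptual verification being the time-reversal and rescaling invariance of the hypothesis on $\gamma$, which are immediate from Lemma \ref{lem:abnormal_annihilator} and the symmetry of $\di_{SF}$.
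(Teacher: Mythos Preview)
Your argument is correct and coincides with the paper's approach: the paper merely remarks that the conclusion follows by ``applying the previous theorem for every sub-segment'', and your forward/backward rescalings $\gamma_\pm$ make this precise in the obvious way. The only points worth tightening are cosmetic: the $\epsilon$ in (i) is a parameter bound while the $\epsilon$ in (ii) is a length bound, so when you shrink you should also account for the speed $\ell(\gamma)$ of the original curve; and at the endpoints $s\in\{0,1\}$ only one of $U_\pm$ is needed, which you already note.
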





\subsection{Volume contraction rate along geodesics}

Our goal is to quantify the contraction rate of small volumes along geodesics. To do this, we combine the smoothness of the $t$-midpoint map, with a lower bound on the so-called geodesic dimension. The latter has been introduced in \cite{MR3852258} for \sr manifolds and in \cite[Def.\ 5.47]{MR3502622} for general metric measure spaces. We recall below the definition. 

Let $M$ be a smooth \sF manifold. Given a point $p\in M$ and a Borel set $\Omega \subset M \setminus \cut(p)$, we define the \emph{geodesic homothety} of $\Omega$ with center $p$ and ratio $t \in [0, 1]$ as 
 \begin{equation}
    \Omega_t^p:=\{\phi_t(p,q): q\in \Omega\}.
\end{equation}
In the sequel, we say that $\m$ is a \emph{smooth measure} if, in coordinates, is absolutely continuous with respect the Lebesgue measure of the chart with a smooth and positive density. We will consider the metric measure space $(M,\di_{SF},\m)$.

\begin{definition}
\label{def:geodesic_dimension}
     Let $M$ be a smooth \sF manifold, equipped with a smooth measure $\m$. For any $p\in M$ and $s > 0$, define
    \begin{equation}
    \label{eq:pre_geod_dim}
         C_s(p) := \sup \left\{\limsup_{t\to 0}\frac{1}{t^s} \frac{\m(\Omega_t^p)}{\m(\Omega)}: \Omega \subset M \setminus \cut(p) \text{ Borel, bounded and  }\m(\Omega)\in(0,+\infty)\right\},
    \end{equation}
    We define the \emph{geodesic dimension} of $(M, \di_{SF}, \m)$ at $p\in M$ as the non-negative real number
    \begin{equation}
         \mathcal N(p) := \inf\{s > 0 : C_s(p) = +\infty\} = \sup\{s > 0 : C_s(p) = 0\},
     \end{equation}
     with the conventions $\inf \emptyset = +\infty$ and $\sup \emptyset = 0$.
 \end{definition} 

 \begin{remark}
\label{rmk:well-posedness_geod_dim}
    In \cite{MR3502622}, the definition of geodesic dimension is given for metric measure spaces with \emph{negligible cut loci}. While, in \cite[Def.\ 4.17]{barilari2022unified}, this assumption is dropped and the definition of geodesic dimension is given by taking the supremum \eqref{eq:pre_geod_dim} over Borel sets. In our work, we modified the definition by considering the supremum \eqref{eq:pre_geod_dim} over sets $\Omega$ outside the cut locus $\cut(p)$. This may not be equivalent to neither of the previous definitions but it is tailored to our specific purposes. 
 \end{remark}

 We now prove a fundamental theorem which relates the geodesic and topological dimensions of a sub-Finsler manifold $M$. This result is a suitable adaptation of \cite[Thm.\ 4]{MR3502622} to our setting.

 \begin{prop}
 \label{prop:lower_bound_geod_dim}
    Let $M$ be a smooth \sF manifold, equipped with a smooth measure $\m$. Assume that $r(p)<n:=\dim M$ for every $p\in M$.
    Then, 
    \begin{equation}
        \mathcal N (p) \geq n+1,\qquad\forall\,p\in M.
    \end{equation}
 \end{prop}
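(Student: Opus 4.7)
The plan is to bound the Jacobian of the $t$-midpoint map $\phi_t(p,\cdot)$ on $\Omega$ by $O(t^{n+1})$ as $t\to 0$, from which $\m(\Omega_t^p)=O(t^{n+1})$ and hence $C_s(p)=0$ for every $s<n+1$, giving $\mathcal{N}(p)\geq n+1$.

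First, I fix a bounded Borel $\Omega\subset M\setminus\cut(p)$ with $0<\m(\Omega)<+\infty$. Every $q\in\Omega$ is joined to $p$ by a unique, non-abnormal, non-conjugate geodesic; denote by $\lambda(q)\in T_p^*M\setminus\ann(\dis)_p$ its initial covector, so that $\phi_t(p,q)=\exp_p(t\lambda(q))$. By Lemma \ref{lem:maximized_hamiltonian} and Proposition \ref{prop:maximal_extension_normal_geod}, $\exp_p$ is smooth on $T_p^*M\setminus\ann(\dis)_p$, and non-conjugacy makes it a local diffeomorphism at $\lambda(q)$; hence $\lambda$ is smooth on $\Omega$ and $\phi_t(p,\cdot)$ is smooth on $\Omega$ for every $t\in(0,1]$.

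Next, I choose local coordinates around $p$ in which the first $r:=r(p)<n$ basis vectors span $\dis_p$. Since the initial velocity of a normal geodesic is horizontal (Corollary \ref{cor:control_normal_extremal}), the Taylor expansion in $t$ reads
$$
    \exp_p(t\lambda)=p+t\,v(\lambda)+O(t^2),\qquad v(\lambda)\in\dis_p,
$$
with remainder smooth in $\lambda$ uniformly on compact subsets of $T_p^*M\setminus\ann(\dis)_p$. Differentiating in $\lambda$, the matrix of $d_\lambda[\exp_p(t\cdot)]$ in the chosen coordinates takes the block form
$$
    \begin{pmatrix} tA(\lambda)+O(t^2)\\ O(t^2)\end{pmatrix},
$$
where $A(\lambda)\in\R^{r\times n}$, because $d_\lambda v$ has image contained in $\dis_p$. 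Every term of the Leibniz expansion of the determinant picks up at least $t^r$ from the first $r$ rows and $t^{2(n-r)}$ from the last $n-r$ rows, so $|\det d_\lambda[\exp_p(t\cdot)]|\leq C(\lambda)\,t^{2n-r}$. Since $r<n$, we have $2n-r\geq n+1$, and the chain-rule identity $d_\lambda[\exp_p(t\cdot)]=t\,d_{t\lambda}\exp_p$ yields $|\det d_{t\lambda}\exp_p|\leq C(\lambda)\,t$.

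Finally, applying the change-of-variables formula to $d_q\phi_t(p,\cdot)=t\,d_{t\lambda(q)}\exp_p\cdot d_q\lambda$ gives
$$
    \m(\Omega_t^p)\leq \int_\Omega|\det d_q\phi_t(p,\cdot)|\,d\m(q)\leq t^{n+1}\int_\Omega \tilde C(q)\,d\m(q),
$$
where $\tilde C(q)$ is locally bounded on $\Omega$. A standard exhaustion of $\Omega$ by sets whose images under $\lambda$ lie in fixed compact subsets of $T_p^*M\setminus\ann(\dis)_p$, followed by dominated convergence, shows $\m(\Omega_t^p)/t^s\to 0$ for every $s<n+1$, so $C_s(p)=0$ and $\mathcal{N}(p)\geq n+1$. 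The main technical hurdle is this last uniformity step: ensuring that the remainder in the Taylor expansion, and hence $\tilde C(q)$, is controlled as $\lambda(q)$ may approach $\ann(\dis)_p$ or $q$ may approach $\cut(p)$, which is handled by the openness of $M\setminus\cut(p)$ combined with the smoothness of $\lambda(\cdot)$ thereon.
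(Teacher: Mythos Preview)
Your approach differs from the paper's, and the final uniformity step contains a genuine gap.

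The paper's proof is a short metric argument and never touches the Jacobian of $\phi_t$. Since $\Omega$ is bounded, $\Omega\subset B_R^{SF}(p)$ for some $R>0$, and then automatically $\Omega_t^p\subset B_{tR}^{SF}(p)$. The Ball--Box theorem, transferred via the equivalence \eqref{eq:equivalent_sr_distance}, gives $\m\big(B_r^{SF}(p)\big)\leq C_p\, r^{n_p}$ for small $r$, with $n_p\geq n+1$ because $r(p)<n$. Hence $\m(\Omega_t^p)/t^s\to 0$ for every $s<n+1$, so $C_s(p)=0$ and $\mathcal N(p)\geq n+1$.

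Your pointwise Jacobian estimate is correct, but the step $\m(\Omega_t^p)\leq t^{n+1}\int_\Omega\tilde C\,d\m$ needs $\tilde C\in L^1(\Omega,\m)$, and you do not establish this. The set $\Omega$ is only Borel and bounded; it need not be precompact in $M\setminus\cut(p)$. As $q\in\Omega$ approaches $p\in\cut(p)$ one has $H(\lambda(q))=\tfrac12 d_{SF}(p,q)^2\to 0$, while the annihilator component of $\lambda(q)$ need not vanish, so $\lambda(q)$ can accumulate on $\ann(\dis)_p\setminus\{0\}$. There the Hamiltonian is only $C^1$ (Lemma~\ref{lem:maximized_hamiltonian}, Remark~\ref{rmk:exponential_map_not_reg}), so the second-order remainder in your Taylor expansion of $t\mapsto\exp_p(t\lambda)$ is not uniformly controlled and $\tilde C(q)$ may blow up. ``Openness of $M\setminus\cut(p)$ and smoothness of $\lambda$'' yield only local boundedness of $\tilde C$; an exhaustion $\Omega_k\uparrow\Omega$ gives $\m((\Omega_k)_t^p)\leq C_k t^{n+1}$ with $C_k$ possibly diverging, and dominated convergence is unavailable without a dominating function. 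The natural way to control the tail $\m\big((\Omega\setminus\Omega_k)_t^p\big)$ is to bound it by $\m\big(B_{tR}^{SF}(p)\big)$ --- which is exactly the paper's argument, rendering the Jacobian computation superfluous.

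As a side remark, a Jacobian bound of the kind you aim for does appear in the paper, in Theorem~\ref{thm:finite_order_jacobian}, but with the logic reversed: Proposition~\ref{prop:lower_bound_geod_dim} (via Ball--Box) is used as \emph{input} to force the order of $|\det d_q\phi_t(p,\cdot)|$ at $t=0$ to be at least $\mathcal N(p)\geq n+1$.
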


\begin{proof}
    Let $\di_{SR}$ be a \sr distance on the manifold $M$, equivalent to $\di_{SF}$ (see \eqref{eq:equivalent_sr_distance}). The Ball-Box theorem, cf.\ \cite[Cor.\ 2.1]{MR3308372}, ensures that for every $p \in M$ there exist $n_p\geq n+1$ and a positive constant $C_p$ such that 
    \begin{equation}\label{eq:Max,box}
        \m\big(B_r^{SR}(p)\big) \leq C_p \cdot r^{n_p} \qquad \text{ for $r$ sufficiently small.}
    \end{equation}
    Since $\di_{SF}$ and $\di_{SR}$ are equivalent, up to changing the constant, the same estimate holds for \sF balls, in particular
    \begin{equation}\label{eq:kdensity}
        \limsup_{r \to 0} \frac{\m\big(B_r^{SF}(p)\big)}{r^k}=0
    \end{equation}
    for every $k<n+1$. Take any $\Omega \subset M \setminus \cut(p)$ Borel, bounded and with $\m(\Omega)\in (0,+\infty)$ and consider $R>0$ such that $\Omega \subset B_R^{SF}(p)$. Note that $\Omega_t^p \subset B_{tR}^{SF}(p)$ and thus for every $k<n+1$ we have that
    \begin{equation}
        \limsup_{t \to 0} \frac{\m(\Omega_t^p)}{t^k \m (\Omega)} \leq  \limsup_{t \to 0} \frac{\m\big(B_{tR}^{SF}(p)\big)}{t^{k} \m (\Omega)} = \limsup_{t \to 0} \frac{\m\big(B_{tR}^{SF}(p)\big)}{(tR)^k} \cdot \frac{R^k}{\m(\Omega)}=0,
    \end{equation}
    where we used \eqref{eq:kdensity} for the last equality. Since $\Omega$ was arbitrary, we deduce that $C_k(p)=0$ for every $k<n+1$ and then $\mathcal{N}(p)\geq n+1$.
\end{proof}

\begin{remark}
    For an equiregular \sF manifold, with the same proof, it is possible to improve the estimate of Proposition \ref{prop:lower_bound_geod_dim}. In fact, in this case the Ball-Box theorem provides the estimate \eqref{eq:Max,box} with $n_p$ equal to the Hausdorff dimension $\dim_{H}(M)$, for every $p$, and consequently $\mathcal{N}(p)\geq \dim_{H}(M)$, cf. \cite[Prop.\ 5.49]{MR3852258}.
\end{remark}

By construction, the geodesic dimension controls the contraction rate of volumes along geo\-desics. This information can be transferred to the $t$-midpoint map, provided that is smooth. By invoking Theorem \ref{thm:regularity_distance_fun}, we can always guarantee the smoothness of the $t$-midpoint map for a sufficiently short segment of a geodesic without abnormal sub-segments.


\begin{theorem}
\label{thm:finite_order_jacobian}
    Let $M$ be a smooth \sF manifold equipped with a smooth measure $\m$ and such that $r(p)<n:=\dim M$ for every $p\in M$. Let $\gamma:[0,1]\to M$ be a geodesic that does not admit abnormal sub-segments, with endpoints $p$ and $q$. Assume that $(p,q)$ belongs to the open set $U$, found in Theorem \ref{thm:regularity_distance_fun}. Then, either $|\det\big(d_q\phi_t(p,\cdot)\big)|$ has infinite order at $t=0$ or
    \begin{equation}
    \label{eq:order_jacobian}
        |\det\big(d_q\phi_t(p,\cdot)\big)| \sim t^{m_p},\qquad\text{as }t\to 0
    \end{equation}
    for some integer $m_p\geq\mathcal N(p)\geq n+1$.
\end{theorem}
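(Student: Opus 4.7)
The plan is to combine the smoothness of the $t$-midpoint map along $\gamma$ with the lower bound on the geodesic dimension from Proposition \ref{prop:lower_bound_geod_dim}. First, since $(p,q) \in U$ the squared distance $\di_{SF}^2$ is smooth on a neighborhood of $(p,q)$, so Proposition \ref{prop:smoothness_implies_smoothness} produces an open set $V_q \subset \Sigma_p = M \setminus \cut(p)$ containing $q$ such that
\begin{equation}
    \phi_t(p,x) = \exp_x\bigl((t-1)\, d_x \mathfrak{f}_p\bigr), \qquad \forall\, x \in V_q,\ t \in [0,1].
\end{equation}
At $(t,x) = (0,q)$ the covector $-d_q\mathfrak{f}_p$ is the initial covector at $q$ of the reverse of $\gamma$, which is a non-trivial normal geodesic since $\gamma$ admits no abnormal sub-segment; hence $-d_q\mathfrak{f}_p \in T^*_q M \setminus \ann(\dis)_q$. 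By continuity the same holds for $(t-1)d_x\mathfrak{f}_p$ whenever $(t,x)$ is close enough to $(0,q)$, and Remark \ref{rmk:exponential_map_not_reg} then guarantees joint smoothness of $(t,x) \mapsto \phi_t(p,x)$ on a neighborhood of $(0,q)$.

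Fixing charts around $q$ and $p$, set $J(t) := \det\bigl(d_q \phi_t(p,\cdot)\bigr)$. By the previous step $J$ is smooth near $t = 0$, and $J(0) = 0$ because $\phi_0(p,\cdot) \equiv p$ is constant on $V_q$ and therefore has vanishing differential at $q$. Consequently either all derivatives of $J$ at $0$ vanish, giving the infinite-order alternative, or there is a smallest positive integer $m_p$ with $J^{(m_p)}(0) \neq 0$. In the latter case Taylor's formula yields $J(t) = c\, t^{m_p} + o(t^{m_p})$ with $c \neq 0$, so $|J(t)| \sim |c|\, t^{m_p}$ as $t \to 0^+$, and $m_p$ is automatically a positive integer.

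In the finite-order case it remains to prove $m_p \geq \mathcal{N}(p)$, the bound $\mathcal{N}(p) \geq n+1$ being furnished by Proposition \ref{prop:lower_bound_geod_dim}. Expanding the smooth function $\det\bigl(d_x \phi_t(p,\cdot)\bigr) = \sum_{k \geq 1} a_k(x)\, t^k$ with coefficients $a_k$ smooth in $x$, the condition $a_{m_p}(q) \neq 0$ persists on a sufficiently small ball $B_r(q) \subset V_q$ by continuity; hence for every $x \in B_r(q)$, the function $t \mapsto \det\bigl(d_x \phi_t(p,\cdot)\bigr)$ vanishes at $t = 0$ to some integer order at most $m_p$. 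For any $s > m_p$ this gives $|\det d_x \phi_t(p,\cdot)|/t^s \to +\infty$ as $t \to 0^+$ pointwise in $x \in B_r(q)$. Since $B_r(q) \subset M\setminus \cut(p)$ is admissible in the supremum defining $C_s(p)$, the change of variables formula for $\phi_t(p,\cdot)$ (a local diffeomorphism on $V_q$ for small $t > 0$), combined with Fatou's lemma and the smooth positive density of $\m$, yields
\begin{equation}
    \liminf_{t \to 0^+} \frac{\m\bigl(B_r(q)^p_t\bigr)}{t^s\, \m\bigl(B_r(q)\bigr)} = +\infty,
\end{equation}
whence $C_s(p) = +\infty$ for every $s > m_p$ and $\mathcal{N}(p) \leq m_p$. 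The main technical obstacle lies in the first step: because the sub-Finsler exponential is only of class $C^1$ along $\ann(\dis)$, joint smoothness of $\phi_t(p, \cdot)$ down to $t = 0$ rests crucially on the assumption that $\gamma$ admits no abnormal sub-segment, which is precisely what keeps the covectors $(t-1)d_x\mathfrak{f}_p$ off the annihilator.
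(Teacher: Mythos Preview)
Your proof is correct and follows the same overall strategy as the paper: smoothness of $(t,x)\mapsto\phi_t(p,x)$ near $(0,q)$ via Proposition~\ref{prop:smoothness_implies_smoothness}, Taylor expansion of the Jacobian in $t$, and a volume-contraction argument to bound the order from below by $\mathcal N(p)$. The paper organizes the last step slightly differently: it argues by contradiction, setting $N=\lceil\mathcal N(p)\rceil-1$ and taking $m=\min\{i\le N:\exists\,z\in V,\ a_i(z)\neq 0\}$, so that $a_i\equiv 0$ on $V$ for every $i<m$; this yields a clean leading term $\Leb^n(\tilde V^p_t)=\big(\int_{\tilde V}a_m\big)t^m+o(t^m)$ without needing Fatou. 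Your pointwise-blowup-plus-Fatou variant is a legitimate alternative and arguably more direct.

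One imprecision worth fixing: to obtain the lower bound $\m\big(B_r(q)^p_t\big)\ge c\int_{B_r(q)}|\det d_x\phi_t(p,\cdot)|\,\de x$ needed for Fatou, ``local diffeomorphism on $V_q$'' is neither quite established (you have not shown the Jacobian is nonzero uniformly in $x$ for small $t$) nor sufficient, since the area formula only gives the reverse inequality in general. What is actually required is that $\phi_t(p,\cdot)$ be \emph{injective} on $B_r(q)$ for each $t\in(0,1)$. This follows from uniqueness of geodesics from $p$ to points of $B_r(q)\subset M\setminus\cut(p)$ together with the fact that normal extremals do not branch (being integral curves of the smooth field $\vec H$ on $T^*M\setminus\ann(\dis)$). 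The paper relies on the same injectivity implicitly when it writes $\Leb^n(\tilde V^p_t)=\int_{\tilde V}|\det d_z\phi_t|\,\de z$.
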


\begin{proof}
    Since, by assumption $(p,q)\in U$, we can apply item {\slshape (iii)} of Theorem \ref{thm:regularity_distance_fun}, deducing the regularity of the distance function. Combining this with Proposition \ref{prop:smoothness_implies_smoothness} and the homogeneity of the Hamiltonian flow, there exists an open neighborhood $V\subset M$ of $q$, such that the function
    \begin{equation}
        [0,1)\times V\ni(t,z) \mapsto d_z\phi_t(p,\cdot) = d_z\left(\exp_z( (t-1) d_z\mathfrak f_p)\right)
    \end{equation}
    is smooth. Thus, we can compute the Taylor expansion of its determinant in the $t$-variable at order $N:= \lceil \mathcal N (p)\rceil-1 < \mathcal{N}(p)$, obtaining:
    \begin{equation}
    \det\big(d_z\phi_t(p,\cdot)\big)= \sum_{i=0}^N a_i(z)t^i+t^{N+1}R_N(t,z), \qquad \forall\, z\in V,
    \end{equation}
    where the functions $a_i$ and $R_N$ are smooth. Arguing by contradiction, we assume that there exists $j\leq N$ such that $a_j(q)\ne 0$ and define
    \begin{equation}
        m:= \min\{ i\leq N \, :\, \exists \,z\in V \text{ such that } a_i(z)\neq 0\}.
    \end{equation}
    Note that $m\leq j$ since $a_j(q)\neq 0$ and thus $m\leq N$.
    Without loss of generality, we can assume that $V$ and $p$ are contained in the same coordinate chart and that $a_m>0$ on an open subset $\tilde V\subset V$ with positive measure. Then, in charts, it holds that 
    \begin{equation}
        \Leb^n \big(\tilde V_t^p\big)=\int_{\tilde V}\big|\det\big(d_z\phi_t(p,\cdot)\big)\big|\de z = \int_{\tilde V} a_m(z) \de z \cdot t^m + o(t^m) \qquad \text{as }t \to 0.
    \end{equation}
    Therefore, recalling that $\m$ is a smooth measure, there exists a constant $a>0$ such that 
    \begin{equation}
        \m \big(\tilde V_t^p\big) \geq a \cdot t^m,
    \end{equation}
    for every $t$ sufficiently small. As a consequence, taking any $s\in (N, \mathcal N(p))$ we have that 
    \begin{equation}
        \limsup_{t\to 0}\frac{1}{t^s} \frac{\m(\tilde V_t^p)}{\m(\tilde V)} \geq \limsup_{t\to 0}\frac{1}{\m(\tilde V)} \frac{a \cdot t^m}{t^s} = +\infty,
    \end{equation}
    and therefore we deduce $C_s(p)=+\infty$, which in turn implies $\mathcal N (p)\leq s$, giving a contradiction.
\end{proof}

Theorem \ref{thm:finite_order_jacobian} motivates the following definition. 

\begin{definition}[Ample geodesic]
\label{def:ample_geodesic}
    Let $M$ be a smooth \sF manifold and let $\gamma:[0,1]\to M$ be a strictly normal geodesic not admitting abnormal sub-segments. We say that $\gamma$ is \emph{ample} if, for every couple of distinct points $p,q\in \gamma([0,1])$, $|\det\big(d_q\phi_t(p,\cdot)\big)|$ exists and has finite order in $t=0$. 
\end{definition}

\begin{remark}
    The concept of ample geodesic in the \sr setting has been introduced in \cite{MR3852258} and it differs from Definition \ref{def:ample_geodesic}. However, we remark that, in \sr manifolds, for ample geodesics in the sense of \cite{MR3852258}, $|\det\big(d_q\phi_t(p,\cdot)\big)|$ has finite order equal to the geodesic dimension at $p$, cf.\ \cite[Lem.\ 6.27]{MR3852258}. Thus, our definition is weaker, but enough for our purposes. 
\end{remark}

\subsection{Proof of Theorem \ref{thm:intro1}}\label{sec:argument_of_the_mago}

Let $M$ be a smooth \sF manifold and let $\phi_t$ the $t$-midpoint map, defined as in \eqref{eq:t-midpoint_map}. For ease of notation, set 
\begin{equation}
\label{eq:midpoint}
    \M(p,q) := \phi_{1/2}(p,q),\qquad \forall\,(p,q)\in M\times M\setminus \cut(M),
\end{equation}
be the $1/2$-midpoint map or simply midpoint map. Reasoning as in \cite[Prop.\ 3.1]{MR4201410}, we obtain the following result as a consequence of Corollary \ref{cor:regularity_distance_fun} and Theorem \ref{thm:finite_order_jacobian}. This argument hinges upon Theorem \ref{thm:existence_good_geod}, which establishes the existence of a geodesic without abnormal sub-segments in a \sF manifold.  

\begin{prop}\label{prop:mago,isnegnaci}
    Let $M$ be a smooth \sF manifold equipped with a smooth measure $\m$ and such that $r(p)<n:=\dim M$ for every $p\in M$.
    Let $\gamma:[0,1]\to M$ be the geodesic identified in Theorem \ref{thm:existence_good_geod} and let $\varepsilon>0$. Then, there exist $0\leq a<b\leq 1$ such that, letting $\bar p:=\gamma(a)$, $\bar q:=\gamma(b)$, the following statements hold:
    \begin{enumerate}
        \item[(i)] $\bar p\notin \cut(\bar q)$, $\bar q\notin \cut(\bar p)$ and, for every $t\in (a,b)$, we have $\bar p,\bar q\notin \cut(\gamma(t))$. Moreover, for every $t\in (a,b)$, $\mathfrak f_{\gamma(t)}$ is smooth in a neighborhood of $\bar p$ and in a neighborhood $\bar q$.
        \item[(ii)] If, in addition, $\gamma$ is ample, the midpoint map satisfies        \begin{equation}\label{eq:jacobianest}
            |\det d_{\bar q}\M(\bar p,\cdot)|\leq (1+\varepsilon)2^{-m_{\bar p}},\qquad |\det d_{\bar p}\M(\cdot,\bar q)|\leq (1+\varepsilon)2^{-m_{\bar q}}     
        \end{equation}
        where $m_{\bar p}$ and $m_{\bar q}$ are defined by \eqref{eq:order_jacobian} and $m_{\bar p}, m_{\bar q}\geq n+1$.
    \end{enumerate}
\end{prop}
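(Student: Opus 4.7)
The plan is to prove the two items in sequence: item (i) via a compactness argument on the curve $\gamma([0,1])$, and item (ii) via a scaling argument based on the $2$-homogeneity of the sub-Finsler Hamiltonian.

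For item (i), I would apply Corollary \ref{cor:regularity_distance_fun} at each parameter $s \in [0,1]$, obtaining radii $\epsilon_s > 0$ and open neighborhoods $U_s \subset M \times M$ on which $\di_{SF}^2$ is smooth and such that $(\gamma(s), \gamma(t)) \in U_s$ whenever $0 < |t-s| < \epsilon_s$. By compactness of $[0,1]$ I would extract a uniform $\epsilon > 0$ for which this property holds at every $s \in [0,1]$. Picking any $0 \leq a < b \leq 1$ with $b-a < \epsilon$ and setting $\bar p := \gamma(a)$, $\bar q := \gamma(b)$, for every $t \in (a,b)$ both pairs $(\bar p, \gamma(t))$ and $(\bar q, \gamma(t))$ lie in open sets where $\di_{SF}^2$ is smooth. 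Proposition \ref{prop:smoothness_implies_smoothness} then converts this smoothness into the cut-locus assertions $\bar p, \bar q \notin \cut(\gamma(t))$ and into the smoothness of $\mathfrak f_{\gamma(t)}$ in neighborhoods of $\bar p$ and $\bar q$; the particular case $s=a$, $t=b$ also handles the pair $(\bar p, \bar q)$.

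For item (ii), assume $\gamma$ is ample. The $2$-homogeneity of $H$ implies $\phi_t(\bar p, q) = \exp_{\bar p}(t\lambda(q))$, where $\lambda := \exp_{\bar p}^{-1}$ is smooth on a neighborhood of $\bar q$ thanks to item (i). Setting $F(\mu) := \exp_{\bar p}(\mu)$, the chain rule gives
\[
|\det d_q \phi_t(\bar p, \cdot)| = t^n \cdot \frac{|\det d_{t\lambda(q)} F|}{|\det d_{\lambda(q)} F|}.
\]
By ampleness and Theorem \ref{thm:finite_order_jacobian}, the left-hand side is $\sim t^{m_{\bar p}}$ as $t\to 0$, which forces $|\det d_{r\eta} F| \sim C_\eta\, r^{m_{\bar p}-n}$ as $r \to 0^+$ along any fixed ray $r \mapsto r\eta$ in $T^*_{\bar p} M$. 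Specializing to the ray determined by $\lambda(\bar q)$ (whose direction is the initial covector of the reparametrized geodesic $\gamma|_{[a,b]}$ at $\bar p$), and observing that $|\lambda(\bar q)| \to 0$ along this fixed ray as $b-a \to 0$, the ratio above evaluated at $t = 1/2$ converges to $2^{-(m_{\bar p}-n)}$. Hence $|\det d_{\bar q}\M(\bar p,\cdot)| \to 2^{-m_{\bar p}}$. The symmetry $\M(p,q) = \M(q,p)$ reduces the second inequality of \eqref{eq:jacobianest} to the same scaling argument performed with $\bar q$ as base point and $\bar p$ approaching $\bar q$, yielding $|\det d_{\bar p}\M(\cdot,\bar q)| \to 2^{-m_{\bar q}}$. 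Shrinking $b-a$ further if necessary, both bounds can be secured at a common pair $(a,b)$.

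The main obstacle is the rigorous passage from the pointwise $t \to 0$ asymptotic of Theorem \ref{thm:finite_order_jacobian} (which keeps the endpoints fixed) to the limit $b-a \to 0$ at the fixed parameter $t = 1/2$: I need to extract from the asymptotic not only the decay order of $|\det d_\mu F|$ along rays, but also the equality of the leading constants at $\mu$ and $\mu/2$, so that the ratio $|\det d_{\mu/2}F|/|\det d_\mu F|$ genuinely converges to $2^{-(m_{\bar p}-n)}$ rather than merely being bounded by a comparable quantity. A secondary delicate point is realizing both bounds of \eqref{eq:jacobianest} at the same pair $(a, b)$, since the exponent $m_{\bar q}$ depends on $b$; this is handled by the continuity of the Jacobians in their arguments together with the local uniformity of the scaling asymptotic along $\gamma$.
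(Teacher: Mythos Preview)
Your proposal is correct and follows the same route as the paper, which simply refers to \cite[Prop.\ 3.1]{MR4201410} and invokes Corollary~\ref{cor:regularity_distance_fun} together with Theorem~\ref{thm:finite_order_jacobian}. The compactness argument for item~(i) and the scaling/semigroup argument $\phi_{1/2}(\bar p,\cdot)\circ\phi_{t_0}(\bar p,\cdot)=\phi_{t_0/2}(\bar p,\cdot)$ for item~(ii) are exactly what is intended.

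One refinement worth noting concerns your ``secondary delicate point''. You phrase the analysis through $F=\exp_{\bar p}$, but the map $(\bar p,\mu)\mapsto d_\mu\exp_{\bar p}$ is smooth only on $T^*M\setminus\ann(\dis)$ (Remark~\ref{rmk:exponential_map_not_reg}), so the uniformity in $\bar p$ as $\mu\to 0$ is not immediate in that formulation. The paper's description $\phi_t(p,q)=\exp_q((t-1)d_q\mathfrak f_p)$ avoids this: for $t<1$ the covector stays away from $\ann(\dis)$, and the joint smoothness of $\di_{SF}^2$ from Corollary~\ref{cor:regularity_distance_fun} yields that $(s,t)\mapsto\Psi(s,t):=|\det d_{q_0}\phi_t(\gamma(s),\cdot)|$ is jointly smooth. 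Consequently the vanishing order $m_{\gamma(s)}$ is upper semicontinuous in $s$, one may restrict $\gamma$ to an open subinterval on which both the forward and backward orders are constant, and on that subinterval the convergence $\Psi(s,\tau/2)/\Psi(s,\tau)\to 2^{-m_{\gamma(s)}}$ is uniform. This makes your ``local uniformity'' precise and delivers both estimates of \eqref{eq:jacobianest} at a common pair $(a,b)$.
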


%

\noindent Given $z\in M$, define the \emph{inverse geodesic map} $\I_z: M \setminus \cut(z) \to M$ as 
\begin{equation}
\label{eq:inverse_geodesic}
    \I_z(p) = \exp_z(- \lambda) \qquad \text{where } \lambda \in T^*_z M\text{ such that } p=\exp_z( \lambda).
\end{equation}
We may interpret this map as the one associating to $p$ the point $\I_z(p)$ such that $z$ is the midpoint of $x$ and $\I_z(p)$. 

We prove now the main theorem of this section, which also implies Theorem \ref{thm:intro1}. Our strategy is an adaptation to the \sF setting of the one proposed in \cite{MR4201410}. 

\begin{theorem}\label{thm:casosmooth}
    Let $M$ be a complete smooth \sF manifold equipped with a smooth measure $\m$ and such that $r(p)<n:=\dim M$ for every $p\in M$. Then, the metric measure space $(M,\di_{SF},\m)$ does not satisfy the Brunn--Minkowski inequality $\bm(K,N)$, for every $K\in\R$ and $N\in (1,\infty)$.
\end{theorem}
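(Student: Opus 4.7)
The plan is to adapt Juillet's strategy from \cite{MR4201410} to the sub-Finsler setting, using the building blocks developed in Sections 3--4. Fixing $K \in \R$ and $N \in (1,\infty)$, I would first use Theorem \ref{thm:existence_good_geod} to produce a geodesic $\gamma$ without abnormal sub-segments, and then apply Proposition \ref{prop:mago,isnegnaci} to select $\bar p = \gamma(a), \bar q = \gamma(b)$ with $\di_{SF}(\bar p, \bar q)$ so small that $\tau_{K,N}^{(s)}(\di_{SF}(\bar p,\bar q)) \geq s - \eta$ for all $s \in [0,1]$ (possible by Remark \ref{rmk:limit_dist_coeff}, with $\eta>0$ arbitrary). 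The argument then splits according to whether $\gamma$ is ample (Definition \ref{def:ample_geodesic}).

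In the ample case, Proposition \ref{prop:mago,isnegnaci}(ii) gives the Jacobian bound $|\det d_{\bar p} \M(\cdot, \bar q)| \leq (1+\varepsilon)\,2^{-(n+1)}$. I would take balls $A = B_\delta(\bar p)$ and $B = B_\delta(\bar q)$ with equal $\m$-measure and consider the smooth map $F \colon A \times B \to M \times M$, $F(p, q) := (\M(p, q), q)$, whose Jacobian coincides with $|\det d_p \M(\cdot, q)|$ and is uniformly bounded by $(1+2\varepsilon)\,2^{-(n+1)}$ on $A\times B$. The change-of-variables formula combined with two applications of Fubini---the second one re-parametrizing the image of $F$ via the inverse geodesic map \eqref{eq:inverse_geodesic}, using that $z \in \M(A, q)$ iff $q \in \I_z(A)$---yields
\[
    \int_{M_{1/2}(A, B)} \m(B \cap \I_z(A)) \, d\m(z) \leq (1+2\varepsilon)\,2^{-(n+1)}\, \m(A)\,\m(B).
\]
With a careful choice of the test sets (for instance, shaping $B$ as a slight enlargement of $\I_{\bar z}(A)$, where $\bar z = \M(\bar p, \bar q)$, so that $\I_z(A) \subseteq B$ for $z$ in a near-full-measure subset of $M_{1/2}(A, B)$) together with a density estimate, I would extract the upper bound $\m(M_{1/2}(A, B)) \leq (1+C\varepsilon)\, 2^{-(n+1)}\, \m(A)$. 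Matching this against the Brunn--Minkowski lower bound $\m(M_{1/2}(A, B)) \geq (1-2\eta)^N \m(A)$ gives the desired contradiction, since $2^{-(n+1)} \leq 1/4$ for $n \geq 1$, by sending $\varepsilon, \eta \to 0$.

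In the non-ample case, Theorem \ref{thm:finite_order_jacobian} produces a pair $p, q \in \gamma([0,1])$ for which $|\det d_y \phi_t(p, \cdot)| = o(t^N)$ as $t \to 0$. Taking $B = B_\delta(q)$ and $A = B_\rho(p)$ with $\rho = \rho(t)$ decaying sufficiently fast so that the inclusion $M_t(A, B) \subseteq \phi_t(p, B)^{+C\rho}$ yields $\m(M_t(A, B)) \leq o(t^N)\, \m(B)$ (by adding the smooth Jacobian contribution and a controlled perimeter contribution of order $\rho \cdot (t\delta)^{n-1}$), this contradicts the Brunn--Minkowski lower bound $\m(M_t(A, B)) \geq (t - \eta)^N \m(B)$ for $t$ small enough.

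The main obstacle is the ample case, specifically securing the fiber estimate $\m(B \cap \I_z(A)) \gtrsim \m(A)$ on a near-full-measure subset of $M_{1/2}(A, B)$. A naive infimum bound is too weak since the fiber degenerates at the boundary of $M_{1/2}(A, B)$; one must rather tailor $A$ and $B$ geometrically through the inverse geodesic map and carefully absorb the boundary contributions into the error terms. This is the step where the full strength of the smoothness and strong convexity of the reference norm, together with the regularity results of Section \ref{sec:sub-Finsler_geometry}, enters decisively.
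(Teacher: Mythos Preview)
Your proposal follows Juillet's original strategy quite closely, but leaves the crucial step in the ample case unresolved---you yourself flag the fiber estimate $\m(B \cap \I_z(A)) \gtrsim \m(A)$ as ``the main obstacle'' and only gesture at how it might be secured. The paper sidesteps this difficulty entirely with a cleaner argument. Instead of working with $F(p,q) = (\M(p,q),q)$ and then fighting to control the fibers, the paper chooses the midpoint $m = \M(\bar p,\bar q)$, sets $A_\varrho := \I_m(B_\varrho)$ for a small Euclidean ball $B_\varrho$ around $\bar q$, and considers
\[
F \colon B_\varrho \times B_\varrho \to M,\qquad F(x,y) := \M(\I_m(x),y).
\]
The point is that $F(x,x) = m$ identically, so the differential at $(\bar q,\bar q)$ satisfies $d_{(\bar q,\bar q)}F(v,w) = d_{\bar q}\M(\bar p,\cdot)(w-v)$. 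A first-order Taylor expansion then gives directly
\[
M_{1/2}(A_\varrho,B_\varrho) = F(B_\varrho,B_\varrho) \subseteq m + d_{\bar q}\M(\bar p,\cdot)\big(B^{eu}_{2\varrho}(0)\big) + B^{eu}_{o(\varrho)}(0),
\]
from which the volume bound $\Leb^n(M_{1/2}(A_\varrho,B_\varrho)) \leq \tfrac12(1+\varepsilon)\omega_n\varrho^n + o(\varrho^n)$ follows immediately via \eqref{eq:jacobianest}. No Fubini, no fiber analysis, no tailoring of test sets. The only additional ingredient is the observation that $|\det d_{\bar q}\I_m|\cdot|\det d_{\bar p}\I_m| = 1$, so at least one of the two is $\geq 1$, guaranteeing $\Leb^n(A_\varrho) \geq \omega_n\varrho^n + o(\varrho^n)$.

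In the non-ample case your approach is workable but unnecessarily delicate: controlling $M_t(A,B) \subseteq \phi_t(p,B)^{+C\rho}$ requires uniform Lipschitz bounds on $\phi_t$ in the first variable as $t\to 0$, which is not immediate. The paper instead invokes that $\bm(K,N)$ implies $\MCP(K,N)$ and tests the latter with a single ball $\Omega_\varrho = B_\varrho(\bar q)$ shrinking to a point; passing $\varrho\to 0$ then $t\to 0$ in the MCP inequality forces the order of $|\det d_{\bar q}\phi_t(\bar p,\cdot)|$ to be at most $N$, contradicting the infinite-order vanishing.
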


\begin{proof}
Fix $\varepsilon>0$, $K\in \R$ and $N\in(1,\infty)$. Let $\gamma:[0,1]\to M$ be the geodesic identified by Theorem \ref{thm:existence_good_geod} and assume it is contained in a coordinate chart with (sub-Finsler) diameter $D>0$. Up to restricting the domain of the chart and the geodesic, we can also assume that 
\begin{equation}\label{eq:mincharts}
    (1-\varepsilon)\Leb^n \leq \m \leq (1+\varepsilon)\Leb^n \qquad\text{and} \qquad
    \tau_{K,N}^{(1/2)} (\theta) \geq  \frac 12 - \varepsilon, \quad \forall \theta \leq D,
\end{equation}
where the second inequality can be fulfilled, according to Remark \ref{rmk:limit_dist_coeff}.
Moreover, let $0\leq a < b \leq 1$ be as in Proposition \ref{prop:mago,isnegnaci}. We proceed by contradiction and assume that $(M,\di_{SF},\m)$ satisfies the $\bm(K,N)$. 

First of all, suppose that $\gamma$ is not ample. According to \cite[Prop.\ 5.3]{Magnabosco-Portinale-Rossi:2022b}, the Brunn--Minkowski inequality $\bm(K,N)$ implies the $\MCP(K,N)$ condition\footnote{In that paper, the proposition is proved for essentially non-branching metric measure spaces. However, what is needed is a measurable selection of geodesics which we have locally around the curve $\gamma$.}.
Therefore, $(M,\di_{SF},\m)$ satisfies the $\MCP(K,N)$ condition and, for the moment, assume $K=0$. Set $\bar p:=\gamma(a)$ and $\bar q:=\gamma(b)$ and let $\Omega_\varrho:=B_\varrho(\bar q)$ for $\varrho>0$. From the $\MCP(0,N)$ condition we get
    \begin{equation}
    \label{eq:MCP_condition_ineq}
        \m\big(\Omega_{\varrho,t}^{\bar p}\big)\geq t^N\m(\Omega_\varrho),\qquad \forall\, t\in[0,1],\,\varrho>0. 
    \end{equation}
   If $\varrho$ is sufficiently small, then $\Omega_{\varrho,t}^{\bar p}=\phi_t(\bar p,\Omega_\varrho)$ for $t\in [0,1)$, therefore, employing the first estimate in \eqref{eq:mincharts}, the inequality \eqref{eq:MCP_condition_ineq} can be reformulated as follows:
    \begin{equation}
       \label{eq:average_int_det}
       \frac{1+\varepsilon}{1-\varepsilon} \fint_{\Omega_\varrho}|\det\big(d_z\phi_t(\bar p,\cdot)\big)|\de z\geq\frac{\m(\Omega_{\varrho,t}^{\bar p})}{\m(\Omega_\varrho)}\geq  t^N,\qquad\forall\, t\in[0,1),\,\varrho>0.
    \end{equation}
    Taking the limit as $\varrho\to0$, and then the limit as $t\to 0$, we find that the order of $\big|\det\big(d_{\bar q}\phi_t(\bar p,\cdot)\big)\big|$ should be smaller than or equal to $N$, giving a contradiction. Finally, if $K\neq 0$, observe that the behavior of the distortion coefficients, as $t\to 0$, is comparable with $t$, namely there exists a constant $C=C(K,N,\di(\bar p,\bar q))>0$ such that
    \begin{equation}
        \tau_{K,N}^{(t)}(\theta)\geq C t,\qquad\text{as }t\to 0, \qquad \forall\, \theta\in (\di(\bar p,\bar q)-\varrho,\di(\bar p,\bar q)+\varrho).
    \end{equation}
    Therefore, repeating the same argument that we did for the case $K=0$, we obtain the sought contradiction. 

Suppose instead that the geodesic $\gamma$ is ample and let $m$ be the unique midpoint between $\bar p=\gamma(a)$ and $\bar q=\gamma(b)$. According to item {\slshape (i)} of Proposition \ref{prop:mago,isnegnaci}, the map $\I_m$ is well-defined and smooth in a neighborhood of $\bar p$ and $\bar q$, moreover by definition $\I_m(\bar q)=\bar p$ and $\I_m(\bar p)=\bar q$. Note that $\I_m \circ \I_m = \id$ (where defined), thus 
\begin{equation}
  |\det(d_{\bar p} \I_m)|   \cdot |\det(d_{\bar q} \I_m)| = \big|\det \big(d_{\bar q} (\I_m \circ \I_m)  \big)\big| =1.
\end{equation}
Therefore, at least one between $|\det(d_{\bar q} \I_m)|$ and $|\det(d_{\bar p} \I_m)|$ is greater than or equal to 1, without loss of generality we assume
\begin{equation}\label{eq:detI}
    |\det(d_{\bar q} \I_m)| \geq 1.
\end{equation}
Let $B_\varrho:=B^{eu}_\varrho(\bar q)$ the (Euclidean) ball of radius $\varrho>0$ centered in $\bar q$. Introduce the function $F: B_\varrho \times B_\varrho \to M$, defined as 
\begin{equation*}
    B_\varrho \times B_\varrho\ni (x,y) \mapsto F(x,y):=\M (\I_m(x),y).
\end{equation*}
Observe that, for $\varrho$ small enough, $F$ is well-defined and by construction $F(x,x)=m$ for every $x\in B_\varrho$. Therefore, we deduce that for every vector $v\in T_{\bar q}M\cong \R^n$, the following holds: 
\begin{equation}
    0 = d_{(\bar q,\bar q)} F  (v,v) = \big(d_{\bar p}\M (\cdot,\bar q) \circ d_{\bar q}\I_m\big) \, v + d_{\bar q} \M(\bar p,\cdot) \, v.
\end{equation}
Since the former identity is true for every vector $v\in\R^n$, we can conclude that
\begin{equation}
   d_{\bar p} \M(\cdot,\bar q)\circ  d_{\bar q}\I_m + d_{\bar q} \M(\bar p,\cdot)=0,
\end{equation}
and consequently, for every $v,w\in \R^n$, we have
\begin{equation}
    d_{(\bar q,\bar q)} F (v,w) = \big(d_{\bar p} \M (\cdot,\bar q)\circ  d_{\bar q} \I_m\big) \, v + d_{\bar q} \M (\bar p,\cdot) \, w = d_{\bar q} \M (\bar p,\cdot) \, (w-v).
\end{equation}
In particular, we obtain a Taylor expansion of the function $F$ at the point $(\bar q,\bar q)$ that in coordinates takes the form:
\begin{equation*}
    \norm{F(\bar q+v,\bar q+w) - m - d_{\bar q} \M (\bar p,\cdot) \, (w-v)}_{eu} = o (\norm{v}_{eu}+\norm{w}_{eu}),\qquad \text{as }v,w\to 0.
\end{equation*}
Then, as $v$ and $w$ vary in $B^{eu}_\varrho(0)$, $v-w$ varies in $B^{eu}_{2\varrho}(0)$, and we obtain that 
\begin{equation}\label{eq:themago}
    F(B_\varrho,B_\varrho) \subseteq m + d_{\bar q} \M (\bar p,\cdot) \, \big(B^{eu}_{2\varrho}(0)\big) + B^{eu}_{\omega(\varrho)}(0),
\end{equation}
where $\omega:\R_+\to \R_+$ is such that $\omega(r)=o(r)$ when $r\to 0^+$. Now, consider $A_\varrho:= \I_m(B_\varrho)$ and note that by definition $M_{1/2}(A_\varrho,B_\varrho) = F(B_\varrho,B_\varrho)$, then using \eqref{eq:themago} we conclude that, as $\varrho\to 0$, 
\begin{equation*}
\begin{split}
    \Leb^n&\big(M_{1/2}(A_\varrho,B_\varrho)\big)=\Leb^n\big(F(B_\varrho,B_\varrho)\big) \leq \Leb^n\Big( d_{\bar q} \M (p,\cdot) \, \big(B^{eu}_{2\varrho}(0)\big)\Big) + o (\varrho^n) \\
    &= \big|\det(d_{\bar q} \M (p,\cdot))\big| \cdot \omega_n 2^n\varrho^n + o(\varrho^n) \leq (1+\varepsilon)2^{n-m_q}   \omega_n \varrho^n + o(\varrho^n) \leq \frac 12 (1+\varepsilon) \omega_n \varrho^n + o(\varrho^n)
\end{split}
\end{equation*}
where $\omega_n=\Leb^n(B^{eu}_1(0))$ and the two last inequalities follow from \eqref{eq:jacobianest} and $m_{\bar q}\geq n+1$. On the other hand, it holds that $\Leb^n (B_\varrho)= \omega_n \varrho^n $ and, as $\varrho\to0$, 
\begin{equation}
    \Leb^n (A_\varrho)= \Leb^n( \I_m(B_\varrho)) = \big( |\det(d_{\bar q} \I_m)| + O (\varrho)\big)\, \Leb^n (B_\varrho)\geq \omega_n \varrho^n +o(\varrho^n).
\end{equation}
Taking into account the first estimate of \eqref{eq:mincharts}, we deduce the following inequalities for the measure $\m$, as $\varrho\to 0$, 
\begin{equation*}
\begin{gathered}
    \m\big(M_{1/2}(A_\varrho,B_\varrho)\big) \leq \frac 12 (1+\varepsilon)^2 \omega_n \varrho^n + o(\varrho^n), \\
    \m (A_\varrho) \geq (1-\varepsilon)\omega_n  \varrho^n + o(\varrho^n)\qquad\text{and}\qquad\m (B_\varrho) \geq (1-\varepsilon) \omega_n  \varrho^n.
\end{gathered}
\end{equation*}
Finally, if $\varepsilon$ is small enough we can find $\varrho$ sufficiently small such that 
\begin{equation}
\begin{split}
     \m\big(M_{1/2}(A_\varrho,B_\varrho)\big) ^\frac{1}{N} &< \bigg(\frac 12 - \varepsilon \bigg)\,\m(A_\varrho)^\frac{1}{N} + \bigg(\frac 12 - \varepsilon \bigg)\, \m(B_\varrho)^\frac{1}{N} \\
     &\leq  \tau_{K,N}^{(1/2)} \big(\Theta(A_\varrho,B_\varrho)\big) \,\m(A_\varrho)^\frac{1}{N} + \tau_{K,N}^{(1/2)} \big(\Theta(A_\varrho,B_\varrho)\big) \,\m(B_\varrho)^\frac{1}{N}, 
\end{split}
\end{equation}
which contradicts the Brunn--Minkowski inequality $\bm(K,N)$.
\end{proof}

\begin{remark}
    Observe that the argument presented in this section is local, around the geodesic without abnormal sub-segments. Thus, repeating the same proof, we can extend Theorem \ref{thm:casosmooth} if the assumption on the rank holds on an open set $V\subset M$, namely $r(p) <n$ for every $p\in V$. 
\end{remark}

\section{Failure of the \texorpdfstring{$\cd(K,N)$}{CD(K,N)} condition in the sub-Finsler Heisenberg group}\label{sec:Heisenberg}

In this section, we disprove the curvature-dimension condition in the \sF Heisenberg group, cf. Theorem \ref{thm:intro2}. Our strategy relies on the explicit expression of geodesics in terms of convex trigonometric functions, found in  \cite{Nestogol}. 

\subsection{Convex trigonometry}
\label{sec:convex_trigtrig}

In this section, we recall the definition and main properties of the convex trigonometric functions, firstly introduced in \cite{Nestoeurogol}. 
Let $\Omega\subset \R^2$ be a convex, compact set, such that $O:=(0,0)\in \text{Int} (\Omega)$ and denote by $\Sbb$ its surface area. 

\begin{definition}
    Let $\theta\in\R$ denote a generalized angle. If $0\leq \theta < 2 \Sbb$ define $P_\theta$ as the point on the boundary of $\Omega$, such that the area of the sector of $\Omega$ between the rays $Ox$ and $OP_{\theta}$ is $\frac{1}{2}\theta$ (see Figure \ref{fig:convextrig1}). Moreover, define $\sinom(\theta)$ and $\cosom(\theta)$ as the coordinates of the point $P_\theta$, i.e.
    \begin{equation*}
        P_\theta = \big( \sinom(\theta), \cosom(\theta) \big).
    \end{equation*}
    Finally, extend these trigonometric functions outside the interval $[0,2\Sbb)$ by periodicity (of period $2 \Sbb$), so that for every $k\in \mathbb Z$.
    \begin{equation*}
        \cosom(\theta)= \cosom(\theta+2k \Sbb), \quad \sinom(\theta)= \sinom(\theta+2k \Sbb) \quad \text{and}\quad P_\theta = P_{\theta +2k\Sbb}.
    \end{equation*}
\end{definition}

\noindent Observe that by definition $\sinom(0)=0$ and that when $\Omega$ is the Euclidean closed unit ball we recover the classical trigonometric functions.

\begin{figure}[h!]
    \begin{minipage}[c]{.47\textwidth}

    \centering
    \begin{tikzpicture}[scale=0.8]

    \draw[white](1.1,3.25)--(0.92142,4.75);
    \fill[color=black!10!white](3,0)--(0,0)--(1.1,3.25)--(3.6,4);
    \fill[white](3,0) .. controls (3.6,2.8) and (2.4,3.8)..(0,3)--(0,4)--(4,4);
    \draw[->] (-4,0)--(4,0);
    \draw[->] (0,-4)--(0,4);
    \draw[very thick] (3,0) .. controls (3.6,2.8) and (2.4,3.8)..(0,3)..controls (-1.2,2.6) and (-2,2.4).. (-2.6,0)..controls (-3.4,-3) and (-2,-3.4).. (0,-3.2)..controls (1.4,-3) and (2.4,-2.4).. (3,0);
    \draw[dotted,blue,thick] (1.1,3.25) --(0,3.25);
    \draw[very thick] (1.1,3.25) --(0,0);
    \draw[very thick] (3,0) --(0,0);
    \draw[very thick,blue](0,3.25)--(0,0);
    \draw[very thick,red](1.1,0)--(0,0);
    \draw[dotted,red,thick] (1.1,3.25) --(1.1,0);
    \filldraw[black] (0,0) circle (1.5pt);
    \filldraw[blue] (0,3.25) circle (1.5pt);
    \filldraw[red] (1.1,0) circle (1.5pt);
    \filldraw[black] (1.1,3.25) circle (1.5pt);

    \node at (-0.9,2)[label=south:${\color{blue}{\sin_\Omega(\theta)}}$] {};
    \node at (0.9,0)[label=south:${\color{red}{\cos_\Omega(\theta)}}$] {};
    \node at (-0.3,0.1)[label=south:$O$] {};
    \node at (2,2)[label=south:$\frac 12 \theta$] {};
    \node at (-2,-1.8)[label=south:$\Omega$] {};
    \node at (1.2,4.2)[label=south:$P_\theta$] {};

    \end{tikzpicture}
    \caption{Values of the generalized trigonometric functions $\cosom$ and $\sinom$.}
    \label{fig:convextrig1}
    
\end{minipage}%
\hfill
\begin{minipage}[h]{.47\textwidth}

    \centering
    \begin{tikzpicture}[scale=0.8]

    \fill[color=black!10!white](3,0)--(0,0)--(1.1,3.25)--(3.6,4);
    \fill[white](3,0) .. controls (3.6,2.8) and (2.4,3.8)..(0,3)--(0,4)--(4,4);
    \draw[->] (-4,0)--(4,0);
    \draw[->] (0,-4)--(0,4);
    \draw[very thick] (3,0) .. controls (3.6,2.8) and (2.4,3.8)..(0,3)..controls (-1.2,2.6) and (-2,2.4).. (-2.6,0)..controls (-3.4,-3) and (-2,-3.4).. (0,-3.2)..controls (1.4,-3) and (2.4,-2.4).. (3,0);
    \draw[very thick] (1.1,3.25) --(0,0);
    \draw[very thick] (3,0) --(0,0);
    \filldraw[black] (0,0) circle (1.5pt);
    \filldraw[black] (1.1,3.25) circle (1.5pt);
    \draw(1.1,3.25)--(-1,3);
    \draw(1.1,3.25)--(3.2,3.5);
    \draw[very thick, ->](1.1,3.25)--(0.92142,4.75);

    \node at (-0.3,0.1)[label=south:$O$] {};
    \node at (2,2)[label=south:$\frac 12 \theta$] {};
    \node at (-2,-1.8)[label=south:$\Omega$] {};
    \node at (1.5,3.3)[label=south:$P_\theta$] {};
    \node at (1.5,4.7)[label=south:$Q_{\psi}$] {};

    \end{tikzpicture}
   \caption{Representation of the correspondence $\theta\xleftrightarrow{\Omega} \psi$.}
    \label{fig:convextrig2}
    
\end{minipage}
\end{figure}

Consider now  the polar set:
\begin{equation*}
    \Omega^\circ := \{(p,q)\in \R^2\, :\, px+qy\leq 1 \text{ for every }(x,y)\in \Omega\},
\end{equation*}
which is itself a convex, compact set such that $O\in \text{Int}(\Omega^\circ)$. Therefore, we can consider the trigonometric functions $\sinomp$ and $\cosomp$. Observe that, by definition of polar set, it holds that 
\begin{equation}
\label{eq:NO_PYTHAGOREAN_IDENTITY_COMETIPERMETTI}
    \cosom(\theta) \cosomp(\psi) + \sinom(\theta)\sinomp(\psi)\leq 1,\qquad \text{for every } \theta,\psi\in \R.
\end{equation}

\begin{definition}
    We say that two angles $\theta,\psi\in \R$ \emph{correspond} to each other and write $\theta \xleftrightarrow{\Omega} \psi$ if the vector $Q_\psi:= (\cosomp(\psi),\sinomp(\psi))$ determines a half-plane containing $\Omega$ (see Figure \ref{fig:convextrig2}).
\end{definition}

By the bipolar theorem \cite[Thm.\ 14.5]{Rockafellar+1970}, it holds that $\Omega^{\circ \circ}=\Omega$ and this allow to prove the following symmetry property for the correspondence just defined.

\begin{prop}\label{prop:correspondence}
    Let $\Omega\subset \R^2$ be a convex and compact set, with $O\in \text{Int} (\Omega)$. Given two angles $\theta,\psi\in \R$, $\theta\xleftrightarrow{\Omega} \psi$ if and only if $\psi\xleftrightarrow{\Omega^\circ} \theta$.
    Moreover, the following analogous of the Pythagorean equality holds:
    \begin{equation}\label{eq:pytagorean}
        \theta\xleftrightarrow{\Omega} \psi \qquad \text{ if and only if }\qquad \cosom(\theta) \cosomp(\psi) + \sinom(\theta)\sinomp(\psi)= 1.
    \end{equation}
\end{prop}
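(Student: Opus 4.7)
My plan is to first establish the Pythagorean identity, from which the symmetry of the correspondence will follow as an immediate consequence of the bipolar theorem. The key observation is that, after unpacking the geometric definition of $\theta\xleftrightarrow{\Omega}\psi$, the condition translates into the purely analytic statement $\scal{P_\theta}{Q_\psi}=1$, and this analytic statement is manifestly symmetric in $P_\theta$ and $Q_\psi$.

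First I would translate the definition. Since $Q_\psi\in\partial\Omega^\circ$ by construction, and the half-plane associated with the correspondence has outward normal $Q_\psi$ and boundary passing through $P_\theta$, the condition $\theta\xleftrightarrow{\Omega}\psi$ is equivalent to
\begin{equation*}
    \Omega \subset \big\{z\in\R^2 \,:\, \scal{z}{Q_\psi}\leq\scal{P_\theta}{Q_\psi}\big\}.
\end{equation*}
Next I would prove the equivalence $\theta\xleftrightarrow{\Omega}\psi \Leftrightarrow \scal{P_\theta}{Q_\psi}=1$, noting that $\scal{P_\theta}{Q_\psi}$ is precisely the expression $\cosom(\theta)\cosomp(\psi)+\sinom(\theta)\sinomp(\psi)$ appearing in the statement. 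The nontrivial direction is the forward one: since $Q_\psi\in\partial\Omega^\circ$, compactness of $\Omega$ and closedness of $\Omega^\circ$ force $\sup_{z\in\Omega}\scal{z}{Q_\psi}=1$, because otherwise a full Euclidean neighborhood of $Q_\psi$ would still satisfy the polar inequality, contradicting the fact that $Q_\psi$ lies on the boundary. Combining this with the half-plane inclusion above and with $P_\theta\in\Omega$, I conclude that $\scal{P_\theta}{Q_\psi}=1$. The reverse implication is immediate: the half-plane $\{z:\scal{z}{Q_\psi}\leq 1\}$ contains $\Omega$ by the very definition of $\Omega^\circ$, and its boundary passes through $P_\theta$ exactly when $\scal{P_\theta}{Q_\psi}=1$.

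Finally, the symmetry follows at once. The right-hand side of the Pythagorean identity is invariant under the swap $(P_\theta,Q_\psi)\leftrightarrow(Q_\psi,P_\theta)$, and by the bipolar theorem $\Omega^{\circ\circ}=\Omega$, so $P_\theta$ plays for $\Omega^\circ$ exactly the role that $Q_\psi$ plays for $\Omega$. Applying the Pythagorean equivalence just proved, with $\Omega$ replaced by $\Omega^\circ$, yields $\psi\xleftrightarrow{\Omega^\circ}\theta \Leftrightarrow \scal{Q_\psi}{P_\theta}=1$, which is the same condition as before; hence the two correspondences are equivalent. The only delicate point I expect is the boundary characterization $\sup_{z\in\Omega}\scal{z}{Q}=1$ for $Q\in\partial\Omega^\circ$, which is the content of the standard polar-duality lemma for compact convex bodies containing the origin in their interior; the rest is purely formal.
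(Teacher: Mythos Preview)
Your proposal is correct. The paper does not actually supply a proof of this proposition; it is stated as a known fact from the convex-trigonometry literature, with only the remark that the bipolar theorem $\Omega^{\circ\circ}=\Omega$ ``allows to prove'' the symmetry. Your argument fleshes out precisely this hint: you reduce the correspondence to the analytic condition $\scal{P_\theta}{Q_\psi}=1$ via the support-function characterization of $\partial\Omega^\circ$, and then invoke bipolarity for the symmetry, so your approach is exactly the one the paper gestures at.
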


\noindent The correspondence $\theta\xleftrightarrow{\Omega} \psi$ is not one-to-one in general, in fact if the boundary of $\Omega$ has a corner at the point $P_\theta$, the angle $\theta$ corresponds to an interval of angles (in every period). Nonetheless, we can define a monotone multi-valued
map $C^\circ$ that maps an angle $\theta$ to the maximal closed interval containing angles corresponding to $\theta$. This function has the following periodicity property:
\begin{equation*}
    C^\circ(\theta+2\Sbb k)=  C^\circ(\theta) +2\Sbb^\circ k \qquad \text{ for every }k\in \mathbb Z,
\end{equation*}
where $\Sbb^\circ$ denotes the surface area of $\Omega^\circ$.
If $\Omega$ is strictly convex, then the map $C^\circ$ is strictly monotone, while if the boundary of $\Omega$ is $C^1$, then $C^\circ$ is a (single-valued) map from $\R$ to $\R$ and it is continuous. Analogously, we can define the map $C_\circ$ associated to the correspondence $\psi\xleftrightarrow{\Omega^\circ} \theta$. Proposition \ref{prop:correspondence} guarantees that $C_\circ \circ C^\circ = C^\circ \circ C_\circ=  \text{id}$.

\begin{prop}\label{prop:difftrig}
    Let $\Omega\subset\R^2$ as above. The trigonometric functions $\sinom$ and $\cosom$ are Lipschitz and therefore differentiable almost everywhere. At every differentiability point $\theta$ of both functions, there exists a unique angle $\psi$ corresponding to $\theta$ and it holds that 
    \begin{equation*}
        \sinom'(\theta)= \cosomp(\psi) \qquad \text{and}\qquad \cosom'(\theta)= - \sinomp(\psi).
    \end{equation*}
    Naturally, the analogous result holds for the trigonometric functions $\sinomp$ and $\cosomp$.
\end{prop}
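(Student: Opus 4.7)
The plan is to first establish the Lipschitz regularity of $\sinom$ and $\cosom$, so that differentiability almost everywhere follows from Rademacher's theorem. Since $\Omega$ is convex, compact, and contains $O$ in its interior, it is sandwiched between two Euclidean disks centered at $O$ of radii $0<r_1\leq r_2$. Parametrizing $\partial\Omega$ by Euclidean arc length, the sector-area relation $d\theta=h(s)\,ds$, where $h(s)\geq r_1$ is the distance from $O$ to the support line of $\Omega$ at the corresponding boundary point, yields $|P_\theta-P_{\theta'}|\leq \tfrac{1}{r_1}|\theta-\theta'|$.

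Now I would fix a differentiability point $\theta$ of both $\sinom$ and $\cosom$ and choose $\psi$ with $\theta\xleftrightarrow{\Omega}\psi$. By \eqref{eq:NO_PYTHAGOREAN_IDENTITY_COMETIPERMETTI} the auxiliary function
\begin{equation*}
f(\theta'):=\cosom(\theta')\cosomp(\psi)+\sinom(\theta')\sinomp(\psi)
\end{equation*}
satisfies $f\leq 1$ everywhere, while $f(\theta)=1$ by \eqref{eq:pytagorean}. Hence $f$ attains its maximum at the differentiability point $\theta$, and the first-order condition there yields the orthogonality relation
\begin{equation*}
\cosom'(\theta)\cosomp(\psi)+\sinom'(\theta)\sinomp(\psi)=0.
\end{equation*}

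The second key identity is the Kepler-type area formula
\begin{equation*}
\cosom(\theta)\sinom'(\theta)-\sinom(\theta)\cosom'(\theta)=1,
\end{equation*}
which I would derive as follows. By construction, the sector swept between $P_\theta$ and $P_{\theta+h}$ has area $h/2$; convexity of $\Omega$ ensures that this sector differs from the triangle with vertices $O$, $P_\theta$, $P_{\theta+h}$ only by the region between chord and arc, whose area is bounded by the chord length (of order $h$ by Lipschitz continuity) times the sag (also of order $h$), hence is $O(h^2)$. Taylor-expanding $P_{\theta+h}=P_\theta+h P_\theta'+o(h)$ at the differentiability point and passing to the limit $h\to 0$ yields the identity. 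Combining the orthogonality relation, the area formula, and the Pythagorean equality $\cosom(\theta)\cosomp(\psi)+\sinom(\theta)\sinomp(\psi)=1$ from \eqref{eq:pytagorean}, a short two-by-two linear computation (which remains valid whether or not $\cosomp(\psi)$ vanishes) produces $\sinom'(\theta)=\cosomp(\psi)$ and $\cosom'(\theta)=-\sinomp(\psi)$.

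Finally, uniqueness of $\psi$ follows immediately: the formulas just derived determine $Q_\psi=(\cosomp(\psi),\sinomp(\psi))\in\partial\Omega^\circ$ from $\theta$ alone, and by construction the map $\psi\mapsto Q_\psi$ parametrizes $\partial\Omega^\circ$ injectively modulo the period $2\Sbb^\circ$. The main obstacle I foresee is the derivation of the Kepler identity at a specific differentiability point rather than merely almost everywhere: the standard absolute-continuity argument yields the identity only a.e., and it is the convexity-based chord-arc estimate that upgrades the conclusion to every point where both derivatives exist.
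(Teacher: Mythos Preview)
The paper does not supply its own proof of this proposition: it is stated as a background result from the convex-trigonometry literature (introduced in \cite{Nestoeurogol}) and immediately followed by its corollaries. So there is no ``paper's proof'' to compare against.

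Your argument is correct and self-contained. The Lipschitz bound via the support-function relation $d\theta=h(s)\,ds$ with $h\geq r_1$ is standard and clean. The two key identities --- the vanishing of $f'(\theta)$ at the maximum, and the Kepler area relation $\cosom(\theta)\sinom'(\theta)-\sinom(\theta)\cosom'(\theta)=1$ --- combine with the Pythagorean equality exactly as you describe: writing $(\cosom'(\theta),\sinom'(\theta))=\lambda(-\sinomp(\psi),\cosomp(\psi))$ from the orthogonality relation and substituting into the Kepler identity yields $\lambda=1$ via the Pythagorean equality. The chord--arc area bound $O(h^2)$ is justified, since by Lipschitz continuity the entire arc from $P_\theta$ to $P_{\theta+h}$ lies in a Euclidean ball of radius $O(h)$, so both chord length and sag are $O(h)$. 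Your remark that this convexity-based estimate is what upgrades the Kepler identity from an almost-everywhere statement to one valid at every differentiability point is exactly the right observation. The uniqueness of $\psi$ then follows as you say.
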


\noindent As a corollary of the previous proposition, we obtain the following convexity properties for the trigonometric functions.

\begin{corollary}\label{cor:convexitytrig}
    The functions $\sinom$ and $\cosom$ are concave in every interval in which they are non-negative and convex in every interval in which they are non-positive.
\end{corollary}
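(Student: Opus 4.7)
The plan is to derive both statements from the differentiation formulas in Proposition \ref{prop:difftrig}, combined with the monotonicity of the duality correspondence $C^\circ$. I will treat $\sinom$ in detail; the argument for $\cosom$ is entirely parallel, obtained by using $\cosom'(\theta) = -\sinomp(\psi)$ together with $(\sinomp)'(\psi) = \cosom(\theta)$, so that the sign of the derivative of $\cosom$ is controlled by the sign of $\cosom$ itself instead of by the sign of $\sinom$.

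Fix an interval $I$ on which $\sinom \geq 0$. Since $\sinom$ is Lipschitz, it is concave on $I$ if and only if its a.e.\ derivative is non-increasing on $I$, and this is the monotonicity I will verify. Proposition \ref{prop:difftrig} gives $\sinom'(\theta) = \cosomp(C^\circ(\theta))$ at a.e.\ $\theta$. Since $C^\circ$ is monotone non-decreasing (it is induced by the monotone multi-valued correspondence between $\partial\Omega$ and $\partial\Omega^\circ$), the problem reduces to showing that $\cosomp$ is non-increasing on the image interval $J := C^\circ(I)$.

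For this I will apply Proposition \ref{prop:difftrig} once more, now to the polar pair: at a.e.\ $\psi \in J$ one has $(\cosomp)'(\psi) = -\sinom(C_\circ(\psi))$, and by the duality $C_\circ \circ C^\circ = \mathrm{id}$ at points of single-valuedness, the argument $C_\circ(\psi)$ lies in $I$. The hypothesis $\sinom \geq 0$ on $I$ then gives $(\cosomp)' \leq 0$ a.e.\ on $J$, and absolute continuity of $\cosomp$ upgrades this to genuine monotonicity. Composing the non-increasing $\cosomp$ with the non-decreasing $C^\circ$ yields $\sinom' = \cosomp \circ C^\circ$ non-increasing on $I$, as required. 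The convexity statement on intervals where $\sinom \leq 0$ is obtained by reversing the sign of $(\cosomp)'$ in exactly the same chain of implications.

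The only delicate aspect, more a bookkeeping point than a genuine obstacle, is the possible multi-valuedness of $C^\circ$ and $C_\circ$ at corners of $\partial\Omega$ or flat pieces of $\partial\Omega^\circ$. These contribute only a Lebesgue-null set of angles, and because all the trigonometric functions involved are absolutely continuous, choosing any measurable single-valued selection of the correspondences on this exceptional set does not affect either the derivative identities of Proposition \ref{prop:difftrig} or the resulting monotonicity conclusions.
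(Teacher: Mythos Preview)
Your argument is correct and is precisely the deduction the paper has in mind: the corollary is stated without proof, as an immediate consequence of Proposition~\ref{prop:difftrig}, and you have spelled out exactly that chain of implications (differentiate $\sinom$, use monotonicity of $C^\circ$, differentiate $\cosomp$ back, and close the loop via $C_\circ\circ C^\circ=\id$). Your handling of the null set of non-differentiability points and of the multi-valuedness of $C^\circ$, $C_\circ$ is appropriate, since all functions involved are absolutely continuous and the conclusion only requires a non-increasing representative of the a.e.\ derivative.
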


\noindent This convexity properties of the trigonometric functions will play a small but fundamental role in Section \ref{sec:CDheisenberg} in the form of the following corollaries.

\begin{corollary}\label{cor:convexity}
    Given a non-null constant $k\in\R$ and every angle $\theta$, consider the function
    \begin{equation}\label{eq:convexfunction}
        g:\R\to \R;\qquad g(t):=\sinom(\theta)\cosom(\theta+ k t) - \cosom(\theta) \sinom(\theta+ k t).
    \end{equation}
    If $k>0$ this function is convex for positive values of $t$ and concave for negative values of $t$, locally around $0$. Vice versa, If $k<0$ it is concave for positive values of $t$ and convex for negative values of $t$, locally around $0$. 
\end{corollary}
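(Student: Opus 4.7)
Set $P_\theta := (\cosom(\theta),\sinom(\theta))$, $Q_\psi := (\cosomp(\psi),\sinomp(\psi))$, and write $\psi(t) := C^\circ(\theta+kt)$. Applying Proposition~\ref{prop:difftrig}, at every $t$ for which $\sinom$ and $\cosom$ are both differentiable at $\theta+kt$,
\begin{equation}
g'(t) \;=\; -k\bigl[\sinom(\theta)\sinomp(\psi(t))+\cosom(\theta)\cosomp(\psi(t))\bigr] \;=\; -k\,F(t),
\end{equation}
where $F(t):=P_\theta\cdot Q_{\psi(t)}$. Since $\theta\xleftrightarrow{\Omega}\psi(0)$ by construction, the Pythagorean identity~\eqref{eq:pytagorean} gives $F(0)=1$ and $g'(0)=-k$. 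Because $g$ is Lipschitz, to obtain the desired convexity (resp.\ concavity) of $g$ on a one-sided neighborhood of $0$, it suffices to show that $g'$ is non-decreasing (resp.\ non-increasing) almost everywhere there, which reduces to a monotonicity statement for $F$.

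The central claim is that $t \mapsto F(t)$ admits a local maximum at $t=0$, irrespective of the sign of $k$: there exists $\delta>0$ such that $F$ is non-increasing on $[0,\delta]$ and non-decreasing on $[-\delta,0]$. Indeed, \eqref{eq:NO_PYTHAGOREAN_IDENTITY_COMETIPERMETTI} ensures $F \le 1 = F(0)$ everywhere, and as $t$ varies in a neighborhood of $0$, the point $Q_{\psi(t)}$ traverses $\partial\Omega^\circ$ monotonically (in a direction determined by the sign of $k$). By Proposition~\ref{prop:correspondence}, $\psi(0)\xleftrightarrow{\Omega^\circ}\theta$, meaning that $P_\theta$ is an outer normal to $\Omega^\circ$ at $Q_{\psi(0)}$. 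By convexity of $\Omega^\circ$, the tangent direction to $\partial\Omega^\circ$ rotates monotonically and is perpendicular to $P_\theta$ exactly at $Q_{\psi(0)}$, so the directional derivative of the linear functional $F|_{\partial\Omega^\circ}$ has opposite signs on the two sides of $Q_{\psi(0)}$. This produces the unimodality of $F$, regardless of the direction of traversal.

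Combining the two ingredients concludes the proof. When $k>0$, the function $g'(t) = -kF(t)$ has a local minimum at $t=0$, hence is non-decreasing on $[0,\delta]$ and non-increasing on $[-\delta,0]$; therefore $g$ is convex for small $t>0$ and concave for small $t<0$. When $k<0$, the factor $-k>0$ reverses the picture: $g'$ has a local maximum at $t=0$ and $g$ is concave for small $t>0$ and convex for small $t<0$, as asserted.

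The main obstacle is to make the unimodality of $F$ rigorous when $\partial\Omega$ has corners (so that $C^\circ$ is multi-valued and $\psi(t)$ may have jumps) or when $\partial\Omega^\circ$ has flat faces (so that $F$ is locally constant). Both degeneracies can be handled by reasoning directly on the convex curve $\partial\Omega^\circ$: a jump of $\psi(t)$ corresponds to $Q_{\psi(t)}$ sliding along a flat face (on which $F$ is affine with a sign dictated by the normal $P_\theta$), while flat faces of $\partial\Omega^\circ$ simply make $F$ locally constant; neither phenomenon breaks the monotonicity of $g'$ that is needed for the conclusion.
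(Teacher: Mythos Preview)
Your argument is correct, but it takes a substantially longer route than the paper's. You differentiate $g$, obtain $g'(t)=-k\,P_\theta\cdot Q_{\psi(t)}$, and then prove unimodality of $t\mapsto P_\theta\cdot Q_{\psi(t)}$ by a geometric argument on the convex body $\Omega^\circ$ (the linear functional $q\mapsto P_\theta\cdot q$ is maximized on $\partial\Omega^\circ$ precisely at the face with outer normal $P_\theta$, and is monotone on each boundary arc emanating from it). This is sound; the one imprecision is the phrase ``$g'$ is non-decreasing almost everywhere,'' which by itself is not the right hypothesis for convexity of a Lipschitz function---what you actually establish (and what is needed) is that $g'$ admits a non-decreasing \emph{representative}, namely $-k\,L\circ\psi$ with $L(\psi)=P_\theta\cdot Q_\psi$ monotone and $\psi=C^\circ(\theta+k\,\cdot)$ monotone.

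The paper bypasses all of this with a one-line reduction: since $g(t)=-\det(P_\theta,P_{\theta+kt})$ is invariant under simultaneous rotation of both vectors, one rotates $\Omega$ to a new body $\tilde\Omega$ so that $P_\theta$ lands on the positive $x$-axis; the area parametrization then sends $\theta\mapsto 0$ and $\theta+kt\mapsto kt$, giving
\[
g(t)=-\cos_{\tilde\Omega}(0)\,\sin_{\tilde\Omega}(kt).
\]
Now Corollary~\ref{cor:convexitytrig} (concavity of $\sin_{\tilde\Omega}$ where non-negative, convexity where non-positive) yields the result immediately. Your approach has the merit of making the role of the dual body $\Omega^\circ$ explicit and of anticipating the structure used later in Corollary~\ref{cor:monotonicity}, but the paper's rotation trick is considerably shorter and avoids any discussion of corners, flat faces, or multi-valued correspondences.
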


\begin{proof}
    The function $g(t)$ can be seen as a scalar product of two vectors in $\R^2$, therefore it is invariant by rotations. In particular, we consider the rotation that sends $\theta$ to $0$: this maps $P_\theta$ to the the positive $x$-axis and the set $\Omega$ to a convex, compact set $\tilde\Omega\subset\R^2$. Then, $g(t)$ in \eqref{eq:convexfunction} is equal to the function
    \begin{equation}
         t\mapsto - \cos_{\tilde\Omega}(0) \sin_{\tilde\Omega}( k t).
    \end{equation}
    The conclusion immediately follows from Corollary \ref{cor:convexitytrig}.
\end{proof}

\begin{corollary}
\label{cor:monotonicity}
      Given a non-null constant $k\in\R$ and every angle $\psi$, the function 
    \begin{equation}\label{eq:convexfunction2}
        h:\R\to \R;\qquad h(t):= 1 - \sinomp(\psi)\sinom\big((\psi+ k t)_\circ\big) - \cosomp(\psi) \cosom\big((\psi+ k t)_\circ\big).
    \end{equation}
    is non-decreasing for positive values of $t$ and non-increasing for negative values of $t$, locally around $0$.
\end{corollary}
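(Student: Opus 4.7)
The plan is to interpret $h$ geometrically. Setting $\theta(t) := (\psi+kt)_\circ$, the value $h(t)$ measures the defect in the polar duality pairing between the fixed point $P_\psi^\circ := (\sinomp(\psi), \cosomp(\psi)) \in \partial\Omega^\circ$ and the moving point $P_{\theta(t)} := (\sinom(\theta(t)), \cosom(\theta(t))) \in \partial\Omega$; namely,
\begin{equation*}
    h(t) = 1 - \langle P_\psi^\circ, P_{\theta(t)} \rangle.
\end{equation*}
By the Pythagorean equality \eqref{eq:pytagorean} applied to the corresponding pair $(\theta(0),\psi)$ we obtain $h(0) = 0$, while the inequality \eqref{eq:NO_PYTHAGOREAN_IDENTITY_COMETIPERMETTI} gives $h \geq 0$ everywhere, so $t = 0$ is a global minimum of $h$.

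To upgrade this to local monotonicity, I would first establish the following geometric claim: the restriction of the linear functional $f(P) := \langle P_\psi^\circ, P \rangle$ to $\partial\Omega$ is non-increasing on each of the two arcs joining its maximum set $\{P_\theta : \theta \in C_\circ(\psi)\}$ to the antipodal minimum set. The proof would be by contradiction: if $f|_{\partial\Omega}$ admitted an interior local extremum on one of these two connecting arcs, then every level line of $f$ sufficiently close to the corresponding extreme value would meet $\partial\Omega$ in at least three distinct points, contradicting the standard fact that a line meets the boundary of a planar convex set in at most two points or a single sub-arc.

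With this monotonicity of $f|_{\partial\Omega}$ in hand, the conclusion would follow by composing it with the monotonicity of the (possibly multi-valued) map $C_\circ$. For $t$ in a small neighborhood of $0$, $\theta(t) = C_\circ(\psi+kt)$ traces $\partial\Omega$ in a fixed direction determined by the sign of $k$, and $P_{\theta(t)}$ moves away from the maximum arc of $f$ as $|t|$ grows. A direct verification in the two cases $k > 0$ and $k < 0$ shows that $h$ is non-decreasing on $[0, \delta)$ and non-increasing on $(-\delta, 0]$ in both situations, because the possible reversal of the direction of $\theta(t)$ is compensated by the symmetric monotone behavior of $f|_{\partial\Omega}$ on either side of its maximum arc.

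The main obstacle is the careful verification of the geometric monotonicity claim, especially in the presence of corners of $\Omega$ or flat segments of $\partial\Omega$. A minor technical subtlety is that $(\psi+kt)_\circ$ may be multi-valued at countably many values of $t$ (precisely those for which $\psi+kt$ corresponds to a flat segment of $\partial\Omega$), but all admissible choices lie on a common flat segment on which $f$ is affine, so $h$ can be extended continuously through these ambiguous values, preserving both monotonicity and non-negativity.
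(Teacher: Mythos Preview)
Your argument is correct, and it takes a genuinely different route from the paper's proof. The paper observes that $h$ is precisely $\frac{1}{k}\,\frac{d}{dt}\big[kt + \sinomp(\psi)\cosomp(\psi+kt) - \cosomp(\psi)\sinomp(\psi+kt)\big]$, and then invokes Corollary~\ref{cor:convexity}: the bracketed expression is a linear term plus a function of the form \eqref{eq:convexfunction} (with $\Omega$ replaced by $\Omega^\circ$), hence convex on one side of $0$ and concave on the other; dividing the derivative by $k$ absorbs the sign and yields the stated monotonicity directly. This is a two-line reduction to the previously established convexity of the generalized trigonometric functions.

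Your approach instead bypasses Corollaries~\ref{cor:convexitytrig} and~\ref{cor:convexity} altogether and works directly with the convexity of $\Omega$: you read $h(t)=1-\langle Q_\psi, P_{\theta(t)}\rangle$ and use that a linear functional on $\partial\Omega$ is monotone on each arc joining its maximum face to its minimum face. A slightly cleaner way to justify that monotonicity than the three-intersection argument you sketch is to note that the super-level arcs $\{P\in\partial\Omega:\langle Q_\psi,P\rangle\geq c\}$ are nested connected arcs (being the boundary of the convex set $\Omega\cap\{\langle Q_\psi,\cdot\rangle\geq c\}$ inside $\partial\Omega$), so their endpoints move monotonically as $c$ varies. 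Your treatment of the possible multi-valuedness of $C_\circ$ is more careful than necessary for the paper's application (where $\normdot$ is strictly convex, hence $C_\circ$ is single-valued), but it does no harm. In short: the paper's proof is shorter because it cashes in on the convexity corollaries already proved; your proof is more self-contained and makes the underlying geometry explicit.
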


\begin{proof}
    Note that $h$ is the derivative of the function
    \begin{equation}\label{eq:convexfunction3}
        \R \ni t\mapsto kt +\sinomp(\psi)\cosomp(\psi+ k t) - \cosomp(\psi) \sinomp(\psi+ k t),
    \end{equation}
    divided by $k$. The thesis follows from Corollary \ref{cor:convexity}, since the function \eqref{eq:convexfunction3} is the sum of a linear function and of a function of the type \eqref{eq:convexfunction}.
\end{proof}

In the following we are going to consider the trigonometric functions associated to the closed unit ball of a strictly convex norm $\normdot$ on $\R^2$, i.e. $\Omega:= \overline B^{\norm{\cdot}}_1(0)$. In this case, the polar set $\Omega^\circ$ is the closed unit ball $\overline B^{\norm{\cdot}_*}_1(0)$ of the dual norm $\normdot_*$. Moreover, according to the Pythagorean identity \eqref{eq:pytagorean}, if $\theta\xleftrightarrow{\Omega} \psi$ $\normdot$ then $Q_\psi$ is a dual vector of $P_\theta$. In particular, if $\normdot$ is a $C^1$ norm, Lemma \ref{lem:banachduality} ensures that
\begin{equation}\label{eq:correspondence}
    (\cosomp(\psi),\sinomp(\psi))=Q_\psi = d_{P_\theta} \normdot=  d_{(\cosom(\theta),\sinom(\theta))} \normdot.
\end{equation}

\subsection{Geodesics in the Heisenberg group}\label{sec:saymyname}

We present here the \sF Heisenberg group and study its geodesics. Let us consider the Lie group $M=\R^3$, equipped with the non-commutative group law, defined by
\begin{equation}
    (x, y, z) \star (x', y', z') = \bigg(x+x',y+y',z+z'+\frac12(xy' - x'y)\bigg),\qquad\forall\,(x, y, z), (x', y', z')\in\R^3,
\end{equation}
with identity element $\e=(0,0,0)$. In the notation of Section \ref{sec:prelim_sf}, we define the following morphism of bundles 
\begin{equation}
    \xi: M\times \R^2\to TM,\qquad \xi(x,y,z;u_1,u_2)=\bigg(x,y,z;u_1,u_2,\frac12(u_2 x - u_1 y)\bigg).
\end{equation}
The associated distribution of rank $2$ is spanned by the following left-invariant vector fields:
\begin{equation}
    X_1=\partial_x-\frac{y}2\partial_z,\qquad X_2=\partial_y+\frac{x}2\partial_z,
\end{equation}
namely $\dis=\text{span}\{X_1,X_2\}$. It can be easily seen that $\dis$ is bracket-generating. Then, letting $\normdot:\R^2\to\R_+$ be a norm, the \emph{\sF Heisenberg group} $\hei$ is the Lie group $M$ equipped with the \sF structure $(\xi,\normdot)$. By construction, also the resulting norm on the distribution is left-invariant, so that the left-translations defined by
\begin{equation}
\label{eq:left_translations}
    L_p:\hei\to\hei;\qquad L_p(q):=p\star q,
\end{equation}
are isometries for every $p\in \hei$.

In this setting, the geodesics were originally studied in \cite{Busemann} and \cite{Bereszynski} for the three-dimensional case and in \cite{Nestogol} for general left-invariant structures on higher-dimensional Heisenberg groups. We recall below the main results of \cite{Bereszynski} for strictly convex norms.

\begin{theorem}[{\cite[Thm.\ 1, Thm.\ 2]{Bereszynski}}]
\label{thm:geod_Heisenberg}
    Let $\hei$ be the \sF Heisenberg group, equipped with a strictly convex norm. Then, the following statements hold: 
    \begin{enumerate}
        \item[(i)] for any $q\in \hei\setminus\{x=y=0\}$, there exists a unique geodesic $\gamma:[0,1]\to\hei$ joining the origin and $q$. 
        \item[(ii)] $\gamma:[0,T]\to\hei$ is a geodesic starting at the origin if and only if it satisfies the Pontryagin's maximum principle for the time-optimal control problem:
        \begin{equation}\label{eq:time_optimal_problem}
    \begin{cases}
    \dot\gamma(t) = \displaystyle u_1(t) X_1(\gamma(t))+u_2(t) X_2(\gamma(t)),  \\
    \displaystyle u(t)\in \overline B^{\norm{\cdot}}_1(0),\quad\gamma(0)=q_0,\quad\text{and}\quad\gamma(T)=q_1, \\
      T\to \min.
    \end{cases}
\end{equation}
    \end{enumerate} 
\end{theorem}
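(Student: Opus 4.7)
The plan is to prove (ii) first and derive (i) from it. For (ii), I would exploit the standard arclength-reparametrization trick that recasts length-minimization as a time-optimal problem with bounded controls. If $\gamma:[0,1]\to\hei$ is an admissible curve with minimal control $\bar u$, then $\ell(\gamma)=\int_0^1 \|\bar u\|\,dt$; reparametrizing by arclength yields $\tilde\gamma:[0,T]\to\hei$, $T=\ell(\gamma)$, with minimal control $\tilde u$ satisfying $\|\tilde u\|\le 1$ almost everywhere (indeed $=1$ on a length-minimizer). Conversely, any admissible trajectory with $u(t)\in\overline{B}_1^{\|\cdot\|}(0)$ on $[0,T]$ has length at most $T$. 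Hence length-minimizers correspond bijectively to time-optimal trajectories of \eqref{eq:time_optimal_problem}, and the classical Pontryagin maximum principle for time-optimal problems with compact convex control set (a direct variant of Theorem \ref{thm:PMP}, adapted to free terminal time and time cost) yields (ii) in both directions.

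For part (i), I would first argue that no non-trivial minimizer is purely abnormal. By Lemma \ref{lem:abnormal_annihilator}, a non-trivial abnormal lift satisfies $\lambda_t\in\ann(\dis)_{\gamma(t)}$ for every $t$; a direct computation shows that $\ann(\dis)$ is the one-dimensional sub-bundle spanned by $\tfrac{y}{2}dx-\tfrac{x}{2}dy+dz$, and imposing the Hamiltonian system with $\nu=0$ along this bundle forces $\bar u\equiv 0$, so the curve is constant. Consequently every non-trivial minimizer is normal with $\lambda_0\in T_\e^*\hei\setminus\ann(\dis)_\e$, and its control is recovered from Corollary \ref{cor:control_normal_extremal} as $\bar u(t)=\hat\lambda_t^{\,*}$; this makes sense because strict convexity of $\normdot$ yields $C^1$ regularity of $\normdot_*$ by Proposition \ref{prop:propunderduality}, and hence $C^1$ regularity of the maximized Hamiltonian $H$ off the annihilator.

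For the uniqueness statement in (i), I would rely on the explicit integration of the normal Hamiltonian flow in terms of the convex trigonometric functions of $\Omega=\overline{B}_1^{\|\cdot\|}(0)$, following \cite{Nestogol}: any normal geodesic from $\e$ is parametrized by $\lambda_0=(h_1,h_2,h_3)$ with $\|(h_1,h_2)\|_*=1$; if $h_3=0$ the projection to the $xy$-plane is a straight segment and one recovers Euclidean-type uniqueness, while if $h_3\ne 0$ the projection traces a rescaled copy of $\partial\Omega$, closing up at $(x,y)=(0,0)$ precisely when the endpoint lies on the $z$-axis. The injectivity of $\lambda_0\mapsto\exp_\e(\lambda_0)$ off the preimage of $\{x=y=0\}$ then follows from strict convexity of $\normdot$, which by Proposition \ref{prop:correspondence} makes the correspondence $C^\circ$ single-valued and hence the control uniquely determined by the covector at each instant; two distinct admissible $\lambda_0$ then produce $L^1$-separated controls and, via an endpoint computation using the Pythagorean identity \eqref{eq:pytagorean}, distinct endpoints whenever the $xy$-projection does not close up.

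The main obstacle will be the injectivity argument under the weak regularity hypothesis of strict convexity alone: the Hamiltonian vector field $\vec H$ is merely continuous on $\ann(\dis)$ and the exponential map is only $C^1$, so the smooth ODE-uniqueness tools used elsewhere in the paper are unavailable. The leverage will have to come entirely from the explicit convex-trigonometric formulas together with Proposition \ref{prop:correspondence}, reducing the question to a bookkeeping exercise on closed convex curves in the plane; this also pins down the failure of injectivity to precisely the $z$-axis, where opposite signs of $h_3$ (and, more generally, distinct covectors compatible with a full period of the planar loop) can produce the same endpoint, just as in the Riemannian Heisenberg case.
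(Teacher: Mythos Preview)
The paper does not prove this theorem: it is quoted from \cite{Bereszynski} without argument, so there is no in-paper proof to compare against. Your outline for (ii) via the arclength/time-optimal reparametrization is standard and essentially what underlies the equivalence in \cite{Bereszynski}; that part is fine.

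For (i), however, your plan has two genuine gaps. First, several of the tools you invoke (Corollary~\ref{cor:control_normal_extremal}, the identification $\bar u=\hat\lambda^{\,*}$, regularity of $H$) are established in Section~\ref{sec:sub-Finsler_geometry} only for \emph{smooth} sub-Finsler structures; under a merely strictly convex norm these must be re-derived, which is precisely why the paper redoes the Hamiltonian analysis from scratch in Proposition~\ref{prop:geodesics_nestogol}. Relatedly, your claim that strict convexity of $\normdot$ makes $C^\circ$ single-valued is not what the paper says: strict convexity of $\Omega$ makes $C^\circ$ strictly monotone, while single-valuedness requires $\partial\Omega\in C^1$ (equivalently, $\normdot_*$ strictly convex). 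This matters because with a strictly convex but non-$C^1$ norm the control is \emph{not} a function of the covector at corners of $\Omega$, so your pointwise recovery of $\bar u$ from $\lambda_t$ breaks down exactly where the analysis is delicate.

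Second, and more importantly, the uniqueness statement is where all the content lies, and you do not actually prove it: you reduce it to ``a bookkeeping exercise on closed convex curves'' and an ``endpoint computation'' that is never carried out. Bereszynski's argument does not go through injectivity of the exponential map at all; it passes through the classical Dido/isoperimetric reformulation (via Proposition~\ref{prop:z=area}: the $z$-coordinate is the swept area), so that geodesics to $q=(x_1,y_1,z_1)$ with $(x_1,y_1)\neq(0,0)$ correspond to plane curves from $(0,0)$ to $(x_1,y_1)$ enclosing signed area $z_1$ with minimal $\normdot$-length. Uniqueness then follows from the equality case of the anisotropic isoperimetric inequality for the strictly convex body $\Omega^\circ$, which forces the minimizing curve to be a specific arc of a dilate of $\partial\Omega^\circ$. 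Your Hamiltonian-injectivity route could in principle be made to work, but without the isoperimetric input you would have to rule out, by hand, coincidences among the explicit trajectories of Proposition~\ref{prop:geodesics_nestogol} for all choices of $(\phi,\omega,r)$; this is not a bookkeeping exercise, and the Pythagorean identity \eqref{eq:pytagorean} alone is not enough to separate endpoints.
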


\begin{remark}
    Note that the geodesics in \cite{Bereszynski} are found solving the Pontryagin maximum principle of Theorem \ref{thm:PMP} for the time-optimal problem.
    The latter is an equivalent formulation of \eqref{eq:Ptimalcontrolproblem}, however it produces arc-length parameterized geodesics. 
\end{remark}
    
The next step is to compute explicitly the exponential map. In \cite{Nestogol}, the author provides an explicit expression for geodesics starting at the origin, using the convex trigonometric functions functions presented in Section \ref{sec:convex_trigtrig}. Since therein the author solves the time-optimal problem, we prefer to solve explicitly the Hamiltonian system \eqref{eq:Hamiltonian_system}, in the case of the \sF Heisenberg group.

\begin{prop}[{\cite[Thm.\ 4]{Nestogol}}] 
\label{prop:geodesics_nestogol}
Let $\hei$ be the \sF Heisenberg group, equipped with a strictly convex norm. Let $\gamma : [0, 1] \to \hei$ be the projection of a (non-trivial) normal extremal $(\lambda_t)_{t\in [0,1]}$ starting at the origin, then $\gamma(t) = (x(t), y(t), z(t))$, with
\begin{equation}
    \begin{cases}
    \begin{aligned}
        x(t) &= \frac{r}{\omega}\left(\sinomp(\phi+\omega t ) - \sinomp(\phi)\right),\\
        y(t) &= -\frac{r}{\omega}\left(\cosomp(\phi+\omega t ) - \cosomp(\phi)\right),\\
        z(t) &= \frac{r^2}{2\omega^2}\left(\omega t + \cosomp(\phi+\omega t ) \sinomp(\phi) - \sinomp(\phi+\omega t ) \cosomp(\phi)\right),
    \end{aligned}
    \end{cases}
\end{equation}
for some $\phi\in [0, 2\mathbb S^\circ)$, $\omega \in \R \setminus \{0\}$ and $r > 0$. If $\omega = 0$, then 
\begin{equation}
\label{eq:straight_lines}
    \begin{cases}
    \begin{aligned}
        x(t) &= \left(r\cosom(\phi_\circ)\right) t,\\
        y(t) &= \left(r\sinom(\phi_\circ)\right) t,\\
        z(t) &= 0.
    \end{aligned}
    \end{cases}
\end{equation}
\end{prop}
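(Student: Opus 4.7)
The plan is to solve directly the Hamiltonian system $\dot\lambda_t = \vec H(\lambda_t)$ associated with the sub-Finsler Hamiltonian, using the explicit structure of the Heisenberg group and the convex trigonometric description of the dual norm. Working in canonical coordinates $(x,y,z,p_x,p_y,p_z)$ on $T^*\hei$ and introducing the abbreviations $a := \langle\lambda,X_1\rangle = p_x - \tfrac{y}{2}p_z$ and $b := \langle\lambda,X_2\rangle = p_y + \tfrac{x}{2}p_z$, Lemma \ref{lem:maximized_hamiltonian} gives $H = \tfrac12\|(a,b)\|_*^2$, so $H$ does not depend on $z$ and $p_z \equiv \omega$ is a conserved quantity along the motion. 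Similarly $H$ itself is conserved, so $\|(a,b)\|_*$ is constant; call its value $r>0$ (the case $r=0$ corresponds to the trivial extremal and is excluded).

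The next step is to write Hamilton's equations. Using $d_{(a,b)}(\tfrac12\|\cdot\|_*^2) = \hat\lambda^*$ (a direct consequence of Lemma \ref{lem:banachduality}), we obtain $\dot x = u_1$, $\dot y = u_2$, $\dot z = \tfrac12(xu_2 - yu_1)$, where $(u_1,u_2) := \hat\lambda^*$ is exactly the optimal control predicted by Corollary \ref{cor:control_normal_extremal}. A short computation for $p_x, p_y$ together with the relation $p_z = \omega$ then yields the clean system
\begin{equation*}
\dot a = -\omega u_2, \qquad \dot b = \omega u_1.
\end{equation*}
Now I parametrize $\hat\lambda_t = r\bigl(\cosomp(\psi(t)),\sinomp(\psi(t))\bigr)$, which is possible since $\|\hat\lambda\|_* = r$ is constant and $\hat\lambda$ lies on $r\cdot\partial\Omega^\circ$; by Lemma \ref{lem:banachduality} and \eqref{eq:correspondence} this gives $(u_1,u_2) = r(\cosom(\theta(t)),\sinom(\theta(t)))$ with $\theta(t) \xleftrightarrow{\Omega} \psi(t)$. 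Substituting into the equations for $\dot a, \dot b$ and invoking Proposition \ref{prop:difftrig} to differentiate $\cosomp, \sinomp$ along the curve, both equations reduce to the single ODE $\dot\psi = \omega$, yielding $\psi(t) = \phi + \omega t$ for some $\phi \in [0,2\mathbb{S}^\circ)$.

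Assuming $\omega\ne 0$, I integrate the equations for $x$ and $y$ by using the relation $\tfrac{d}{dt}\sinomp(\phi+\omega t) = \omega\cosom(\theta(t))$ (again Proposition \ref{prop:difftrig}), which converts $\dot x = r\cosom(\theta(t))$ into $\dot x = \tfrac{r}{\omega}\tfrac{d}{dt}\sinomp(\phi+\omega t)$; the analogous identity handles $y$, and the initial condition $\gamma(0)=\e$ produces the stated formulas. For the $z$-component, substituting the expressions just obtained into $\dot z = \tfrac12(xu_2-yu_1)$ and using the Pythagorean identity \eqref{eq:pytagorean} to collapse the terms $\sinomp(\phi+\omega t)\sinom(\theta(t)) + \cosomp(\phi+\omega t)\cosom(\theta(t)) = 1$ yields $\dot z = \tfrac{r^2}{2\omega}\bigl(1 - \sinomp(\phi)\sinom(\theta(t)) - \cosomp(\phi)\cosom(\theta(t))\bigr)$; a primitive is precisely $\tfrac{r^2}{2\omega^2}\bigl(\omega t + \cosomp(\phi+\omega t)\sinomp(\phi) - \sinomp(\phi+\omega t)\cosomp(\phi)\bigr)$, as one checks by differentiation. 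The degenerate case $\omega=0$ is immediate: $\hat\lambda$ is then constant, so the control is constant, $x,y$ grow linearly, and $\dot z \equiv 0$, producing \eqref{eq:straight_lines}. The only delicate point is the regularity needed to apply Proposition \ref{prop:difftrig} pointwise; strict convexity of $\normdot$ forces $\normdot_*$ to be $C^1$ (Proposition \ref{prop:propunderduality}), so $\cosomp,\sinomp$ are everywhere differentiable with the stated derivatives, and the computation is rigorous.
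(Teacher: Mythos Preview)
Your approach is essentially the same as the paper's: reduce the Hamiltonian system to the evolution of $(a,b)=(h_1,h_2)$, show $\dot a=-\omega u_2$, $\dot b=\omega u_1$, pass to polar coordinates on the dual sphere to obtain $\dot\psi=\omega$, and integrate using Proposition~\ref{prop:difftrig}. The computations you outline are correct, including the treatment of $\dot z$ via the Pythagorean identity.

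There is one citation issue worth flagging. You invoke Lemma~\ref{lem:maximized_hamiltonian} and Corollary~\ref{cor:control_normal_extremal} to obtain $H=\tfrac12\|\hat\lambda\|_*^2$ and $u=\hat\lambda^*$, but both results are stated under the standing hypothesis that $M$ is a \emph{smooth} sub-Finsler manifold (strongly convex, $C^\infty$ norm), whereas Proposition~\ref{prop:geodesics_nestogol} assumes only strict convexity of $\normdot$. The paper is explicit about this gap: it remarks that Lemma~\ref{lem:maximized_hamiltonian} cannot be applied directly and instead re-derives both the formula for $H$ and the identification of the maximizing control via the convex trigonometric parametrization and the inequality~\eqref{eq:NO_PYTHAGOREAN_IDENTITY_COMETIPERMETTI}. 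The content you need is still true in this generality (the Legendre transform of $\tfrac12\normdot^2$ is $\tfrac12\normdot_*^2$ regardless of smoothness, and strict convexity of $\normdot$ gives a unique maximizer), so your argument goes through once you either supply this short re-derivation or remark that only $\normdot_*\in C^1$ (guaranteed by Proposition~\ref{prop:propunderduality}) and strict convexity of $\normdot$ are actually used.
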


\begin{proof}
    Firstly, we characterize the \sF Hamiltonian in the \sF Heisenberg group. Note that, without assuming additional regularity on $\normdot$, we can not apply directly Lemma \ref{lem:maximized_hamiltonian}. Nevertheless, we can still obtain an analogous result by means of convex trigonometry. Indeed, let $h_i(\lambda):=\langle\lambda,X_i(\pi(\lambda))\rangle$ for $i=1,2$, then 
    \begin{equation}
        H(\lambda):=\max_{u\in\R^2}\left(\sum_{i=1}^2 u_ih_i(\lambda)-\frac{\norm{u}}{2}\right),\qquad\forall\, \lambda\in\ T^*\hei.
    \end{equation}
    We introduce polar coordinates on $\R^2$ associated with $\normdot$ and its dual norm $\normdot_*$, namely $(u_1,u_2)\mapsto (\rho,\theta)$ and $(h_1,h_2)\mapsto(\zeta,\psi)$ where 
    \begin{equation}
    \label{eq:finally_some_coordinates}
        \begin{cases}
            u_1 =\rho\cosomp(\theta),\\
            u_2 =\rho\sinomp(\theta),
        \end{cases}
        \qquad\text{and}\qquad
        \begin{cases}
            h_1 =\zeta\cosomp(\psi),\\
            h_2 =\zeta\sinomp(\psi).
        \end{cases} 
    \end{equation}
    Hence, the \sF Hamiltonian becomes 
    \begin{equation}
    \label{eq:quante_righe_servono}
    \begin{split}
         H(\lambda)&=\max_{u\in\R^2}\left(\sum_{i=1}^2 u_ih_i(\lambda)-\frac{\norm{u}}{2}\right)\\
         &=\max_{\substack{\theta\in[0,2\Sbb)\\ \rho>0}}  \left(\rho\zeta \left(\cosom(\theta)\cosomp(\psi)+\sinom(\theta)\sinomp(\psi)\right)-\frac{\rho^2}{2}\right) \leq \max_{\rho>0} \left(\rho\zeta-\frac{\rho^2}{2}\right) = \frac{\zeta^2}{2},
    \end{split}
    \end{equation}
    where the last inequality is a consequence of \eqref{eq:NO_PYTHAGOREAN_IDENTITY_COMETIPERMETTI}. Moreover, we attain the equality in \eqref{eq:quante_righe_servono} if and only if $\rho=\zeta$ and $\psi=C^\circ (\theta)$. Therefore, since $\zeta=\|\hat\lambda\|_*$ with $\hat\lambda=(h_1(\lambda),h_2(\lambda))$, we conclude that
    \begin{equation}
        H(\lambda)=\frac12\big\|\hat\lambda\big\|^2_*,\qquad\forall\,\lambda\in T^*\hei\setminus\ann(\dis),
    \end{equation}
    and the maximum is attained at the control $u=\hat\lambda^*$.
    Furthermore, $H\in C^1(T^*M)$ by strict convexity of $\normdot$, cf. Proposition \ref{prop:propunderduality}. We write the system \eqref{eq:maximizedsystem} in coordinates $(x,y,z;h_1,h_2,h_3)$ for the cotangent bundle, where $h_3(\lambda):=\langle\lambda,\partial_z\rangle$. The vertical part of \eqref{eq:maximizedsystem} becomes
    \begin{equation}
    \label{eq:aux_ham_sys_Heisenberg}
    \begin{cases}
        \displaystyle \dot h_1(t) = \|\hat\lambda_t\|_*d_{\hat\lambda_t}\normdot_*\cdot\left(0,-h_3(t)\right),\\
        \displaystyle \dot h_2(t) = \|\hat\lambda_t\|_*d_{\hat\lambda_t}\normdot_*\cdot\left( h_3(t),0\right),\\
        \displaystyle \dot h_3(t) = 0.
    \end{cases}   
    \end{equation}
    Let $(\lambda_t)_{t\in [0,1]}$ be a normal extremal with associated maximal control given by $t\mapsto u(t)$, then we use Lemma \ref{lem:banachduality} to deduce that $\|\hat\lambda_t\|_*d_{\hat\lambda_t}\normdot_*=\hat\lambda_t^*= u(t)$. Therefore, letting $h_3(t)\equiv\omega\in\R$, we may rewrite \eqref{eq:aux_ham_sys_Heisenberg} as 
    \begin{equation}
    \label{eq:aux2_ham_sys_Heisenberg}
    \begin{cases}
        \displaystyle \dot h_1(t) = -  \omega\, u_2(t),\\
        \displaystyle \dot h_2(t) =  \omega\, u_1(t).
    \end{cases}   
    \end{equation}
    To solve this system, we use the polar coordinates \eqref{eq:finally_some_coordinates}: letting $t\mapsto (\rho(t),\psi(t))$ be the curve representing $\hat\lambda_t=(h_1(t),h_2(t))$, we deduce that $\rho(t)$ and $\psi(t)$ are absolutely continuous and satisfy
    \begin{equation}
        \rho(t)=\big\|\hat\lambda_t\big\|_*,\qquad \dot\psi(t)=\frac{h_1(t)\dot h_2(t)-\dot h_1(t) h_2(t)}{\rho^2(t)}.
    \end{equation}
    We may compute explicitly $\dot \rho(t)$ and $\dot\psi(t)$, using once again Lemma \ref{lem:banachduality}, the system \eqref{eq:aux2_ham_sys_Heisenberg} and identity \eqref{eq:pytagorean}:
\begin{equation}
    \dot \rho(t) = d_{\hat\lambda_t}\normdot_*\cdot(\dot h_1(t),\dot h_2(t))= \frac{\omega}{\|\hat\lambda_t\|_*}u(t)\cdot (-u_2(t),u_1(t))=0, \qquad    \dot\psi(t) = \omega.
\end{equation}
Thus, integrating the above identities, we obtain $\rho(t)\equiv r$ and $\psi(t)=\omega t +\phi$ for some $r>0$ and $\phi\in [0,2\mathbb S^\circ)$. Finally, we find an explicit expression for the maximal control:
\begin{equation}
    u(t)=\left(r\cosom(C_\circ (\phi+\omega t )),r\sinom(C_\circ (\phi+\omega t ))\right).
\end{equation} 
From this, we may explicitly integrate the horizontal part of the Hamiltonian system, obtaining the desired expression. In particular, if $\omega=0$ we immediately obtain \eqref{eq:straight_lines}. If $\omega\neq 0$, we may employ Proposition \ref{prop:difftrig} to conclude. 
\end{proof}

As $(\hei,\di_{SF})$ is complete, normal extremals can be extended to $\R$, according to Proposition \ref{prop:maximal_extension_normal_geod}. Thus, we may define the (extended) exponential map at the origin on the whole $T_0^*\hei \times \R$: 
\begin{equation}
\label{eq:extended_exponential_map}
\begin{split}
     G: \big([0,2\mathbb S^\circ)\times \R\times [0,\infty)\big)\times\R&\longrightarrow\hei,\\
     (\phi,\omega,r;t)&\longmapsto\big(x(\phi,\omega,r;t), y(\phi,\omega,r;t),  z(\phi,\omega,r;t)\big),
\end{split}
\end{equation}
where $(x(\phi,\omega,r;t), y(\phi,\omega,r;t),  z(\phi,\omega,r;t))$ correspond to the curve $(x(t),y(t),z(t))$ defined by Proposition \ref{prop:geodesics_nestogol} with initial datum $(\phi,\omega,r)$ and with the understanding that $G(\phi,\omega,0;t)\equiv 0$. By the properties of the convex trigonometric functions, $G$ is a $C^1$ map for $\omega\neq 0$. Moreover, thanks to Theorem \ref{thm:geod_Heisenberg}, for every initial datum $(\phi,\omega,r)$, the curve $t\mapsto G(\phi,\omega,r;t)$ is a geodesic between its endpoints for sufficiently small times. More precisely, it is minimal for $|t|<t^*=t^*(\phi,\omega,r)$, where $t^*>0$ is the first positive time such that $G(\phi,\omega,r;t^*)$ lies on the $z$-axis. In particular, a direct computation shows that
\begin{equation}
    t^*=
    \begin{cases}
    \displaystyle\frac{2\mathbb S^\circ}{|\omega|}, &\text{if }\omega\neq 0,\\
    \infty, &\text{if }\omega =0.
    \end{cases}
\end{equation}
We conclude this section by highlighting a property of geodesics in the Heisenberg group that will be relevant in our analysis. For the sake of notation, denote by $ \Omega^\circ_{(\phi,\omega,r)}$ the following transformation of $\Omega^\circ=\overline B^{\norm{\cdot}_*}_1(0)$:   
\begin{equation}
    \Omega^\circ_{(\phi,\omega,r)}:= R_{-\pi/2}\left[\frac{r}{\omega}(\Omega^\circ-(\cosomp(\phi),\sinomp(\phi)))\right],
\end{equation}
where $R_{-\pi/2}$ is counter-clockwise rotation in the plane of angle $-\pi/2$. 

\begin{prop}[{\cite[Thm.\ 1]{Bereszynski}}]
\label{prop:z=area}
    Let $\hei$ be the \sF Heisenberg group, equipped with a strictly convex norm and let $\gamma:[0,1]\to\hei$ be a geodesic starting at the origin, with $\gamma(t)=(x(t),y(t),z(t))$. Then, the curve $t\mapsto (x(t),y(t))$ is either a straight line or belongs to the boundary of $\Omega^\circ_{(\phi,\omega,r)}$. Moreover, for every $t\in[0,1]$, $z(t)$ equals the oriented area that is swept by the vector joining $(0,0)$ with $(x(s),y(s))$, for $s\in [0,t]$.
\end{prop}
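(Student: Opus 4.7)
The plan is to read off everything directly from the closed-form expressions produced in Proposition \ref{prop:geodesics_nestogol}, so the proof will amount to two short verifications. I would first dispatch the degenerate case $\omega = 0$: formulas \eqref{eq:straight_lines} exhibit $(x(t), y(t)) = t \cdot (r\cosom(\phi_\circ), r\sinom(\phi_\circ))$ as a straight line through the origin, with $z(t)\equiv 0$ matching the zero oriented area swept by a radial vector sliding along a line through $O$.

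For $\omega\neq 0$, the planar statement will be essentially tautological from the definition of $\Omega^\circ_{(\phi,\omega,r)}$. With the convention $R_{-\pi/2}(a,b) = (b,-a)$, the formulas for $x(t), y(t)$ rearrange exactly as
\begin{equation*}
    (x(t), y(t)) = R_{-\pi/2}\frac{r}{\omega}\bigl[(\cosomp(\phi+\omega t), \sinomp(\phi+\omega t)) - (\cosomp(\phi), \sinomp(\phi))\bigr],
\end{equation*}
and since $s\mapsto(\cosomp(s), \sinomp(s))$ parametrizes $\partial\Omega^\circ$, the image lies on $\partial\Omega^\circ_{(\phi,\omega,r)}$. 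For the area statement, I would invoke horizontality of $\gamma$: admissibility in $\hei$ forces $\dot z(t) = \tfrac{1}{2}(x(t)\dot y(t) - y(t)\dot x(t))$ directly from the explicit form of $X_1, X_2$, and integrating with $z(0)=0$ gives
\begin{equation*}
    z(t) = \frac{1}{2}\int_0^t \bigl(x(s)\dot y(s) - y(s)\dot x(s)\bigr)\,\de s,
\end{equation*}
which is precisely the standard formula for the oriented area swept by the radial vector from $O$ to $(x(s), y(s))$ as $s$ ranges in $[0, t]$.

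As a consistency check one may rederive the same $\dot z$ from the closed-form $z(t)$ via Proposition \ref{prop:difftrig}, obtaining $\dot z(t) = \tfrac{r^2}{2\omega}\bigl[1 - \sinomp(\phi)\sinom(\theta(t)) - \cosomp(\phi)\cosom(\theta(t))\bigr]$ with $\theta(t) = C_\circ(\phi+\omega t)$, and verifying that $\tfrac{1}{2}(x\dot y - y\dot x)$ reduces to the same expression after applying the Pythagorean identity \eqref{eq:pytagorean}. I do not expect any genuine obstacle: the entire statement extracts geometric content already encoded in the explicit integration of Proposition \ref{prop:geodesics_nestogol}. The only care required will be in fixing the orientation convention for $R_{-\pi/2}$ and in consistently using the correspondence $\theta = C_\circ(\psi)$ when differentiating $\sinomp$ and $\cosomp$, since these functions need not be $C^1$ everywhere without additional regularity on $\normdot$.
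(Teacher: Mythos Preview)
Your argument is correct. The paper does not actually prove this proposition: it is stated with a citation to \cite[Thm.\ 1]{Bereszynski} and no proof is given in the text. Your approach---reading the planar statement off from the explicit formulas of Proposition \ref{prop:geodesics_nestogol} and the definition of $\Omega^\circ_{(\phi,\omega,r)}$, and deriving the area interpretation of $z(t)$ directly from the horizontality constraint $\dot z = \tfrac{1}{2}(x\dot y - y\dot x)$---is the natural self-contained verification, and everything checks out including the rotation convention. The consistency check you outline at the end is optional but harmless.
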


\subsection{Failure of the \texorpdfstring{$\cd(K,N)$}{CD(K,N)} condition for \texorpdfstring{$C^{1,1}$}{C1,1}-norms} \label{sec:CDheisenberg}


In this section we contradict the validity of the $\cd(K,N)$ condition in the \sF Heisenberg group, equipped with a strictly convex and $C^{1,1}$ norm and with a smooth measure. The strategy follows the blueprint of the one presented in Section \ref{sec:argument_of_the_mago}. The main issue we have to address here is the low regularity (cf. Remark \ref{rmk:flexiamo}) of the midpoint and inverse geodesic maps of \eqref{eq:t-midpoint_map} and \eqref{eq:inverse_geodesic}. Nevertheless, using the explicit expression of geodesics presented in Proposition \ref{prop:geodesics_nestogol}, we successfully overcome these challenges through a series of technical lemmas, culminating in Corollary \ref{cor:finalmente}, Proposition \ref{prop:dopotantafatica} and Theorem \ref{thm:noCDC11}.

Let $\hei$ be the \sF Heisenberg group, equipped with a $C^{1,1}$ and strictly convex norm $\normdot$. According to Proposition \ref{prop:propunderduality}, the dual norm $\normdot_*$ is $C^1$ and strongly convex. Thus, in the notations of Section \ref{sec:convex_trigtrig}, the correspondences $C^\circ$ and $C_\circ$ are continuous functions. In order to ease the notation, in this section we sometimes use the shorthands:
\begin{equation*}
    \theta^\circ = C^\circ(\theta) \quad \text{and} \quad \psi_\circ = C_\circ(\psi), \qquad \forall\, \theta, \psi\in \R.
\end{equation*}
Alexandrov's theorem ensures that the dual norm $\normdot_*$ has a second derivative and a second-order Taylor expansion almost everywhere, we call $D_*\subset \R^2$ the set of twice differentiability of it.

\begin{prop}\label{prop:diffCcirc}
    Let $\psi\in [0,2\Sbb^\circ)$ be an angle such that $Q_\psi\in D_*$, then the function $C_\circ$ is differentiable at $\psi$ with positive derivative.
\end{prop}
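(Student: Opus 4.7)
The plan is to reduce the differentiability of $C_\circ$ at $\psi$ to a chain-rule argument for the function $F(\psi):=d_{Q_\psi}\normdot_*$, which by the analogue of \eqref{eq:correspondence} with the roles of $\Omega$ and $\Omega^\circ$ exchanged (legitimate since $\normdot_*$ is itself strictly convex and $C^1$, by Proposition~\ref{prop:propunderduality}) coincides with $P_{C_\circ(\psi)}$. Thus recovering $C_\circ'(\psi)$ amounts to computing $F'(\psi)$ and then inverting the $C^1$ parameterization $\alpha\mapsto P_\alpha$ of $\partial\Omega$.

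First I would observe that, under the standing assumptions, both $C^\circ$ and $C_\circ$ are continuous and the curves $\theta\mapsto P_\theta$ and $\psi\mapsto Q_\psi$ are of class $C^1$: indeed, the almost-everywhere derivatives from Proposition~\ref{prop:difftrig} are continuous in their arguments thanks to the continuity of $C^\circ$ and $C_\circ$, and so the Lipschitz trigonometric functions upgrade to $C^1$. In particular, letting $J$ denote the $\pi/2$ counterclockwise rotation in $\R^2$, $\dot Q_\psi = J P_{C_\circ(\psi)}$ and $\dot P_\theta = J Q_{C^\circ(\theta)}$. Since $Q_\psi\in D_*$, Alexandrov's theorem applied to the $C^1$ convex function $\normdot_*$ yields the classical differentiability of $\nabla\normdot_*$ at $Q_\psi$, and the chain rule then gives $F'(\psi)=\nabla^2\normdot_*(Q_\psi)\cdot\dot Q_\psi$.

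The main obstacle is showing that $F'(\psi)=c\,JQ_\psi$ with $c>0$. Tangency (that is, $F'(\psi)\cdot Q_\psi=0$) comes for free by differentiating $\normdot(F(\cdot))\equiv 1$ and using $d_{P_\theta}\normdot=Q_\psi$ from \eqref{eq:correspondence}, with $\theta:=C_\circ(\psi)$. For positivity, one exploits the strong convexity of $\normdot_*$, a consequence of $\normdot\in C^{1,1}$. Set $A:=\nabla^2\normdot_*(Q_\psi)$: by $1$-homogeneity, $A$ is positive semi-definite with $Q_\psi\in\ker A$, so $A$ has rank at most one in $\R^2$. Writing $\nabla^2(\normdot_*^2)(Q_\psi)=2P_\theta P_\theta^T+2A$ and testing the strong convexity bound $v^T\nabla^2(\normdot_*^2)(Q_\psi)v\geq\alpha\|v\|_*^2$ against $v=JP_\theta$ (which is Euclidean-orthogonal to $P_\theta$) gives $v^T A v\geq\tfrac{\alpha}{2}\|JP_\theta\|_*^2>0$, so $A$ has rank exactly one, with non-kernel eigendirection $JQ_\psi$ and positive eigenvalue $\lambda>0$. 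Decomposing $JP_\theta=\mu Q_\psi+|Q_\psi|^{-2}JQ_\psi$ (the coefficient $|Q_\psi|^{-2}$ coming from $P_\theta\cdot Q_\psi=1$ in Proposition~\ref{prop:correspondence}) yields $F'(\psi)=A(JP_\theta)=\lambda|Q_\psi|^{-2}JQ_\psi$, hence $c=\lambda|Q_\psi|^{-2}>0$.

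To conclude, set $\theta_s:=C_\circ(\psi+s)$, so that $\theta_s\to\theta$ by continuity of $C_\circ$. The $C^1$ parameterization of $\partial\Omega$ yields $P_{\theta_s}-P_\theta=(\theta_s-\theta)(JQ_\psi+o(1))$ as $s\to 0$. Matching with $F(\psi+s)-F(\psi)=cs\,JQ_\psi+o(s)$ and projecting onto the non-zero vector $JQ_\psi$, a standard absorption argument first produces $\theta_s-\theta=O(s)$ and then $\theta_s-\theta=cs+o(s)$, proving that $C_\circ'(\psi)$ exists and equals $c>0$.
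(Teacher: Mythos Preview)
Your proof is correct, but it follows a different route from the paper's. The paper proceeds geometrically: it fixes a unit tangent vector $v\perp d_{Q_\psi}\normdot_*$, uses the second-order Alexandrov expansion of $\normdot_*$ along $Q_\psi+sv$ to parameterize an arc of $\partial\Omega^\circ$ by $s$, computes the swept area to convert $s$ into the angle increment $\psi(s)-\psi$, and then repeats the area computation on the $\partial\Omega$ side for the curve $y(s)=d_{Q_\psi+sv}\normdot_*$ to obtain $C_\circ(\psi(s))-C_\circ(\psi)$. Positivity of the derivative is argued at the end by showing that $a=\mathrm{Hess}_{Q_\psi}(\normdot_*)(v)$ cannot be tangent to $\partial\Omega$ at $y(0)$.

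Your approach replaces the two area computations by a direct chain-rule analysis of $F(\psi)=\nabla\normdot_*(Q_\psi)=P_{C_\circ(\psi)}$, exploiting the $C^1$ regularity of $\psi\mapsto Q_\psi$ and the first-order Taylor expansion of $\nabla\normdot_*$ at the Alexandrov point $Q_\psi$. The heart of your argument is the linear-algebraic observation that in $\R^2$ the Hessian $A=\nabla^2\normdot_*(Q_\psi)$ has $Q_\psi$ in its kernel by $1$-homogeneity and, by strong convexity tested against $JP_\theta$, has rank exactly one with positive eigenvalue along $JQ_\psi$; this immediately gives $F'(\psi)=cJQ_\psi$ with $c>0$, after which inverting the $C^1$ parameterization $\alpha\mapsto P_\alpha$ is routine. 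Your route is more algebraic and arguably more streamlined, while the paper's route makes the geometric meaning of the derivative (as a ratio of swept areas) transparent. Both rely on the same three ingredients: the duality identity $P_{C_\circ(\psi)}=d_{Q_\psi}\normdot_*$, Alexandrov second-order differentiability at $Q_\psi$, and strong convexity of $\normdot_*$ for strict positivity.
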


\begin{proof}
    Consider a vector $v\in \R^2$ orthogonal to $d_{Q_\psi} \normdot_*$ such that $\norm{v}_{eu}=1$. Then, since $Q_\psi\in D_*$, there exists a constant $C\in\R$ such that 
    \begin{equation}\label{eq:normalongtangent}
        \norm{Q_\psi+ s v}_* = 1 + C s^2 + o(s^2), \qquad \text{as }s \to 0.
    \end{equation}
    Observe that, since the norm $\normdot_*$ is strongly convex, the constant $C$ is strictly positive. Consider the curve 
    \begin{equation*}
        s \mapsto x(s):= \frac{Q_\psi+ s v}{\norm{Q_\psi+ s v}_*},
    \end{equation*}
    which by definition is a parametrization of an arc of the unit sphere $S^{\norm{\cdot}_*}_1(0)=\partial\Omega^\circ$. Call $A(s)$ the signed area of the sector of $\Omega^\circ$ between the rays $O Q_\psi$ and $O x(s)$ (see Figure \ref{fig:diffCcirc1}). As a consequence of \eqref{eq:normalongtangent}, we deduce that 
    \begin{equation*}
        A(s)= \frac 12 k s + o(s^2), \qquad \text{as }s \to 0,
    \end{equation*}
    where $k$ is the scalar product between $Q_\psi$ and $v^\perp$, that is the vector obtained by rotating $v$ with an angle of $-\frac \pi 2$. In fact, the first-order term $\frac12 k s$ is the area of the triangle of vertices $O$, $Q_\psi$ and $Q_\psi+sv$, while the error term is controlled by the area of the triangle of vertices $x(s)$, $Q_\psi$ and $Q_\psi+sv$. The latter is an $o(s^2)$ as $s\to 0$, thanks to \eqref{eq:normalongtangent}. In particular, letting $\psi(s)$ be the angle such that $x(s)= Q_{\psi(s)}$, by definition of generalized angles, it holds that 
    \begin{equation*}
        \psi(s)-\psi= 2 A(s)= k s + o(s^2), \qquad \text{as }s \to 0.
    \end{equation*}
    Up to substituting the vector $v$ with $-v$, we can assume $k>0$.
    Then, in order to conclude, it is enough to prove that the function $s \mapsto C_\circ (\psi(s))$ is differentiable in $s=0$ with positive derivative. 
    
    \begin{figure}[h]

    \begin{minipage}[c]{.47\textwidth}

    \centering
    \begin{tikzpicture}

        \fill[color=blue!10!white] (1.75,2.5)--(-2,0)--(1.3375,4);
        \fill[color=white] (2,0) .. controls (1.9,3.5) and (1.4,4) .. (-1,5)--(2,5);

        \draw[ thick] (2,0) .. controls (1.9,3.5) and (1.4,4) .. (-1,5);
        \filldraw[black] (-2,0) circle (1.5pt);
        \filldraw[black] (1.75,2.5) circle (1.5pt);
        \filldraw[black] (1.3375,4) circle (1.5pt);
        \filldraw[black] (1.12,3.75) circle (1.5pt);
        \draw (1.3375,4)--(-2,0);
        \draw[very thick] (1.12,3.75)--(-2,0);
        \draw[very thick] (1.75,2.5) -- (-2,0);
        \draw[very thick, ->] (1.75,2.5) -- (1.2,4.5);
        \draw[very thick, ->] (1.75,2.5) -- (3.75,3.1);
        \node at (1.4,5)[label=south:$v$] {};
        \node at (-0.7,4.9)[label=south:$S^{||\cdot||_*}_1(0)$] {};
        \node at (2.2,4.2)[label=south:$Q_\psi + sv$] {};
        \node at (0.6,4.2)[label=south:$x(s)$] {};
        \node at (1.57,2.4)[label=south:$Q_\psi$] {};
        \node at (3,2.9)[label=south:$v ^\perp$] {};
        \node at (1,3.25)[label=south:${\color{blue}{A(s)}}$] {};
        \node at (-2.1,0.7)[label=south:$O$] {};
        
    \end{tikzpicture}
    \caption{Definition of $A(s)$.}
    \label{fig:diffCcirc1}
    
\end{minipage}
\hfill
\begin{minipage}[c]{.47\textwidth}

    \centering
    \begin{tikzpicture}

        \fill[color=red!10!white](1.8,1) --(-2,0)--(1,2.65)--(2,2.6);
        \fill[color=white] (2,0) .. controls (1.7,2) and (1.2,3) .. (-1,4)--(2,4);

        \draw[ thick] (2,0) .. controls (1.7,2) and (1.2,3) .. (-1,4);
        \filldraw[black] (-2,0) circle (1.5pt);
        \filldraw[black] (1,2.65) circle (1.5pt);
        \filldraw[black] (1.8,1) circle (1.5pt);
        \draw[very thick] (1.8,1)--(-2,0);
        \draw[very thick] (1,2.65) -- (-2,0);
        \draw[very thick,->](1.8,1)--(1.3,3.2);
        \node at (-0.7,4.9)[label=south:$S^{||\cdot||}_1(0)$] {};
        \node at (2,4)[label=south:${a= \text{Hess}_{Q_\psi}(||\cdot||_*)(v)}$] {};
        \node at (0.4,3.1)[label=south:$y(s)$] {};
        \node at (2.3,1.4)[label=south:$y(0)$] {};
        \node at (0.9,1.9)[label=south:${\color{red}{B(s)}}$] {};
        \node at (-2.1,0.7)[label=south:$O$] {};
        
    \end{tikzpicture}
    \caption{Definition of $B(s)$.}
    \label{fig:diffCcirc2}
    
\end{minipage}

\end{figure}

    First of all, by our choice of $k>0$, $s \mapsto C_\circ (\psi(s))$ is monotone non-decreasing close to $s=0$, being a composition of monotone non-decreasing functions. Second of all, we can show that it has a first-order expansion. To this aim, note that the curve
    \begin{equation}
        s \mapsto y(s):=d_{x(s)} \normdot_* 
    \end{equation}
    is a parametrization of an arc of the sphere $S^{\norm{\cdot}}_1(0)= \partial \Omega$ (cf. Lemma \ref{lem:banachduality}). Moreover, recalling that $Q_\psi\in D_*$ and using the homogeneity of the norm, we have that 
    \begin{equation}\label{eq:dualarea}
        y(s) = d_{Q_\psi+sv} \normdot_* =  d_{Q_\psi} \normdot_* + a \, s + o (s), \qquad \text{as }s \to 0,
    \end{equation}
    where $a:= \text{Hess}_{Q_\psi}(\normdot_*)(v)$. Observe that $a\neq 0$ because $\normdot_*$ is strongly convex and $\norm{Q_\psi}_*=1$. Then, call $B(s)$ the (signed) area of the sector of $\Omega$ between the rays $O y(0)$ and $O y(s)$ (see Figure \ref{fig:diffCcirc2}). Reasoning as we did for $A(s)$, from \eqref{eq:dualarea} we deduce that 
    \begin{equation}
    \label{eq:area_B(s)}
        B(s) = \frac 12 \scal{y(0)}{a^\perp} s + o(s^2), \qquad \text{as }s \to 0.
    \end{equation}
    On the other hand, by definition 
    \begin{equation}
    \label{eq:C_circ=area_B(s)}
        C_\circ (\psi(s)) - C_\circ(\psi(0)) = 2 B(s)=\scal{y(0)}{a^\perp} s + o(s^2), \qquad \text{as }s \to 0.
    \end{equation}
    This shows that the function $s \mapsto C_\circ (\psi(s))$ is differentiable in $s=0$ with derivative $\scal{y(0)}{a^\perp}$. In addition, since $C_\circ\circ\psi$ is non-decreasing close to $s=0$, \eqref{eq:C_circ=area_B(s)} also implies that $\scal{y(0)}{a^\perp}\geq 0$. We are left to show that $\scal{y(0)}{a^\perp}$ is strictly positive. If $\scal{y(0)}{a^\perp}=0$ then $a$ is parallel to $y(0)$, however, according to \eqref{eq:dualarea}, the vector $a$ is tangent to the sphere $S^{\norm{\cdot}}_1(0)$ at $y(0)$ and therefore we obtain a contradiction.
\end{proof}

\begin{remark}
\label{rmk:diff_C_full_measure}
    Since the norm is invariant by homotheties, then also $D_*$ is so, thus the set of angles $\psi$ such that $Q_\psi\not\in D_*$ has null $\Leb^1$-measure. In particular, the function $C_\circ$ is differentiable with positive derivative $\Leb^1$-almost everywhere, as a consequence of Proposition \ref{prop:diffCcirc}.
\end{remark}


As already mentioned, the strategy to prove the main theorem of this section is the same of Section \ref{sec:argument_of_the_mago}. In particular, it is fundamental to prove estimates on the volume contraction along geodesic homotheties. To this aim, we consider the Jacobian determinant of the exponential map \eqref{eq:extended_exponential_map}:
\begin{equation}
  J(\phi,\omega,r;t):= \left|\, \det  \begin{pNiceMatrix}
\frac{\partial x}{\partial r}  & \frac{\partial x}{\partial \phi}  & \frac{\partial x}{\partial \omega} \\
\frac{\partial y}{\partial r}  & \frac{\partial y}{\partial \phi}  & \frac{\partial y}{\partial \omega} \\
\frac{\partial z}{\partial r}  & \frac{\partial z}{\partial \phi}  & \frac{\partial z}{\partial \omega} 
\end{pNiceMatrix} (\phi,\omega,r;t) \right|
\end{equation}
where we recall $x(\phi,\omega,r;t),\, y(\phi,\omega,r;t),\, z(\phi,\omega,r;t)$ are defined in Proposition \ref{prop:geodesics_nestogol}. In order to study this, we will use the following formulation:
\begin{equation}\label{eq:J}
 J(\phi,\omega,r;t) = \left| \frac{\partial z}{\partial \omega}(\phi,\omega,r;t) \det(M_1) - \frac{\partial z}{\partial \phi}(\phi,\omega,r;t) \det(M_2) + \frac{\partial z}{\partial r} (\phi,\omega,r;t)\det(M_3)  \right|
\end{equation}
where
\begin{equation*}
    M_1:= \begin{pNiceMatrix}
\frac{\partial x}{\partial r}  & \frac{\partial x}{\partial \phi}  \\
\frac{\partial y}{\partial r}  & \frac{\partial y}{\partial \phi} \end{pNiceMatrix}(\phi,\omega,r;t), \quad M_2:= \begin{pNiceMatrix}
\frac{\partial x}{\partial r}  & \frac{\partial x}{\partial \omega}  \\
\frac{\partial y}{\partial r}  & \frac{\partial y}{\partial \omega} \end{pNiceMatrix}(\phi,\omega,r;t), \quad M_3:= \begin{pNiceMatrix}
\frac{\partial x}{\partial \phi}  & \frac{\partial x}{\partial \omega}  \\
\frac{\partial y}{\partial \phi}  & \frac{\partial y}{\partial \omega} \end{pNiceMatrix}(\phi,\omega,r;t).
\end{equation*}

 We are particularly interested in studying the behaviour of $ J(\phi,\omega,r;t)$ as $t\to0$. In the following lemmas we estimate the behaviour of every term in \eqref{eq:J} as $t\to0$.

\begin{notation}
    Let $I\subset\R$ be an interval containing $0$. Given a function $f:I \to \R$ and $n\in \N$, we write 
    \begin{equation}
        f(t) \sim  t^n, \qquad \text{as }t \to 0,
    \end{equation}
    if there exists a constant $C\neq 0$ such that $f(t)= C t^n + o (t^n)$, as $t\to 0$.
\end{notation}

\begin{lemma}\label{lem:Jestimate1}
    Let $\phi\in[0,2\Sbb^\circ)$ be a differentiability point for the map $C_\circ$, $r>0$ and $\omega\neq 0$, then
    \begin{equation}\label{eq:expansion1}
        \det \big( M_1  (\phi,\omega,r;t) \big) \sim t^2, \qquad \text{as }t \to 0,
    \end{equation}
    while 
    \begin{equation}\label{eq:expansion2}
        \det \big( M_2  (\phi,\omega,r;t) \big),\, \det \big( M_3  (\phi,\omega,r;t) \big) = O(t^3), \qquad \text{as }t \to 0.
    \end{equation}
\end{lemma}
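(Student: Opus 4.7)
The plan is to compute Taylor expansions of $x(\phi,\omega,r;t),y(\phi,\omega,r;t)$ from Proposition \ref{prop:geodesics_nestogol} up to order $t^2$, then differentiate in $r,\phi,\omega$ and assemble the three $2\times 2$ determinants. First I would record the regularity of the trigonometric functions: since $\normdot$ is both $C^{1,1}$ and strictly convex, Proposition \ref{prop:propunderduality} ensures that $\normdot_*$ is $C^1$ as well, so all four functions $\sinom,\cosom,\sinomp,\cosomp$ are $C^1$ on $\R$ with derivatives given by Proposition \ref{prop:difftrig}. Set $\kappa:=C_\circ'(\phi)$, which exists by hypothesis and is strictly positive by Proposition \ref{prop:diffCcirc}. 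Writing $\sinomp'(\phi)=\cosom(\phi_\circ)$ and applying the chain rule yields
\begin{equation}
\sinomp''(\phi)=-\sinomp(\phi)\,\kappa,\qquad \cosomp''(\phi)=-\cosomp(\phi)\,\kappa.
\end{equation}
Combining these second-order expansions with Proposition \ref{prop:geodesics_nestogol} and setting $c:=\cosom(\phi_\circ)$, $s:=\sinom(\phi_\circ)$, I obtain
\begin{align}
x(\phi,\omega,r;t) &= rc\,t-\tfrac12 r\sinomp(\phi)\kappa\omega\,t^2+o(t^2),\\
y(\phi,\omega,r;t) &= rs\,t+\tfrac12 r\cosomp(\phi)\kappa\omega\,t^2+o(t^2).
\end{align}

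Differentiating in $r$ gives $\partial_r x,\partial_r y=O(t)$ with leading coefficients $c,s$. For $\partial_\phi x=(r/\omega)(\cosom((\phi+\omega t)_\circ)-\cosom(\phi_\circ))$ and the analogous expression for $\partial_\phi y$, the chain rule (using $\cosom'(\phi_\circ)=-\sinomp(\phi)$ and $\sinom'(\phi_\circ)=\cosomp(\phi)$ from Proposition \ref{prop:difftrig}, together with $C_\circ'(\phi)=\kappa$) yields leading terms $-r\sinomp(\phi)\kappa\,t$ and $r\cosomp(\phi)\kappa\,t$ respectively. For $\partial_\omega x$, differentiating the prefactor $r/\omega$ and the argument $\phi+\omega t$ separately and substituting the expansion of $\sinomp(\phi+\omega t)$, the two $O(t)$ contributions cancel identically, leaving
\begin{equation}
\partial_\omega x=-\tfrac12 r\sinomp(\phi)\kappa\,t^2+o(t^2),\qquad \partial_\omega y=\tfrac12 r\cosomp(\phi)\kappa\,t^2+o(t^2).
\end{equation}

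To conclude, the leading coefficient of $\det(M_1)=\partial_r x\cdot\partial_\phi y-\partial_r y\cdot\partial_\phi x$ is
\begin{equation}
r\kappa\bigl(c\cosomp(\phi)+s\sinomp(\phi)\bigr)\,t^2=r\kappa\,t^2,
\end{equation}
where the equality uses the Pythagorean identity \eqref{eq:pytagorean} applied to the corresponding pair $\phi_\circ\xleftrightarrow{\Omega}\phi$. Since $r\kappa\neq 0$, this establishes \eqref{eq:expansion1}. For $M_2$ and $M_3$, the $\omega$-columns are of order $t^2$ while the $r$- and $\phi$-columns are of order $t$; hence both determinants are $O(t^3)$, which is \eqref{eq:expansion2}. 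The most delicate point is the justification of the second-order Taylor expansions of $\sinomp,\cosomp$ at $\phi$ using only pointwise differentiability of $C_\circ$: this is precisely where the $C^{1,1}$ hypothesis on $\normdot$ enters, since it guarantees that $\sinom,\cosom$ are $C^1$ and hence may be composed with $C_\circ$ via the chain rule under the single pointwise assumption.
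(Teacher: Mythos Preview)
Your proof is correct and follows essentially the same route as the paper: you compute the partial derivatives of $x,y$ directly from the explicit formulas in Proposition \ref{prop:geodesics_nestogol}, expand them in $t$ using Proposition \ref{prop:difftrig} together with the differentiability of $C_\circ$ at $\phi$, and then assemble the determinants, invoking the Pythagorean identity \eqref{eq:pytagorean} for the leading coefficient of $\det(M_1)$. Your remark that the $C^{1,1}$ hypothesis on $\normdot$ is what makes $\sinom,\cosom$ everywhere $C^1$ (so that the chain rule applies at the single point $\phi$) is exactly the subtle point the paper uses implicitly when writing the second-order expansions of $\sinomp,\cosomp$.
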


\begin{proof}
    Let us begin by proving \eqref{eq:expansion1}. Firstly, since the function $C_\circ$ is differentiable at $\phi$, we can compute the following Taylor expansions as $t\to0$, using Proposition \ref{prop:difftrig}:
\begin{equation}\label{eq:taylor1}
\begin{split}
    \cosom\big( (\phi+\omega t)_\circ\big) &= \cosom\big( \phi_\circ\big) - t \omega C'_\circ (\phi) \sinomp(\phi) + o(t),\\
    \sinom\big( (\phi+\omega  t)_\circ\big) &= \sinom\big( \phi_\circ\big) + t \omega C'_\circ (\phi) \cosomp(\phi) + o(t).
\end{split}
\end{equation}
    Therefore, we may expand the entries of $M_1$ as $t\to 0$: 
\begin{equation}
\label{eq:taylor1000mila}
\begin{split}
    \frac{\partial x}{\partial r} (\phi,\omega,r;t) &= \frac 1 \omega\left(\sinomp(\phi+\omega t ) - \sinomp(\phi)\right)=\cosom(\phi_\circ) t+o(t),\\
    \frac{\partial y}{\partial r} (\phi,\omega,r;t) &= -\frac 1 \omega\left(\cosomp(\phi+\omega t ) - \cosomp(\phi)\right)=\sinom(\phi_\circ) t+o(t),\\
    \frac{\partial x}{\partial \phi} (\phi,\omega,r;t) &= \frac{r}{\omega} \left( \cosom\big( (\phi+\omega t )_\circ\big) - \cosom\big( \phi_\circ\big) \right)= -t rC'_\circ (\phi) \sinomp(\phi) + o(t),\\    
    \frac{\partial y}{\partial \phi} (\phi,\omega,r;t) &= \frac{r}{\omega} \left( \sinom\big( (\phi+\omega t )_\circ\big) - \sinom\big( \phi_\circ\big) \right) = tr C'_\circ (\phi) \cosomp(\phi) + o(t)
,
\end{split}
\end{equation}
where we used once again Proposition \ref{prop:difftrig}. Finally, the determinant has the following Taylor expansion as $t\to0$: 
\begin{equation}\label{eq:detM1}
\begin{split}
    \det (M_1) &= \frac{\partial x}{\partial r} \frac{\partial y}{\partial \phi} - \frac{\partial y}{\partial r}\frac{\partial x}{\partial \phi}  = t^2 r  C'_\circ (\phi) \big( \sinomp(\phi) \sinom\big( \phi_\circ\big) + \cosomp(\phi)\cosom\big( \phi_\circ\big) \big)+o(t^2)\\
    &= t^2 r  C'_\circ (\phi)+o(t^2),   
\end{split}
\end{equation}
where, in the last equality, we used Proposition \ref{prop:correspondence}. This proves \eqref{eq:expansion1}, keeping in mind Proposition \ref{prop:diffCcirc}, which guarantees that $C_\circ'(\phi)>0$.

Now we prove \eqref{eq:expansion2} for $\det(M_2)$, the proof for $\det(M_3)$ is analogous. As a first step, reasoning as before, we can Taylor expand at second-order the following quantities, as $t\to0$:
\begin{equation}
\label{eq:taylor3}
    \begin{split}
        \cosomp (\phi+\omega  t) &= \cosomp( \phi)  - \omega t \sinom(\phi_\circ) - \frac 12 (\omega t)^2 C_\circ'(\phi)\cosomp( \phi) + o (t^2), \\
        \sinomp (\phi+\omega  t) &= \sinomp( \phi)  + \omega t \cosom(\phi_\circ) - \frac 12 (\omega t)^2 C_\circ'(\phi)\sinomp( \phi) + o (t^2) .
    \end{split}
\end{equation}
Hence, we deduce the expansion for the derivative of $x$ in the $\omega$ direction, as $t\to 0$: 
\begin{equation}
\label{eq:redelleespansionizioporcone}
\begin{split}
    \frac{\partial x}{\partial \omega}& (\phi,\omega,r;t) = -\frac{r}{\omega^2} \big( \sinomp (\phi+\omega t) - \sinomp( \phi) \big) + \frac{rt}{\omega} \cosom\big( (\phi+ \omega  t)_\circ\big)\\
    &=-\frac{r}{\omega}\big( t \cosom(\phi_\circ) - \frac 12 \omega t^2 C_\circ'(\phi)\sinomp( \phi)\big)+\frac{rt}{\omega}\big(\cosom\big( \phi_\circ\big) - t \omega C'_\circ (\phi) \sinomp(\phi)\big)+ o (t^2) \\
    &=-\frac{1}{2} r t^2 C_\circ'(\phi)\sinomp( \phi)+ o (t^2). 
\end{split}
\end{equation}
An analogous computation shows that the derivative of $y$ in $\omega$ has the ensuing expansion as $t\to 0$:
\begin{equation}
\label{eq:reginadelleespansioniziaporcona}
    \frac{\partial y}{\partial \omega} (\phi,\omega,r;t) =\frac{1}{2} r t^2 C_\circ'(\phi)\cosomp( \phi)+ o (t^2).
\end{equation}
Note that, on the one hand, \eqref{eq:redelleespansionizioporcone} and \eqref{eq:reginadelleespansioniziaporcona} imply that 
\begin{equation}\label{eq:behaviour1}
    \frac{\partial x}{\partial \omega}=O(t^2)\qquad\text{and}\qquad\frac{\partial y}{\partial \omega}=O(t^2),
\end{equation}
as $t\to 0$. On the other hand, by \eqref{eq:taylor1000mila}, we can deduce the following behavior, as $t\to0$: 
\begin{equation}\label{eq:behaviour2}
    \frac{\partial x}{\partial r}=O(t)\qquad\text{and}\qquad\frac{\partial y}{\partial  r}=O(t).
\end{equation}
Thus, \eqref{eq:behaviour1} and \eqref{eq:behaviour2} prove the claimed behavior of $\det(M_2)$ as $t\to 0$, since
\begin{equation}
    \det (M_2) = \frac{\partial x}{\partial r} \frac{\partial y}{\partial \omega}- \frac{\partial y}{\partial r}\frac{\partial x}{\partial \omega}.
\end{equation}

\end{proof}

In the next lemmas, we study the derivatives of $z$. These are the most delicate to estimate, since the second-order Taylor polynomial of $z$ is zero and higher-order derivatives may not exist.



\begin{notation}
\label{notation:brilliant_notation}
    Let $g:\R\to\R$ be a function. We write 
        \begin{equation}
        g(t)= C(1+O(\varepsilon)) f(t), \qquad \forall\,t\in [-\rho,\rho].
    \end{equation}
    if there exists a constant $K>0$ and a function $f:\R\to\R$ such that, for every $\varepsilon>0$, there exist positive constants $C=C(\varepsilon),\rho=\rho(\varepsilon)>0$ for which the following holds
    \begin{equation}
        C(1-K\varepsilon) f(t)<g(t)< C(1+K\varepsilon) f(t), \qquad \forall\, t\in [-\rho,\rho].
    \end{equation}
\end{notation}

\begin{lemma}\label{lem:Jestimate2}
    Given $\varepsilon>0$ sufficiently small, for $\Leb^1$-almost every $\phi\in [0,2\Sbb^\circ)$, every $r>0$ and $\omega\neq 0$, there exist two positive constants $k=k(r)$ and $\rho=\rho (\phi,\omega,r)$ such that
    \begin{equation}\label{eq:difficultestimate}
        \frac{\partial z}{\partial \omega}(\phi,\omega,r;t)=(1+O(\varepsilon)) k t^3, \qquad \qquad \forall\,t\in [-\rho,\rho].
    \end{equation}
\end{lemma}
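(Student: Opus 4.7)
\begin{pr}[Proof plan for Lemma \ref{lem:Jestimate2}]
The plan is to first rewrite $\partial_\omega z$ in a form where only the function $h$ from Corollary \ref{cor:monotonicity} appears, and then determine the second-order behaviour of $h$ at $t=0$ at a.e.\ $\phi$.

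Starting from the formula of Proposition \ref{prop:geodesics_nestogol} I would write $z(\phi,\omega,r;t)=\tfrac{r^2}{2\omega^2}F(\phi,\omega,t)$ with $F(\phi,\omega,t)=\omega t+\cosomp(\phi+\omega t)\sinomp(\phi)-\sinomp(\phi+\omega t)\cosomp(\phi)$. Using Proposition \ref{prop:difftrig} (applied to $\Omega^\circ$) a direct computation gives $\partial_tF=\omega h(t)$ and $\partial_\omega F=t\,h(t)$ with the same $h$ as in Corollary \ref{cor:monotonicity} (for $\psi=\phi$, $k=\omega$). Since $F(\phi,\omega,0)=0$, integrating yields $F=\omega\int_0^t h(s)\,ds$, hence
\begin{equation*}
   \frac{\partial z}{\partial\omega}(\phi,\omega,r;t)
    =\frac{r^2}{2\omega^2}\!\left(t\,h(t)-2\int_0^t h(s)\,ds\right).
\end{equation*}

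Next I would establish the expansion $h(t)=\tfrac12\omega^2 C_\circ'(\phi)\,t^2+o(t^2)$ at a.e.\ $\phi$. Observe that $h(0)=0$ by the Pythagorean identity of Proposition \ref{prop:correspondence}, and that $h\ge 0$ near $0$ by Corollary \ref{cor:monotonicity}. Fix $\phi$ at which $C_\circ$ is differentiable with positive derivative $c:=C_\circ'(\phi)>0$; this holds at $\Leb^1$-a.e.\ $\phi$ by Remark \ref{rmk:diff_C_full_measure}. Setting $\phi_\circ:=C_\circ(\phi)$, the inverse function theorem for strictly monotone continuous maps gives $(C^\circ)'(\phi_\circ)=1/c$. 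Since $\normdot$ is $C^{1,1}$ and strictly convex, Proposition \ref{prop:difftrig} together with the $C^1$ regularity of $\sinomp,\cosomp$ shows $\sinom,\cosom\in C^1(\R)$ with $\sinom'(\theta)=\cosomp(C^\circ(\theta))$ and $\cosom'(\theta)=-\sinomp(C^\circ(\theta))$; at $\phi_\circ$ these derivatives are themselves differentiable by the chain rule, with $\sinom''(\phi_\circ)=-\sinom(\phi_\circ)/c$ and $\cosom''(\phi_\circ)=-\cosom(\phi_\circ)/c$. This yields second-order Peano expansions of $\sinom$ and $\cosom$ at $\phi_\circ$. Substituting $\eta(t):=(\phi+\omega t)_\circ-\phi_\circ=\omega c\,t+o(t)$ into $h(t)$ and using the Pythagorean identity to cancel the zeroth- and first-order terms, the surviving quadratic term combines to
\begin{equation*}
   h(t)=\frac{1}{2c}\bigl(\sinomp(\phi)\sinom(\phi_\circ)+\cosomp(\phi)\cosom(\phi_\circ)\bigr)\eta(t)^2+o(\eta(t)^2)
       =\frac{1}{2}\omega^2 c\,t^2+o(t^2).
\end{equation*}

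Plugging this expansion into the formula for $\partial_\omega z$ and integrating, the cubic terms combine as $\tfrac12\omega^2c\,t^3-\tfrac13\omega^2c\,t^3=\tfrac16\omega^2c\,t^3$, so
\begin{equation*}
   \frac{\partial z}{\partial\omega}(\phi,\omega,r;t)=\frac{r^2 C_\circ'(\phi)}{12}\,t^3+o(t^3)\qquad\text{as }t\to 0.
\end{equation*}
Setting $k:=r^2 C_\circ'(\phi)/12>0$ (independent of $\omega$ and of $t$; this is the ``$k=k(r)$'' of the statement, with an implicit $\phi$-dependence), and unwinding Notation \ref{notation:brilliant_notation}, the Peano remainder $o(t^3)$ gives precisely: for any $\varepsilon>0$ there exists $\rho=\rho(\phi,\omega,r,\varepsilon)>0$ such that $k(1-K\varepsilon)t^3\le \partial_\omega z\le k(1+K\varepsilon)t^3$ for $t\in[-\rho,\rho]$ (with $K=1/k$), which is the claim.

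The main obstacle is the second-order Peano expansion of $\sinom,\cosom$ at $\phi_\circ$: the corresponding second derivatives exist only almost everywhere, and moreover the natural parametrization of $\partial\Omega$ involves the (possibly wild) inverse correspondence $C^\circ$. It is precisely the $C^{1,1}$ hypothesis on $\normdot$ that, via Proposition \ref{prop:propunderduality}, makes $\normdot_*$ strongly convex and thus (by Proposition \ref{prop:diffCcirc}) forces $C_\circ$ to be differentiable with positive derivative at a.e.\ $\phi$; without this regularity the second-order term $t^3$ cannot be isolated from the integral formula for $\partial_\omega z$.
\end{pr}
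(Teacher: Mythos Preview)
Your argument is correct and is in fact cleaner than the route taken in the paper. The paper computes instead the ``swapped'' function $f_\psi(s)=1-\sinomp(\psi+s)\sinom(\psi_\circ)-\cosomp(\psi+s)\cosom(\psi_\circ)$, for which the expansion $f_\psi(s)=\tfrac12 C_\circ'(\psi)s^2+o(s^2)$ is immediate; the difficulty is then that $h(t)=f_{\phi+\omega t}(-\omega t)$ has a \emph{moving} base angle $\psi=\phi+\omega t$, so the pointwise expansion cannot be used directly. The paper resolves this by stratifying the angles into sets $E_{n,m}$ on which the second-order constant lies in a fixed dyadic window, then uses density points of these sets together with the monotonicity of $h$ (Corollary~\ref{cor:monotonicity}) to squeeze $h(t)$ between nearby values of $f_{\phi+\omega\bar s}(-\omega\bar s)$.

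Your route avoids this entirely: you keep the base point fixed at $\phi_\circ$ and expand $\sinom,\cosom$ there. The key observation---which the paper does not exploit---is that under the $C^{1,1}$ hypothesis $\sinom,\cosom\in C^1$ with $\sinom'=\cosomp\circ C^\circ$, and the elementary inverse function theorem for strictly monotone continuous maps turns differentiability of $C_\circ$ at $\phi$ (which is the a.e.\ input from Proposition~\ref{prop:diffCcirc}) into differentiability of $C^\circ$ at $\phi_\circ$. This gives pointwise second differentiability of $\sinom,\cosom$ at $\phi_\circ$, hence a genuine second-order Peano expansion of $h$, and the $o(t^3)$ remainder is exactly what Notation~\ref{notation:brilliant_notation} encodes. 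A bonus of your approach is that the full-measure set of admissible $\phi$ does not depend on $\varepsilon$ (in the paper it does, through the sets $E_{n,m}$), and your constant $k=r^2C_\circ'(\phi)/12$ is explicit rather than the approximate $(1+\varepsilon)^n r^2/6$; both correctly carry the implicit $\phi$-dependence that the statement's ``$k=k(r)$'' somewhat hides.
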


\begin{proof}
    First of all, we compute that 
    \begin{equation}\label{eq:difficultpartiald}
    \begin{split}
         \frac{\partial z}{\partial \omega}(\phi,\omega,r;t) &= \frac{r^2t}{2\omega^2} \big ( 1 - \sinomp(\phi)\sinom\big((\phi+\omega  t)_\circ\big) - \cosomp(\phi) \cosom\big((\phi+\omega  t)_\circ\big) \big) \\
         &\quad - \frac{r^2}{\omega^3} \big ( \omega t + \sinomp(\phi)\cosomp(\phi+\omega t) - \cosomp(\phi) \sinomp(\phi+\omega t) \big).
    \end{split}
    \end{equation}
    In order to evaluate this quantity, fix an angle $\psi\in[0,2\Sbb^\circ)$, for which Proposition \ref{prop:diffCcirc} holds, and consider the function $f_\psi$, defined as
    \begin{equation}
      s \mapsto f_\psi(s) :=  1 - \sinomp(\psi+s)\sinom(\psi_\circ) - \cosomp(\psi+s) \cosom(\psi_\circ).
    \end{equation}
    Notice that \eqref{eq:pytagorean} ensures that $f_\psi(0)=0$, moreover direct computations show that 
    \begin{equation}
        f'_\psi(0)=0 \qquad \text{and} \qquad f''_\psi(0) = C'_\circ (\psi) >0.
    \end{equation}
    Consequently, it holds that 
    \begin{equation}\label{eq:businessisbusiness}
        f_\psi(s)= \frac12 C'_\circ (\psi) \cdot s^2 + o(s^2), \qquad \text{as }s \to 0.
    \end{equation}
    For every $n\in \mathbb Z$ and $m\in\N$ define the set of angles
    \begin{equation}
        E_{n,m} := \left\{\psi \in [0,2\Sbb^\circ) \, :\, (1+\varepsilon)^{n-1} s^2 < f_\psi (s) < (1+\varepsilon)^{n+1} s^2, \text{ for every }s\in\Big[-\frac1m,\frac1m\Big]\right\}.
    \end{equation}
    Observe that, by \eqref{eq:businessisbusiness}, we have that $ E:=\bigcup_{n\in \mathbb Z,m\in \N} E_{n,m}$ covers all differentiability points of $C_\circ$, in particular $E$ has full $\Leb^1$-measure, cf. Remark \ref{rmk:diff_C_full_measure}.

     Now, fix $\omega> 0$, $r>0$ and take $\phi\in [0,2\Sbb^\circ)$ to be a density point\footnote{We say that $r\in\R$ is a \emph{density point} for a measurable set $J\subset \R$ if 
\begin{equation} 
\lim_{s\to 0^+} \frac{\Leb^1(J \cap [r-s,r+s])}{2s} = 1.
\end{equation} 
} for the set $E_{n,m}$, for some $n\in \mathbb Z$, $m\in \N$. We are going to prove the statement \eqref{eq:difficultestimate} for our choice of parameters and for positive times. The cases with $\omega<0$ and negative times are completely analogous. Let $0<\rho(\phi,\omega,r)< \frac{1}{2 \omega m}$ be sufficiently small such that for every $t\in(0,\rho]$
    \begin{equation}\label{eq:predensity}
        \Leb^1(E_{n,m} \cap [\phi-2\omega  t,\phi+2\omega  t])> 4 \omega t (1-\varepsilon/4).
    \end{equation}
    Introduce the set 
    \begin{equation}
         F_{n,m}:= \{s \in \R \, :\,\phi+\omega s \in E_{n,m}\} .
    \end{equation}
    Observe that, from \eqref{eq:predensity}, we can deduce that for every $t\in(0,\rho]$,
    \begin{equation}\label{eq:density}
        \Leb^1(F_{n,m} \cap [-2 t,2 t])> 4  t (1-\varepsilon/4).
    \end{equation}
    Now, given every $t\in(0,\rho]$, \eqref{eq:density} ensures that there exists $\bar s\in [t(1-\varepsilon),t]$ such that $\bar s\in F_{n,m}$. Then, thanks to Corollary \ref{cor:monotonicity}, we obtain that 
    \begin{equation}\label{eq:nonnepossopiu}
        \begin{split}
             1 - \sinomp(\phi)\sinom&\big((\phi+\omega  t)_\circ\big)- \cosomp(\phi) \cosom\big((\phi+\omega  t)_\circ\big) \\
             & \geq 1 - \sinomp(\phi)\sinom\big((\phi+\omega  \bar s)_\circ\big)- \cosomp(\phi) \cosom\big((\phi+\omega  \bar s)_\circ\big)\\
            &= f_{\phi+\omega  \bar s}(-\omega  \bar s) \geq (1+\varepsilon)^{n-1} (\omega  \bar s)^2 \geq (1-\varepsilon)^2 (1+\varepsilon)^{n-1} (\omega  t)^2,
        \end{split}
    \end{equation}
    where the second to last inequality holds by our choice of the parameter $\rho$ and because $\phi+\omega  \bar s \in E_{n,m}$.
    With an analogous argument, we can find an element in $ [t,t(1+\varepsilon)]\cap F_{n,m}$ and deduce the estimate:
    \begin{equation}\label{eq:nonnepossopiu2}
        1 - \sinomp(\phi)\sinom \big((\phi+\omega  t)_\circ\big)- \cosomp(\phi) \cosom\big((\phi+\omega  t)_\circ\big) \leq (1+\varepsilon)^2 (1+\varepsilon)^{n+1} (\omega  t)^2.
    \end{equation}
    Combining \eqref{eq:nonnepossopiu} and \eqref{eq:nonnepossopiu2}, we conclude that, on $(0,\rho]$, the following holds
    \begin{equation}\label{eq:sonostancodimetterelabel}
         1 - \sinomp(\phi)\sinom \big((\phi+\omega t)_\circ\big)- \cosomp(\phi) \cosom\big((\phi+\omega t)_\circ\big) = (1+ O(\varepsilon))(1+\varepsilon)^{n} (\omega  t)^2,
    \end{equation}
    in the Notation \ref{notation:brilliant_notation}. Consequently, we deduce:
    \begin{equation}\label{eq:stimapazza1}
        \frac{r^2t}{2\omega^2} \left ( 1 - \sinomp(\phi)\sinom\big((\phi+\omega  t)_\circ\big) - \cosomp(\phi) \cosom\big((\phi+\omega t)_\circ\big) \right) = (1+ O(\varepsilon))(1+\varepsilon)^{n}\frac{r^2 t^3}{2},
    \end{equation}
    for $t\in (0,\rho]$. To estimate the second term in \eqref{eq:difficultpartiald}, observe that 
    \begin{equation}
    \begin{split}
          \frac{\partial}{\partial s}\big ( \omega  s + \sinomp(\phi)\cosomp&(\phi+\omega  s) - \cosomp(\phi) \sinomp(\phi+\omega  s) \big) \\
          &= \omega  \big ( 1 - \sinomp(\phi)\sinom\big((\phi+\omega  s)_\circ\big) - \cosomp(\phi) \cosom\big((\phi+\omega  s)_\circ\big) \big).
    \end{split}
    \end{equation}
    In particular, since in $s=0$ this quantity is equal to $0$, we have that for $t\in (0,\rho]$
    \begin{equation}\label{eq:pezzotosto1}
        \begin{split}
             \omega  t &+ \sinomp(\phi)\cosomp(\phi+\omega  t) - \cosomp(\phi) \sinomp(\phi+\omega  t) \\
             &=\omega  \int_0^t \big ( 1 - \sinomp(\phi)\sinom\big((\phi+\omega  s)_\circ\big) - \cosomp(\phi) \cosom\big((\phi+\omega  s)_\circ\big) \big) \de s \\
             &= \omega  \int_0^t (1+ O(\varepsilon))(1+\varepsilon)^{n}(\omega  t)^2 \de s = (1+ O(\varepsilon))(1+\varepsilon)^{n} \frac{(\omega  t)^3 }{3},
        \end{split}
    \end{equation}
    where the second equality follows from \eqref{eq:sonostancodimetterelabel}. Then, we obtain that: 
    \begin{equation}\label{eq:stimapazza2}
        \frac{r^2}{\omega^3} \big ( \omega  t + \sinomp(\phi)\cosomp(\phi+\omega  t) - \cosomp(\phi) \sinomp(\phi+\omega  t) \big)= (1+ O(\varepsilon))(1+\varepsilon)^{n}\frac{r^2t^3}{3}.
    \end{equation}
    Finally, putting together \eqref{eq:stimapazza1} and \eqref{eq:stimapazza2}, we conclude that
    \begin{equation}
         \frac{\partial z}{\partial \omega}(\phi,\omega,r;t) = (1+ O(\varepsilon))(1+\varepsilon)^{n}\frac{r^2t^3}{6},\qquad\forall\, t\in (0,\rho],
    \end{equation}
    that is \eqref{eq:difficultestimate} with $k=k(r) :=(1+\varepsilon)^{n}\frac{r^2}{6}$.
   To conclude, observe that we proved the statement for (every $r>0$, $\omega\neq 0$ and) every $\phi\in [0,2\Sbb^\circ)$ which is a density point of some $E_{n,m}$ and the set of such angles has full $\Leb^1$-measure in  $[0,2\Sbb^\circ)$. Indeed, $E=\bigcup_{n\in \mathbb Z,m\in \N} E_{n,m}$ has full $\Leb^1$-measure in  $[0,2\Sbb^\circ)$ and almost every point of a measurable set is a density point.
\end{proof}

\begin{lemma}\label{lem:Jestimate3}
    Let $\phi\in [0,2\Sbb^\circ)$ be a differentiability point for the map $C_\circ$, $r>0$ and $\omega\neq 0$, then
    \begin{equation*}
        \frac{\partial z}{\partial r}(\phi,\omega,r;t),\, \frac{\partial z}{\partial \phi} (\phi,\omega,r;t) = o (t^2), \qquad \text{as }t\to 0. 
    \end{equation*}
\end{lemma}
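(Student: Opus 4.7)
My plan is to express $z$ via an integral representation that avoids the limited regularity of the formula of Proposition \ref{prop:geodesics_nestogol}. Direct differentiation in $t$ gives $\partial_t z(\phi,\omega,r;t)=\frac{r^2}{2\omega}h(t;\phi)$, where $h(s;\phi):=1-\sinomp(\phi)\sinom((\phi+\omega s)_\circ)-\cosomp(\phi)\cosom((\phi+\omega s)_\circ)$; since $z(\phi,\omega,r;0)=0$, integration yields
\[
z(\phi,\omega,r;t)=\frac{r^2}{2\omega}\int_0^t h(s;\phi)\,\de s.
\]
Thus $\frac{\partial z}{\partial r}=\frac{r}{\omega}\int_0^t h(s;\phi)\,\de s$, and differentiation under the integral sign (justified because $h$ is Lipschitz in $\phi$ with $|\partial_\phi h|$ uniformly bounded for small $t$) gives $\frac{\partial z}{\partial \phi}=\frac{r^2}{2\omega}\int_0^t \partial_\phi h(s;\phi)\,\de s$.

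For $\frac{\partial z}{\partial r}$: since $\phi$ is a differentiability point of $C_\circ$ and $\sinom,\cosom\in C^1$, the chain rule yields $\sinom((\phi+\omega s)_\circ)=\sinom(\phi_\circ)+\omega s\,C_\circ'(\phi)\cosomp(\phi)+o(s)$ and $\cosom((\phi+\omega s)_\circ)=\cosom(\phi_\circ)-\omega s\,C_\circ'(\phi)\sinomp(\phi)+o(s)$. Substituting into $h$ and invoking \eqref{eq:pytagorean}, the zeroth- and first-order terms in $s$ cancel, leaving $h(s;\phi)=o(s)$. Integrating, $\frac{\partial z}{\partial r}=o(t^2)$, as claimed.

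For $\frac{\partial z}{\partial \phi}$, I compute $\partial_\phi h(s;\phi)=P(s)+C_\circ'(\phi+\omega s)\,Q(s)$ for a.e.\ $s$ (cf.\ Remark \ref{rmk:diff_C_full_measure}), with
\[
P(s):=\sinom(\phi_\circ)\cosom((\phi+\omega s)_\circ)-\cosom(\phi_\circ)\sinom((\phi+\omega s)_\circ),\quad Q(s):=\cosomp(\phi)\sinomp(\phi+\omega s)-\sinomp(\phi)\cosomp(\phi+\omega s).
\]
The argument of the previous paragraph yields $P(s)=-\omega s\,C_\circ'(\phi)+o(s)$, hence $\int_0^t P(s)\,\de s=-\frac{\omega t^2}{2}C_\circ'(\phi)+o(t^2)$. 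Since $C_\circ$ is differentiable at $\phi$, the second-order Taylor expansions \eqref{eq:taylor3} of $\sinomp,\cosomp$ at $\phi$ imply $Q(s)=\omega s+o(s^2)$. Moreover, the $C^{1,1}$ hypothesis on $\normdot$ combined with Proposition \ref{prop:propunderduality}(ii) forces $\normdot_*$ to be strongly convex, from which standard arguments on uniformly convex bodies show that $C_\circ$ is locally Lipschitz. The substitution $x=\phi+\omega s$ followed by integration by parts (valid by absolute continuity) then gives
\[
\int_0^t \omega s\,C_\circ'(\phi+\omega s)\,\de s=\frac{1}{\omega}\Bigl[\omega t\,C_\circ(\phi+\omega t)-\int_\phi^{\phi+\omega t}C_\circ(x)\,\de x\Bigr]=\frac{\omega t^2}{2}C_\circ'(\phi)+o(t^2),
\]
where the last equality uses the first-order Taylor expansion of $C_\circ$ at $\phi$; the $o(s^2)$ remainder of $Q$ contributes only $o(t^3)$ since $C_\circ'$ is locally bounded. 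The two $O(t^2)$ terms therefore cancel exactly, so $\int_0^t\partial_\phi h(s;\phi)\,\de s=o(t^2)$ and hence $\frac{\partial z}{\partial \phi}=o(t^2)$.

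The main obstacle I foresee is precisely this cancellation between $\int P$ and $\int C_\circ'\cdot Q$. A direct Taylor expansion of $\frac{\partial z}{\partial \phi}$ from the explicit formula in Proposition \ref{prop:geodesics_nestogol} cannot be pushed past order $o(t)$, because $\sinom\circ C_\circ$ is only first-order differentiable at $\phi$ under the sole assumption that $C_\circ$ is differentiable there; the integral representation is what converts this ``missing'' second-order regularity into an integration by parts against the locally Lipschitz function $C_\circ$, which is what produces the required $o(t^2)$ estimate.
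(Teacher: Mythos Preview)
Your argument for $\partial_r z=o(t^2)$ is correct and essentially the same as the paper's.

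For $\partial_\phi z$, however, there is a genuine gap: your claim that $C_\circ$ is locally Lipschitz is false under the standing hypotheses. You deduce it from strong convexity of $\normdot_*$, but the implication runs the other way. By Proposition~\ref{prop:propunderduality}(ii), strong convexity of $\normdot_*$ is equivalent to $\normdot\in C^{1,1}$, and (via the formula $C_\circ'(\psi)\sim\langle y(0),a^\perp\rangle$ with $a=\mathrm{Hess}_{Q_\psi}(\normdot_*)(v)$ from the proof of Proposition~\ref{prop:diffCcirc}) this yields a \emph{lower} bound on $C_\circ'$, not an upper bound. Lipschitzness of $C_\circ$ would require $\normdot_*\in C^{1,1}$, i.e.\ $\normdot$ strongly convex, which is not assumed. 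Concretely: if $\partial\Omega$ is locally the graph of $x\mapsto x^4$ (so $\normdot$ is $C^\infty$, strictly but not strongly convex), then $\partial\Omega^\circ$ is locally like $p\mapsto p^{4/3}$ with unbounded curvature, and $C_\circ'$ blows up at the corresponding angle. One can even arrange the curvature of $\partial\Omega$ to vanish on a set of positive measure, in which case $C_\circ$ is not absolutely continuous.

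This undermines your argument at three points. First, differentiation under the integral sign is not justified, since $|\partial_\phi h(s;\phi')|$ contains the factor $C_\circ'(\phi'+\omega s)$ and admits no uniform integrable dominant as $\phi'$ varies. Second, the integration by parts $\int_0^t\omega s\,C_\circ'(\phi+\omega s)\,\de s=\omega t\,C_\circ(\phi+\omega t)-\int C_\circ$ fails when $C_\circ$ is not AC: the Stieltjes formulation picks up an extra term $\int_0^t s\,\de[C_\circ]_{\mathrm{sing}}(\phi+\omega s)$, and since the singular increment on $[\phi,\phi+\omega t]$ is in general only $O(t)$, this error is $O(t^2)$ rather than $o(t^2)$. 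Third, the ``$C_\circ'$ locally bounded'' justification for the $o(s^2)$ remainder is wrong (though that particular conclusion can be rescued by the monotone estimate $\int_0^t C_\circ'\le \tfrac{1}{|\omega|}(C_\circ(\phi+\omega t)-C_\circ(\phi))=O(t)$). The upshot is that your method delivers $\partial_\phi z=O(t^2)$ but not the required $o(t^2)$.

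The paper sidesteps $C_\circ'$ away from $\phi$ altogether: using the area interpretation of Proposition~\ref{prop:z=area}, after a rigid motion $\partial\Omega^\circ$ is written locally as a graph $y=f(x)$, and $z$ becomes the chord--area function $F(p(\phi),p(\phi+\omega t))$. Then $\partial_\phi z$ is computed from $\partial_{s_i}F$ (involving only $f,f'$) and $p'$ (which is $C^0$ since $\normdot_*\in C^1$). The exact cancellation of the $t^2$ coefficients follows from the expansions $f(x)=\tfrac12 cx^2+o(x^2)$, $f'(x)=cx+o(x)$, which use only the twice-differentiability of $\normdot_*$ at the single point $Q_\phi$ --- precisely the hypothesis of the lemma --- and never the behaviour of $C_\circ'$ elsewhere.
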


\begin{proof}
     We start by proving the statement for $\frac{\partial z}{\partial r}$. We have that
    \begin{equation}
         \frac{\partial z}{\partial r} (\phi,\omega,r;t) = \frac{r}{\omega^2}\left(\omega t + \cosomp(\phi+\omega t ) \sinomp(\phi) - \sinomp(\phi+\omega t ) \cosomp(\phi)\right).
    \end{equation}
    Direct computations show that, on the one hand,
    \begin{multline}
         \frac{\partial}{\partial t}\bigg|_{t=0} \big(\omega t + \cosomp(\phi+\omega t ) \sinomp(\phi) - \sinomp(\phi+\omega t ) \cosomp(\phi)\big) \\
        = \omega - \omega\sinom(\phi_\circ)\sinomp(\phi) -  \omega \cosom(\phi_\circ) \cosomp(\phi) =0,
    \end{multline}
    where we applied Proposition \ref{prop:correspondence}, and, on the other hand,
    \begin{multline}
        \frac{\partial^2}{\partial t^2}\bigg|_{t=0} \big(\omega t + \cosomp(\phi+\omega t ) \sinomp(\phi)- \sinomp(\phi+\omega t ) \cosomp(\phi)\big) \\
        =- \omega^2\cosomp(\phi)\sinomp(\phi) C'_\circ(\phi) + \omega^2  \sinomp(\phi)\cosomp(\phi) C'_\circ(\phi) =0.
    \end{multline}
Consequently, we conclude the proof of the first part of the statement:
    \begin{equation*}
         \frac{\partial z}{\partial r} (\phi,\omega,r;t) = \frac{r}{\omega^2} \cdot o(t^2) = o(t^2),\qquad\text{as }t\to 0.
    \end{equation*}
 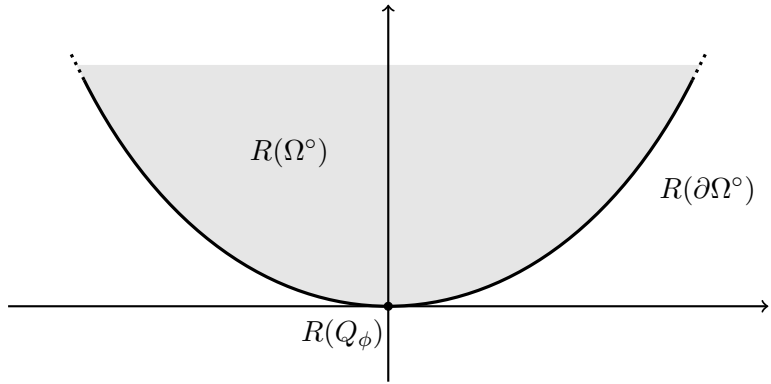
\begin{figure}[b!]
    \centering
        \begin{tikzpicture}
    \fill[color=black!10!white] (-4,3) .. controls (-2,-1) and (2,-1) .. (4,3);
    \fill[color=black!10!white] (-4,3)--(-4.1,3.2)--(4.1,3.2)-- (4,3);
    \draw[thick,->] (0,-1)--(0,4);
    \draw[thick,->] (-5,0)--(5,0);
    \draw[very thick, dotted](-4,3)--(-4.2,3.4);
    \draw[very thick, dotted](4,3)--(4.2,3.4);
    \node at (-0.6,0.1)[label=south:$R(Q_\phi)$] {};
    \node at (4.2,2)[label=south:$R(\partial\Omega^\circ)$] {};
    \node at (-1.3,2.5)[label=south:$R(\Omega^\circ)$] {};
    \filldraw[black] (0,0) circle (1.5pt);
    \draw[very thick] (-4,3) .. controls (-2,-1) and (2,-1) .. (4,3);
    \end{tikzpicture}
    \caption{Image of $Q_\phi$ and $\Omega^\circ$ through the rigid transformation $R$.}
    \label{fig:R}
\end{figure}

    In order to prove the statement for $\frac{\partial z}{\partial \omega}$, we use a geometric argument based on Proposition \ref{prop:z=area}. First of all, recall that $d_{Q_\phi}\normdot_*$ identifies a half-plane tangent at $Q_\phi$ and containing $\Omega^\circ$. Thus, we can find a rigid transformation $R:\R^2\to \R^2$, such that $R(Q_\phi)= (0,0)$ and $R(\Omega^\circ)$ is contained in $\{y\geq0\}\subset\R^2$, see Figure \ref{fig:R}. Then, as $\normdot_*$ is $C^{1,1}$, the image of the unit sphere $R\big(\partial\Omega^\circ\big)$, can be described (locally around $O$) as the graph of a non-negative function $f\in C^{1,1}(\R)$ with $f(0)=0$. In addition, by our choice of $\phi\in[0,2\Sbb^\circ)$, $f$ is twice differentiable in $0$ with strictly positive second derivative $f''(0):=c>0$. Now consider the function $p$ defined in a neighborhood of $\phi$ as 
    \begin{equation*}
        p(\psi):= \p_x \big( R(Q_{\psi})\big),
    \end{equation*}
    where $\p_x:\R^2\to \R$ denotes the projection on the $x$-axis, i.e.\ $\p_x(a,b)=a$.
    
    Second of all, for $s_1,s_2\in\R$, call $F(s_1,s_2)$ the signed area between the segment connecting $(s_1,f(s_1))$ and $(s_2,f(s_2))$ and the graph of $f$ (intended positive if $s_1< s_2$ and negative if $s_1>s_2$), see Figure \ref{fig:F(s_1,s_2)}. Proposition \ref{prop:z=area} ensures that for $\psi$ in a neighborhood of $\phi$ it holds that
    \begin{equation*}
        z (\psi,\omega,r;t) = \frac{r^2}{\omega^2}F \big(p(\psi), p(\psi+\omega  t)\big).
    \end{equation*}
    In particular, we obtain that 
    \begin{equation}\label{eq:derivativeofF}
        \frac{\omega^2}{r^2}\,\frac{\partial z}{\partial \phi} (\phi,\omega,r;t) = \frac{\partial}{\partial s_1} F(0, p(\phi+\omega  t))\cdot p'(\phi) +  \frac{\partial}{\partial s_2} F(0,p(\phi+\omega  t))\cdot p'(\phi+ \omega  t).
    \end{equation}

\begin{figure}[h!]

\centering

\begin{minipage}[c]{.5\textwidth}
  \begin{center}
    \begin{tikzpicture}[scale=0.75]

\fill[color=blue!10!white](-2,0.62) -- (3,1.5)--(3,0)--(-2,0);
\fill[white] (-4,3) .. controls (-2,-1) and (2,-1) .. (4,3)--(4,0)--(-4,0);

\draw[thick,->] (0,-2)--(0,5);
\draw[thick,->] (-5,0)--(5,0);
\draw[very thick, dotted](-4,3)--(-4.2,3.4);
\draw[very thick, dotted](4,3)--(4.2,3.4);
\draw[thick,dotted](-2,0.6)--(-2,0);
\draw[thick,dotted](3,1.5)--(3,0);
\filldraw[black] (-2,0.62) circle (1.5pt);
\filldraw[black] (3,1.5) circle (1.5pt);
\draw[very thick](-2,0.62)--(3,1.5);

\node at (-2,0.1)[label=south:$s_1$] {};
\node at (3,0.1)[label=south:$s_2$] {};
\node at (1.2,2.45)[label=south:${\color{blue}{F(s_1,s_2)}}$] {};
\node at (3.8,2)[label=south:$f$] {};
\draw[very thick] (-4,3) .. controls (-2,-1) and (2,-1) .. (4,3);

\end{tikzpicture} 

  \caption{Definition of the function $F(s_1,s_2)$.}
  \label{fig:F(s_1,s_2)}
      
  \end{center}
\end{minipage}%
\begin{minipage}[c]{.5\textwidth}

  \centering

   \begin{tikzpicture}[scale=0.75]

\fill[color=red!10!white](-1.5,3) -- (0,0)--(2,0)--(2,0.62);
\fill[white] (-4,3) .. controls (-2,-1) and (2,-1) .. (4,3)--(4,0)--(-4,0);
\draw[very thick](-1.5,3) -- (0,0);
\draw[very thick](2,0.62)--(-1.5,3);
\draw[thick,->] (0,-2)--(0,5);
\draw[thick,->] (-5,0)--(5,0);
\draw[very thick, dotted](-4,3)--(-4.2,3.4);
\draw[very thick, dotted](4,3)--(4.2,3.4);
\filldraw[black] (2,0.62) circle (1.5pt);
\filldraw[black] (0,0) circle (1.5pt);
\filldraw[black] (-1.5,3) circle (1.5pt);
\draw[thick,dotted](2,0.6)--(2,0);
\draw[very thick] (-4,3) .. controls (-2,-1) and (2,-1) .. (4,3);
\node at (2,0.1)[label=south:$q$] {};
\node at (-1.5,2.9)[label=north:${R(O)}$] {};
\node at (1.1,2.5)[label=south:${\color{red}{A(q)}}$] {};

\end{tikzpicture}

  \caption{Definition of the function $A(q)$.}
  \label{fig:guardaunpo':A}
\end{minipage}

\end{figure}

    We now proceed to compute the terms in the last formula, starting from the ones involving $p'$. To this aim, consider the point $(x_0,y_0):= R(O)$ and, for every $q$ in a neighborhood of $0$, call $A(q)$ the signed area inside $R\big(\partial\Omega^\circ\big)$ between the segments $(x_0,y_0)O$ and $(x_0,y_0)(q,f(q))$. Observe that 
    \begin{equation*}
        A'(q)= \frac 12 \scal{(1,f'(q))}{(y_0-f(q), q-x_0)} = \frac 12 y_0 + O(q),\qquad\text{as }q\to 0.
    \end{equation*}
    Note that, in the last equality, we have used that $f(0)=f'(0)=0$ and $f\in C^{1,1}(\R)$. Consequently, since $A(0)=0$, we have that
    \begin{equation}
    \label{eq:quante_labelss}
        A(q) = \frac 1 2 y_0 q + O(q^2),\qquad\text{as }q\to 0.
    \end{equation}
    On the other hand, by the definition of angle it holds that $2A(p(\phi+\vartheta))=\vartheta$ for every $\vartheta$ sufficiently small and therefore, invoking \eqref{eq:quante_labelss} and observing that $p\in C^1$, we obtain that
    \begin{equation}\label{eq:treddisperato}
        p(\phi+ \vartheta) = \frac{1}{y_0} \vartheta + o(\vartheta) \quad \text{and} \quad p'(\phi+ \vartheta) = \frac{1}{y_0} + o(1),\qquad\text{as }\vartheta\to 0.
    \end{equation}
    Now we compute the partial derivatives of the function $F$. Observe that $F$ can be calculated in the following way 
    \begin{equation}
         F(s_1,s_2) = \frac 12 (f(s_1)+f(s_2)) (s_2-s_1) - \int_{s_1}^{s_2} f(x) \de x.
    \end{equation}
     As a consequence, we compute that
    \begin{equation}
     \begin{split}
          \frac{\partial}{\partial s_1} F(s_1,s_2) &= \frac 12 f'(s_1) (s_2-s_1) + \frac 12 \big( f(s_1) - f (s_2)\big)\\
              \frac{\partial}{\partial s_2} F(s_1,s_2) &= \frac 12 f'(s_2) (s_2-s_1) + \frac 12 \big( f(s_1) - f (s_2)\big).
    \end{split}
    \end{equation}
    Combining these two relations with \eqref{eq:derivativeofF}, we conclude that 
    \begin{multline}
        \frac{\omega^2}{r^2}\frac{\partial z}{\partial \phi} (\phi,\omega,r;t) = - \frac 12 f (p(\phi+\omega t)) \cdot p'(\phi), \\
        + \frac 12 [ f'(p(\phi+\omega t)) p(\phi+\omega t) -   f (p(\phi+\omega t))]\cdot p'(\phi+ \omega t).
    \end{multline}
    Now, recall that $f$ is twice differentiable in $0$ with positive second derivative $c$, therefore we have that 
    \begin{equation}
       f(x)= \frac{1}{2} c x^2 + o(x^2)\qquad \text{and} \qquad f'(x)=  c x + o(x).
     \end{equation}
     Using these relations, together with \eqref{eq:treddisperato}, we can conclude that 
    \begin{equation}
    \begin{split}
         \frac{\omega^2}{r^2} \,\frac{\partial z}{\partial \phi} (\phi,\omega,r;t) &= \frac 12  f'(p(\phi+\omega t)) p(\phi+\omega t)  p'(\phi+ \omega t)- \frac 12 f (p(\phi+\omega t)) [p'(\phi) + p'(\phi+\omega t)] \\
         &=  \frac 12 [c p(\phi+\omega t) + o (p(\phi+\omega t)) ] \,p(\phi+\omega t)  p'(\phi+ \omega t) \\
         &\quad - \frac 14 [c p(\phi+\omega t)^2 + o (p(\phi+\omega t)^2)] [p'(\phi) + p'(\phi+\omega t)] \\
         &=\frac{c}{2 y_0^3} (\omega t)^2 - \frac{c}{2y_0^3} (\omega t)^2 + o(t^2)= o(t^2).
    \end{split}
    \end{equation}
    This concludes the proof. 
\end{proof}

As a consequence of the these lemmas, we obtain the following estimate of the quantity $J(\phi,\omega,r;t)$, as $t\to 0$.

\begin{corollary}\label{cor:finalmente}
Given $\varepsilon>0$ sufficiently small, for $\Leb^1$-almost every $\phi\in [0,2\Sbb^\circ)$, every $r>0$ and $\omega\neq 0$, there exist two positive constants $C=C(\phi,\omega,r)$ and $\rho=\rho(\phi,\omega,r)$ such that
    \begin{equation*}
        J(\phi,\omega,r;t) = C(1 + O(\varepsilon)) |t|^5,\qquad \forall\, t\in[-\rho,\rho],
    \end{equation*}
in the Notation \ref{notation:brilliant_notation}.
\end{corollary}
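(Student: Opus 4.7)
The plan is to plug the estimates from Lemmas \ref{lem:Jestimate1}, \ref{lem:Jestimate2}, and \ref{lem:Jestimate3} directly into the expansion \eqref{eq:J} for $J(\phi,\omega,r;t)$, check that one term strictly dominates, and absorb the lower-order contributions into the $(1+O(\varepsilon))$ factor. Specifically, I would take $\phi \in [0,2\Sbb^\circ)$ to simultaneously satisfy the full-measure conditions of Lemma \ref{lem:Jestimate2} and the differentiability of $C_\circ$ at $\phi$ (which is itself a full-measure condition by Remark \ref{rmk:diff_C_full_measure}), so that all three lemmas apply.

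First, I would isolate the ``main term'' $\frac{\partial z}{\partial \omega}(\phi,\omega,r;t)\cdot \det(M_1)(\phi,\omega,r;t)$. By Lemma \ref{lem:Jestimate2}, there exist $k=k(r)>0$ and $\rho_1=\rho_1(\phi,\omega,r)>0$ with
\begin{equation*}
    \tfrac{\partial z}{\partial \omega}(\phi,\omega,r;t) = (1+O(\varepsilon))\, k\, t^3, \qquad \forall\, t\in[-\rho_1,\rho_1],
\end{equation*}
while by the explicit expansion \eqref{eq:detM1} in Lemma \ref{lem:Jestimate1} one has
\begin{equation*}
    \det(M_1)(\phi,\omega,r;t) = r\, C'_\circ(\phi)\, t^2 + o(t^2) = (1+O(\varepsilon))\, r\, C'_\circ(\phi)\, t^2
\end{equation*}
for $t$ in some interval $[-\rho_2,\rho_2]$, using $C'_\circ(\phi)>0$ from Proposition \ref{prop:diffCcirc}. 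Multiplying these two asymptotics and setting $C:=k\, r\, C'_\circ(\phi)>0$ gives
\begin{equation*}
    \tfrac{\partial z}{\partial \omega}\cdot \det(M_1) = C(1+O(\varepsilon))\, t^5,
\end{equation*}
for all $t$ in the intersection of the two intervals.

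Next, I would show that the remaining two terms of \eqref{eq:J} are negligible. By Lemma \ref{lem:Jestimate1}, $|\det(M_2)|, |\det(M_3)| = O(|t|^3)$ as $t \to 0$, and by Lemma \ref{lem:Jestimate3}, $\tfrac{\partial z}{\partial \phi}, \tfrac{\partial z}{\partial r} = o(t^2)$. Hence
\begin{equation*}
    \left|\tfrac{\partial z}{\partial \phi}\det(M_2)\right| + \left|\tfrac{\partial z}{\partial r}\det(M_3)\right| = o(|t|^5), \qquad\text{as } t\to 0,
\end{equation*}
so, up to further shrinking the interval to some $\rho=\rho(\phi,\omega,r)$, one can ensure that this error is bounded in absolute value by $\varepsilon\, C\, |t|^5$.

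Finally, combining the main term and the error and taking absolute values as in \eqref{eq:J}, the triangle inequality yields
\begin{equation*}
    J(\phi,\omega,r;t) = C(1+O(\varepsilon))|t|^5, \qquad\forall\, t\in[-\rho,\rho],
\end{equation*}
in the sense of Notation \ref{notation:brilliant_notation}, completing the proof. There is no real obstacle here beyond bookkeeping: the delicate analytic work has already been carried out in the three preceding lemmas, and the only care needed is to consistently merge their different regimes of validity (choice of $\phi$, smallness of $t$, and the meaning of $(1+O(\varepsilon))$ versus $o(t^5)$) into a single interval and a single constant.
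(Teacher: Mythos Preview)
Your proposal is correct and follows essentially the same approach as the paper: both combine Lemma~\ref{lem:Jestimate1} with Lemma~\ref{lem:Jestimate2} to identify the dominant term $\tfrac{\partial z}{\partial \omega}\det(M_1)$ as $C(1+O(\varepsilon))|t|^5$, and use Lemma~\ref{lem:Jestimate1} together with Lemma~\ref{lem:Jestimate3} to show the remaining two terms are $o(|t|^5)$, then absorb the latter by shrinking~$\rho$. The only cosmetic difference is that you make the constant $C=k\,r\,C'_\circ(\phi)$ explicit, whereas the paper leaves it as an abstract $C(\phi,\omega,r)$.
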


\begin{proof}
    Let $\phi\in [0,2\Sbb^\circ)$ be a differentiability point for the map $C_\circ$ and such that the conclusion of Lemma \ref{lem:Jestimate2} holds, and fix $r>0$ and $\omega\neq 0$.
    Observe that, on the one hand, as a consequence of Lemma \ref{lem:Jestimate1} and Lemma \ref{lem:Jestimate3}, we have that
    \begin{equation}
        \left|\frac{\partial z}{\partial \phi} \det(M_2) \right|(\phi,\omega,r;t), \, \left| \frac{\partial z}{\partial r} \det(M_3)\right|(\phi,\omega,r;t) = o(t^5), \qquad \text{as }t \to 0.
    \end{equation}
    On the other hand, Lemma \ref{lem:Jestimate1} and Lemma \ref{lem:Jestimate2} ensure that there exist positive constants $C=C(\phi,\omega,r),\rho=\rho(\phi,\omega,r)>0$ such that
    \begin{equation}
        \left|\frac{\partial z}{\partial \omega} \det(M_1) \right|(\phi,\omega,r;t)= C(1 + O(\varepsilon)) |t|^5,\qquad \forall\, t\in[-\rho,\rho]
    \end{equation}
    where, in particular, $\rho$ has to be smaller than the constant identified by Lemma \ref{lem:Jestimate2}.
    Up to taking a smaller $\rho$ and keeping in mind \eqref{eq:J}, we may conclude that 
    \begin{equation}
          J(\phi,\omega,r;t) = C(1 + O(\varepsilon)) |t|^5, \qquad \forall\, t\in[-\rho,\rho].
    \end{equation}
\end{proof}

\begin{remark}
    Note that, in the \sr Heisenberg group, the contraction rate of volumes along geodesic is exactly $t^5$, cf. \cite{MR3852258}. In our setting, we are able to highlight the same behavior for the Jacobian determinant of the exponential map $J(\phi,\omega,r;t)$, as $t\to0$. 
\end{remark}

Now that we know the behaviour of $J(\phi,\omega,r;t)$ as $t\to 0$, in the next proposition, we obtain a statement similar to Proposition \ref{prop:mago,isnegnaci}, which will allow us to disprove the $\cd(K,N)$ condition in the Heisenberg group. In particular, the proof of the following proposition uses Corollary \ref{cor:finalmente} and some ideas developed in \cite[Prop.\ 3.1]{MR4201410}.

In our setting, we define the \emph{midpoint map} as: 
\begin{equation}\label{eq:midpoint_hei}
    \M(p,q):=e_{\frac12}\left(\gamma_{pq}\right),\qquad \text{if } p\star q^{-1}\notin\{x=y=0\},
\end{equation}
where $\gamma_{pq}:[0,1]\to\hei$ is the unique geodesic joining $p$ and $q$, given by Theorem \ref{thm:geod_Heisenberg}. Similarly, we define the \emph{inverse geodesic map} $I_m$ (with respect to $m\in\hei$) as:
\begin{equation}\label{eq:inverse_geodesic_hei}
    I_m(q)=p, \qquad \text{if there exists }x\in\hei\text{ such that }\M(p,q)=m.
\end{equation}

\begin{remark}
    Recall the definition of midpoint map in \eqref{eq:midpoint} and inverse geodesic map in \eqref{eq:inverse_geodesic}. Both maps were defined using the differential structure of a smooth \sF manifold, however they are characterized by the metric structure of the space. In particular, if the norm is sufficently regular, they coincide with \eqref{eq:midpoint_hei} and \eqref{eq:inverse_geodesic_hei}.
\end{remark}

\begin{prop}\label{prop:dopotantafatica}
    Let $\hei$ be the \sF Heisenberg group, equipped with a strictly convex and $C^{1,1}$ norm. For $\Leb^1$-almost every $\phi\in[0,2\Sbb^\circ)$, every $r>0$ and $\omega\neq 0$, there exists a positive constant $\rho=\rho(\phi,\omega,r)$ such that for every $t\in[-\rho,\rho]$:
    \begin{enumerate}
        \item[(i)] the inverse geodesic map $I_\e$ is well-defined and $C^1$ in a neighborhood of $G(\phi,\omega,r;t)$;
        \item[(ii)] the midpoint map $\M$ is well-defined and $C^1$ in a neighborhood of $(\e,G(\phi,\omega,r;t))$, moreover
        \begin{equation}
        \label{eq:suitable_jacobian_estimate}
           \big|\det d_{G(\phi,\omega,r;t)} \M(\e,\cdot) \big| \leq \frac 1{2^4}.
        \end{equation}
    \end{enumerate}
\end{prop}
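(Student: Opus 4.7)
The plan is to derive both claims by locally inverting the exponential map via the Jacobian estimate of Corollary \ref{cor:finalmente}, and then to compute the Jacobian of $\M(\e,\cdot)$ using the chain rule together with the homogeneity of the Hamiltonian flow.

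First I would fix $(\phi,\omega,r)$ for which Corollary \ref{cor:finalmente} applies, with $\varepsilon>0$ to be chosen sufficiently small. This produces constants $C=C(\phi,\omega,r)>0$ and $\rho=\rho(\phi,\omega,r)>0$ such that $J(\phi,\omega,r;s)=C(1+O(\varepsilon))|s|^5$ for every $s\in[-\rho,\rho]$, and in particular $J(\phi,\omega,r;s)\neq 0$ on $[-\rho,\rho]\setminus\{0\}$. Since $\normdot$ is strictly convex and $C^{1,1}$, Proposition \ref{prop:propunderduality} yields $\normdot_*\in C^1$, so $G$ is $C^1$ on $\{\omega\neq 0\}$. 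Hence, for each $t\in[-\rho,\rho]\setminus\{0\}$, the inverse function theorem provides a $C^1$ diffeomorphism $(\phi',\omega',r')\mapsto G(\phi',\omega',r';t)$ from an open neighborhood of $(\phi,\omega,r)$ onto an open neighborhood $V$ of $q_0:=G(\phi,\omega,r;t)$.

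Exploiting homogeneity of the Hamiltonian flow (scaling the initial covector by $\alpha$ rescales time by $\alpha$), any point $q=G(\phi',\omega',r';t)\in V$ satisfies
\begin{equation*}
    I_\e(q)=G(\phi',\omega',r';-t),\qquad \M(\e,q)=G(\phi',\omega',r';t/2).
\end{equation*}
For $t$ small and $\omega\neq 0$, direct inspection of the formulas in Proposition \ref{prop:geodesics_nestogol} shows that $q_0$ does not lie on the $z$-axis, so shrinking $V$ and using Theorem \ref{thm:geod_Heisenberg}(i), the geodesic from $\e$ to any $q\in V$ is unique; this makes $\M(\e,\cdot)$ and $I_\e$ well-defined on $V$, and the formulas above exhibit them as compositions of $C^1$-maps. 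The $C^1$-regularity of $\M$ as a map of two variables then follows from the left-invariance identity $\M(p_1,p_2)=p_1\star\M(\e,p_1^{-1}\star p_2)$, together with smoothness of the group law. This settles (i) and the regularity part of (ii).

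For the Jacobian bound, applying the chain rule to the factorization $\M(\e,\cdot)\circ G(\cdot;t)=G(\cdot;t/2)$ on the neighborhood produced above gives
\begin{equation*}
    \big|\det d_{q_0}\M(\e,\cdot)\big|=\frac{J(\phi,\omega,r;t/2)}{J(\phi,\omega,r;t)}.
\end{equation*}
Since both $t$ and $t/2$ lie in $[-\rho,\rho]$ and the constant $C$ in Corollary \ref{cor:finalmente} depends only on $(\phi,\omega,r)$, the right-hand side equals $(1+O(\varepsilon))/2^5$. Choosing $\varepsilon$ small enough at the outset, independently of $t$, ensures this ratio is bounded by $1/2^4$, as desired. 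The key technical point that makes the argument go through is precisely Corollary \ref{cor:finalmente}: without the sharp asymptotic $J\sim|t|^5$, the ratio would not collapse to the scale-invariant factor $2^{-5}$ and the $1/2^4$ buffer would be unavailable.
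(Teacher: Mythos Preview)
Your proof is correct and follows essentially the same approach as the paper: invert the time-$t$ exponential $E_t$ locally via Corollary \ref{cor:finalmente} and the inverse function theorem, express $I_\e$ and $\M(\e,\cdot)$ as $E_{-t}\circ E_t^{-1}$ and $E_{t/2}\circ E_t^{-1}$, extend $\M$ to both variables by left-translation, and read off the Jacobian ratio $J(\phi,\omega,r;t/2)/J(\phi,\omega,r;t)=(1+O(\varepsilon))2^{-5}\leq 2^{-4}$. One cosmetic remark: the justification you label ``homogeneity of the Hamiltonian flow'' is really just the fact that $s\mapsto G(\phi',\omega',r';s)$ is the constant-speed geodesic through $\e$ and $q$, so its value at $s=t/2$ is the midpoint and at $s=-t$ is the geodesic reflection; no covector rescaling is needed here.
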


\begin{proof}
    Take $\varepsilon$ sufficiently small, let $\phi$ be an angle for which the conclusion of Corollary \ref{cor:finalmente} holds. Fix $r>0$ and $\omega\neq 0$, and let $\rho=\rho(\phi,\omega,r)$ be the (positive) constant identified by Corollary \ref{cor:finalmente}. Let $t\in [-\rho,\rho]$ and consider the map $E_t:T^*_\e\hei\to\hei$ defined as 
    \begin{equation}\label{eq:esponenzialepazzo}
        E_t(\phi,\omega,r) :=  G(\phi,\omega,r;t)= \big(x(\phi,\omega,r;t), y(\phi,\omega,r;t),  z(\phi,\omega,r;t)\big),
    \end{equation}
    where $G$ is defined in \eqref{eq:extended_exponential_map}. 
    Note that $J(\phi,\omega,r;t)$ is the Jacobian of $ E_t(\phi,\omega,r)$ and in particular, since $t\in [-\rho,\rho]$, Corollary \ref{cor:finalmente} ensures that $J(\phi,\omega,r;t)>0$. Then, from the inverse function theorem, we deduce that $E_t$ is locally invertible in a neighborhood $B_t\subset \hei$ of $E_t(\phi,\omega,r)$ with $C^1$ inverse $E_t^{-1}:B_t \to T^*_\e\hei$. Then, according to Theorem \ref{thm:geod_Heisenberg} and Proposition \ref{prop:geodesics_nestogol}, the curve $[-t,t] \ni s \mapsto G(\phi,\omega,r;s)$ is the unique geodesic connecting $G(\phi,\omega,r;-t)$ and $G(\phi,\omega,r;-t)$, and such that $G(\phi,\omega,r;0)=\e$, provided that $\rho$ is sufficiently small. Hence, we can write the map $I_\e:B_t \to \R^3$ as
    \begin{equation}
        I_\e(q)= E_{-t}(E_t^{-1}(q)),\qquad\forall\,q\in B_t.
    \end{equation}
    Therefore, the map $I_\e$ is $C^1$ on $B_t$, being a composition of $C^1$ functions, proving item {\slshape (i)}. 
    
    With an analogous argument, the midpoint map (with first entry $\e$), $\M_\e(\cdot):=\M(\e,\cdot):B_t \to \R^3$, can be written as 
    \begin{equation}\label{eq:boh}
        \M_\e(q)= E_{t/2}(E_t^{-1}(q)),\qquad\forall\,q\in B_t.
    \end{equation}
    As before, we deduce this map is well-defined and $C^1$. To infer regularity of the midpoint map in a neighborhood of $(\e,G(\phi,\omega,r;t))$, we take advantage of the underline group structure, in particular of the left-translations \eqref{eq:left_translations}, which are isometries. Indeed, note that 
    \begin{equation}
        \M(p,q)= L_p\left(\M_\e( L_{p^{-1}} (q))\right), \qquad\forall\, p,q \in\hei,
    \end{equation}
    and, for every $(p,q)$ in a suitable neighborhood of $(\e,G(\phi,\omega,r;t))$, we have $L_{p^{-1}} (q)\in B_t$, therefore $\M$ is well-defined and $C^1$. Finally, keeping in mind \eqref{eq:boh} and applying Corollary \ref{cor:finalmente}, we deduce that 
    \begin{equation}
    \begin{split}
        \big|\det d_{G(\phi,\omega,r;t)} \M_\e(\cdot) \big|&= \big|\det d_{(\phi,\omega,r)} E_{t/2} \big| \cdot \big|\det d_{(\phi,\omega,r)} E_t \big|^{-1} \\
        &= J(\phi,\omega,r;t/2) \cdot J(\phi,\omega,r;t)^{-1}\\
        &= \frac{C(1 + O(\varepsilon)) |t/2|^5}{C(1 + O(\varepsilon)) |t|^5}= \frac{1}{2^5}\,(1 + O(\varepsilon)) \leq \frac{1}{2^4},
    \end{split}
    \end{equation}
    where the last inequality is true for $\varepsilon$ sufficiently small. This concludes the proof of item {\slshape (ii)}.
\end{proof}

\begin{theorem}\label{thm:noCDC11}
    Let $\hei$ be the \sF Heisenberg group, equipped with a strictly convex and $C^{1,1}$ norm and with a smooth measure $\m$. Then, the metric measure space $(\hei,\di_{SF},\m)$ does not satisfy the Brunn--Minkowski inequality $\bm(K,N)$, for every $K\in\R$ and $N\in (1,\infty)$.
\end{theorem}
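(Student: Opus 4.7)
The proof follows the blueprint of the ample case in Theorem \ref{thm:casosmooth}, using Proposition \ref{prop:dopotantafatica} in place of Proposition \ref{prop:mago,isnegnaci}. The additional difficulty is that Proposition \ref{prop:dopotantafatica} only yields the regularity of the midpoint map $\M$ at pairs of the form $(\e, G(\phi,\omega,r;t))$; to overcome this, one uses left-translations of the group $\hei$ to transfer the regularity of $I_m$ to the relevant anchor points.

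Fix $\varepsilon > 0$ sufficiently small, $K \in \R$ and $N \in (1,\infty)$, and argue by contradiction, assuming $\bm(K,N)$ holds. Fix $r > 0$ and $\omega \neq 0$, and choose a ``good'' angle $\phi \in [0, 2\mathbb S^\circ)$ for which Proposition \ref{prop:dopotantafatica} applies. Pick $t \neq 0$ in $(-\rho(\phi,\omega,r),\rho(\phi,\omega,r))$, small enough so that $\tau_{K,N}^{(1/2)}(\theta) \geq 1/2 - \varepsilon$ for every $\theta$ in a neighborhood of $\di_{SF}(\e, G(\phi,\omega,r;t))$ and, moreover, so that the shifted angle $\tilde\phi := (\phi + \omega t/2) \Mod{2\mathbb S^\circ}$ is also a good angle. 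This is possible since the exceptional set of angles has zero $\Leb^1$-measure and is preserved by translations. Set $\bar p := \e$, $\bar q := G(\phi,\omega,r;t)$ and $m := G(\phi,\omega,r;t/2)$. By Proposition \ref{prop:dopotantafatica}(ii), $\M$ is well-defined and $C^1$ near $(\bar p, \bar q)$, $\M(\bar p, \bar q)=m$, and
\begin{equation*}
    \big|\det d_{\bar q} \M(\bar p, \cdot)\big| \leq \tfrac{1}{2^4}.
\end{equation*}
Left-invariance of the sub-Finsler structure and conservation of the vertical momentum in $\hei$ imply that $L_{m^{-1}}$ maps the geodesic $s \mapsto G(\phi,\omega,r;s)$, $s\in[0,t]$, into a geodesic starting at $\e$ with initial datum $(\tilde\phi, \omega, r)$, so that $L_{m^{-1}}(\bar q) = G(\tilde\phi,\omega,r;t/2)$. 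Combining the identity $I_m = L_m \circ I_\e \circ L_{m^{-1}}$ with Proposition \ref{prop:dopotantafatica}(i) applied at the good angle $\tilde\phi$ then yields that $I_m$ is $C^1$ near both $\bar p$ and $\bar q$.

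At this point we replicate the contradiction of Theorem \ref{thm:casosmooth}. Since $I_m\circ I_m = \id$ near $\bar q$, we have $|\det d_{\bar p} I_m| \cdot |\det d_{\bar q} I_m| = 1$, so up to swapping the roles of $\bar p$ and $\bar q$ (which amounts to applying the argument to the reversed geodesic, after possibly enforcing a further full-measure condition on $\phi$), we may assume $|\det d_{\bar q} I_m| \geq 1$. Define $B_\varrho := B_\varrho^{eu}(\bar q)$ and $A_\varrho := I_m(B_\varrho)$, so that $\m(B_\varrho),\m(A_\varrho) \geq (1-\varepsilon)\omega_3\varrho^3 + o(\varrho^3)$ as $\varrho \to 0$. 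Letting $F(x,y):=\M(I_m(x),y)$, differentiation of the identity $F(x,x)=m$ at $x=\bar q$ yields $d_{\bar p}\M(\cdot,\bar q)\circ d_{\bar q} I_m + d_{\bar q}\M(\bar p,\cdot) = 0$, and hence $d_{(\bar q,\bar q)} F(v,w)=d_{\bar q}\M(\bar p,\cdot)(w-v)$; a Taylor expansion combined with the Jacobian bound then gives
\begin{equation*}
    \m\big(M_{1/2}(A_\varrho,B_\varrho)\big) = \m\big(F(B_\varrho,B_\varrho)\big) \leq (1+\varepsilon)\big|\det d_{\bar q}\M(\bar p,\cdot)\big| \cdot 2^3 \omega_3 \varrho^3 + o(\varrho^3) \leq \tfrac{1+\varepsilon}{2}\omega_3\varrho^3 + o(\varrho^3),
\end{equation*}
which, combined with the lower bounds on $\m(A_\varrho),\m(B_\varrho)$ and the choice of $t$, contradicts $\bm(K,N)$ for $\varrho$ sufficiently small.

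The main obstacle is the density-type argument needed to select the parameters: $\phi$ must be a good angle (to exploit Proposition \ref{prop:dopotantafatica}(ii) for $\M$ at $(\bar p,\bar q)$) and simultaneously $\tilde\phi$ must be a good angle (to transfer, via left-translation, the regularity of $I_\e$ at $G(\tilde\phi,\omega,r;t/2)$ into regularity of $I_m$ at $\bar q$). The extensive use of the left-invariance of $\hei$ is essential to circumvent the fact that Proposition \ref{prop:dopotantafatica} only provides regularity of $\M$ at anchor pairs of the form $(\e, G(\phi,\omega,r;t))$.
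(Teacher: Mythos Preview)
Your high-level strategy is exactly that of the paper: use Proposition \ref{prop:dopotantafatica} to get the Jacobian bound on $\M(\e,\cdot)$, transport the regularity of $I_\e$ to $I_m$ via left-translations, and then rerun the computation from the ample case of Theorem \ref{thm:casosmooth}. Your identification $L_{m^{-1}}(\bar q)=G(\tilde\phi,\omega,r;t/2)$ with $\tilde\phi=\phi+\omega t/2$ is also correct.

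The genuine gap is in the ``density-type argument'' that you correctly flag as the main obstacle but do not actually carry out. Two issues arise. First, requiring that $\tilde\phi$ be a good angle is not enough: to invoke Proposition \ref{prop:dopotantafatica}(i) at $G(\tilde\phi,\omega,r;\pm t/2)$ you need $|t/2|\leq\rho(\tilde\phi,\omega,r)$, and since $\rho(\cdot,\omega,r)$ is merely positive (with no uniform lower bound) and $\tilde\phi$ itself depends on $t$, your sentence ``this is possible since the exceptional set of angles has zero $\Leb^1$-measure'' does not settle this circularity. Second, your ``swap'' is not free: if $|\det d_{\bar p}I_m|\geq 1$ you need $|\det d_{\bar p}\M(\cdot,\bar q)|\leq 2^{-4}$, which (after left-translation) amounts to $|\det d_{\bar q^{-1}}\M(\e,\cdot)|\leq 2^{-4}$; but $\bar q^{-1}$ corresponds to yet another initial angle (that of the reversed geodesic translated to $\e$), and you again need this angle to be good \emph{with the correct $\rho$-bound}. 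You thus have three interdependent full-measure conditions on $\phi$ and $t$ that must hold simultaneously, and you have not shown they can be.

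This is precisely the content of Remark \ref{rmk:flexiamo}. The paper's proof resolves it differently: it first shows that along a fixed geodesic $\eta$ one can assign to $\Leb^1$-a.e.\ point $\eta(\bar s)$ a radius $\lambda(\bar s)>0$ within which both the $\M$-bound and the $I$-regularity hold (this absorbs the $\rho$-circularity via the map $\Phi$ and left-translations), and then runs a two-parameter density argument on $[0,1]^2$---using the sets $T$, $D_\epsilon$, and $\Lambda_\delta$---to find $(\bar s,\bar t)$ with $(\bar s,\bar t),(\bar t,\bar s)\in T$ (yielding \emph{both} Jacobian bounds, so no swap is needed) and $\tfrac{\bar s+\bar t}{2}\in\Lambda_\delta$ (yielding regularity of $I_{\eta((\bar s+\bar t)/2)}$ at both endpoints). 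Your one-parameter selection of $t$ cannot accomplish all of this at once without an argument of comparable care.
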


\begin{proof}
    Take an angle $\phi$ for which the conclusion of Proposition \ref{prop:dopotantafatica} holds, fix $r>0$, $\omega\neq 0$ and call $\gamma$ the curve 
    \begin{equation}
        \R \ni s \mapsto \gamma(s):= G(\phi,\omega,r;s).
    \end{equation}
    Fix $t\in (0,\rho]$, where $\rho=\rho(\phi,\omega,r)$ is the positive constant identified by Proposition \ref{prop:dopotantafatica}. Recall the map $E_t$ (see \eqref{eq:esponenzialepazzo}) from the proof of Proposition \ref{prop:dopotantafatica}. $E_t$ is invertible, with $C^1$ inverse, in a neighborhood $B_t\subset \hei$ of $E_t(\phi,\omega,r)= \gamma(t)$. Consider the function 
    \begin{equation}
        s \mapsto \Phi(s) :=  \p_1 \big[ E_t^{-1} \big(L_{\gamma(s)^{-1}}( \gamma(t+s))\big)\big],
    \end{equation}
    where $\p_1$ denotes the projection onto the first coordinate. Observe that, for $s$ sufficiently small, $L_{\gamma(s)^{-1}}( \gamma(t+s)) \in B_t$, thus, $\Phi$ is well-defined and $C^1$ (being composition of $C^1$ functions) in an open interval $I\subset\R$ containing $0$. Moreover, note that $\Phi(s)$ is the initial angle for the geodesic joining $\e$ and $\gamma(s)^{-1}\star\gamma(t+s)$.  
    Now, we want to prove that there exists an interval $\tilde I \subset I$ such that, for $\Leb^1$-almost every $s\in \tilde I$, $\Phi(s)$ is an angle for which the conclusion of Proposition \ref{prop:dopotantafatica} holds. We have two cases, either $\Phi' \equiv 0$ in $I$ or there is $\bar s\in I$ such that $\Phi'(\bar s) \neq 0$. In the first case, since by definition $\Phi(0)=\phi$, we deduce that $\Phi(s)\equiv \phi$, thus the claim is true. In the second case, since $\Phi$ is $C^1$, we can find an interval $\tilde I \subset I$ such that $\Phi'(s) \neq 0$ for every $s\in \tilde I$. Then, consider 
    \begin{equation}
        J:=\{\psi\in \Phi(\tilde I): \psi \text{ is an angle for which Proposition \ref{prop:dopotantafatica} holds}\}\subset\Phi(\tilde I)
    \end{equation}
    and observe that $J$ has full $\Leb^1$-measure in $\Phi(\tilde I)$. Therefore, the set $\tilde J:=\Phi^{-1}(J)\subset \tilde I$ has full $\Leb^1$-measure in $\tilde I$, it being the image of $J$ through a $C^1$ function with non-null derivative. Thus the claim is true also in this second case. 
    
    At this point, let $\bar s\in \tilde I$ such that $\Phi(\bar s)$ is an angle for which the conclusion of Proposition \ref{prop:dopotantafatica} holds and consider 
    \begin{equation}
        \bar\rho:=\rho\big( E_t^{-1} \big(L_{\gamma(\bar s)^{-1}}( \gamma(t+\bar s))\big)\big)>0.
    \end{equation} 
    For every $s \in [-\bar\rho,\bar\rho] \setminus\{0\}$, from Proposition \ref{prop:dopotantafatica}, we deduce that the inverse geodesic map $I_\e$ and the midpoint map $\M$ are well-defined and $C^1$ in a neighborhood of $G\big(E_t^{-1} \big(L_{\gamma(\bar s)^{-1}}( \gamma(t+\bar s))\big);s\big)$ and $\big(\e, G\big(E_t^{-1} \big(L_{\gamma(\bar s)^{-1}}( \gamma(t+\bar s))\big);s\big)\big)$, respectively. Moreover, we have that 
    \begin{equation}
           \big|\det d_{G(E_t^{-1} (L_{\gamma(\bar s)^{-1}}( \gamma(t+\bar s)));s)} \M(\e,\cdot) \big| \leq \frac 1{2^4}.
    \end{equation}
    Observe that, since the left-translations are smooth isometries, the inverse geodesic map $I_{\gamma(\bar s)}$ is well-defined and $C^1$ in a neighborhood of $\gamma(\bar s + s)$, in fact it can be written as
    \begin{equation}
        I_{\gamma(\bar s)} (p) = L_{\gamma(\bar s)} \big[I_\e \big(L_{\gamma(\bar s)^{-1}} (p)\big)\big],
    \end{equation}
    and $L_{\gamma(\bar s)^{-1}} \big(\gamma(\bar s + s)\big)=G\big(E_t^{-1} \big(L_{\gamma(\bar s)^{-1}}( \gamma(t+\bar s))\big);s\big)$.
    Similarly, we can prove that the midpoint map is well-defined and $C^1$ in a neighborhood of $(\gamma(\bar s),\gamma(\bar s + s))$, with
    \begin{equation}
        \big|\det d_{\gamma(\bar s + s)} \M(\gamma(\bar s),\cdot) \big| \leq \frac 1{2^4}.
    \end{equation}

    In conclusion, up to restriction and reparametrization, we can find a geodesic $\eta:[0,1]\to \hei$ with the property that, for $\Leb^1$-almost every $\bar s\in [0,1]$, there exists $\lambda(\bar s)>0$ such that, for every $s\in [\bar s -\lambda(\bar s), \bar s + \lambda(\bar s)]\cap [0,1]\setminus\{\bar s\}$, the inverse geodesic map $I_{\eta(\bar s)}$ and the midpoint map $\M$ are well-defined and $C^1$ in a neighborhood of $\eta (s)$ and $(\eta(\bar s),\eta(s))$ respectively, and in addition
    \begin{equation}
        \big|\det d_{\eta( s)} \M(\eta(\bar s),\cdot) \big| \leq \frac 1{2^4}.
    \end{equation}
    Set $\lambda(s)=0$ on the (null) set where this property is not satisfied and consider the set 
    \begin{equation}
        T:=\left\{(s,t)\in [0,1]^2\, : \, t\in [s-\lambda(s),s + \lambda(s)]\right\}. 
    \end{equation}
    Observe that, introducing for every $\epsilon>0$ the set
    \begin{equation}
        D_\epsilon:= \{(s,t)\in [0,1]^2\, : \, |t-s|<\epsilon\},
    \end{equation}
    we have that 
    \begin{equation}\label{eq:quasinoncicredo}
        \frac{\Leb^2(T \cap D_\epsilon)}{\Leb^2( D_\epsilon)} =\frac{\Leb^2(T \cap D_\epsilon)}{2\epsilon-\epsilon^2} \to 1, \qquad \text{as }\,\epsilon \to 0.
    \end{equation}
    On the other hand, we can find $\delta>0$ such that the set $
    \Lambda_\delta :=\{s\in [0,1] \, :\, \lambda(s)>\delta\}$ satisfies $\Leb^1(\Lambda_\delta) > \frac 34$. In particular, for every $\epsilon<\delta$ sufficiently small we have that 
    \begin{equation}\label{eq:quasinoncicredo2}
        \Leb^2\left(\left\{(s,t)\in [0,1]^2\,:\, \frac{s+t}{2}\not\in  \Lambda_\delta \right\} \cap D_\epsilon \right) < \frac{1}{2} \epsilon.
    \end{equation}
    Therefore, putting together \eqref{eq:quasinoncicredo} and \eqref{eq:quasinoncicredo2}, we can find $\epsilon<\delta$ sufficiently small such that 
    \begin{equation}
        \Leb^2\left(T \cap D_\epsilon \cap \left\{(s,t)\in [0,1]^2\,:\, \frac{s+t}{2}\in  \Lambda_\delta \right\} \right) > \frac 12 \Leb^2( D_\epsilon).
    \end{equation}
    Then, since the set $D_\epsilon$ is symmetric with respect to the diagonal $\{s=t\}$, we can find $\bar s\neq\bar t$ such that 
    \begin{equation}
        (\bar s,\bar t),(\bar t,\bar s) \in T \cap D_\epsilon \cap \left\{(s,t)\in [0,1]^2\,:\, \frac{s+t}{2}\in  \Lambda_\delta \right\} .
    \end{equation}
    In particular, this tells us that: 
    \begin{itemize}
        \item[(i)] $\bar t\in [\bar s-\lambda(\bar s),\bar s + \lambda(\bar s)]$ and $\bar s\in [\bar  t-\lambda(\bar  t),\bar t + \lambda(\bar  t)]$;
        \item[(ii)] $|\bar t-\bar s|< \epsilon <\delta$;
        \item[(iii)] $ \frac{\bar s+\bar t}{2}\in  \Lambda_\delta$.
    \end{itemize}
    Now, on the one hand, (i) ensures that the midpoint map $\M$ is well-defined and $C^1$ in a neighborhood of $(\eta(\bar s), \eta(\bar t))$ with  
    \begin{equation}
        \big|\det d_{\eta( \bar t)} \M(\eta(\bar s),\cdot) \big| \leq \frac 1{2^4} \qquad \text{and} \qquad \big|\det d_{\eta( \bar s)} \M(\cdot, \eta(\bar t)) \big| \leq \frac 1{2^4}.
    \end{equation}
    While, on the other hand, the combination of (ii) and (iii) guarantees that the inverse geodesic map $I_{\eta(\frac{\bar s+\bar t}{2})}$ is well-defined and $C^1$ in a neighborhood of $\eta(\bar s)$ and in a neighborhood of $ \eta(\bar t)$ respectively. Indeed, we have:
    \begin{equation}
        \bar s,\bar t\in \left[\frac{\bar s+\bar t}{2}- \delta,\frac{\bar s+\bar t}{2}+ \delta\right] \subset \left[\frac{\bar s+\bar t}{2}- \lambda\left(\frac{\bar s+\bar t}{2}\right),\frac{\bar s+\bar t}{2}+ \lambda\left(\frac{\bar s+\bar t}{2}\right)\right],
    \end{equation}
    and, by the very definition of $\lambda(\cdot)$, we obtain the claimed regularity of the inverse geodesic map.
    
    Once we have these properties, we can repeat the same strategy used in the second part of the proof of Theorem \ref{thm:casosmooth} and contradict the Brunn--Minkowski inequality $\bm(K,N)$ for every $K\in \R$ and every $N\in (1,\infty)$.
\end{proof}

\begin{remark}
\label{rmk:flexiamo}
    If we want to replicate the strategy of Theorem \ref{thm:casosmooth}, we ought to find a short geodesic $\gamma:[0,1]\to\hei$ such that 
    \begin{itemize}
        \item[(i)] the midpoint map $\M$ is $C^1$ around $(\gamma(0),\gamma(1))$ and satisfies a Jacobian estimates at $\gamma(1)$ of the type \eqref{eq:suitable_jacobian_estimate};
        \item[(ii)] the midpoint map $\M$ satisfies a Jacobian estimates at $\gamma(0)$ of the type \eqref{eq:suitable_jacobian_estimate};
        \item[(iii)] the inverse geodesic map $\I_{\gamma(1/2)}$, with respect to $\gamma(1/2)$, is $C^1$ around $\gamma(0)$ and $\gamma(1)$. 
    \end{itemize}
    Proposition \ref{prop:dopotantafatica} guarantees the existence of a large set $\mathscr A\subset T_{\gamma(0)}^*\hei$ of initial covectors for which the corresponding geodesic $\gamma$ satisfies (i). The problem arises as the set $\mathscr A$ of ``good'' covectors depends on the base point and is large only in a measure-theoretic sense. A simple ``shortening'' argument, mimicking the strategy of the smooth case, is sufficient to address (ii). However, once the geodesic is fixed, we have no way of ensuring that (iii) is satisfied. In particular, it may happen that the map $\I_{\gamma(1/2)}$ does not fit within the framework of Proposition \ref{prop:dopotantafatica} item {\slshape (i)}, as the corresponding initial covector may fall outside the hypothesis.  To overcome such a difficulty, we use a density-type argument to choose \emph{simultaneously} an initial point and an initial covector in such a way that (i)--(iii) are satisfied.
\end{remark}

\subsection{Failure of the \texorpdfstring{$\mathsf{MCP}(K,N)$}{MCP(K,N)} condition for singular norms} \label{sec:noMCP}
In this section we prove Theorem \ref{thm:intro3}, showing that the measure contraction property (see Definition \ref{def:mcp}) can not hold in a \sF Heisenberg group, equipped with a strictly convex, singular norm. Our strategy is based on the observation that, in this setting, geodesics exhibit a branching behavior, despite being unique (at least for small times).

\begin{theorem}\label{thm:noCDnonC1}
    Let $\hei$ be the \sF Heisenberg group, equipped with a strictly convex norm $\normdot$ which is not $C^1$, and let $\m$ be a smooth measure on $\hei$. Then, the metric measure space $(\hei, \di_{SF}, \m)$ does not satisfy the measure contraction property $\MCP(K,N)$ for every $K\in \R$ and $N\in (1,\infty)$.
\end{theorem}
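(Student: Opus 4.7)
The plan is to construct a Borel set $A \subset \hei$ with $\m(A) > 0$ whose $t$-midpoint set $M_t(\{\e\}, A)$ is $\m$-null for every sufficiently small $t > 0$. Since geodesics from $\e$ to points of $A$ (chosen off the $z$-axis) are unique by Theorem \ref{thm:geod_Heisenberg}(i), Remark \ref{rmk:SIUUUUUUU} will then contradict $\MCP(K, N)$: the right-hand side of \eqref{eq:tj_pantaloncini} is strictly positive, while the left-hand side vanishes. The construction exploits a branching phenomenon in the geodesic flow stemming from the failure of $C^1$ regularity of $\normdot$. By Proposition \ref{prop:propunderduality}(i), $\normdot_*$ is $C^1$, but the absence of $C^1$ regularity of $\normdot$ dualizes to the presence of a maximal straight segment $\{Q_\psi : \psi \in [\psi_1, \psi_2]\}$ on $\partial \Omega^\circ$, on which the multimap $C_\circ$ collapses to a single value $\theta^*$. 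Proposition \ref{prop:difftrig} then implies that both $\sinomp$ and $\cosomp$ are affine on $[\psi_1, \psi_2]$.

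Plugging these affine expressions into Proposition \ref{prop:geodesics_nestogol} and invoking the Pythagorean identity \eqref{eq:pytagorean} with $\theta^* \xleftrightarrow{\Omega} \phi$, a direct calculation yields
\begin{equation*}
G(\phi, \omega, r; t) = \bigl( rt \cosom(\theta^*),\ rt \sinom(\theta^*),\ 0 \bigr)
\end{equation*}
for every $(\phi, \omega, r)$ such that both $\phi$ and $\phi + \omega t$ lie in $[\psi_1, \psi_2]$. In words, every geodesic whose covector angle stays in the flat part is a straight segment of the $xy$-plane, depending only on $r$ and $t$. Fixing $T > 0$ small, $\delta > 0$, $r_0 > 0$, and $\omega_0$ large enough that $\phi + \omega T > \psi_2$ for all $(\phi, \omega) \in (\psi_1 + \delta, \psi_2 - \delta) \times (\omega_0, 2\omega_0)$, I would set
\begin{equation*}
\Lambda := (\psi_1 + \delta, \psi_2 - \delta) \times (\omega_0, 2\omega_0) \times (r_0, 2r_0), \qquad t^* := \frac{\delta}{2\omega_0},
\end{equation*}
so that $\phi + \omega t \in [\psi_1, \psi_2]$ for every $(\phi, \omega, r) \in \Lambda$ and $t \in [0, t^*]$ (the geodesic is straight until time $t^*$), while $\phi + \omega T > \psi_2$ guarantees that each geodesic has exited the flat segment strictly before time $T$. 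Defining $F(\phi, \omega, r) := G(\phi, \omega, r; T)$ and $A := F(\Lambda)$, the first property forces
\begin{equation*}
M_t(\{\e\}, A) \subset \bigl\{ (rt \cosom(\theta^*), rt \sinom(\theta^*), 0) : r \in [r_0, 2r_0] \bigr\} \quad \text{for } t \in (0, t^*),
\end{equation*}
a bounded segment in $\hei$ and hence $\m$-null.

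What remains is to establish $\m(A) > 0$. The map $F$ is continuous (in fact $C^1$, since $\Omega^\circ$ is $C^1$, hence $\sinomp, \cosomp \in C^1$), and I claim it is injective on $\Lambda$. Indeed, if $(\phi_1, \omega_1, r_1), (\phi_2, \omega_2, r_2) \in \Lambda$ yield the same endpoint at $t = T$, uniqueness of geodesics (Theorem \ref{thm:geod_Heisenberg}(i)) forces the two curves to coincide on $[0, T]$; comparing initial speeds gives $r_1 = r_2$, comparing the first time the curve leaves the straight segment gives $(\psi_2 - \phi_1)/\omega_1 = (\psi_2 - \phi_2)/\omega_2$, and comparing the post-exit curving rates — exploiting the strict monotonicity of $C_\circ$ on a right neighborhood of $\psi_2$, which follows from the maximality of $[\psi_1, \psi_2]$ as a flat segment — forces $\omega_1 = \omega_2$ and hence $\phi_1 = \phi_2$. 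Invariance of domain then guarantees that $F(\Lambda) \subset \R^3$ is a non-empty open set, hence of positive Lebesgue measure, and the smoothness and positivity of $\m$ give $\m(A) > 0$. A suitable further restriction on the parameters keeps $A$ away from the $z$-axis, which ensures that geodesics from $\e$ to points of $A$ remain unique.

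The main obstacle is the injectivity of $F$: it hinges on a careful analysis at the junction $\psi_2$ between the flat and curved portions of $\partial \Omega^\circ$, verifying that two geodesics coinciding inside the flat part genuinely separate once they leave it. With injectivity established, the invariance-of-domain argument offers a clean way to prove $\m(A) > 0$ that bypasses the delicate Jacobian estimates needed in the $C^{1,1}$ setting of Theorem \ref{thm:noCDC11}, and the contradiction with $\MCP(K, N)$ follows from Remark \ref{rmk:SIUUUUUUU} exactly as outlined.
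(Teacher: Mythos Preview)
Your strategy coincides with the paper's: exhibit a set $A$ of positive measure whose $t$-midpoints with $\e$ collapse to a line segment for small $t$, then contradict \eqref{eq:tj_pantaloncini} via Remark~\ref{rmk:SIUUUUUUU}. The substantive difference is how you obtain $\m(A)>0$. The paper does this geometrically: it first shows the $(x,y)$-projection of $G(\mathscr A;1)$ fills a planar neighborhood (sliding $\phi$ within the flat segment translates the endpoint in one direction, while varying $r$ moves it transversally), and then, for each admissible $(x,y)$, produces a whole interval of achievable $z$-values via an area argument that rests on the auxiliary convexity Lemma~\ref{lem:facilecontodianalisi1} (monotonicity of $s\mapsto a(s)/d(s)^2$). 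Your route---injectivity of the endpoint map $F$ plus invariance of domain---bypasses that lemma entirely and is more conceptual, at the price of having to analyze the post-exit dynamics.

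One point needs repair. Maximality of the flat segment $[\psi_1,\psi_2]$ does \emph{not} yield strict monotonicity of $C_\circ$ on a right neighborhood of $\psi_2$: nothing rules out another flat piece nearby. What maximality does give, together with continuity of $C_\circ$ (which holds because $\normdot$ strictly convex forces $\normdot_*\in C^1$), is that $C_\circ$ cannot be identically $\theta^*$ on any interval $(\psi_2,\psi_2+\epsilon)$. This weaker fact suffices for injectivity: if $\omega_1>\omega_2$ and the two controls $t\mapsto C_\circ(\phi_i+\omega_i t)$ agree on $(\tau,T]$, then $C_\circ$ is constant on each interval $[\psi_2+\omega_2(t-\tau),\,\psi_2+\omega_1(t-\tau)]$; these overlap for nearby $t$ and cover all of $(\psi_2,\psi_2+\omega_1(T-\tau)]$, and continuity at $\psi_2$ pins the common value to $\theta^*$, contradicting maximality. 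With this correction---and the explicit constraint $2\omega_0 T<2\Sbb^\circ$ so that the curves remain minimizing on $[0,T]$ and Theorem~\ref{thm:geod_Heisenberg} applies---your argument goes through.
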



\begin{proof}
For simplicity, we assume $\m=\Leb^3$. As it is apparent from the proof, the same argument can be carried out in the general case.

According to Proposition \ref{prop:propunderduality}, since $\normdot$ is not $C^1$, its dual norm $\normdot_*$ is not strictly convex. In particular, there exists a straight segment contained in the sphere $S^{\norm{\cdot}_*}_1(0)=\partial\Omega^\circ$. Since the differential structure of the Heisenberg group is invariant under rotations around the $z$-axis, we can assume without losing generality that this segment is vertical in $\R^2\cap\{x>0\}$, i.e. there exists $\bar x \in \R$ and an interval $I:=[y_0,y_1]\subset \R$ such that 
\begin{equation*}
    \{\bar x\}\times I \subset \partial \Omega^\circ.
\end{equation*}
Moreover, we can take the interval $I$ to be maximal, namely for every $y\not\in I$ we have $(\bar x,y)\not\in \Omega^\circ$ (see Figure \ref{fig:MCP1}). Let $\psi_0 \in [0,2\Sbb^\circ)$ be such that $Q_{\psi_0}=(\bar x,y_0)$, then 
it holds that
\begin{equation}\label{eq:sincosflat}
    (\bar x, y)= Q_{\psi_0 + (y- y_0)\bar x}, \qquad \text{for every }y\in I.
\end{equation}
As a consequence, we have that 
\begin{equation}\label{eq:120.5}
   \cosomp(\psi_0 + (y-y_0)\bar x)= \bar x \quad\text{and}\quad \sinomp(\psi_0 + (y-y_0)\bar x)= y, \qquad \text{for }y\in I.
\end{equation}
Let $y_2= \frac 12 (y_0+y_1)$ and $\phi_0= \psi_0 + \frac{1}{2}(y_1-y_0) \bar x$, so that $(\bar x, y_2)= Q_{\phi_0}$ by \eqref{eq:sincosflat}. 
Moreover, take $\phi_1>\psi_1:= \psi_0+(y_1-y_0)\bar x$ sufficiently close to $\psi_1$ (so that $Q_{\phi_1}$ is not in the flat part of $\partial\Omega^\circ$) and call $\bar r= \phi_1-\phi_0>0$. We are now going to prove that there exists a suitably small neighborhood $\mathscr A\subset T_0^*\hei\cong [0,2\Sbb^\circ)\times \R\times [0,\infty)$ of the point $(\phi_0,\bar r,\bar r)$\footnote{ Here the angle $\phi_0$ has to be intended modulo $2\Sbb^\circ$.} such that 
\begin{equation}\label{eq:positiveset}
    \Leb^3\big( G(\mathscr A;1)\big) >0.
\end{equation}
For proving this claim, one could argue directly by computing the Jacobian of the map $G(\cdot,1)$ at the point $(\phi_0,\bar r,\bar r)$, however the computations are rather involved and do not display the geometrical features of the space. Thus, we instead prefer to present a different strategy, which highlights the interesting behaviour of geodesics.

Consider the map 
\begin{equation*}
    F(\phi,\omega,r):= \big(x(\phi,\omega,r;1),y(\phi,\omega,r;1)\big),
\end{equation*}
where $x(\phi,\omega,r;t),y(\phi,\omega,r;t)$ are defined as in \eqref{eq:extended_exponential_map}, and observe that
\begin{equation*}
    F(\phi_0,\bar r,\bar r)= (\sinomp(\phi_1) - \sinomp(\phi_0), \cosomp(\phi_0) - \cosomp(\phi_1))= (\sinomp(\phi_1) - y_2, \bar x -\cosomp(\phi_1)).
\end{equation*}
Proceeding with hindsight, let $\varepsilon>0$ such that $\varepsilon<\min\{\frac12(\phi_1-\psi_1),\frac 14 (\psi_1-\phi_0)\}$ and
 consider the intervals $I_\phi=[\phi_0-\varepsilon,\phi_0+\varepsilon]$ and $I_r=[\bar r-\varepsilon,\bar r + \varepsilon]$, then the set $F(I_\phi\times I_r\times I_r)$ is a neighborhood of $F(\phi_0,\bar r,\bar r)$. Indeed, due to our choice of $\phi_1$ the set 
\begin{equation}\label{eq:insiemebuffo}
    \{F(\phi_0,r,r) \, :\, r \in [\bar r - \varepsilon/2, \bar r + \varepsilon/2] \}\subset \R^2
\end{equation} 
is a curve that is not parallel to the $x$-axis. Moreover, for every small $\delta$ such that $|\delta| < \psi_1-\phi_0 = \phi_0-\psi_0$ and every $r\in [\bar r - \varepsilon/2, \bar r + \varepsilon/2]$, the equalities in \eqref{eq:120.5} imply the following relation:
\begin{equation*}
\begin{split}
    F(\phi_0+ \delta,r-\delta, r-\delta)&= (\sinomp(\phi_0+ r) - \sinomp(\phi_0+\delta), \cosomp(\phi_0+\delta) - \cosomp(\phi_0+r)) \\
    &= (\sinomp(\phi_0+ r) - \sinomp(\phi_0)- \delta/\bar x, \bar x - \cosomp(\phi_0+r) ) \\ 
    &= (-\delta/\bar x,0) + F(\phi_0, r, r).
\end{split}
\end{equation*}
This shows that $F(I_\phi\times I_r\times I_r)$ contains all the sufficiently small horizontal translation of the set in \eqref{eq:insiemebuffo} (see Figure \ref{fig:MCP2}), so it is a neighborhood of $F(\phi_0,\bar r,\bar r)$. In particular $\Leb^2(F(I_\phi\times I_r\times I_r))>0$. 

\begin{figure}[h]

    \begin{minipage}[c]{.5\textwidth}

    \centering
    \begin{tikzpicture}
        \filldraw[black] (-2,-0.5) circle (1.5pt);
        \filldraw[black] (2,-1.5) circle (1.5pt);
        \filldraw[black] (2,-0.4) circle (1.5pt);
        \filldraw[black] (2,0.7) circle (1.5pt);
        \filldraw[black] (0.6,1.95) circle (1.5pt);
        
        \draw[very thick] (1.7,-2) .. controls (1.8,-2) and (1.95,-1.9) ..(2,-1.5) -- (2,0.7).. controls (1.95,1.7) and (1.7,1.9) ..(0,2);
        \draw [thick](2.4,-1.5)--(2.4,0.7);
        \draw [thick](2.3,-1.5)--(2.5,-1.5);
        \draw [thick](2.3,0.7)--(2.5,0.7);
        \node at (-1.7,0.2)[label=south:$O$] {};
        \node at (-0.8,2.5)[label=south:$\partial\Omega^\circ$] {};
        \node at (0.8,1.12)[label=south:${(\bar x, y_1)= Q_{\psi_1}}$] {};
        \node at (0.8,0.02)[label=south:${(\bar x, y_2)= Q_{\phi_0}}$] {};
        \node at (0.8,-1.06)[label=south:${(\bar x, y_0)= Q_{\psi_0}}$] {};
        \node at (1.1,2.7)[label=south:$Q_{\phi_1}$] {};
        \node at (2.3,-0.4)[label=east:$I$] {};
        
    \end{tikzpicture}
    \caption{The flat part of $\partial\Omega^\circ$.}
    \label{fig:MCP1}
    
\end{minipage}%
\begin{minipage}[c]{.5\textwidth}

    \centering
    \begin{tikzpicture}[scale=1.3]
        \fill[color=blue!10!white](1.55,-0.6) ..controls (1.52,-0.9) ..(1.45,-1.2)--(2.25,-1.2)..controls (2.32,-0.9) ..(2.35,-0.6)--cycle;
         \draw[ thick] (-2,-1.7) .. controls (-2,-1.8) and (-1.9,-1.95) ..(-1.5,-2) -- (0.7,-2).. controls (1.7,-1.95) and (1.9,-1.7) ..(2,0);
         \draw[very thick, blue](1.55,-0.6) ..controls (1.52,-0.9) ..(1.45,-1.2)--(2.25,-1.2)..controls (2.32,-0.9) ..(2.35,-0.6)--cycle;
         \filldraw[red] (1.95,-0.6) circle (1.5pt);
         \filldraw[red] (1.85,-1.2) circle (1.5pt);
         \draw[very thick, red](1.95,-0.6) ..controls (1.92,-0.9) ..(1.85,-1.2);
         \node at (-0.4,0)[label=south:${\color{red}{\{F(\phi_0,r,r) \, :\, r \in [\bar r - \varepsilon/2, \bar r + \varepsilon/2] \}}}$] {};
         \node at (0.25,-0.6)[label=south:${\color{blue}{F(I_\phi\times I_r\times I_r) \supset}}$] {};
         \filldraw[black] (-0.4,-2) circle (1.5pt);
         \node at (-0.4,-1.4)[label=south:$O$] {};
        \filldraw[white] (0,-2.5) circle (1.5pt);
        \filldraw[white] (0,1.10) circle (1.5pt);
    \end{tikzpicture}
   \caption{Estimate of the set $F(I_\phi\times I_r\times I_r)$.}
    \label{fig:MCP2}
    
\end{minipage}
\end{figure}

Now we claim that, for every point $(\tilde x,\tilde y,\tilde z)= G(\tilde \psi,\tilde \omega,\tilde r;1)$ with $\tilde \psi \in I_\phi$, $\tilde \omega \in I_r$ and $\tilde r \in I_r$, there exists an interval $J_z\ni \tilde z$ (depending on $\tilde x$ and $\tilde y$) such that 
\begin{equation}
\label{eq:claim_interval_included}
    \{(\tilde x, \tilde y, z)\,:\, z\in J_z\} \subset G([\tilde \psi - \varepsilon, \tilde \psi + \varepsilon], [\tilde\omega - \varepsilon, \tilde\omega + \varepsilon], [\tilde r - \varepsilon, \tilde r + \varepsilon];1).
\end{equation}
 This is enough to prove \eqref{eq:positiveset}, indeed, on the one hand, \eqref{eq:claim_interval_included} implies that
\begin{equation}\label{eq:Jay-Z}
    \{(\tilde x, \tilde y, z)\,:\, z\in J_z\} \subset G(I'_{\psi}\times  I'_r\times I'_r;1),
\end{equation}
where $I'_{\psi}=[ \phi_0 - 2\varepsilon,  \phi_0 + 2\varepsilon]$,  and $I'_{r}=[\bar r - 2\varepsilon, \bar r + 2\varepsilon]$.
On the other hand, since \eqref{eq:Jay-Z} holds for every point $(\tilde x, \tilde y)\in F(I_\phi\times I_r\times I_r)$, we deduce that
 \begin{equation}
     \Leb^3\big(G(I'_{\psi}\times I'_r\times I'_r;1)\big)\geq \int_{F(I_\phi\times I_r\times I_r)}\Leb^1(J_z(\tilde x,\tilde y))\de \tilde x\de \tilde y >0.
 \end{equation}
which implies \eqref{eq:positiveset} with $\mathscr A=I'_{\psi}\times I'_r\times I'_r$.
%
%
%

We proceed to the proof of claim \eqref{eq:claim_interval_included}: let $(\tilde x,\tilde y,\tilde z)=G(\tilde \psi,\tilde \omega,\tilde r;1)$ with $\tilde \psi \in I_\phi$, $\tilde \omega \in I_r$ and $\tilde r \in I_r$ and consider the family of parallel lines 
\begin{equation}
\label{eq:parallel_lines}
\big\{ l(s)=\{y=s + kx\} \,:\, s \in \R\big\}
\end{equation}
in $\R^2$, following the direction identified by the vector $(\tilde x,\tilde y)$, see Figure \ref{fig:MCP3}. Call $S'\subset \R^2$ the sphere $\partial\Omega^\circ$ dilated by $\frac{\tilde r}{\tilde w}$ and rotated by $-\frac{\pi}{2}$. Then, there exists $\bar s \in \R$ such that $l(\bar s)$ intersects $S'$ in the points 
\begin{equation}
\frac{\tilde r}{\tilde w}(\sinomp(\tilde \psi), -\cosomp(\tilde \psi))\qquad\text{ and }\qquad\frac{\tilde r}{\tilde w}(\sinomp(\tilde \psi+ \tilde r), -\cosomp(\tilde \psi+\tilde r)).
\end{equation}

\begin{figure}[ht]

    \begin{minipage}[c]{.5\textwidth}

    \centering
    \begin{tikzpicture}[scale=1.5]

        \draw[very thick] (-2,-1.7) .. controls (-2,-1.8) and (-1.9,-1.95) ..(-1.5,-2) -- (0.7,-2).. controls (1.7,-1.95) and (1.9,-1.7) ..(2,0);
        \draw (-0.5,-2.2)--(2.2,-0.4);
        \draw[red] (-0.8,-2.2)--(2.2,-0.2);
        \draw (-0.2,-2.2)--(2.2,-0.6);
        \draw (-1.1,-2.2)--(2.2,-0);
        \filldraw[red] (-0.5,-2) circle (1pt);
        \draw[very thick, red,->] (-0.5,-2) -- (1.97,-0.35);
        \node at (0.5,-0.5)[label=south:${\color{red}{(\tilde x, \tilde y)}}$] {};
        \node at (2.5,0.1)[label=south:${\color{red}{l(\bar s)}}$] {};
        \node at (-1.6,-1.4)[label=south:$S'$] {};
    \end{tikzpicture}
    \caption{The line $l(\bar s)$ identifies $(\tilde x, \tilde y)$.}
    \label{fig:MCP3}
    
\end{minipage}
\hfill
\begin{minipage}[c]{.5\textwidth}

    \centering
    \begin{tikzpicture}[scale=1.5]
         \fill[blue!10!white](-0.5,-2) -- (1.97,-0.35)--(2,-2);
         \fill[white] (-2,-1.7) .. controls (-2,-1.8) and (-1.9,-1.95) ..(-1.5,-2) -- (0.7,-2).. controls (1.7,-1.95) and (1.9,-1.7) ..(2,0)--(2,-2)--(-0.5,-2);

        \draw[very thick] (-2,-1.7) .. controls (-2,-1.8) and (-1.9,-1.95) ..(-1.5,-2) -- (0.7,-2).. controls (1.7,-1.95) and (1.9,-1.7) ..(2,0);
        \draw (-0.8,-2.2)--(2.2,-0.2);
        \filldraw[red] (-0.5,-2) circle (1pt);
        \filldraw[red] (1.97,-0.35) circle (1pt);
        \draw[very thick, red] (-0.5,-2) -- (1.97,-0.35);
        \node at (0.5,-0.6)[label=south:${\color{red}{d(s)}}$] {};
        \node at (1,-1.3)[label=south:${\color{blue}{a(s)}}$] {};
        \node at (2.5,0.1)[label=south:${l( s)}$] {};
        \node at (-1.6,-1.4)[label=south:$S'$] {};
    \end{tikzpicture}
   \caption{Definition of $d(s)$ and $a(s)$.}
    \label{fig:MCP4}
    
\end{minipage}
\end{figure}

Let $a(s)$ be the function that associates to $s$ the area inside $S'$ and below $l(s)$ and let $d(s)$ be the function that associates to $s$ the (Euclidean) distance between the two intersections of $l(s)$ with $S'$ (see Figure \ref{fig:MCP4}). In particular, by our choice of $\bar s $, we have $d(\bar s)=\norm{(\tilde x, \tilde y)}_{eu}$ and, according to Proposition \ref{prop:z=area}, $a(\bar s)=\tilde z$. Moreover, note that, by Lemma \ref{lem:facilecontodianalisi1}, the function 
\begin{equation}\label{eq:increasingratio}
    s \mapsto \frac{a(s)}{d(s)^2} \text{ is strictly increasing.}
\end{equation}
Now, for every $s$ close enough to $\bar s$, the line $l(s)$ intersects $S'$ in the points
\begin{equation}
	\frac{\tilde r}{\tilde \omega}(\sinomp( \psi(s)), -\cosomp(\psi(s)))\qquad\text{ and }\qquad\frac{\tilde r}{\tilde \omega}(\sinomp( \psi(s)+  r(s)), -\cosomp( \psi(s)+r(s))),
\end{equation}
with $\psi(s)\in [\tilde \psi - \varepsilon, \tilde \psi + \varepsilon]$ and $r(s)\in [\tilde r - \varepsilon/2, \tilde r + \varepsilon/2]$. By Proposition \ref{prop:z=area} and our choice of parallel lines in \eqref{eq:parallel_lines}, we deduce that 
\begin{equation*}
    G\bigg(\psi(s), r(s), r(s) \frac{\norm{(\tilde x, \tilde y)}_{eu}}{d(s)};1\bigg)= \bigg(\tilde x, \tilde y, \frac{\norm{(\tilde x, \tilde y)}_{eu}^2}{d(s)^2} \cdot a(s)\bigg).
\end{equation*}
Observe that, since $d$ is a continuous function and $d(\bar s)= \norm{(\tilde x, \tilde y)}_{eu}$, for every $s$ sufficiently close to $\bar s$ we have 
\begin{equation*}
    r(s) \in [\tilde r-\varepsilon,\tilde r+\varepsilon]\subset I'_r \qquad \text{and} \qquad r(s) \frac{d(s)}{\norm{(\tilde x, \tilde y)}_{eu}} \in [\tilde r - \varepsilon, \tilde r + \varepsilon] \subset I'_r.
\end{equation*}
Then, \eqref{eq:increasingratio} is sufficient to conclude the existence of an interval $J_z\subset\R$ as in \eqref{eq:claim_interval_included}. This concludes the proof of claim \eqref{eq:positiveset} with the choice $\mathscr A=I'_{\psi}\times I'_r\times I'_r$.  

Finally, we are ready to disprove the measure contraction property $\MCP(K,N)$, taking as marginals
\begin{equation}
\mu_0:=\delta_\e\qquad\text{and}\qquad\mu_1:= \frac{1}{\Leb^3(G(\mathscr A;1))}\, \Leb^3|_{G(\mathscr A;1)}.
\end{equation}
Note that, thanks to our construction of the set $\mathscr A$, the curve $t\mapsto G(\lambda;t)$, with $\lambda\in \mathscr A$, is the unique geodesic joining the origin and $G(\lambda;1)$ (cf. Theorem \ref{thm:geod_Heisenberg}). Therefore, according to Remark \ref{rmk:SIUUUUUUU}, it is enough to contradict \eqref{eq:tj_pantaloncini} with $ A'=A= G(\mathscr A;1)$. 
In particular, we prove that there exists $t_0\in(0,1)$ such that
\begin{equation}
\label{eq:very_thin_support}
    M_t(\{\e\},A) \subset \{y=0,z=0\}, \qquad \forall\,t<t_0.
\end{equation}
 To this aim, fix any $(\phi,\omega,r)\in \mathscr A$ and note that, for every $t<\frac{\psi_1-\phi}{\omega}$, \eqref{eq:120.5} implies that
\begin{equation*}
    \cosomp(\phi+\omega t ) = \bar x \qquad\text{and}\qquad \sinomp(\phi+\omega t )= \sinomp(\phi) + \frac{\omega t}{\bar x}.
\end{equation*}
From these relations, it follows immediately that 
\begin{equation*}
    y(\phi,\omega,r;t)=0\qquad\text{and}\qquad z(\phi,\omega,r;t) = 0,
\end{equation*}
for every $t<\frac{\psi_1-\phi}{\omega }$. Observe that, by our choice of $\varepsilon$ small enough, $\frac{\psi_1-\phi}{\omega}$ is bounded from below by a positive constant uniformly as $\phi\in I'_\phi$ and $\omega \in I'_r$, thus ensuring the existence of a constant $t_0\in (0,1)$ for which \eqref{eq:very_thin_support} holds.
\end{proof}

\begin{remark}
    In the last step of the proof of the preceding theorem, we established the existence of a family of branching geodesics: namely those corresponding to a flat part of $\partial\Omega^\circ$. In particular, when $\hei$ is equipped with a strictly convex and singular norm, geodesics can branch, although they are unique. This is remarkable as examples of branching spaces usually occur when geodesics are not unique. 
\end{remark}

\begin{lemma}\label{lem:facilecontodianalisi1}
    Let $f:\R\to\R$ be a concave and $C^1$ function. Assume that there exist $\alpha_0<\beta_0$ such that
    \begin{equation}
    \label{eq:lemegaassunzioni}
        f(\alpha_0)=f(\beta_0)=0\qquad\text{and}\qquad f>0\, \ \text{on}\ \,(\alpha_0,\beta_0).
    \end{equation}
    For every $s\in [0,\max f)$, define $\alpha(s)<\beta(s)$ such that
    \begin{equation}
        \{y=s\}\cap {\rm Graph}(f)=\{\left(\alpha(s),s\right);\left(\beta(s),s\right)\}.
    \end{equation}
    Denote by $a(s)$ the area enclosed by the line $\{y=s\}$ and the graph of $f$, and by $d(s):=\beta(s)-\alpha(s)$ (see Figure \ref{fig:byconvexity}). Then, 
    \begin{equation}
        [0,\max f)\ni s\mapsto \frac{a(s)}{d^2(s)}\qquad\text{is strictly decreasing}.
    \end{equation}
\end{lemma}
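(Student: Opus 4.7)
The plan is to prove strict monotonicity of $s \mapsto a(s)/d(s)^2$ by differentiating in $s$ and bounding $a(s)$ from above by the triangle formed by the two tangent lines to the graph at $\alpha(s)$ and $\beta(s)$. First, by the Leibniz rule and the identities $f(\alpha(s)) = f(\beta(s)) = s$, the boundary contributions cancel and $a'(s) = -d(s)$. Implicit differentiation of $f(\alpha(s)) = s = f(\beta(s))$, using the $C^1$ regularity of $f$, yields $\alpha'(s) = 1/f'(\alpha(s))$ and $\beta'(s) = 1/f'(\beta(s))$. Concavity together with $s < \max f$ force $\alpha(s)$ to lie strictly to the left of every maximizer of $f$, so $p := f'(\alpha(s)) > 0$; symmetrically $q := -f'(\beta(s)) > 0$. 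Hence $d'(s) = -(p+q)/(pq) < 0$, and
\begin{equation*}
\frac{d}{ds}\!\left(\frac{a(s)}{d(s)^2}\right) = \frac{a'(s)\, d(s) - 2 a(s)\, d'(s)}{d(s)^3} = \frac{1}{d(s)^3}\left(-d(s)^2 + \frac{2(p+q)}{pq}\, a(s)\right),
\end{equation*}
so strict decrease is equivalent to the sharp inequality $2 a(s) < \frac{pq}{p+q}\, d(s)^2$.

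Next I would prove this inequality via a geometric argument. Since the graph of $f$ lies below each of its tangent lines at $\alpha(s)$ and $\beta(s)$, one has
\begin{equation*}
f(x) - s \leq \min\bigl( p(x - \alpha(s)),\ q(\beta(s) - x) \bigr), \qquad \forall\,x \in [\alpha(s), \beta(s)].
\end{equation*}
The right-hand side is the upper boundary of the triangle with base $[\alpha(s), \beta(s)]$ on $\{y = s\}$ and apex at $x^* = (p\alpha(s) + q\beta(s))/(p+q)$ of height $pq\, d(s)/(p+q)$, whose area is $\frac{pq\, d(s)^2}{2(p+q)}$. Integrating yields the non-strict bound $a(s) \leq \frac{pq\, d(s)^2}{2(p+q)}$, which is exactly the estimate needed in non-strict form.

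The main obstacle is upgrading this to a strict inequality, and it is precisely here that the $C^1$ hypothesis is essential. Equality throughout $[\alpha(s), \beta(s)]$ would force the graph of $f$ to coincide with the triangular upper profile, which has left-slope $p > 0$ and right-slope $-q < 0$ at the apex $x^*$; this contradicts the continuity of $f'$, since $p = -q$ is impossible when $p,q > 0$. Hence the pointwise bound is strict on some open sub-interval, which by continuity of both sides upgrades the integrated estimate to a strict one and delivers $\frac{d}{ds}(a/d^2) < 0$ throughout $[0, \max f)$.
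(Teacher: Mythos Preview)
Your argument is correct. The computation of $a'(s)=-d(s)$ and $d'(s)=-(p+q)/(pq)$ via the implicit function theorem is clean, and the tangent-triangle bound $a(s)\le \tfrac{pq}{2(p+q)}\,d(s)^2$ is exactly the sharp inequality you need; the upgrade to strictness via the $C^1$ hypothesis is also airtight, since equality would force $f$ to have a corner at the apex $x^*\in(\alpha(s),\beta(s))$.

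This is, however, a genuinely different route from the paper's. The paper does not differentiate in $s$: instead it fixes two levels $s_1<s_2$, rescales the integral for $a(s_2)$ by the affine map sending $[\alpha(s_2),\beta(s_2)]$ onto $[\alpha(s_1),\beta(s_1)]$, and then compares the rescaled integrand $g$ against $f-s_1$ pointwise. The comparison is obtained by integrating the inequality $g'(t)\le f'(t)$ (which is just monotonicity of $f'$, i.e.\ concavity) from $\alpha(s_1)$ to a fixed point $\tilde t$ of the rescaling, and symmetrically on the other side. Strictness again comes from $C^1$: equality in $g'\le f'$ on both sides would make $f$ piecewise affine with a kink at $\tilde t$. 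So the two proofs share the same endgame (equality forces a corner, ruled out by $C^1$), but the paper's argument avoids the implicit function theorem and the differentiability of $\alpha,\beta$ altogether, working directly with two fixed levels. Your approach, on the other hand, is more transparently geometric---the triangle bound is visually immediate---and yields the pointwise sign of the derivative of $a/d^2$, which is marginally more information than the paper extracts.
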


  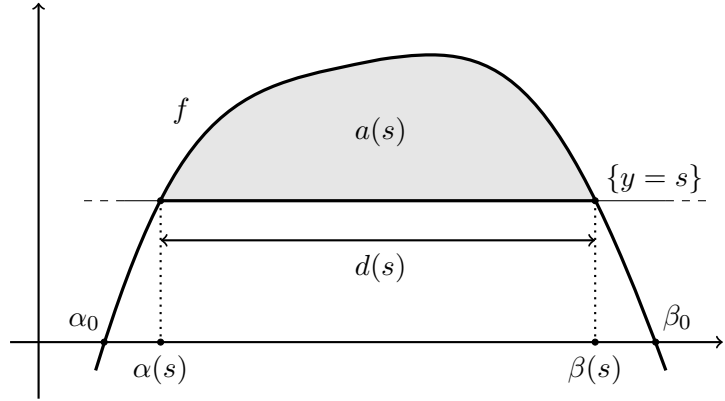
\begin{figure}[h]
      \centering
       \begin{tikzpicture}[scale=0.75]
\fill[color=black!10!white](3.76,2.5) --(-3.86,2.5)--(-3.86, 5.4)--(3.76,5.4)--cycle;
\fill[white] (-5,-0.5)..controls (-3.5,4.4) and (-2.5,4.5) .. (0,5)..controls (2,5.3) and (3,5).. (5,-0.5)--(5,5.5)--(-5,5.5)--cycle;
\draw[very thick] (-5,-0.5)..controls (-3.5,4.4) and (-2.5,4.5) .. (0,5)..controls (2,5.3) and (3,5).. (5,-0.5);
\draw[thick,->] (-6,-1)--(-6,6);
\draw[thick,->] (-6.5,0)--(6,0);
\filldraw[black] (3.76,2.5) circle (1.5pt);
\filldraw[black] (-3.86,2.5) circle (1.5pt);
\draw[very thick](3.76,2.5) --(-3.86,2.5);
\draw[thick,dotted](3.76,2.5) --(3.76,0);
\filldraw[black] (3.76,0) circle (1.5pt);
\draw[thick,dotted](-3.86,2.5) --(-3.86,0);
\filldraw[black] (-3.86,0) circle (1.5pt);
\draw (-4.5,2.5)-- (5,2.5);
\draw[dashed](5,2.5)--(5.8,2.5);
\draw[dashed](-4.5,2.5)--(-5.3,2.5);
\draw[<->, thick](-3.86,1.8)--(3.76,1.8);

\node at (0,3.1)[label=north:$a(s)$] {};
\node at (0,0.7)[label=north:$d(s)$] {};
\node at (-3.5,3.5)[label=north:$f$] {};
\node at (4.8,3.5)[label=south:${\{y=s\}}$] {};
\node at (-3.86,-1.1)[label=north:$\alpha(s)$] {};
\node at (-5.2,-0.15)[label=north:$\alpha_0$] {};
\node at (5.2,-0.15)[label=north:$\beta_0$] {};
\node at (3.76,-1.1)[label=north:$\beta(s)$] {};
\filldraw[black] (-4.85,0) circle (1.5pt);
\filldraw[black] (4.82,0) circle (1.5pt);

\end{tikzpicture}
\caption{Representation of the quantities $\alpha(s)$, $\beta(s)$, $a(s)$ and $d(s)$. }
    \label{fig:byconvexity}
  \end{figure}

\begin{proof}
    Fix $0\leq s_1< s_2 < \max f$, then it is sufficient to prove that 
    \begin{equation}\label{eq:strictlydecreasing}
        A_1:= a(s_1) > \frac{d^2(s_1)}{d^2(s_2)} a (s_2) =: A_2.
    \end{equation}
    Observe that, by definition, $\displaystyle a(s)=\int_{\alpha(s)}^{\beta(s)}(f(t)-s)\de t$, therefore:
    \begin{equation}
        A_1= \int_{\alpha(s_1)}^{\beta(s_1)} \big(f(t) - s_1 \big) \de t, \qquad
        A_2=\frac{d(s_1)}{d(s_2)}\int_{\alpha(s_1)}^{\beta(s_1)}\left[f\left(\alpha(s_2)+\frac{d(s_2)}{d(s_1)}(t-\alpha(s_1))\right)-s_2\right]\de t,
    \end{equation}
    where, for $A_2$, we used the change of variables $t\mapsto \alpha(s_1)+\frac{d(s_1)}{d(s_2)}(t-\alpha(s_2))$. For ease of notation, set $g$ to be the integrand of $A_2$, namely:
    \begin{equation}
        g(t):=\frac{d(s_1)}{d(s_2)}\left[f\left(\alpha(s_2)+\frac{d(s_2)}{d(s_1)}(t-\alpha(s_1))\right)-s_2\right],\qquad\forall\,t\in\R.
    \end{equation}
    Now, let $\tilde t\in [\alpha(s_1),\beta(s_1)]$ be such that
    \begin{equation}
        \tilde t = \alpha(s_2)+\frac{d(s_2)}{d(s_1)}(\tilde t-\alpha(s_1)).
    \end{equation}
    Note that, by linearity, $t\leq \tilde t$ if and only if $t\leq \alpha(s_2)+\frac{d(s_2)}{d(s_1)}(t-\alpha(s_1))$. Thus, for every $t\leq \tilde t$, the concavity of $f$ yields that 
    \begin{equation}
    \label{eq:ineq_first_der}
        g'(t) = f'\bigg(\alpha(s_2)+\frac{d(s_2)}{d(s_1)}(t-\alpha(s_1))\bigg) \leq f'(t).
    \end{equation}
    Therefore, observing that $g(\alpha(s_1))=0$, we deduce that, for every $t\leq \tilde t$,
    \begin{equation}
    \label{eq:ineq_integrands}
        g(t) = \int_{\alpha(s_1)}^{t} g'(r) \de r \leq \int_{\alpha(s_1)}^{t} f'(r) \de r = f(t) - s_1.
    \end{equation}
    The same inequality can be proved for every $t\geq \tilde t$, proceeding in a symmetric way. Thus, integrating both sides of \eqref{eq:ineq_integrands}, we obtain $A_1\geq A_2$. 
    Finally, observe that if $A_1=A_2$ then also \eqref{eq:ineq_first_der} is an equality, for every $t< \tilde t$. By concavity, this implies that $f'(t)\equiv c_1$ for every $t< \tilde t$. Analogously, $f'(t)\equiv c_2$ for every $t > \tilde t$ and \eqref{eq:lemegaassunzioni} implies that we must have $c_1\neq c_2$. Thus, $f$ is linear on $(\alpha(s_1)),\tilde t)$ and $(\tilde t,\beta(s_1)))$ and not differentiable at $\tilde t$. But, this contradicts that $f\in C^1(\R)$, proving claim \eqref{eq:strictlydecreasing}.
\end{proof}

\bibliography{biblio-carnot}

\bibliographystyle{alpha} 

\end{document}